\numberwithin{equation}{subsection}
\newtheorem*{theorem-non}{Theorem}
\newtheorem*{theorem-nonf}{Th\'eor\`eme}
\newtheorem*{corollary-non}{Corollary}
\newtheorem*{corollary-nonf}{Corollaire}
\newtheorem{lmm}[subsection]{Lemma}
\newtheorem{thm}[subsection]{Theorem}
\newtheorem{prop}[subsection]{Proposition}
\newtheorem{cor}[subsection]{Corollary}
\theoremstyle{definition}
\newtheorem{defn}[subsection]{Definition}
\newtheorem{ex}[subsection]{Example}
\newtheorem*{ex-non}{Example}
\newtheorem{rmk}[subsection]{Remark}
\newtheorem*{rmk-non}{Remark}
\newtheorem*{rmk-nonf}{Remarque}
\newtheorem{notation}[subsection]{Notation}
\newtheorem{construction}[subsection]{Construction}
\title{}
\date{ }
\newcommand{\x}[0]{\times}
\newcommand{\Ac}[0]{\mathcal{A}}
\newcommand{\Cc}[0]{\mathcal{C}}
\newcommand{\Dc}[0]{\mathcal{D}}
\newcommand{\Ec}[0]{\mathcal{E}}
\newcommand{\Fc}[0]{\mathcal{F}}
\newcommand{\Gc}[0]{\mathcal{G}}
\newcommand{\Lc}[0]{\mathcal{L}}
\newcommand{\Mcc}[0]{\mathcal{M}}
\newcommand{\Oc}[0]{\mathcal{O}}
\newcommand{\Uc}[0]{\mathcal{U}}
\newcommand{\Vs}[0]{\mathscr{V}}
\newcommand{\Cb}[0]{\mathbb{C}}
\newcommand{\Db}[0]{\mathbb{D}}
\newcommand{\N}[0]{\mathbb{N}}
\newcommand{\V}[0]{\mathbb{V}}
\newcommand{\Z}[0]{\mathbb{Z}}
\newcommand{\Hu}[0]{\textup{H}}
\newcommand{\Vu}[0]{\textup{V}}
\newcommand{\Map}[0]{\textup{Map}}
\newcommand{\oo}{\infty}
\newcommand{\Hom}[0]{\textup{\underline{Hom}}}
\newcommand{\Fun}[0]{\textup{Fun}}
\newcommand{\sLambda}{\underline{\Lambda}}
\newcommand{\CAlg}{\textup{CAlg}}
\newcommand{\Ql}[1]{\mathbb{Q}_{\ell#1}}
\newcommand{\Det}[1]{\mathcal{D}_{\text{\'et}}(#1)}
\newcommand{\aff}[2]{\mathbb{A}^{#1}_{#2}}
\newcommand{\sch}[0]{\textup{\textbf{Sch}}_{S}}
\newcommand{\DM}[1]{\textup{\textbf{DM}}_{#1}}
\newcommand{\Ar}[1]{\textup{\textbf{Art}}_{#1}}
\newcommand{\ag}[1]{[\mathbb{A}^1_{#1}/\mathbb{G}_{\textup{m},#1}]}
\newcommand{\gm}[1]{\mathbb{G}_{\textup{m},#1}}
\newcommand{\bgm}[1]{\textup{B}\mathbb{G}_{\textup{m},#1}}
\newcommand{\muinf}[1]{\mu_{\infty,#1}}
\newcommand{\prs}[0]{\textup{\textbf{Pr}}^{\textup{L}}_{\textup{stb}}}
\newcommand{\prso}[0]{\textup{\textbf{Pr}}^{\textup{L},\otimes}_{\textup{stb}}}
\newcommand{\prsr}[0]{\textup{\textbf{Pr}}^{\textup{R}}_{\textup{stb}}}
\newcommand{\prsro}[0]{\textup{\textbf{Pr}}^{\textup{R},\otimes}_{\textup{stb}}}
\newcommand{\shv}[1]{\textup{\textbf{Shv}}(#1)}
\title{\parbox{\linewidth}{\centering \'Etale tame vanishing cycles over \texorpdfstring{$\ag{S}$}{ag}}}
\date{}
\author{Denis-Charles Cisinski, Massimo Pippi}
\email{denis-charles.cisinski@ur.de}
\address{Fakultät für Mathematik
Universität Regensburg
93040 Regensburg
Germany}
\email{massimo.pippi@ur.de}
\address{Fakultät für Mathematik
Universität Regensburg
93040 Regensburg
Germany}
\begin{document}

\begin{abstract}
  We develop a theory of tame vanishing cycles for schemes over $\ag{S}$ in the context of \'etale sheaves. We show some desired properties of this formalism, among which: a compatibility with tame vanishing cycles over a (strctly) henselian trait, a compatibility with the theory of tame vanishing cycles over $\aff{1}{S}$, a compatibility with tensor product and with duality. In the last section, we prove that monodromy-invariant vanishing cycles, introduced by the second named author, are the homotopy fixed points with respect to a canonical continuous action of $\mu_{\oo}$ of tame vanishing cycles over $\ag{S}$.
\end{abstract}
\maketitle
\tableofcontents

\section*{Introduction}
The theory of vanishing cycles for the germ of an holomorphic function $(\mathbb{C}^n,\underline{0})\rightarrow (\mathbb{C},0)$ was introduced by J.~Milnor in his celebrated book \cite{m68}. 
The theory was developed in the algebraic setup by A.~Grothendieck (\cite{sga7i}) and by P.~Deligne (\cite{sga7ii}). 
In the latter context, the role of the germ of an holomorphic function is played by a scheme over an henselian trait. 
The henselian trait here plays the role of a small disk centered at the origin. 
A variant of the theory of (tame) nearby cycles, developed by J.~Ayoub in his monography \cite{ay07a,ay07b} in the motivic context (more generally, in the context of homotopy stable $2$-functors), replaces the small little disk centered at the origin with $\aff{1}{S}$. 
Moreover, Ayoub shows that these two formalisms are compatible in a suitable sense in \cite{ay14}. 
Since $\aff{1}{S}$ is naturally equipped with an action of the group scheme $\gm{S}$, it is natural to seek for a theory of (tame) nearby/vanishing cycles for $\gm{S}$-\emph{equivariant functions} $X\rightarrow \aff{1}{S}$.
This is precisely what we aim to do in this note. 
We achieve this by using the language of Artin stacks: the datum of a $\gm{S}$-equivariant function $f:X\rightarrow \aff{1}{S}$ is equivalent to that of a morphism $[X/\gm{S}]\rightarrow \ag{S}$ of stacks. 
In this setup, we let $\ag{S}$ play the role of a little disk centered at the origin. 
Then, the role of the origin of this little disk is played by the closed substack $\bgm{S}$ of $\ag{S}$, while that of the punctured disk by $S\simeq [\aff{1}{S}-\{0\}/\gm{S}]$.
The following situation, which was the one of interest in \cite{p20}, might help to clarify the geometric intuition underlying our theory. 
Let $X$ be an $S$-scheme. To provide a morphism $X\rightarrow \ag{S}$ is equivalent to provide a pair $(\Lc,s)$, where $\Lc$ is a line bundle over $X$ and $s$ is a global section of $\Lc$. 
Then, the fiber of $X$ over $\bgm{S}$ is the zero locus of $s$ (i.e. the closed subscheme defined by the ideal sheaf generated by the morphism $\Lc^{\vee}\rightarrow \Oc_X$ induced by $s$), while its fiber over $S$ is the open complementary to this closed subscheme.
In the theory developed in \cite{sga7i,sga7ii}, a crucial role is played by (the choice of) a seperable closure of the fraction field of the base henselian ring. 
In the variant where the base is $\aff{1}{S}$, this role is played by the pro-\'etale cover $\widetilde{\gm{S}}\rightarrow \gm{S}$, where $\widetilde{\gm{S}}=\varprojlim_{n\in \N_S}(\gm{S}\xrightarrow{t\mapsto t^{n}}\gm{S})$, where $t$ is the parameter of $\gm{S}$.
These are the geometric analogues of an universal cover of the punctured disk in the topological setting.
Since in our context the punctured disk is "topologically trivial", this universal cover has to be introduced in an alternative way.
The reader might benefit by considering the following topological analogue of our setup: let $X$ be a complex variety endowed with an action of $\mathbb{C}^*$ and let $f:X\rightarrow \mathbb{C}$ be a $\mathbb{C}^*$-equivariant function.
We can consider the following diagram, where all squares are Cartesian:
\begin{equation}
    \begindc{\commdiag}[15]
      \obj(-40,15)[1]{$f^{-1}(0)$}
      \obj(0,15)[2]{$X$}
      \obj(40,15)[3]{$X^*$}
      \obj(80,15)[4]{$\widetilde{X}^*$}
      \obj(-40,-15)[5]{$\{0\}$}
      \obj(0,-15)[6]{$\mathbb{C}$}
      \obj(40,-15)[7]{$\mathbb{C}^*$}
      \obj(80,-15)[8]{$\mathbb{C}.$}
      \mor{1}{2}{$i$}
      \mor{3}{2}{$j$}
      \mor{4}{3}{$$}
      \mor{5}{6}{$i$}
      \mor{7}{6}{$j$}
      \mor{8}{7}{$exp$}
      \mor{1}{5}{$$}
      \mor{2}{6}{$f$}
      \mor{3}{7}{$f_{|X^*}$}
      \mor{4}{8}{$$}
    \enddc
\end{equation}

The two squares on the left form a $\mathbb{C}^*$-equivariant diagram in a natural way. 
The exponential map $exp:\mathbb{C}\rightarrow \mathbb{C}^*$ is a morphism of groups and we let it play the role of the universal cover of the punctured disk. 
Notice that we can regard the whole diagram as being $\mathbb{C}$-equivariant thanks to the exponential map. This is also how the \emph{monodromy action} arises in this topological setup. 
\begin{rmk}
The precise definition of $\mathbb{C}^*$-equivariant vanishing cycles and its relationship with the formalism introduced here will appear elsewhere.
\end{rmk}
Clearly, in the algebraic setup we do not dispose of the exponential map. Therefore, we approximate it with the tower of morphisms $\gm{S}\xrightarrow{t\mapsto t^n}\gm{S}$. In other words, if $f:X\rightarrow \aff{1}{S}$ is an equivariant map of $\gm{S}$-schemes, we look at the tower of equivariant diagrams
\begin{equation}
    \begindc{\commdiag}[15]
      \obj(-40,15)[1]{$X_0$}
      \obj(0,15)[2]{$X$}
      \obj(40,15)[3]{$U_X$}
      \obj(80,15)[4]{$U_X^{(n)}$}
      \obj(-40,-15)[5]{$S$}
      \obj(0,-15)[6]{$\aff{1}{S}$}
      \obj(40,-15)[7]{$\gm{S}$}
      \obj(80,-15)[8]{$\gm{S}.$}
      \mor{1}{2}{$i$}
      \mor{3}{2}{$j$}
      \mor{4}{3}{$$}
      \mor{5}{6}{$i$}
      \mor{7}{6}{$j$}
      \mor{8}{7}{$(-)^n$}
      \mor{1}{5}{$$}
      \mor{2}{6}{$f$}
      \mor{3}{7}{$f_{|U_X}$}
      \mor{4}{8}{$$}
    \enddc
\end{equation}

However, $\gm{S}$ acts via $\gm{S}\xrightarrow{(-)^n}\gm{S}$ on the two left squares of this diagram. Therefore, taking the limit over all $n$'s, we obtain a $\widetilde{\gm{S}}$-equivariant diagram. This is how we obtain the \emph{monodromy action} in this setting.

In the language of stacks, this is obtained by looking at the following diagrams
\begin{equation}
    \begindc{\commdiag}[15]
      \obj(-40,15)[1]{$\bgm{S}$}
      \obj(0,15)[2]{$\ag{S}$}
      \obj(40,15)[3]{$S$}
      \obj(-40,-15)[4]{$\bgm{S}$}
      \obj(0,-15)[5]{$\ag{S}$}
      \obj(40,-15)[6]{$S,$}
      \mor{1}{2}{$i$}
      \mor{3}{2}{$j$}[\atright,\solidarrow]
      \mor{1}{4}{$\Theta_0^{(n)}$}[\atright,\solidarrow]
      \mor{2}{5}{$\Theta^{(n)}$}
      \mor{3}{6}{$id_S$}
      \mor{4}{5}{$i$}[\atright,\solidarrow]
      \mor{6}{5}{$j$}
    \enddc
\end{equation}
where $\Theta^{(n)}$ (resp. $\Theta_0^{(n)}$), at the level of $T$-points, induces the functor $(\Lc,s)\mapsto (\Lc^{\otimes n},s^{\otimes n})$ (resp. $\Lc\mapsto \Lc^{\otimes n}$).

For an Artin stack $X$ endowed with a morphism $p:X\rightarrow \ag{S}$, we introduce three $\oo$-functors
\begin{equation}
    \Psi_{p,\eta}^t:\Det{U_X;\Lambda}\rightarrow \Det{X_0;\Lambda}^{\mu_{\oo}}, 
\end{equation}
\begin{equation}
    \Psi_p^t:\Det{X;\Lambda}\rightarrow \Det{\Vs_p;\Lambda},
\end{equation} 
\begin{equation}
    \Phi_p^t:\Det{X;\Lambda}\rightarrow \Det{X_0;\Lambda}^{\mu_{\oo}},
\end{equation}
that we call respectively: \emph{tame nearby cycles}, \emph{total tame nearby cycles} and \emph{tame vanishing cycles}. Here $\Det{X;\Lambda}$ (resp. $\Det{X_0;\Lambda}$) denotes the stable $\oo$-category of complexes of \'etale sheaves on $X$ (resp. $X_0$). The objects of the stable $\oo$-category $\Det{X_0;\Lambda}^{\mu_{\oo}}$ are, roughly, complexes of \'etale sheaves on $X_0$ endowed with a \emph{continuous} action of $\mu_{\oo}=\varprojlim \mu_n$. Finally, $\Det{\Vs_p;\Lambda}$ is a stable $\oo$-category whose objects are, roughly, $\mu_{\oo}$-equivariant morphisms $F\rightarrow G$, where $F$ is endowed with the trivial action.
\subsection*{Applications and related works} Our formalism is intimately related to that of \emph{monodromy-invariant vanishing cycles}, introduced by the second named author in \cite{p20t,p20}. The precise relationship between these two constructions is spelled out in \S \ref{comparison with monodromy invariant vanishing cycles}. In a forthcoming paper, we will introduce the formalism of tame nearby/vanishing cycles over $\ag{S}$ in the context of \'etale motives (as considered by Ayoub~\cite{ay14}, or equivalently, of h-motives, as developed by the first named author in collaboration with F.~D\'eglise~\cite{cd16}). In fact, the present paper can be interpreted as such a formalism in the case where the algebra of coefficients is torsion (the case of rational coefficients is much more technical: not only because it must take place in the contexct of motivic sheaves, but also because there are obstructions we must deal with that vanish with torsion coefficients). The idea of considering $\gm{S}$-equivariant nearby cycles goes back implicitly at least to Verdier~\cite{Verdier}, in his work on monodromic sheaves that appear through his specialization functor, defined via deformation to the normal cone, and the present article is part of an ongoing series of papers devoted to developing microlocal methods for $\ell$-adic sheaves as well as for motivic sheaves.

\subsection*{Outline of the paper.} The paper is organised as follows:
\begin{itemize}
    \item In section \ref{etale sheaves on Artin stacks} we quickly recollect the formalism of Grothendieck's 6 functors formalism for stable $\oo$-categories of \'etale complexes on (derived) Artin stacks.
    \item In section \ref{section recollements}, we quickly review the theory of recollements in the context of $\oo$-categories as developed by J.~Lurie in \cite{lu17} and we provide certain constructions and results that we were not able to locate in the existing literature. In particular, we provide a method to construct an adjunction between two recollements, starting from the datum of two adjunctions between the subcategories that define them.
    \item In section \ref{review of the theory of tame vanishing cycles} we reinterpret the classical theory of tame vanishing cycles over a strictly henselian trait in the language of (pro) Deligne-Mumford stacks. We decided to include this section to emphatize that the language of stacks is well suited to develop a theory of vanishing cycles.
    \item Section \ref{tame nearby and vanishing cycles over ag} is the core of this paper. Here we construct the three $\oo$-functors $\Psi_{p,\eta}^t$, $\Psi_p^t$ and $\Phi_p^t$ that we mentioned in the introduction. Moreover, in section \ref{functorial behaviour} we exhibit certain natural compatibility properties with the $*$-pullback, $*$-pushforward, $!$-pullback and $!$-pushforward.
    \item Relying on the re-interpretation of the theory of tame nearby cycles over a strictly henselian trait provided in section \ref{review of the theory of tame vanishing cycles}, in section \ref{comparison with vanishing cycles over a strictly henselian trait} we show how our formalism of tame nearby vanishing cycles over $\ag{S}$ generalises the classical theory.
    \item In section \ref{tame nearby cycles over A1 and comparison with the etale version of Ayoub's tame nearby cycles}, we show that if the morphism $X\rightarrow \aff{1}{S}$ factors through $\aff{1}{S}\rightarrow \ag{S}$, we re-find the \'etale version of the formalism of tame nearby cycles over $\aff{1}{S}$ developed by J.~Ayoub by composing our functor $\Psi_{p,\eta}^t$ with the forgetful functor $\Det{X_0;\Lambda}^{\mu_{\oo}}\rightarrow \Det{X_0;\Lambda}$.
    \item Building on the results of section \ref{tame nearby cycles over A1 and comparison with the etale version of Ayoub's tame nearby cycles} and on the results provided by J.~Ayoub for his formalism, in section \ref{compatibility with tensor produc and duality} we show that $\Psi_{p,\eta}^t$ is compatible with tensor products and with duality.
    \item In section \ref{comparison with monodromy invariant vanishing cycles}, we prove that monodromy invariant vanishing cycles introduced by the first named author are nothing but the homotopy fixed points of the $\mu_{\oo}$-action on the sheaf of tame vanishing cycles introduced in section \ref{tame nearby and vanishing cycles over ag}. In pariticular, we show how the first Chern class of the line bundle defining the morphism $X_0\rightarrow \bgm{S}$ plays an important role in the canonical action of $\mu_{\oo}$ on the sheaf of tame vanishing cycles.
\end{itemize}
\subsection*{Notation}
\begin{itemize}
\item $\mathcal{S}$ denotes the $\oo$-category of spaces;
\item for a simplicial set $X$, we denote by $X([n])$ the set of $n$-simplicies of $X$;
\item whenever we will have a diagram $Y\xrightarrow{f}X\xleftarrow{g}Z$ in a ($\oo$-)category with finite limits, we will denote the fiber product by $Y\x_{f,X,g}Z$ if we want to keep track of $f$ and $g$ in the notation;
\item we will denote the $\oo$-category of stable and presentable $\oo$-categories with right adjoints as $\prsr$. Similarly, we denote the $\oo$-category of stable and presentable $\oo$-categories with left adjoints as $\prs$;
\item unless otherwise specified, by $2$-category we will always mean "strict $(2,1)$-category";
\item all functors will be implicitely derived;
\item following \cite{lu09}, we denote the simplicial nerve by $N$.
\end{itemize}
\section*{Acknowledgements}
MP wishes to thank Bertrand To\"en and Gabriele Vezzosi for many conversations on the subject of this paper.

This project has received funding from the European Research Council (ERC) under the European
Union Horizon 2020 research and innovation programme (grant agreement No. 725010).

The authors are supported by the collaborative research center SFB 1085 \emph{Higher Invariants - Interactions between Arithmetic Geometry and Global
Analysis} funded by the Deutsche Forschungsgemeinschaft.

\section{\'Etale sheaves on Artin stacks}\label{etale sheaves on Artin stacks}
One of the major achievements of \cite{sga4i,sga4ii, sga4iii} is the construction of \emph{Grothendieck's six functors formalism} on the derived categories of \'etale sheaves of schemes. 
In this paper, it is crucial that this formalism can be extended to Artin stacks. Many papers have been dedicated to this matter, including \cite{lmb00,be03,o07,lo08,lz17}. 
We will briefly recall what is the derived category of an Artin stack and how the assignment $X\mapsto \Det{X,\Lambda}$ is endowed with a six functor formalism following \cite{lz17}. 
Here $\Lambda$ denotes a fixed ring of coefficients, that we suppose to be torsion and such that for all points $s$ of the base scheme $S$, $char(k(s))$ is invertible in $\Lambda$. 
Notice that, in particular, we will use the language of $\oo$-categories: $\Det{X;\Lambda}$ is a presentable, stable $\oo$-category. 
Recall that, for an Artin stack $X$ and a smooth atlas $X'\rightarrow X$ with $X'$ a scheme, $\Det{X;\Lambda}$ is equivalent to the limit of the following diagram
\begin{equation}
    \begindc{\commdiag}[15]
    \obj(-70,0)[1]{$\Det{X';\Lambda}$}
    \obj(0,0)[2]{$\Det{X'\x_XX';\Lambda}$}
    \obj(90,0)[3]{$\Det{X'\x_XX'\x_XX';\Lambda}$}
    \obj(160,0)[4]{$\dots$}
    \mor(-55,2)(-25,2){$$}
    \mor(-55,-2)(-25,-2){$$}
    \mor(25,4)(55,4){$$}
    \mor(25,0)(55,0){$$}
    \mor(25,-4)(55,-4){$$}
    \mor(125,6)(155,6){$$}
    \mor(125,2)(155,2){$$}
    \mor(125,-2)(155,-2){$$}
    \mor(125,-6)(155,-6){$$}
    \enddc
\end{equation}

This construction does not depend on the atlas $X'\rightarrow X$. 

Grothendieck's six functors formalism consists of the following data and formulas:
\begin{itemize}
    \item For every morphisms of Artin stacks $f:Y\rightarrow X$, there is an adjunction
    \begin{equation}
        f^*:\Det{X;\Lambda}\leftrightarrows \Det{Y;\Lambda}:f_*,
    \end{equation}
    where the left adjoint $f^*$ is called \emph{pullback} (or $*$-pullback) and the right adjoint $f_*$ is called \emph{pushforward} (or $*$-pushforward).
    \item For every separated morphism of finite type, there is an adjunction
    \begin{equation}
        f_!:\Det{Y;\Lambda}\leftrightarrows \Det{X;\Lambda}:f^!,
    \end{equation}
    where the left adjoint $f_!$ is called \emph{exceptional pushforward} (or $!$-pushforward) and the right adjoint $f^!$ is called \emph{exceptional pullback} (or $!$-pushforward).
    \item For every $X$, $\Det{X;\Lambda}$ is a closed symmetric monoidal $\oo$-category. The tensor product is denoted by $-\otimes_X -$ and the internal hom by $\Hom_X$.
    \item The assignments $f\mapsto f^*$, $f\mapsto f_*$, $f\mapsto f_!$ and $f\mapsto f^!$ are functorial (in the $\oo$-categorical sense) in $f$. 
    \item There is a natural transformation of $\oo$-functors
    \begin{equation}
        f_!\rightarrow f_*
    \end{equation}
    that is an equivalence whenever $f:X\rightarrow Y$ is representable by Deligne-Mumford stacks, there exists a finite surjection $Z\rightarrow X$ with $Z$ an algebraic space and $f$ is proper.
    \item For every smooth separated morphism $f:X\rightarrow Y$ of relative dimension $d$, there is a natural equivalence of $\oo$-functors
    \begin{equation}
        f^*\simeq f^!(-d)[-2d],
    \end{equation}
    where $(-d)$ denotes the \emph{Tate shift} while $[-2d]$ the \emph{cohomological shift}.
    \item (\textbf{K\"unneth Formula}) For every finite collection of morphisms $\{ f_i:X_i\rightarrow Y_i \}_{\{i=1,\dots n\} }$ of Artin stacks, given a Cartesian square
    \begin{equation}
        \begindc{\commdiag}[15]
          \obj(-35,15)[1]{$X$}
          \obj(35,15)[2]{$X_1\x \dots \x X_n$}
          \obj(-35,-15)[3]{$Y$}
          \obj(35,-15)[4]{$Y_1\x \dots \x Y_n$}
          \mor{1}{2}{$(p_1, \dots, p_n)$}
          \mor{1}{3}{$f$}
          \mor{2}{4}{$f_1 \x \dots \x f_n $}
          \mor{3}{4}{$(q_1, \dots, q_n)$}
        \enddc
    \end{equation}
    the following square commutes (in the $\oo$-categorical sense)
    \begin{equation}
            \begindc{\commdiag}[15]
              \obj(-60,15)[1]{$\Det{X_1;\Lambda}\x \dots \x \Det{X_n;\Lambda}$}
              \obj(60,15)[2]{$\Det{X;\Lambda}$}
              \obj(-60,-15)[3]{$\Det{Y_1;\Lambda}\x \dots \x \Det{Y_n;\Lambda}$}
              \obj(60,-15)[4]{$\Det{Y;\Lambda}.$}
              \mor{1}{2}{$p_1^*\otimes \dots \otimes p_n^*$}
              \mor{1}{3}{$f_{1!}\x \dots \x f_{n!}$}
              \mor{2}{4}{$f_!$}
              \mor{3}{4}{$q_1^*\otimes \dots \otimes q_n^*$}
            \enddc
    \end{equation}
    There are two particularly important cases that deserve a name on their own:
    \begin{enumerate}
        \item (\textbf{Base Change Formulas}) If $n=1$, then we are just considering a Cartesian square
        \begin{equation}
            \begindc{\commdiag}[15]
              \obj(-25,15)[1]{$X'$}
              \obj(25,15)[2]{$Y'$}
              \obj(-25,-15)[3]{$X$}
              \obj(25,-15)[4]{$Y$}
              \mor{1}{2}{$f'$}
              \mor{1}{3}{$g'$}
              \mor{2}{4}{$g$}
              \mor{3}{4}{$f$}
            \enddc
        \end{equation}
        and the K\"unneth Formula implies that we have an equivalence of $\oo$-functors
        \begin{equation}
            f'_!\circ g'^*\simeq g^*\circ f_!.
        \end{equation}
        Notice that, by the uniqueness (up to a contractible space of choices) of right adjoints, this formally implies
        \begin{equation}
            g'_*\circ f'^!\simeq f^!\circ g_*.
        \end{equation}
        In particular, if $f$ is proper, representable by Deligne-Mumford stacks and $X$ admits a finite surjection from an algebraic space, we obtain the \textbf{proper base change formula}
    \begin{equation}
        f'_*\circ g'^*\simeq g^*\circ f_*,
    \end{equation}
    while if $f$ is smooth we obtain the \textbf{smooth base change formula}
    \begin{equation}
        g'_*\circ f'^*\simeq f^*\circ g_*.
    \end{equation}
    \item (\textbf{Projection Formula}) For every morphism $f:X\rightarrow Y$, we can consider the pullback square
    \begin{equation}
        \begindc{\commdiag}[15]
              \obj(-25,15)[1]{$X$}
              \obj(25,15)[2]{$Y$}
              \obj(-25,-15)[3]{$X\x Y$}
              \obj(25,-15)[4]{$Y\x Y,$}
              \mor{1}{2}{$f$}
              \mor{1}{3}{$(id, f)$}
              \mor{2}{4}{$\Delta_Y$}
              \mor{3}{4}{$f\x id$}
            \enddc
    \end{equation}
    in which case the K\"unneth formula translates as the equivalence
    \begin{equation}
        f_!(-\otimes f^*(-))\simeq f_!(-)\otimes -.
    \end{equation}
    \end{enumerate}
    Notice that the Projection Formula immediately implies the following:
    \begin{equation}
        \Hom_Y\bigl (f_!(-),- \bigr )\simeq f_*\Hom_X\bigl ( -,f^!(-)\bigr ),
    \end{equation}
    \begin{equation}
        f^!\Hom_Y\bigl ( -,-\bigr )\simeq \Hom_X\bigl (f^*(-), f^!(-) \bigr ).
    \end{equation}
    \item (\textbf{Localization property}) If $i:Z\rightarrow X$ is a closed embedding and $j:U\rightarrow X$ is its open complementary, there is a fiber-cofiber sequence of $\oo$-functors
    \begin{equation}
        j_!\circ j^!\rightarrow id \rightarrow i_*\circ i^*.
    \end{equation}
    It is known (see \cite[Proposition 2.3.3]{cd19}) that this implies that
    \begin{equation}
        i_*\circ i^!\rightarrow id \rightarrow j_*\circ j^*
    \end{equation}
    is a fiber-cofiber sequence of $\oo$-functors as well.
    \item (\textbf{Absolute Purity}) If $i:Z\rightarrow X$ is a closed embedding of regular stacks of constant codimension $c$, there is an equivalence
    \begin{equation}
        \underline{\Lambda}_Z(-c)[-2c]\simeq i^!\underline{\Lambda}_X.
    \end{equation}
    \item (\textbf{Duality}) Let $f:X\rightarrow S$ be a morphism from an Artin stack to a regular scheme $S$. Put $K_X:=f^!(\underline{\Lambda}_S)$ and consider the $\oo$-functor
    \begin{equation}
        \Db_X:=\Hom_X(-,K_X):\Det{X;\Lambda}^{\textup{op}}\rightarrow \Det{X;\Lambda}.
    \end{equation}
    Then $K_X$ is a \emph{dualizing object} in $\Det{X;\Lambda}$. That is, for every $M\in \Det{X;\Lambda}$, the canonical morphism
    \begin{equation}
        M\rightarrow \Db_X(\Db_X(M))
    \end{equation}
    is an equivalence. Moreover, the \emph{duality functor} $\Db_X$ satisfies the following compatibilities with the six functors. Let $f:X\rightarrow Y$ and let $L,M\in \Det{X;\Lambda}$, $N\in \Det{Y;\Lambda}$:
    \begin{equation}
        \Db_X(L\otimes_X \Db_X(M))\simeq \Hom_X(L,M);
    \end{equation}
    \begin{equation}
        \Db_X(f^*(N))\simeq f^!\Db_Y(N);
    \end{equation}
    \begin{equation}
        f^*\Db_Y(N)\simeq \Db_X(f^!N);
    \end{equation}
    \begin{equation}
        \Db_Y(f_*M)\simeq f_!\Db_X(M);
    \end{equation}
    \begin{equation}
        f_!\Db_X(M)\simeq \Db_Y(f_*M).
    \end{equation}
\end{itemize}
The six functor formalism is expressed by the existence of a lax monoidal $\oo$-functor
\begin{equation}\label{six functor formalism with left adjoints}
    Corr(N(\Ar{S}))^{\otimes}_{F,all}\rightarrow \prso,
\end{equation}
which encodes (in an homotopy coherent way!) all the properties listed above. Here $F$ denotes the class of separated morphisms locally of finite type, while $Corr(\Ar{S})^{\otimes}_{F,all}$ is the symmetric monoidal $\oo$-category denoted by $N(\Ar{S})_{corr: \Ec_{F},all}^{\otimes}$ in \cite{lz17}.
Roughly, this $\oo$-functor sends an Artin stack $X$ to $\Det{X;\Lambda}$ and a correspondence $X\xleftarrow{f} Z \xrightarrow{g} Y$ to
\begin{equation}
    f_!\circ g^*: \Det{Y;\Lambda}\rightarrow \Det{X;\Lambda}.
\end{equation}
We refer to \emph{loc. cit.} for further details. 

Also notice that passing to right adjoints, i.e. through the equivalence $\prso \simeq (\prsro)^{op}$, we get an $\oo$-functor
\begin{equation}
    Corr(N(\Ar{S}))^{\otimes}_{all,F}\rightarrow \prsro
\end{equation}
which sends a correspondence $X\xleftarrow{f} Z \xrightarrow{g} Y$ to 
\begin{equation}
    g_*\circ f^!: \Det{X;\Lambda}\rightarrow \Det{Y;\Lambda}.
\end{equation}

\section{Recollements}\label{section recollements}
In this section we will quickly review the theory of \emph{recollement} of $\oo$-categories. 
We will follow \cite[Appendix A.8]{lu17}. To the authors knowledge, the notion of recollement first appeared in \cite{sga4i,sga4ii,sga4iii} in the context of topoi. All results and constructions of this section are well known (even those that might not appear in the literature).
The example which motivates the theory of recollement is the following one: let $X$ be a topological space. 
Let $j:U\subseteq X$ be an open subset and $i:Z=X-U\subseteq X$ be its closed complementary. 
Let $\Fc \in \shv{X}$ be a sheaf (of sets, say) on $X$. 
It determines the following triplet: $(\Fc_{|Z}:=i^*\Fc, \Fc_{|U}:=j^*\Fc, \Fc_{|Z}\rightarrow i^*j_*\Fc_{|U})$. 
Such triplets can be organized in a category in an evident way, denoted $\bigl (\shv{Z},\shv{U},i^*j_*:\shv{U}\rightarrow \shv{Z}\bigr )$ and the assignment 
\begin{equation*}
          \Fc \mapsto (\Fc_{|Z}, \Fc_{|U}, \Fc_{|Z} \rightarrow i^*j_*\Fc_{|U})
\end{equation*}
determines a functor
\begin{equation*}
          \shv{X}\rightarrow \bigl (\shv{Z},\shv{U},i^*j_*:\shv{U}\rightarrow \shv{Z} \bigr ).
\end{equation*}
Moreover, this functor happens to be an equivalence. For example, we can recover the sheaf $\Fc$ (up to isomorphism) from $(\Fc_{|Z}, \Fc_{|U}, \Fc_{|Z}\rightarrow i^*j_*\Fc_{|U})$ by taking the limit of the diagram
\begin{equation*}
  \begindc{\commdiag}[18]
    \obj(15,15)[1]{$j_*\Fc_{|U}$}
    \obj(-15,-15)[2]{$i_*\Fc_{|Z}$}
    \obj(15,-15)[3]{$i_*i^*j_* \Fc_{|U}.$}
    \mor{1}{3}{$$}
    \mor{2}{3}{$$}
  \enddc
\end{equation*}
The key properties of the phenomenon described above are highlighted in the following 
\begin{defn}{\cite[Definition A.8.1]{lu17}}
Let $\Cc$ be an $\oo$-category with finite limits and let $\Cc_0,\Cc_1\subseteq \Cc$ be two full subcategories. We say that $\Cc$ \emph{is the recollement of} $\Cc_0$ and $\Cc_1$ if the following hypothesis are met: 
\begin{enumerate}
\item $\Cc_0$ and $\Cc_1$ are stable under equivalences;
\item the inclusions $i_0:\Cc_0\hookrightarrow \Cc$ and $i_1:\Cc_1\hookrightarrow \Cc$ admit left adjoints $L_0:\Cc\rightarrow \Cc_0$ and $L_1:\Cc\rightarrow \Cc_1$;
\item $L_0$ and $L_1$ are left exact;
\item $L_1\circ i_0$ is equivalent to the constant functor $\Cc_0\rightarrow \Cc_1$ determined by a final object of $\Cc_1$ (which exists as $\Cc$ has finite limits and $L_1$ is left exact);
\item $(L_0,L_1)$ detects equivalences, i.e. an arrow $\alpha:x\rightarrow y$ in $\Cc$ is an equivalence if and only if $L_0(\alpha)$ and $L_1(\alpha)$ are.
\end{enumerate}
\end{defn}
\begin{ex}
Let $X,U,Z,i,j$ be as above. Let $\shv{X}$ (resp. $\shv{U}$, resp. $\shv{Z}$) be the $\oo$-category of $\mathcal{S}$-valued sheaves on $X$ (resp. $U$, resp $Z$). The functors $i_*$ and $j_*$ are fully faithful and the full subcategories stable under equivalences determined by these functors are such that $\shv{X}$ is a recollement of those.
\end{ex}
\begin{rmk}
It is proved in \cite[Appendix A.8]{lu17} that the theory of recollements is equivalent to that of left exact fibrations (see \cite[A.8.12]{lu17}). Recall that a \emph{left exact fibration} is a morphism of simplicial sets $p:\Mcc \rightarrow \Delta^1$ such that
\begin{enumerate}
\item $p$ is a Cartesian fibration;
\item $\Mcc_0$ and $\Mcc_1$ admit finite limits;
\item the $\oo$-functor $\Mcc_1\rightarrow \Mcc_0$ determined by $p$ is left exact.
\end{enumerate}
\end{rmk}
\begin{notation}
According to the equivalence between the theory of recollements and that of left exact fibrations, it is clear that an adjunction $\Cc \leftrightarrows \Dc$ between $\oo$-categories with finite limits determines an $\oo$-category that is a recollement of $\Cc$ and $\Dc$. We will refer to such $\oo$-category as the \emph{recollement determined by the adjunction} $\Cc \leftrightarrows \Dc$.
\end{notation}
\begin{ex}
Notice that every $\oo$-category with finite limits $\Cc$ is the recollement of $\Cc$ and $\Delta^0$. Let $p:\Cc^{\triangleright}\rightarrow \Delta^1=(\Cc\rightarrow \Delta^0)\star \Delta^0$ (see \cite{lu09} for notation). Then $p$ is a left exact fibration:
\begin{enumerate}
\item $p^{-1}(0)=\Cc$ has finite limits by assumption. Clearly, $p^{-1}(1)=\Delta^0$ has finite limits;
\item to show that $p$ is a Cartesian fibration it suffices to show that there exists a Cartesian edge over $0\rightarrow 1$. Let $v$ denote the (unique) object of $\Cc^{\triangleright}$ over $1\in \Delta^1$ and let $f$ be a final object of $\Cc$. Then $f\rightarrow v$ is a Cartesian edge over $0\rightarrow 1$;
\item the $\oo$-functor $\Delta^0\rightarrow \Cc$ determined by $p$ is equivalent to $f:\Delta^0\rightarrow \Cc$. In particular, it is left exact.
\end{enumerate}
By the results in \cite[Appendix A.8]{lu17} we see that $\Cc \simeq \textup{Fun}_{\Delta^1}(\Delta^1,\Cc^{\triangleright})$ is the recollement of $\Cc\simeq \{s\in \textup{Fun}_{\Delta^1}(\Delta^1,\Cc^{\triangleright}) : s(1) \text{ is a final object of } p^{-1}(1)\}$ and $\Delta^{0}\simeq \{s\in \textup{Fun}_{\Delta^1}(\Delta^1,\Cc^{\triangleright}): s \text{ is a Cartesian edge}\}$.
\end{ex}
We will be interested in the following $\oo$-categorical analogue of \cite[Exercise 9.5.11]{sga4i}:
\begin{prop}\label{recollement category of functors}
Let $\Cc$ and $\Dc$ be $\oo$-categories. Assume that $\Dc$ is the recollement of $\Dc_0,\Dc_1\subseteq \Dc$. Then $\Fun(\Cc,\Dc)$ is the recollement of $\Fun(\Cc,\Dc_0)$ and $\Fun(\Cc,\Dc_1)$.
\end{prop}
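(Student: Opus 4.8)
The plan is to verify directly the five conditions of \cite[Definition A.8.1]{lu17} (recalled above) for the pair of full subcategories $\Fun(\Cc,\Dc_0),\Fun(\Cc,\Dc_1)\subseteq \Fun(\Cc,\Dc)$, using throughout the principle that finite limits, final objects, adjunctions and conservativity of the evaluation functors on a functor $\oo$-category are all inherited objectwise from the target.

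First, since $\Dc$ has finite limits (it is a recollement), so does $\Fun(\Cc,\Dc)$, with limits computed objectwise; and since each inclusion $i_k\colon\Dc_k\hookrightarrow \Dc$ is fully faithful and $\Dc_k$ is stable under equivalences, $\Fun(\Cc,\Dc_k)\hookrightarrow \Fun(\Cc,\Dc)$ is fully faithful with essential image the functors $F$ such that $F(c)\in \Dc_k$ for all $c\in \Cc$; this image is plainly stable under equivalences, which is condition (1). For condition (2) I would invoke that $\Fun(\Cc,-)$ preserves adjunctions: the adjunctions $L_k\dashv i_k$ induce adjunctions $(L_k\circ -)\dashv (i_k\circ -)$ between $\Fun(\Cc,\Dc)$ and $\Fun(\Cc,\Dc_k)$, and $i_k\circ -$ is exactly the inclusion just described, so $L_k\circ -$ is the desired left adjoint. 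Condition (3) is then immediate: $L_k$ is left exact and finite limits in functor $\oo$-categories are computed objectwise, so $L_k\circ -$ is left exact.

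For condition (4) one uses the coherent associativity of composition of functors: $(L_1\circ -)\circ (i_0\circ -)\simeq (L_1\circ i_0)\circ -$. By hypothesis $L_1\circ i_0$ is equivalent to the constant functor $\Dc_0\to \Dc_1$ at a final object $\mathbf{1}_{\Dc_1}$, and post-composing any $F\colon \Cc\to \Dc_0$ with that constant functor yields the constant functor $\Cc\to \Dc_1$ at $\mathbf{1}_{\Dc_1}$, naturally in $F$; since final objects of $\Fun(\Cc,\Dc_1)$ are computed objectwise, this constant functor is a final object, giving condition (4). Finally, for condition (5) I would combine two facts: a natural transformation $\alpha\colon F\to G$ in $\Fun(\Cc,\Dc)$ is an equivalence if and only if $\alpha_c\colon F(c)\to G(c)$ is an equivalence for every $c$ (the evaluation functors $\mathrm{ev}_c$ are jointly conservative), and, since $\mathrm{ev}_c\circ(L_k\circ -)\simeq L_k\circ \mathrm{ev}_c$, the transformation $(L_k\circ -)(\alpha)$ is an equivalence if and only if $L_k(\alpha_c)$ is an equivalence for every $c$; as $(L_0,L_1)$ detects equivalences in $\Dc$, these two statements together show that $\bigl((L_0\circ -),(L_1\circ -)\bigr)$ detects equivalences in $\Fun(\Cc,\Dc)$.

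The argument has no genuinely hard step; the point requiring the most care is the homotopy-coherent bookkeeping — that $\Fun(\Cc,-)$ preserves adjunctions, commutes with the evaluation functors, and that finite limits and final objects in functor $\oo$-categories are objectwise — rather than any single verification. Alternatively, one could proceed more structurally: the recollement $\Dc$ of $\Dc_0,\Dc_1$ corresponds under \cite[A.8.12]{lu17} to a left exact fibration $q\colon \Nc\to \Delta^1$ with $\Nc_0\simeq \Dc_0$, $\Nc_1\simeq \Dc_1$ and $\Dc\simeq \Fun_{\Delta^1}(\Delta^1,\Nc)$; pulling back $\Fun(\Cc,\Nc)\to \Fun(\Cc,\Delta^1)$ along the edge $\Delta^1\to \Fun(\Cc,\Delta^1)$ that picks out the natural transformation between the two constant functors yields a left exact fibration $\Pc\to \Delta^1$ with $\Pc_k\simeq \Fun(\Cc,\Dc_k)$ and $\Fun_{\Delta^1}(\Delta^1,\Pc)\simeq \Fun(\Cc,\Dc)$, whence the claim. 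The only nontrivial inputs there are that $\Fun(\Cc,-)$ sends Cartesian fibrations to Cartesian fibrations and preserves the relevant objectwise left exactness, both of which are again reductions to the pointwise situation.
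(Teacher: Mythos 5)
Your proposal is correct and follows essentially the same route as the paper: both verify the five conditions of the definition of a recollement directly, reducing each one (finite limits, the induced adjunctions $L_k\circ{-}\dashv i_k\circ{-}$, left exactness, the constancy of $L_1\circ i_0$, and joint conservativity) to the corresponding objectwise statement in $\Dc$. The alternative fibrational argument you sketch at the end is a reasonable variant but is not needed; the pointwise verification is exactly what the paper does.
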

\begin{proof}
$\Fun(\Cc,\Dc)$ admits finite limits by (the dual of) \cite[Corollary 5.1.2.3 (1)]{lu09}. $\Fun(\Cc,\Dc_0)$ is a full subcategory of $\Fun(\Cc,\Dc)$ as it is a limit of the following diagram
\begin{equation*}
  \begindc{\commdiag}[18]
  \obj(30,15)[1]{$\Fun(\Cc,\Dc)$}
  \obj(-30,-15)[2]{$\Pi_{\Cc([0])}\Dc_0$}
  \obj(30,-15)[3]{$\Pi_{\Cc([0])}\Dc$,}
  \mor{1}{3}{$$}
  \mor{2}{3}{$$}[\atright, \injectionarrow]
  \enddc
\end{equation*}
where the vertical arrow is induced by evaluations at all objects $c\in \Cc([0])$ and the bottom arrow is induced by $\Dc_0\hookrightarrow \Dc$ and is fully faithful. An analogous argument shows that $\Fun(\Cc,\Dc_1)$ is a full subcategory of $\Fun(\Cc,\Dc)$ as well. The fact that $\Fun(\Cc,\Dc_0)$ and $\Fun(\Cc,\Dc_1)$ are stable under equivalences is straightforward as $\Dc_0\subseteq \Dc$ and $\Dc_1\subseteq \Dc$ have this property. The inclusions $\Fun(\Cc,\Dc_0),\Fun(\Cc,\Dc_1)\subseteq \Fun(\Cc,\Dc)$ are induced by $i_0:\Dc_0\hookrightarrow \Dc$ and by $i_1:\Dc_1\hookrightarrow \Dc$ and admit $L_0\circ - :\Fun(\Cc,\Dc_0)\rightarrow \Fun(\Cc,\Dc)$ and $L_1\circ - :\Fun(\Cc,\Dc_1)\rightarrow \Fun(\Cc,\Dc)$ as left adjoints. These are left exact as by (the dual of) \cite[Corollary 5.1.2.3 (2)]{lu09}, limits in $\Fun(\Cc,\Dc)$ are computed pointwise. This also implies that the functor $\Fun(\Cc,\Dc_0)\rightarrow \Fun(\Cc,\Dc_1)$ is equivalent to the constant functor determined by a final object of $\Fun(\Cc,\Dc_1)$. Finally, $(L_0\circ -, L_1\circ -)$ detects equivalences as a morphism in $\Fun(\Cc,\Dc)$ is an equivalence if and only if it is pointwise true.
\end{proof}

\begin{cor}\label{functors to a recollement}
Let $\Cc,\Dc$ be $\oo$-categories. Assume that $\Dc$ is the recollement of $\Dc_0$ and $\Dc_1$. The datum of functors $f_0:\Cc \rightarrow \Dc_0$, $f_1:\Cc \rightarrow \Dc_1$ and of a morphism $f_0\rightarrow L_0\circ i_1\circ f_1$ in $\Fun(\Cc,\Dc_0)$ determines a functor $f\in \Fun(\Cc,\Dc)$ such that $f_0\simeq L_0\circ f$ and $f_1\simeq L_1\circ f$.
\end{cor}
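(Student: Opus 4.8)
The plan is to derive this from Proposition~\ref{recollement category of functors} together with the standard reconstruction of an object of a recollement from its gluing datum. First I would invoke Proposition~\ref{recollement category of functors} to the effect that $\Fun(\Cc,\Dc)$ is the recollement of $\Fun(\Cc,\Dc_0)$ and $\Fun(\Cc,\Dc_1)$, the inclusions being $i_0\circ-$ and $i_1\circ-$ with left adjoints $L_0\circ-$ and $L_1\circ-$; as recorded in the proof of that proposition, these left adjoints, as well as all finite limits, are computed pointwise, so the gluing functor of this recollement is $(L_0\circ-)\circ(i_1\circ-)=(L_0\circ i_1)\circ-$. It therefore suffices to establish the following general fact about a recollement $\Ec$ of full subcategories $\Ec_0,\Ec_1$: the datum of objects $x_0\in\Ec_0$, $x_1\in\Ec_1$ and a morphism $\phi\colon x_0\to L_0i_1x_1$ in $\Ec_0$ produces an object $x\in\Ec$ with $L_0x\simeq x_0$ and $L_1x\simeq x_1$. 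Applying this with $\Ec=\Fun(\Cc,\Dc)$, $x_0=f_0$, $x_1=f_1$ and $\phi$ the given morphism then yields the functor $f$ of the statement.

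To prove the general fact, I would let $\eta\colon\mathrm{id}_{\Ec}\to i_0L_0$ be the unit of $L_0\dashv i_0$ and define $x$ as the pullback in $\Ec$ (which exists since $\Ec$ has finite limits) of
\[
  i_0x_0\xrightarrow{\ i_0(\phi)\ }i_0L_0i_1x_1\xleftarrow{\ \eta_{i_1x_1}\ }i_1x_1 .
\]
This is the homotopy-coherent avatar of the reconstruction $\Fc\simeq\lim\bigl(j_*\Fc_{|U}\to i_*i^*j_*\Fc_{|U}\leftarrow i_*\Fc_{|Z}\bigr)$ recalled at the start of the section; alternatively one may read $x$ off from the equivalence between recollements and left exact fibrations of \cite[Appendix~A.8]{lu17}. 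Then I would compute the two localizations of $x$. Since $L_1$ is left exact and $L_1\circ i_0$ is the constant functor at a final object $\ast$ of $\Ec_1$ (axiom~(4)), applying $L_1$ to the defining diagram gives $L_1x\simeq L_1i_1x_1\times_{\ast}\ast\simeq L_1i_1x_1\simeq x_1$, the last equivalence because $i_1$ is fully faithful. Since $L_0$ is left exact and the counit $L_0i_0\to\mathrm{id}_{\Ec_0}$ is an equivalence (as $i_0$ is fully faithful), applying $L_0$ gives $L_0x\simeq L_0i_1x_1\times_{L_0i_0L_0i_1x_1}L_0i_0x_0$; by the triangle identity the leg $L_0(\eta_{i_1x_1})$ is, after the identification $L_0i_0L_0i_1x_1\simeq L_0i_1x_1$, the identity of $L_0i_1x_1$, so this pullback is equivalent to $L_0i_0x_0\simeq x_0$. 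Hence $L_0x\simeq x_0$ and $L_1x\simeq x_1$, as desired.

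I do not expect a genuine obstacle. The one input that really requires Proposition~\ref{recollement category of functors} is that passing from $\Dc$ to $\Fun(\Cc,\Dc)$ preserves the entire recollement structure, with left adjoints and finite limits still computed pointwise; everything after that is the formal pullback manipulation above. The only point demanding care is the bookkeeping of which adjoint pair, $(i_0,L_0)$ or $(i_1,L_1)$, plays the ``closed'' and which the ``open'' role, so that the gluing functor occurring in the hypothesis is $L_0\circ i_1$ (and not $L_1\circ i_0$, which is trivial by axiom~(4)).
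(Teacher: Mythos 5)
Your proof is correct, but it follows a genuinely different route from the paper's. The paper deduces the corollary from the equivalence between recollements and left exact fibrations: it explicitly builds the correspondence $\Mcc\rightarrow\Delta^1$ associated to the recollement of $\Fun(\Cc,\Dc_0)$ and $\Fun(\Cc,\Dc_1)$ via the relative nerve, and identifies the triple $(f_0,f_1,f_0\rightarrow L_0\circ i_1\circ f_1)$ with an object of $\Fun_{\Delta^1}(\Delta^1,\Mcc)$, which corresponds to a functor $\Cc\rightarrow\Dc$ by \cite[Propositions A.8.8 and A.8.11]{lu17}. You instead glue by hand: after reducing via Proposition~\ref{recollement category of functors}, you realize $f$ as the pullback of $i_0f_0\rightarrow i_0L_0i_1f_1\leftarrow i_1f_1$ and check the two localizations using left exactness of $L_0,L_1$, full faithfulness of $i_0,i_1$, the vanishing axiom for $L_1\circ i_0$, and a triangle identity; all of these computations are correct. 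Your ``general fact'' is essentially the object-level content of Proposition~\ref{explicit description recollement} (equivalently \cite[Proposition A.8.14]{lu17}), which the paper states and proves only \emph{after} this corollary — but since you derive it directly from the recollement axioms rather than citing that proposition, there is no circularity. The trade-off: the paper's fibrational argument packages the datum and the resulting functor in a way that is reused verbatim in later constructions (e.g.\ Construction~\ref{specialization functor} and the adjunction machinery), whereas your pullback argument is more elementary, self-contained, and makes the equivalences $L_0\circ f\simeq f_0$ and $L_1\circ f\simeq f_1$ (which the paper dismisses as ``clear'') into an explicit verification.
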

\begin{proof}
By the previous proposition, $\Fun(\Cc,\Dc)$ is the recollement of $\Fun(\Cc,\Dc_0)$ and $\Fun(\Cc,\Dc_1)$. Let $\Mcc_0=\Fun(\Cc,\Dc_0)$, $\Mcc_1$ be the full subcategory of $\Fun(\Delta^1,\Fun(\Cc,\Dc))$ spanned by morphisms $\alpha:g\rightarrow g'$ such that $g\in \Fun(\Cc,\Dc_1)$ and $\alpha$ exhibits $g'$ as a $\Fun(\Cc,\Dc_0)$-localization of $g$. The functor "evaluation at 1" $\Mcc_1^{\textup{op}}\rightarrow \Mcc_0^{\textup{op}}$ determines a morphism $\Theta: [1]\rightarrow Set_{\Delta}$. Let $\Mcc:=\textup{N}_{\Theta}([1])^{\textup{op}}$ (see \cite[Definition 3.2.5.2]{lu09}). By \cite[Proposition A.8.11]{lu17}, there is a left exact fibration $\Mcc \rightarrow \Delta^1$ and is the one associated to the recollement of $\Fun(\Cc,\Dc_0)$ and $\Fun(\Cc,\Dc_1)$. Unravelling the definitions, $(f_0,f_1,f_0\rightarrow L_0\circ i_1\circ f_1)$ is the datum of an object in $\Fun_{\Delta^1}(\Delta^1,\Mcc)$, which by \cite[Proposition A.8.8]{lu17} corresponds to a functor $f:\Cc \rightarrow \Dc$ ($\Delta^1\xrightarrow{id}\Delta^1$ is the left exact fibration associated to the recollement of $\Delta^0$ and $\Delta^0$, which is equivalent to $\Delta^0$).
The last assertion is clear.
\end{proof}

Moreover, we will be interested in the following situation. 
\begin{construction}\label{specialization functor}
  Let $\Dc$ be the recollement of $\Dc_0$ and $\Dc_1$. As usual, we will denote by $i_0$ (resp. $i_1$) the fully faithful embedding $\Dc_0 \hookrightarrow \Dc$ (resp. $\Dc_1 \hookrightarrow \Dc$) and by $L_0$ (resp. $L_1$) its left adjoint. We will further assume that the $\oo$-functor $L_0\circ i_1:\Dc_1 \rightarrow \Dc_0$ admits a left adjoint $U:\Dc_0 \rightarrow \Dc_1$. 

Notice that the adjunction $id:\Dc_1 \leftrightarrows \Dc_1:id$ induces a left exact fibration $\Mcc \rightarrow \Delta^1$. The associated $\oo$-category is $\Fun(\Delta^1,\Dc_1)$, recollement of $\Dc_1$ and $\Dc_1$. The relevant $\oo$-functors are
\begin{equation*}
  \begindc{\commdiag}[18]
  \obj(-50,0)[1]{$\Dc_1$}
  \obj(0,0)[2]{$\Fun(\Delta^1,\Dc_1)$}
  \obj(50,0)[3]{$\Dc_1,$}
  \mor(-48,2)(-14,2){$j_0$}
  \mor(-14,-2)(-48,-2){$R_0$}
  \mor(14,2)(48,2){$R_1$}
  \mor(48,-2)(14,-2){$j_1$}
  \enddc
\end{equation*}
defined at the level of objects as follows ($e$ denotes a final object of $\Dc_1$):
\begin{equation*}
  j_0(d)=(d\rightarrow e), \hspace{0.5cm} R_0(d_0\rightarrow d_1)=d_0,
\end{equation*}
\begin{equation*}
  j_1(d)=(d\xrightarrow{id} d), \hspace{0.5cm} R_1(d_0\rightarrow d_1)=d_1.
\end{equation*}
Notice that $R_1 \circ j_0(d)=e$ and $R_0\circ j_1\simeq id$.
 
We produce a morphism $sp_{\Dc}:\Dc \rightarrow \Fun(\Delta^1,\Dc_1)$ as follows: consider
\begin{itemize}
  \item $U\circ L_0 : \Dc:\rightarrow \Dc_1$;
  \item $L_1:\Dc \rightarrow \Dc_1$;
  \item the natural transformation $U\circ L_0\rightarrow L_1$ corresponding, under the adjunction $(U,L_0\circ i_1)$, to $L_0\rightarrow L_0\circ i_1 \circ L_1\simeq L_0 \circ (id\xrightarrow{unit} i_1\circ L_1)$.
\end{itemize}
By Corollary \ref{functors to a recollement}, these data determine an $\oo$-functor
\begin{equation*}
  sp_{\Dc}:\Dc \rightarrow \Fun(\Delta^1,\Dc_1).
\end{equation*}
\end{construction}
\begin{rmk}
  We will be particularly interested in the composition of $sp_{\Dc}:\Dc \rightarrow \Fun(\Delta^1,\Dc_1)$ with the $\oo$-functor
  \begin{equation*}
    fiber: \Fun(\Delta^1,\Dc_1)\rightarrow \Dc_1,
  \end{equation*}
  right adjoint to the $\oo$-functor
  \begin{equation*}
   j_0: \Dc_1 \rightarrow \Fun(\Delta^1,\Dc_1).
  \end{equation*}
\end{rmk}

\subsection*{Adjunctions between recollements}
In this subsection we will study in some detail how to produce adjunctions between recollements. 
\begin{notation}
In this section, we will consider two $\oo$-categories $\Cc$ and $\Dc$ such that $\Cc$ is the recollement of two subcategories $\Cc_0$ and $\Cc_1$ and $\Dc$ is the recollement of two subcategories $\Dc_0$ and $\Dc_1$. Moreover, we will label $i_{k}:\Cc_k \rightarrow \Cc$ (resp. $j_k:\Dc_k \rightarrow \Dc$) the fully faithful embedding and $L_k:\Cc \rightarrow \Cc_k$ (resp. $R_k:\Dc \rightarrow \Dc_k$) its left adjoint ($k=0,1$). 

We will assume that we are given two adjunctions $F_0:\Cc_0\leftrightarrows \Dc_0:G_0$ and $F_1:\Cc_1\leftrightarrows \Dc_1:G_1$ and homotopies
\begin{equation}
  \begindc{\commdiag}[15]
    \obj(-60,15)[1]{$\Cc_0$}
    \obj(-20,15)[2]{$\Cc_1$}
    \obj(-60,-15)[3]{$\Dc_0$}
    \obj(-20,-15)[4]{$\Dc_1$}
    \mor{2}{1}{$L_0\circ i_1$}[\atright,\solidarrow]
    \mor{1}{3}{$F_0$}[\atright,\solidarrow]
    \mor{2}{4}{$F_1$}
    \mor{4}{3}{$R_0\circ j_1$}
    \obj(20,15)[5]{$\Cc_0$}
    \obj(60,15)[6]{$\Cc_1$}
    \obj(20,-15)[7]{$\Dc_0$}
    \obj(60,-15)[8]{$\Dc_1.$}
    \mor{6}{5}{$L_0\circ i_1$}[\atright,\solidarrow]
    \mor{7}{5}{$G_0$}
    \mor{8}{6}{$G_1$}[\atright,\solidarrow]
    \mor{8}{7}{$R_0\circ j_1$}
  \enddc
\end{equation}
\end{notation}

Our aim will be to construct an adjunction $F:\Cc \leftrightarrows \Dc:G$ out of these data.

We will start with an explicit description of a recollement. Consider the following variant of Construction $\ref{specialization functor}$:

\begin{construction}\label{bar specialization functor}
  As we have already observed, $\Fun(\Delta^1,\Cc_0)$ is the recollement of $\Cc_0$ and $\Cc_0$ (corresponding to the left exact fibration given by the adjunction $id:\Cc_0\leftrightarrows \Cc_0:id$). Then, by Corollary $\ref{functors to a recollement}$, $(L_0:\Cc \rightarrow \Cc_0, L_0\circ i_1 \circ L_1:\Cc \rightarrow \Cc_0, L_0\rightarrow L_0\circ i_1 \circ L_1)$ corresponds to an $\oo$-functor
  \begin{equation}
    \overline{sp}_{\Cc}:\Cc \rightarrow \Fun(\Delta^1,\Cc_0).
  \end{equation}
\end{construction}

\begin{prop}\label{explicit description recollement}
Let $(\{ \Cc \}_{k=0,1}, \{ i_k \}_{k=0,1}, \{ L_k \}_{k=0,1} )$ be as above. The square
\begin{equation}
  \begindc{\commdiag}[13]
    \obj(-40,20)[1]{$\Cc$}
    \obj(40,20)[2]{$\Cc_1$}
    \obj(-40,-20)[3]{$\Fun(\Delta^1,\Cc_0)$}
    \obj(40,-20)[4]{$\Cc_0,$}
    \mor{1}{2}{$L_1$}
    \mor{1}{3}{$\overline{sp}_{\Cc}$}
    \mor{2}{4}{$L_0\circ i_1$}
    \mor{3}{4}{$ev_{1}$}
  \enddc
\end{equation}
where $ev_1$ denotes the "evaluation at $1$" $\oo$-functor, is Cartesian.
\end{prop}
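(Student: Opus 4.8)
The square commutes up to a chosen homotopy, namely $ev_1\circ \overline{sp}_{\Cc}\simeq (L_0\circ i_1)\circ L_1$, which is part of the very construction of $\overline{sp}_{\Cc}$ in Construction \ref{bar specialization functor}; it therefore induces a comparison functor
\[
\Phi=(L_1,\overline{sp}_{\Cc})\colon \Cc\longrightarrow \mathcal{P}:=\Cc_1\x_{L_0\circ i_1,\,\Cc_0,\,ev_1}\Fun(\Delta^1,\Cc_0),
\]
and the plan is to show $\Phi$ is an equivalence by producing an explicit quasi-inverse $\Psi\colon\mathcal{P}\to\Cc$ via Corollary \ref{functors to a recollement} and checking $\Psi\circ\Phi\simeq\mathrm{id}_{\Cc}$ and $\Phi\circ\Psi\simeq\mathrm{id}_{\mathcal{P}}$. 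Throughout I would use the defining properties of $\overline{sp}_{\Cc}$ coming from Construction \ref{bar specialization functor}: under the recollement structure on $\Fun(\Delta^1,\Cc_0)$ as the recollement of $\Cc_0$ and $\Cc_0$ (Construction \ref{specialization functor} with $\Dc_1=\Cc_0$, for which the two localizations are $ev_0$, $ev_1$ and the gluing functor is the identity), the functor $\overline{sp}_{\Cc}$ is classified by the gluing datum $(L_0,\ L_0\circ i_1\circ L_1,\ L_0\to L_0\circ i_1\circ L_1)$; in particular $ev_0\circ\overline{sp}_{\Cc}\simeq L_0$ and $ev_1\circ\overline{sp}_{\Cc}\simeq L_0\circ i_1\circ L_1$.

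To build $\Psi$, I would feed Corollary \ref{functors to a recollement} the functor $g_1:=\pi_{\Cc_1}\colon\mathcal{P}\to\Cc_1$, the functor $g_0:=ev_0\circ\pi_{\Fun}\colon\mathcal{P}\to\Cc_0$ (the two projections out of the pullback, the second post-composed with evaluation at $0$), and the morphism $g_0\to L_0\circ i_1\circ g_1$ obtained by whiskering $\pi_{\Fun}$ with the universal natural transformation $ev_0\Rightarrow ev_1$ of $\Fun(\Delta^1,\Cc_0)$ — this makes sense because, $\mathcal{P}$ being the displayed pullback, $L_0\circ i_1\circ g_1\simeq ev_1\circ\pi_{\Fun}$. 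Corollary \ref{functors to a recollement} then produces $\Psi\colon\mathcal{P}\to\Cc$ with $L_0\circ\Psi\simeq ev_0\circ\pi_{\Fun}$, $L_1\circ\Psi\simeq\pi_{\Cc_1}$, and with structural transformation $L_0\circ\Psi\to L_0\circ i_1\circ L_1\circ\Psi$ identified with the one just chosen.

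For $\Psi\circ\Phi\simeq\mathrm{id}_{\Cc}$: by (the proof of) Corollary \ref{functors to a recollement}, a functor into the recollement $\Cc$ is determined up to equivalence by its gluing datum $\bigl(L_0(-),\,L_1(-),\,L_0(-)\to L_0\circ i_1\circ L_1(-)\bigr)$, so it suffices to match the gluing data of $\Psi\circ\Phi$ and of $\mathrm{id}_{\Cc}$; this is immediate from the identifications above, since $L_0\circ\Psi\circ\Phi\simeq ev_0\circ\pi_{\Fun}\circ\Phi\simeq ev_0\circ\overline{sp}_{\Cc}\simeq L_0$, $L_1\circ\Psi\circ\Phi\simeq\pi_{\Cc_1}\circ\Phi\simeq L_1$, and the structural transformations agree after whiskering. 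For $\Phi\circ\Psi\simeq\mathrm{id}_{\mathcal{P}}$: a functor into the pullback $\mathcal{P}$ is the datum of a functor to $\Cc_1$, a functor to $\Fun(\Delta^1,\Cc_0)$ and an equivalence between their images in $\Cc_0$ (as $\Fun(-,\mathcal{P})$ is again a pullback), and a functor into the arrow $\oo$-category $\Fun(\Delta^1,\Cc_0)$ is the datum of a source, a target and a connecting transformation (as $\Fun(\mathcal{P},\Fun(\Delta^1,\Cc_0))\simeq\Fun(\Delta^1,\Fun(\mathcal{P},\Cc_0))$). With these two bookkeeping facts one checks $\pi_{\Cc_1}\circ\Phi\circ\Psi\simeq L_1\circ\Psi\simeq\pi_{\Cc_1}$ and $\pi_{\Fun}\circ\Phi\circ\Psi\simeq\overline{sp}_{\Cc}\circ\Psi\simeq\pi_{\Fun}$, the last equivalence coming once more from $ev_0\circ\overline{sp}_{\Cc}\simeq L_0$, $ev_1\circ\overline{sp}_{\Cc}\simeq L_0\circ i_1\circ L_1$ and the pullback defining $\mathcal{P}$; the comparison equivalence in $\Cc_0$ matches in the same way. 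Hence $\Phi$ is an equivalence and the square is Cartesian.

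I expect the only real difficulty to be homotopy-coherent bookkeeping: making precise and mutually compatible the natural transformations implicit in ``a functor into a pullback / into an arrow category is such-and-such a datum'', and in particular verifying that the structural transformation of $\overline{sp}_{\Cc}$ — which by Construction \ref{bar specialization functor} is $L_0$ applied to the unit $\mathrm{id}\to i_1\circ L_1$ — is carried to the universal arrow of $\Fun(\Delta^1,\Cc_0)$ under all these identifications. A more conceptual, if less self-contained, alternative is to invoke directly the equivalence between recollements and left exact fibrations of \cite[Appendix A.8]{lu17}: the recollement $\Cc$ is modelled by the $\oo$-category $\Fun_{\Delta^1}(\Delta^1,\Mcc)$ of sections of the left exact (hence Cartesian) fibration $\Mcc\to\Delta^1$ whose restriction functor $\Cc_1\to\Cc_0$ is the gluing functor $L_0\circ i_1$; sections of a Cartesian fibration over $\Delta^1$ are canonically the objects of the oriented fibre product $\Cc_1\x_{L_0\circ i_1,\,\Cc_0,\,ev_1}\Fun(\Delta^1,\Cc_0)$; and one then only has to recognise that evaluation of a section at $1$ recovers $L_1$ while its ``Cartesian name'' recovers $\overline{sp}_{\Cc}$.
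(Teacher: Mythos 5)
Your proposal is correct, but it takes a genuinely different route from the paper. The paper does not construct an explicit inverse pair: it shows directly that the fibre product $\Fun(\Delta^1,\Cc_0)\x_{\Cc_0}\Cc_1$ is itself a recollement of $\Cc_0$ and $\Cc_1$ — producing the embeddings $l_0,l_1$, checking full faithfulness by computing mapping spaces in the fibre product, exhibiting the localizations $ev_0\circ p$ and $q$, verifying that $q\circ l_0$ is constant at the final object and that $(ev_0\circ p,q)$ jointly detect equivalences — and then concludes by the uniqueness of recollements, \cite[Proposition A.8.14]{lu17}. Your first argument instead builds the comparison functor $\Phi=(L_1,\overline{sp}_{\Cc})$ and an explicit quasi-inverse $\Psi$ via Corollary \ref{functors to a recollement}, matching gluing data on both composites; your second, sketched alternative (sections of the left exact fibration over $\Delta^1$ as an oriented fibre product) is closer in spirit to what A.8.14 encapsulates. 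The paper's approach buys you freedom from verifying the two composite identifications and their coherences, at the cost of a somewhat laborious axiom check; yours is more constructive and makes the universal property of the pullback visible. One caveat: the step ``a functor into the recollement $\Cc$ is determined up to equivalence by its gluing datum'' uses more than the literal statement of Corollary \ref{functors to a recollement}, which only asserts existence of $f$ from the datum $(f_0,f_1,f_0\to L_0\circ i_1\circ f_1)$; you need the underlying assignment to be an equivalence of $\oo$-categories of functors (which does follow from the proof of that corollary, via \cite[Proposition A.8.8]{lu17}, since $\Fun(\Cc,\Dc)$ is identified with $\Fun_{\Delta^1}(\Delta^1,\Mcc)$), so you should cite that identification rather than the corollary's statement when matching $\Psi\circ\Phi$ with $\mathrm{id}_{\Cc}$.
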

\begin{proof}
We shall prove that $\Fun(\Delta^1,\Cc_0)\x_{\Cc_0}\Cc_1$ is the recollement of $\Cc_0$ and $\Cc_1$.  First notice that $\Fun(\Delta^1,\Cc_0)\x_{\Cc_0}\Cc_1$ has finite limits.
 
Let $p:\Fun(\Delta^1,\Cc_0)\x_{\Cc_0}\Cc_1\rightarrow \Fun(\Delta^1,\Cc_0)$, $q:\Fun(\Delta^1,\Cc_0)\x_{\Cc_0}\Cc_1\rightarrow \Cc_1$ and $r\simeq ev_1\circ p\simeq L_0\circ i_1\circ q:\Fun(\Delta^1,\Cc_0)\x_{\Cc_0}\Cc_1\rightarrow \Cc_0$ denote the canonical morphisms. Let
\begin{itemize}
  \item $l_0:\Cc_0\rightarrow \Cc \rightarrow \Fun(\Delta^1,\Cc_0)\x_{\Cc_0}\Cc_1$;
  \item $l_1:\Cc_1\rightarrow \Cc \rightarrow \Fun(\Delta^1,\Cc_0)\x_{\Cc_0}\Cc_1$.
\end{itemize}
Recall that, for every couple of objects $x,y$ in $\Fun(\Delta^1,\Cc_0)\x_{\Cc_0}\Cc_1$, the canonical morphism
\begin{equation}
  \Map_{\Fun(\Delta^1,\Cc_0)\x_{\Cc_0}\Cc_1}(x,y)\rightarrow \Map_{\Fun(\Delta^1,\Cc_0)}(p(x),p(y))\x_{\Map_{\Cc_0}(r(x),r(y))}\Map_{\Cc_1}(q(x),q(y))
\end{equation}
is a weak homotopy-equivalence.
It is then easy to see that $l_0$ and $l_1$ are fully faithful. 
Indeed, if $x,y$ are objects in $\Cc_0$, then $q(x)\simeq q(y)\simeq 0$ (final object in $\Cc_1$). In particular,
\begin{equation}
  Map_{\Fun(\Delta^1,\Cc_0)}(p(x),p(y))\x_{\Map_{\Cc_0}(r(x),r(y))}\Map_{\Cc_1}(q(x),q(y))\simeq Map_{\Fun(\Delta^1,\Cc_0)}(p(x),p(y)).
\end{equation}
Moreover, for such $x,y$,
\begin{equation}
  \Map_{\Cc}(x,y)\simeq \Map_{\Fun(\Delta^1,\Cc_0)}(p(x),p(y)).
\end{equation}

If $x,y \in \Cc_1$, it is easy to see that
\begin{equation}
  \Map_{\Fun(\Delta^1,\Cc_0)}(p(x),p(y))\simeq \Map_{\Cc_0}(r(x),r(y))
\end{equation}
(as $p(x)=L_0\circ i_1(x)\xrightarrow{id}L_0\circ i_1(x)$). Therefore,
\begin{equation}
  \Map_{\Cc}(x,y)\rightarrow Map_{\Fun(\Delta^1,\Cc_0)}(p(x),p(y))\x_{\Map_{\Cc_0}(r(x),r(y))}\Map_{\Cc_1}(q(x),q(y)) \simeq \Map_{\Cc_1}(q(x),q(y))
\end{equation}
is a weak homotopy-equivalence.

Moreover, the functors
\begin{equation}
  ev_0\circ p :\Fun(\Delta^1,\Cc_0)\x_{\Cc_0}\Cc_1\rightarrow \Cc_0, \hspace{0.5cm} q:\Fun(\Delta^1,\Cc_0)\x_{\Cc_0}\Cc_1\rightarrow \Cc_1
\end{equation}
are left adjoints to $l_0$ and $l_1$ respectively. It is also clear that these functors are left exact.

The functor $q\circ l_0$ is equivalent to $L_1\circ i_0$ and therefore it takes every object of $\Cc_0$ to the final object in $\Cc_1$.

We are left to show that $(ev_0\circ p,q)$ detect equivalences. 

An object in $\Fun(\Delta^1,\Cc_0)\x_{\Cc_0}\Cc_1$ is a triplet
\begin{equation}
  x=\bigl ( x_0\rightarrow x_0',x_1,\alpha_x:x_0'\simeq L_0\circ i_1(x_1) \bigr ),
\end{equation}
where $x_0\rightarrow x_0'$ is an arrow in $\Cc_0$ and $x_1$ is an object in $\Cc_1$.

A morphism between two objects $x,y$ in $\Fun(\Delta^1,\Cc_0)\x_{\Cc_0}\Cc_1$ is a triplet
\begin{equation}
\Bigl ( \sigma:\Delta^1\times \Delta^1 \rightarrow \Cc_0, g, \tau:\sigma_{|\{1\}\x \Delta^1}\simeq L_0\circ i_1(g) \Bigr ),
\end{equation}
where $\sigma_{|\Delta^1\x \{0\}}=(x_0\rightarrow x_0')$, $\sigma_{|\Delta^1\x \{1\}}=(y_0\rightarrow y_0')$ and $g:x_1\rightarrow y_1$. Such a morphism is an equivalence if and only if $\sigma_{|\{0\}\x\Delta^1}$ and $g$ are equivalences (as $\sigma_{|\{1\}\x\Delta^1}\simeq L_0\circ i_1(g)$). Using this description and the fact that $ev_0\circ p$ and $q$ send such triplet to $\sigma_{|\{0\}\x\Delta^1}$ and $g$ respectively, it is clear that $(ev_0\circ p,q)$ detect equivalences.

Then $\Cc \simeq \Fun(\Delta^1)\x_{\Cc_0}\Cc_1$ by \cite[Proposition A.8.14]{lu17}.
\end{proof}
\begin{cor}\label{mapping spaces in a recollement}
With the notation of the previous proposition, for any couple of objects $x,y$ in $\Cc$,
\begin{equation}
  \Map_{\Cc}(x,y)\simeq \Map_{\Fun(\Delta^1,\Cc_0)}(p(x),p(y))\x_{\Map_{\Cc_0}(r(x),r(y))}\Map_{\Cc_1}(q(x),q(y)).
\end{equation}
\end{cor}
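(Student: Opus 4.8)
The corollary will follow from Proposition~\ref{explicit description recollement} with essentially no extra work. The plan is to combine the identification $\Cc\simeq\Fun(\Delta^1,\Cc_0)\x_{\Cc_0}\Cc_1$ established there with the standard description of mapping spaces in a homotopy fiber product of $\oo$-categories.

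First I would note that the evaluation functor $ev_1:\Fun(\Delta^1,\Cc_0)\rightarrow\Cc_0$ is a categorical fibration, being the restriction functor along the cofibration of simplicial sets $\{1\}\hookrightarrow\Delta^1$ into the quasicategory $\Cc_0$. Consequently the strict fiber product $\Fun(\Delta^1,\Cc_0)\x_{\Cc_0}\Cc_1$ computes the homotopy fiber product, so the Cartesian square of Proposition~\ref{explicit description recollement} gives an honest equivalence of $\oo$-categories $\Cc\simeq\Fun(\Delta^1,\Cc_0)\x_{\Cc_0}\Cc_1$, under which (by construction) the projections $p$, $q$ and the map $r$ correspond to $\overline{sp}_{\Cc}$, $L_1$ and $ev_1\circ\overline{sp}_{\Cc}\simeq L_0\circ i_1\circ L_1$.

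Next I would invoke the formula for mapping spaces in such a fiber product, which was already recalled and used in the proof of Proposition~\ref{explicit description recollement}: for any two objects $x,y$ of $\Fun(\Delta^1,\Cc_0)\x_{\Cc_0}\Cc_1$ the canonical comparison map
\[
\Map_{\Fun(\Delta^1,\Cc_0)\x_{\Cc_0}\Cc_1}(x,y)\longrightarrow\Map_{\Fun(\Delta^1,\Cc_0)}(p(x),p(y))\x_{\Map_{\Cc_0}(r(x),r(y))}\Map_{\Cc_1}(q(x),q(y))
\]
is an equivalence of spaces; this is a consequence of the fact that the mapping-space construction carries limits of $\oo$-categories to limits of spaces (cf.~\cite{lu09}). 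Transporting this equivalence along $\Cc\simeq\Fun(\Delta^1,\Cc_0)\x_{\Cc_0}\Cc_1$ yields precisely the asserted description of $\Map_{\Cc}(x,y)$.

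I do not expect any real obstacle: both ingredients are already in place. The single point requiring a line of care is the bookkeeping in the second paragraph — checking that, after transport, the three mapping spaces on the right-hand side are the ones taken with respect to $\overline{sp}_{\Cc}$, $L_1$ and $ev_1\circ\overline{sp}_{\Cc}$ — so that the target of the comparison map is literally the fiber product written in the statement of the corollary.
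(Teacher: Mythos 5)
Your proposal is correct and matches the paper's (implicit) argument exactly: the corollary is an immediate consequence of Proposition~\ref{explicit description recollement} together with the mapping-space formula for the fiber product $\Fun(\Delta^1,\Cc_0)\x_{\Cc_0}\Cc_1$, which is precisely the fact recalled at the start of that proposition's proof. Your extra remark that $ev_1$ is a categorical fibration (so the strict fiber product is the homotopy fiber product) is a sensible justification of that recalled fact, but otherwise you are doing the same thing the paper does.
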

\begin{notation}
We will denote by $F_0^{\Delta^1}:\Fun(\Delta^1,\Cc_0)\rightarrow \Fun(\Delta^1,\Dc_0)$ (resp. $G_0^{\Delta^1}:\Fun(\Delta^1,\Dc_0)\rightarrow \Fun(\Delta^1,\Cc_0)$) the functor corresponding to $(\Fun(\Delta^1,\Cc_0)\xrightarrow{ev_0} \Cc_0\xrightarrow{F_0}\Dc_0,\Fun(\Delta^1,\Cc_0)\xrightarrow{ev_1} \Cc_0\xrightarrow{F_0}\Dc_0, F_0\circ ev_0\rightarrow F_0\circ ev_1)$ (resp. $(\Fun(\Delta^1,\Dc_0)\xrightarrow{ev_0} \Dc_0\xrightarrow{G_0}\Cc_0,\Fun(\Delta^1,\Dc_0)\xrightarrow{ev_1} \Dc_0\xrightarrow{G_0}\Cc_0, G_0\circ ev_0\rightarrow G_0\circ ev_1)$ ).
\end{notation}

\begin{lmm}
  $F_0^{\Delta^1}:\Fun(\Delta^1,\Cc_0)\leftrightarrows \Fun(\Delta^1,\Dc_0):G_0^{\Delta^1}$ are adjoint functors.
\end{lmm}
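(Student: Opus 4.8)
The plan is to identify $F_0^{\Delta^1}$ and $G_0^{\Delta^1}$ with the postcomposition functors $F_0\circ(-)$ and $G_0\circ(-)$, and then to establish the general fact that postcomposition along an adjoint pair yields an adjoint pair between the corresponding functor $\oo$-categories.

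For the identification, recall that $\Fun(\Delta^1,\Cc_0)$ is the recollement of $\Cc_0$ and $\Cc_0$ associated to the identity adjunction, whose left adjoints $L_0,L_1$ are $ev_0,ev_1$ and whose functor $i_1\colon\Cc_0\to\Fun(\Delta^1,\Cc_0)$ is the ``constant arrow'' functor $d\mapsto(d\xrightarrow{id}d)$, so that $L_0\circ i_1\simeq id_{\Cc_0}$; the same applies to $\Fun(\Delta^1,\Dc_0)$. Unwinding Corollary \ref{functors to a recollement}, $F_0^{\Delta^1}$ is, up to a contractible space of choices, the unique functor $\Phi\colon\Fun(\Delta^1,\Cc_0)\to\Fun(\Delta^1,\Dc_0)$ equipped with equivalences $ev_0\circ\Phi\simeq F_0\circ ev_0$ and $ev_1\circ\Phi\simeq F_0\circ ev_1$ under which the connecting natural transformation of $\Phi$ becomes $F_0$ applied to the evaluation transformation $ev_0\Rightarrow ev_1$. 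The functor $F_0\circ(-)$ manifestly has all of these features — one has $ev_k(F_0\circ\alpha)=F_0(\alpha(k))$, and the arrow underlying $F_0\circ\alpha$ is $F_0$ applied to the arrow underlying $\alpha$ — so $F_0^{\Delta^1}\simeq F_0\circ(-)$; symmetrically, $G_0^{\Delta^1}\simeq G_0\circ(-)$.

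It then suffices to prove that $F_0\circ(-)\dashv G_0\circ(-)$. The cleanest route is to encode $F_0\dashv G_0$ by a fibration $p\colon\Mcc\to\Delta^1$ which is simultaneously Cartesian and coCartesian, with $\Mcc_{0}\simeq\Cc_0$, $\Mcc_{1}\simeq\Dc_0$, and coCartesian (resp. Cartesian) transport $F_0$ (resp. $G_0$), as in \cite[\S 5.2.2]{lu09}; pulling back the induced fibration $\Fun(\Delta^1,\Mcc)\to\Fun(\Delta^1,\Delta^1)$ along the edge $\Delta^1\to\Fun(\Delta^1,\Delta^1)$ classified by the projection $\Delta^1\times\Delta^1\to\Delta^1$ yields, by the stability results of \cite[\S 3.1.2]{lu09}, a fibration over $\Delta^1$ which is again both Cartesian and coCartesian, with fibers $\Fun(\Delta^1,\Cc_0)$ and $\Fun(\Delta^1,\Dc_0)$ and with (co)Cartesian transport computed pointwise, i.e. given by $F_0\circ(-)$ and $G_0\circ(-)$; such a fibration is precisely the datum of an adjunction. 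Alternatively, one whiskers the unit $\eta\colon id\Rightarrow G_0F_0$ and counit $\epsilon\colon F_0G_0\Rightarrow id$ by $(-)$, notes that the triangle identities persist because postcomposition is compatible with horizontal and vertical composition of natural transformations, and concludes with the mapping-space criterion of \cite[\S 5.2.2]{lu09}, using that $\Map_{\Fun(\Delta^1,\Dc_0)}(\alpha,\beta)$ is the fiber product $\Map_{\Dc_0}(\alpha(0),\beta(0))\times_{\Map_{\Dc_0}(\alpha(0),\beta(1))}\Map_{\Dc_0}(\alpha(1),\beta(1))$ and that the pointwise adjunction carries this to the corresponding fiber product computing $\Map_{\Fun(\Delta^1,\Cc_0)}(\alpha,G_0\circ\beta)$. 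The only point requiring care is the first step: matching the recollement-theoretic definition of $F_0^{\Delta^1}$ against naive postcomposition, after which the adjunction is formal.
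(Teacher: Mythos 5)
Your proof is correct and takes the same route the paper intends: the paper's own proof is the one-line assertion that the lemma ``follows immediately from the definition and from the fact that $F_0$ is left adjoint to $G_0$'', and your argument simply supplies the details behind that immediacy, namely that $F_0^{\Delta^1}$ and $G_0^{\Delta^1}$ are (equivalent to) postcomposition by $F_0$ and $G_0$, and that postcomposition along an adjunction yields an adjunction of functor $\oo$-categories. Both of your justifications for the latter (exponentiating the bi-Cartesian fibration over $\Delta^1$, or the pointwise mapping-space computation in the arrow category) are standard and sound.
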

\begin{proof}
This follows immediately from the definition and from the fact that $F_0$ is left adjoint to $G_0$.
\end{proof}

\begin{construction}
Let $F:\Cc \rightarrow \Dc$ be the functor corresponding, by Corollary $\ref{functors to a recollement}$, to $(\Cc \xrightarrow{F_0\circ L_0}\Dc_0, \Cc \xrightarrow{F_1\circ L_1}\Dc_1, F_0\circ L_0 \rightarrow R_0\circ j_1 \circ F_1\circ L_1\simeq F_0\circ L_0\circ i_1\circ L_1 )$.

Similarly, let $G:\Dc \rightarrow \Cc$ be the functor corresponding, by Corollary $\ref{functors to a recollement}$, to $(\Dc \xrightarrow{G_0\circ R_0}\Cc_0, \Dc \xrightarrow{G_1\circ R_1}\Cc_1, G_0\circ R_0 \rightarrow L_0\circ i_1 \circ G_1\circ R_1\simeq G_0\circ R_0\circ j_1 \circ R_1 )$.

Let $u:id_{\Cc}\rightarrow G\circ F$ denote the natural transformation defined as follows. By Proposition $\ref{recollement category of functors}$, $\Fun(\Cc,\Cc)$ is the recollement of $\Fun(\Cc,\Cc_0)$ and $\Fun(\Cc,\Cc_1)$. By Corollary $\ref{functors to a recollement}$, in order to define $u$ we need to exhibit two functors $u_0:\Delta^1\rightarrow \Fun(\Cc,\Cc_0)$, $u_1:\Delta^1\rightarrow \Fun(\Cc,\Cc_1)$ and a natural transformation $u_0\rightarrow L_0\circ i_1 \circ u_1$.
We define
\begin{itemize}
  \item $u_0:L_0\rightarrow L_0\circ G\circ F\simeq G_0\circ R_0\circ F\simeq G_0\circ F_0 \circ L_0$ induced by $id\rightarrow G_0\circ F_0$;
  \item $u_1:L_1\rightarrow L_1\circ G\circ F\simeq G_1\circ R_1\circ F\simeq G_1\circ F_1 \circ L_1$ induced by $id\rightarrow G_1\circ F_1$;
  \item $u_0\rightarrow L_0\circ i_1 \circ u_1$ as
  \begin{equation}
    \begindc{\commdiag}[13]
      \obj(-80,20)[1]{$L_0$}
      \obj(80,20)[2]{$G_0\circ F_0 \circ L_0$}
      \obj(-80,-20)[3]{$L_0\circ i_1\circ L_1$}
      \obj(80,-20)[4]{$L_0\circ i_1 \circ G_1\circ F_1\circ L_1\simeq G_0\circ R_0\circ j_1\circ F_1\circ L_1,$}
      \mor{1}{2}{$u_0$}
      \mor{1}{3}{$$}
      \mor{2}{4}{$$}
      \mor{3}{4}{$L_0\circ i_1\circ u_1$}
    \enddc
  \end{equation}
  where the left vertical arrow is induced by $id\rightarrow i_1\circ L_1$ and the right vertical arrow by $F_0\circ L_0\rightarrow R_0 \circ j_1\circ F_1\circ L_1$.
\end{itemize}
\end{construction}

\begin{prop}\label{adjunctions between recollements}
 $u:id_{\Cc}\rightarrow G\circ F$ is a unit transformation for $(F,G)$ (see \cite[Definition 5.2.2.7]{lu09}). In particular, $F$ is a left adjoint to $G$.
\end{prop}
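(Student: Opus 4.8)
The plan is to check the mapping-space criterion for a unit transformation. By \cite[Proposition 5.2.2.8]{lu09}, to prove that $u$ is a unit transformation for $(F,G)$, and hence that $F$ is left adjoint to $G$, it suffices to show that for every $x\in\Cc$ and every $d\in\Dc$ the composite
\[
\Map_{\Dc}(F(x),d)\xrightarrow{G}\Map_{\Cc}(GF(x),G(d))\xrightarrow{-\circ u_x}\Map_{\Cc}(x,G(d))
\]
is a homotopy equivalence. The whole argument is the verification of this single condition, which I would carry out by computing both mapping spaces through the explicit description of a recollement and comparing them corner by corner.

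First I would record, by unwinding Corollary \ref{mapping spaces in a recollement} together with Proposition \ref{explicit description recollement} (so that $p=\overline{sp}_{\Cc}$, $q=L_1$, and the object $\overline{sp}_{\Cc}(y)$ is the arrow $L_0(y)\to L_0i_1L_1(y)$ of $\Cc_0$), that $\Map_{\Cc}(x,y)$ is the homotopy pullback of
\[
\Map_{\Cc_0}(L_0x,L_0y)\longrightarrow \Map_{\Cc_0}(L_0x,L_0i_1L_1y)\longleftarrow \Map_{\Cc_1}(L_1x,L_1y),
\]
where the first map is postcomposition with the structure arrow of $\overline{sp}_{\Cc}(y)$ and the second is the functor $L_0i_1$ followed by precomposition with the structure arrow of $\overline{sp}_{\Cc}(x)$; and symmetrically for $\Dc$, with $R_0,R_1,j_1$ in place of $L_0,L_1,i_1$. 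Next I would substitute the identifications coming from the construction of $F$ and $G$ via Corollary \ref{functors to a recollement}, namely $R_0F\simeq F_0L_0$, $R_1F\simeq F_1L_1$, $L_0G\simeq G_0R_0$, $L_1G\simeq G_1R_1$, and combine them with the two homotopies $R_0j_1F_1\simeq F_0L_0i_1$ and $L_0i_1G_1\simeq G_0R_0j_1$ furnished by the hypotheses. After these substitutions the pullback computing $\Map_{\Dc}(Fx,d)$ has corners $\Map_{\Dc_0}(F_0L_0x,R_0d)$, $\Map_{\Dc_0}(F_0L_0x,R_0j_1R_1d)$, $\Map_{\Dc_1}(F_1L_1x,R_1d)$, whereas the pullback computing $\Map_{\Cc}(x,Gd)$ has corners $\Map_{\Cc_0}(L_0x,G_0R_0d)$, $\Map_{\Cc_0}(L_0x,G_0R_0j_1R_1d)$, $\Map_{\Cc_1}(L_1x,G_1R_1d)$; and on each corner the comparison map is precisely the adjunction equivalence of $F_0\dashv G_0$ (the first two corners) or of $F_1\dashv G_1$ (the third corner). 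Since a morphism of homotopy pullbacks which is an equivalence on all corners is itself an equivalence, this will conclude the proof as soon as one knows that the three corner equivalences are compatible with the two legs of each cospan and that the resulting map is the one induced by $u$.

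That proviso is the only real content. Compatibility with the left leg reduces, by naturality of the adjunction equivalence of $F_0\dashv G_0$ in the target variable, to the assertion that the structure arrow of $\overline{sp}_{\Cc}(Gd)$ corresponds, under $L_0G\simeq G_0R_0$ and $L_0i_1L_1G\simeq G_0R_0j_1R_1$, to $G_0$ applied to the structure arrow of $\overline{sp}_{\Dc}(d)$; this holds because, by Corollary \ref{functors to a recollement}, that structure arrow is exactly the gluing datum used to build $G$, and the latter is by construction $G_0$ of the gluing datum of $\overline{sp}_{\Dc}$. Compatibility with the right leg reduces, after using naturality of the unit of $F_0\dashv G_0$ and the identity (coming likewise from the construction of $F$) that the structure arrow of $\overline{sp}_{\Dc}(Fx)$ is $F_0$ of the structure arrow of $\overline{sp}_{\Cc}(x)$, to the assertion that the homotopy $L_0i_1G_1\simeq G_0R_0j_1$ is the mate, with respect to $F_0\dashv G_0$ and $F_1\dashv G_1$, of the homotopy $R_0j_1F_1\simeq F_0L_0i_1$. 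This mate compatibility — the homotopy-coherent content of the two squares appearing in the hypotheses — is the one point where more than formal manipulation is needed, and I expect it to be the main obstacle. Finally, the identification of the assembled equivalence with $-\circ u_x$ followed by $G$ is automatic once everything is phrased in the pullback picture: by the construction of $u$, its two components are the units $L_0x\to G_0F_0L_0x$ and $L_1x\to G_1F_1L_1x$ of $F_0\dashv G_0$ and $F_1\dashv G_1$, so applying $G$ and then precomposing with $u_x$ carries a compatible pair $(F_0L_0x\to R_0d,\ F_1L_1x\to R_1d)$ to the compatible pair of its adjuncts, which is exactly the equivalence constructed above. Assembling these compatibilities is feasible precisely because, in a recollement, functors and natural transformations are reconstructed from, and equivalences are detected on, their components together with the gluing datum; this completes the argument, and by \cite[Proposition 5.2.2.8]{lu09} we conclude that $F$ is left adjoint to $G$ with unit $u$.
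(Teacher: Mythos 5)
Your proposal is correct and follows essentially the same route as the paper's proof: reduce via \cite[Proposition 5.2.2.8]{lu09} to the mapping-space criterion for the unit, describe both mapping spaces as fiber products using Proposition \ref{explicit description recollement} and Corollary \ref{mapping spaces in a recollement}, and check corner by corner that the component adjunctions $(F_0,G_0)$ and $(F_1,G_1)$ induce equivalences (the paper keeps the $\Fun(\Delta^1,\Cc_0)$-corner intact and uses $(F_0^{\Delta^1},G_0^{\Delta^1})$ there, whereas you unwind it one step further, which is an equivalent bookkeeping choice). The mate-compatibility of the two given homotopies that you single out as the main obstacle is indeed what makes the corner equivalences assemble into a map of cospans; the paper's proof passes over this point silently, implicitly treating the second homotopy as the mate of the first, which is how it arises in every application in the paper, so your explicit identification of this hypothesis is a point of extra care rather than a gap.
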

\begin{proof}
Let $x$ (resp. $y$) be an object of $\Cc$ (resp. $\Dc$). We need to show that the morphism
\begin{equation}\label{unit transformation composition}
\Map_{\Dc}(F(x),y)\rightarrow \Map_{\Cc}(G\circ F(x),G(y))\xrightarrow{u(x)}\Map_{\Cc}(x,G(y))
\end{equation}
is a weak-equivalence.

By Proposition $\ref{explicit description recollement}$, we have two pullback squares
\begin{equation}
  \begindc{\commdiag}[13]
    \obj(-100,20)[1]{$\Cc$}
    \obj(-40,20)[2]{$\Cc_1$}
    \obj(-100,-20)[3]{$\Fun(\Delta^1,\Cc_0)$}
    \obj(-40,-20)[4]{$\Cc_0$}
    \mor{1}{2}{$q_{\Cc}\sim L_1$}
    \mor{1}{3}{$p_{\Cc}\sim \overline{sp}_{\Cc}$}[\atright,\solidarrow]
    \mor{2}{4}{$L_0\circ i_1$}
    \mor{3}{4}{$ev_1$}
    \mor{1}{4}{$r_{\Cc}$}
    \obj(40,20)[5]{$\Dc$}
    \obj(100,20)[6]{$\Dc_1$}
    \obj(40,-20)[7]{$\Fun(\Delta^1,\Dc_0)$}
    \obj(100,-20)[8]{$\Dc_0$}
    \mor{5}{6}{$q_{\Dc}\sim R_1$}
    \mor{5}{7}{$p_{\Dc}\sim \overline{sp}_{\Dc}$}[\atright,\solidarrow]
    \mor{6}{8}{$R_0\circ j_1$}
    \mor{7}{8}{$ev_1$}
    \mor{5}{8}{$r_{\Dc}$}
  \enddc
\end{equation}
and the map $(\ref{unit transformation composition})$, by Corollary $\ref{mapping spaces in a recollement}$, corresponds to
\begin{equation}
\begindc{\commdiag}[13]
  \obj(0,23)[1]{$\Map_{\Fun(\Delta^1,\Dc_0)}(p_{\Dc}\circ F(x),p_{\Dc}(y))\x_{\Map_{\Dc_0}(r_{\Dc}\circ F(x),r_{\Dc}(y))}\Map_{\Dc_1}(q_{\Dc}\circ F(x),q_{\Dc}(y))$}
  \obj(0,0)[2]{$\Map_{\Fun(\Delta^1,\Cc_0)}(p_{\Cc}\circ G\circ F(x),p_{\Cc}\circ G(y))\x_{\Map_{\Cc_0}(r_{\Cc}\circ G\circ F(x),r_{\Cc}\circ G(y))}\Map_{\Cc_1}(q_{\Cc}\circ G\circ F(x),q_{\Cc}\circ G(y))$}
   \obj(0,-23)[3]{$\Map_{\Fun(\Delta^1,\Cc_0)}(p_{\Cc}(x),p_{\Cc}\circ G(y))\x_{\Map_{\Cc_0}(r_{\Cc}(x),r_{\Cc}\circ G(y))}\Map_{\Cc_1}(q_{\Cc}(x),q_{\Cc}\circ G(y)).$}
  \mor{1}{2}{$$}
  \mor{2}{3}{$$}
  \enddc
\end{equation}
As we have equivalences
\begin{itemize}
  \item $p_{\Dc}\circ F\simeq F_0^{\Delta^1}\circ p_{\Cc}$, \hspace{2.25cm} $p_{\Cc}\circ G\simeq G_0^{\Delta^1}\circ p_{\Dc}$,
  \item $q_{\Dc}\circ F\simeq F_1\circ L_1$, \hspace{2.5cm} $q_{\Cc}\circ F\simeq G_1\circ R_1$,
  \item $r_{\Dc}\circ F\simeq R_0\circ j_1 \circ F_1\circ L_1$, \hspace{1.23cm} $r_{\Cc}\circ G\simeq L_0\circ i_1 \circ G_1\circ R_1$,
\end{itemize}
the compositions
\begin{equation}
\begindc{\commdiag}[13]
  \obj(0,23)[1]{$\Map_{\Fun(\Delta^1,\Dc_0)}(p_{\Dc}\circ F(x),p_{\Dc}(y))\simeq \Map_{\Fun(\Delta^1,\Dc_0)}(F_0^{\Delta^1}\circ p_{\Cc}(x),p_{\Dc}(y))$}
  \obj(0,0)[2]{$\Map_{\Fun(\Delta^1,\Cc_0)}(p_{\Cc}\circ G\circ F(x),p_{\Cc}\circ G(y))\simeq \Map_{\Fun(\Delta^1,\Cc_0)}(G_0^{\Delta^1}\circ F_0^{\Delta^1}\circ p_{\Cc}(x),G_0^{\Delta^1}\circ p_{\Dc}(y))$}
   \obj(0,-23)[3]{$\Map_{\Fun(\Delta^1,\Cc_0)}(p_{\Cc}(x),p_{\Cc}\circ G(y))\simeq \Map_{\Fun(\Delta^1,\Cc_0)}(p_{\Cc}(x),G_0^{\Delta^1}\circ p_{\Dc}(y))$}
  \mor{1}{2}{$$}
  \mor{2}{3}{$$}
  \enddc
\end{equation}
and
\begin{equation}
\begindc{\commdiag}[13]
  \obj(0,23)[1]{$\Map_{\Dc_1}(q_{\Dc}\circ F(x),q_{\Dc}(y))\simeq \Map_{\Dc_1}(F_1\circ L_1(x),q_{\Dc}(y))=\Map_{\Dc_1}(F_1\circ L_1(x),R_1(y))$}
  \obj(0,0)[2]{$\Map_{\Cc_1}(q_{\Cc}\circ G\circ F(x),q_{\Cc}\circ G(y))\simeq \Map_{\Cc_1}(G_1\circ F_1\circ L_1(x),G_1\circ R_1(y))$}
   \obj(0,-23)[3]{$\Map_{\Cc_1}(q_{\Cc}(x),q_{\Cc}\circ G(y))\simeq \Map_{\Cc_1}(L_1(x),G_1\circ R_1(y))$}
  \mor{1}{2}{$$}
  \mor{2}{3}{$$}
  \enddc
\end{equation}
are weak-equivalences, as $(F^{\Delta^1}_0,G_0^{\Delta^1})$ and $(F_1,G_1)$ are adjoint functors. Similarly, the composition
\begin{equation}
\begindc{\commdiag}[13]
  \obj(0,23)[1]{$\Map_{\Dc_0}(r_{\Dc}\circ F(x),r_{\Dc}(y))\simeq \Map_{\Dc_0}(F_0\circ L_0\circ i_1\circ L_1(x),R_0\circ j_1\circ R_1(y))$}
  \obj(0,0)[2]{$\Map_{\Cc_0}(r_{\Cc}\circ G\circ F(x),r_{\Cc}\circ G(y))\simeq \Map_{\Dc_0}(G_0\circ F_0\circ L_0\circ i_1\circ L_1(x),G_0\circ R_0\circ j_1\circ R_1(y))$}
   \obj(0,-23)[3]{$\Map_{\Cc_0}(r_{\Cc}(x),r_{\Cc}\circ G(y))\simeq \Map_{\Dc_0}(L_0\circ i_1\circ L_1(x),G_0\circ R_0\circ j_1\circ R_1(y))$}
  \mor{1}{2}{$$}
  \mor{2}{3}{$$}
  \enddc
\end{equation}
is a weak-equivalence as $(F_0,G_0)$ is an adjunction. 

Then $(\ref{unit transformation composition})$ is a weak-equivalence and the lemma follows from \cite[Proposition 5.2.2.8]{lu09}.
\end{proof}

\section{Review of the theory of tame vanishing cycles}\label{review of the theory of tame vanishing cycles}
In this section we will review the theory of (tame) vanishing cycles as developed in \cite[Expos\'e II]{sga7i}, \cite[Expos\'e XIII]{sga7ii}. 
\subsection*{The classical definition of tame vanishing cycles}
Let $A$ be an henselian discrete valuation ring. Fix an uniformiser $\pi$. Then the residue field of $A$ is $k:=A/(\pi)$, while the fraction field of $A$ is $K:=A[\pi^{-1}]$. Let $p$ denote the characteristic of $k$. Fix a separable closure $k^s$ of $k$ and a separable closure $K^s$ of $K$. Let $K^u$ and $K^t$ denote the maximal unramified and maximal tamely ramified extensions of $K$ inside $K^s$. We have a short exact sequence of profinite groups (see \cite{se62})
\begin{equation}
  1\rightarrow I^t:=Gal(K^t/K^u) \rightarrow Gal(K^t/K) \rightarrow Gal(K^u/K)\simeq Gal(k^s/k)\rightarrow 1.
\end{equation}
$I^t$ is called the \emph{tame inertia group}. There is an isomorphism
\begin{equation}
  I^t\simeq \varprojlim_{(n,p)=1}\mu_n(K^s),
\end{equation}
where $\mu_n(K^s)$ denotes the group of $n^{th}$ roots of the unity in $K^s$.

We will use the following notation:
\begin{itemize}
  \item $S:=Spec(A)$,
  \item $\sigma:=Spec(k)$,
  \item $\sigma^s:=Spec(k^s)$,
  \item $\eta:=Spec(K)$,
  \item $\eta^u:=Spec(K^u)$,
  \item $\eta^t:=Spec(K^t)$.
\end{itemize}

For an $S$-scheme $p:X\rightarrow S$, we will consider the following diagram in which all squares are Cartesian:
\begin{equation}
  \begindc{\commdiag}[18]
    \obj(-60,10)[1u]{$X_{\sigma^s}$}
    \obj(-30,10)[2u]{$X_{\sigma}$}
    \obj(0,10)[3u]{$X$}
    \obj(30,10)[4u]{$X_{\eta}$}
    \obj(60,10)[5u]{$X_{\eta^u}$}
    \obj(90,10)[6u]{$X_{\eta^t}$}
    \obj(-60,-10)[1d]{$\sigma^s$}
    \obj(-30,-10)[2d]{$\sigma$}
    \obj(0,-10)[3d]{$S$}
    \obj(30,-10)[4d]{$\eta$}
    \obj(60,-10)[5d]{$\eta^u$}
    \obj(90,-10)[6d]{$\eta^t$}
    \mor{1u}{2u}{$$}
    \mor{2u}{3u}{$i$}
    \mor{4u}{3u}{$j$}
    \mor{5u}{4u}{$$}
    \mor{6u}{5u}{$$}
    \mor{1d}{2d}{$$}
    \mor{2d}{3d}{$i_0$}
    \mor{4d}{3d}{$j_0$}
    \mor{5d}{4d}{$$}
    \mor{6d}{5d}{$$}
    \mor{1u}{1d}{$$}
    \mor{2u}{2d}{$$}
    \mor{3u}{3d}{$$}
    \mor{4u}{4d}{$$}
    \mor{5u}{5d}{$$}
    \mor{6u}{6d}{$$}
    \cmor((-60,13)(-60,16)(-58,18)(-30,18)(-3,18)(-1,16)(-1,13)) \pdown(-30,21){$i^s$}
    \cmor((-60,-13)(-60,-16)(-58,-18)(-30,-18)(-3,-18)(-1,-16)(-1,-13)) \pup(-30,-22){$i_0^s$}
    \cmor((60,13)(60,16)(58,18)(30,18)(3,18)(1,16)(1,13)) \pdown(30,21){$j^u$}
    \cmor((60,-13)(60,-16)(58,-18)(30,-18)(3,-18)(1,-16)(1,-13)) \pup(30,-22){$j_0^u$}
    \cmor((90,18)(90,23)(88,25)(45,25)(2,25)(0,23)(0,18)) \pdown(45,29){$j^t$}
    \cmor((90,-18)(90,-23)(88,-25)(45,-25)(2,-25)(0,-23)(0,-18)) \pup(45,-29){$j_0^t$}
  \enddc
\end{equation}

Let $\Det{X_{\sigma^s};\Lambda}^{Gal(k^s/k)}$ denote the $\oo$-category of \'etale sheaves of $\Lambda$-modules on $X_{\sigma^s}$ endowed with a continuous action of $Gal(k^s/k)$ compatible with the action on $X_{\sigma^s}$.
Similarly, let $\Det{X_{\sigma^s};\Lambda}^{Gal(K^t/K)}$ denote the $\oo$-category of sheaves on $X_{\sigma^s}$ endowed with a continuous action of $Gal(K^t/K)$ compatible with the action on $X_{\sigma^s}$ induced by the morphism $Gal(K^t/K)\rightarrow Gal(k^s/k)$.
These two categories are related by an adjunction:
\begin{equation}
  triv: \Det{X_{\sigma^s};\Lambda}^{Gal(k^s/k)}\leftrightarrows \Det{X_{\sigma^s};\Lambda}^{Gal(K^t/K)}:(-)^{I^t}.
\end{equation}
Roughly, for $\Fc \in \Det{X_{\sigma^s};\Lambda}^{Gal(k^s/k)}$, $triv(\Fc)$ is the sheaf on $X_{\sigma^s}$ with the continuous $Gal(K^t/K)$-action induced by $Gal(K^t/K)\rightarrow Gal(k^s/k)$, while for $\Gc \in \Det{X_{\sigma^s};\Lambda}^{Gal(K^t/K)}$, $\Gc^{I^t}$ is the sheaf of $I^t$-homotopy fixed points of $\Gc$, with the induced $Gal(k^s/k)$-action.
\begin{notation}
  With a little abuse of notation, for $\Fc \in \Det{X_{\sigma^s};\Lambda}^{Gal(k^s/k)}$, we will write $\Fc$ instead of $triv(\Fc)$. 
  We hope that it will be clear from the context whether we are considering one object or the other.
\end{notation}
For $\Gc \in \Det{X_{\eta};\Lambda}$, the sheaf of \emph{tame nearby cycles with coefficients in $\Gc$} is
\begin{equation}
  \Psi^{cl,t}_{\eta,p}(\Gc):=(i^{s})^*\circ (j^t)_*(\Gc_{|X_{\eta^t}}) \in \Det{X_{\sigma^s};\Lambda}^{Gal(K^t/K)}.
\end{equation}

The $Gal(K^t/K)$-action on $\Psi^{cl,t}_{\eta,p}(\Gc)$ is the one induced by that on $\Gc_{|X_{\eta^t}} \in \Det{X_{\eta^t};\Lambda}^{Gal(K^t/K)}$.

For $\Fc \in \Det{X;\Lambda}$, the unit of the adjuction $((j^t)^*,(j^t)_*)$ induces a morphism
\begin{equation}
  sp: (i^s)^*(\Fc)\rightarrow \Psi^{cl,t}_{\eta,p}(j^*\Fc) 
\end{equation}
in the $\oo$-category $\Det{X_{\sigma^s};\Lambda}^{Gal(K^t/K)}$. This morphism is called the \emph{specialization morphism} and its cone, denoted by
\begin{equation}
  \Phi^{cl,t}_p(\Fc) \in \Det{X_{\sigma^s};\Lambda}^{Gal(K^t/K)},
\end{equation}
is called the sheaf of \emph{tame vanishing cycles with coefficients in $\Fc$}.

\subsection*{Tame vanishing cycles via Deligne-Mumford stacks.}
Our aim in this subsection will be to reinterpret the definition of nearby and vanishing cycles using the language of pro Deligne-Mumford stacks. Indeed, the $\oo$-categories $\Det{X_{\sigma^s};\Lambda}^{Gal(k^s/k)}$ and $\Det{X_{\sigma^s};\Lambda}^{Gal(K^t/K)}$ can (and will) be interpreted as the $\oo$-categories of \'etale sheaves of $\Lambda$-modules on some pro Deligne-Mumford stacks $[X_{\sigma^s}/Gal(k^s/k)]$ and $[X_{\sigma^s}/Gal(K^t/K)]$. The reason why we need to consider pro Deligne-Mumford stacks is that the groups $Gal(k^s/k)$ and $Gal(K^t/K)$ are profinite.

\subsection*{The pro DM stack \texorpdfstring{$[X_{\sigma^s}/Gal(k^s/k)]$}{dms}}
Write $k^s$ as the colimit $\varinjlim_ik_i$, where $k\subseteq k_i$ runs in the filtered set $\mathscr{I}$ of finite Galois extensions of $k$ inside $k^s$. 

For every finite Galois extension $k\subseteq k_i \subseteq k^s$, let $\sigma_i:=Spec(k_i)$ and let $S_i\rightarrow S$ be the corresponding finite \'etale covering of $S$. 
Put $S^s=\varprojlim_i S_i$ and set $X_{S_i}=X\x_S S_i$, $X_{S^s}=\varprojlim_i X_{S_i}$. Moreover, put
$X_{\sigma_i}:=X_{\sigma}\times_{X} X_{S_i}$. The group $Aut_S(S_i)\simeq Gal(k_i/k)$ acts naturally on $X_{S_i}$ and on $X_{\sigma_i}$.

\begin{rmk}
Notice that for each $i\in \mathscr{I}$, $X_{\sigma_i}$ is a nilthickeing of $X_{\sigma}\x_{Spec(\sigma)}Spec(\sigma_i)$.
\end{rmk}

For every tower of finite Galois extensions $k\subseteq k_i \subseteq k_j$ inside $k^s$, $Gal(k_j/k)$ acts on $X_{\sigma_i}$ by means of the canonical quotient morphism
\begin{equation}
  Gal(k_j/k)\rightarrow Gal(k_j/k)/Gal(k_j/k_i)\simeq Gal(k_i/k).
\end{equation}
In particular, we have a diagram of Deligne-Mumford stacks
\begin{equation}\label{diagram pro DM stack Xsigmas/Gal(sigma)}
    \mathscr{I}^{op} \rightarrow \DM{S}
\end{equation}
\begin{equation*}
    i\leq j \mapsto [X_{\sigma_j}/Gal(k_j/k)] \xrightarrow{\alpha_{ij}}[X_{\sigma_i}/Gal(k_i/k)].
\end{equation*}
Notice that this is the diagram obtained from the diagram
\begin{equation}
    \mathscr{I}^{op} \rightarrow \DM{S}
\end{equation}
\begin{equation*}
    i\leq j \mapsto [X_{S_j}/Gal(k_j/k)] \rightarrow [X_{S_i}/Gal(k_i/k)]
\end{equation*}
by pullbacks along $X_{\sigma}\rightarrow X$.

\begin{notation}
For every finite Galois extension $k\subseteq k_i$, we will use the following notation:
\begin{equation}
    \Det{X_{\sigma_i};\Lambda}^{Gal(k_i/k)}:=\Det{[X_{\sigma_i}/Gal(k_i/k)];\Lambda}.
\end{equation}
\end{notation}
For every tower of finite Galois extensions $k\subseteq k_i \subseteq k_j$, the adjunction
\begin{equation}
    \alpha_{ij}^*:\Det{[X_{\sigma_i}/Gal(k_i/k)];\Lambda}\leftrightarrows \Det{[X_{\sigma_j}/Gal(k_j/k)];\Lambda}: (\alpha_{ij})_*
\end{equation}
identifies with
\begin{equation*}
    v_{ij}^*:\Det{X_{\sigma_i};\Lambda}^{Gal(k_i/k)}\leftrightarrows \Det{X_{\sigma_j};\Lambda}^{Gal(k_j/k)}:(v_{ij})_*\circ (-)^{Gal(k_j/k_i)},
\end{equation*}
where $v_{ij}:X_{\sigma_j}\rightarrow X_{\sigma_i}$ is the morphism induced by $S_j\rightarrow S_i$.
We define
\begin{equation}
  [X_{\sigma^s}/Gal(k^s/k)]:="\varprojlim_i" [X_{\sigma_i}/Gal(k_i/k)],
\end{equation}
the pro-object in the $2$-category of DM stacks corresponding to the diagram (\ref{diagram pro DM stack Xsigmas/Gal(sigma)}).

\begin{defn}
Consider the $\oo$-functor
\begin{equation}\label{functoriality pushforward DM stacks}
    \DM{S}\rightarrow \prsr
\end{equation}
which expresses the functoriality of the assignments \begin{equation*}
    f:\mathfrak{Y}\rightarrow \mathfrak{X} \mapsto f_*:\Det{\mathfrak{Y};\Lambda}\rightarrow \Det{\mathfrak{X};\Lambda}.
\end{equation*}
We define the $\oo$-category of $Gal(k^s/k)$-equivariant sheaves on $X_{\sigma^s}$ as the limit of the diagram
\begin{equation*}
    (\ref{functoriality pushforward DM stacks})\circ (\ref{diagram pro DM stack Xsigmas/Gal(sigma)}):\mathscr{I}^{op}\rightarrow \DM{S} \rightarrow \prsr,
\end{equation*}
i.e. as 
\begin{equation}
  \Det{X_{\sigma^s};\Lambda}^{Gal(k^s/k)}=\Det{[X_{\sigma^s}/Gal(k^s/k)];\Lambda}:=
\end{equation}
\begin{equation*}  
  \varprojlim_i \Det{[X_{\sigma_i}/Gal(k_i/k)];\Lambda}=  \varprojlim_i \Det{X_{\sigma_i};\Lambda}^{Gal(k_i/k)} \in \prsr,
\end{equation*}

\end{defn}

\begin{rmk}
For every finite Galois extension $k\subseteq k_i \subseteq k^s$, let $\alpha_i:[X_{\sigma_i}/Gal(k_i/k)]\rightarrow X_{\sigma}$ denote the canonical morphism. It is a homeomorphism. The adjunction
\begin{equation}
  \alpha_i^*:\Det{X_{\sigma};\Lambda}\leftrightarrows \Det{[X_{\sigma_i}/Gal(k_i/k)];\Lambda}:(\alpha_{i})_*
\end{equation}
identifies with the adjunction
\begin{equation}
  v_i^*:\Det{X_{\sigma};\Lambda}\leftrightarrows \Det{X_{\sigma_i};\Lambda}^{Gal(k_i/k)}: (v_{i})_*\circ (-)^{Gal(k_i/k)},
\end{equation}
where $v_i:X_{\sigma_i}\rightarrow X_{\sigma}$. These functors are quasi-inverses. 

Taking the limit over all $i$'s\footnote{Technically, to take this limit we have to produce a diagram $\mathscr{I}^{op}\rightarrow \Fun(\Delta^1,\prsr)$, $i\mapsto (\phi_i)_*$, but we won't give the (easy) details of this construction here.}, we get the well known equivalence \cite[Expos\'e XIII]{sga7ii}

\begin{equation}
  v^*:\Det{X_{\sigma};\Lambda}\leftrightarrows \Det{X_{\sigma^s};\Lambda}^{Gal(k^s/k)}:v_*\circ (-)^{Gal(k^s/k)},
\end{equation}
where $v:X_{\sigma^s}\rightarrow X_{\sigma}$.

One can think of the homomorphism $[X_{\sigma^s}/Gal(k^s/k)]\simeq X_{\sigma}$ as the reason why there is an equivalence
\begin{equation}
  \Det{X_{\sigma^s};\Lambda}^{Gal(k^s/k)}\simeq \Det{X_{\sigma};\Lambda}.
\end{equation}
\end{rmk}

\subsection*{The pro DM stack \texorpdfstring{$[X_{\sigma^s}/Gal(K^t/K)]$}{dmk}}
Write $K^t$ as the colimit $\varinjlim_a K_a$, where $a$ runs in the filtered set $\mathscr{A}$ of finite Galois extensions $K\subseteq K_a\subseteq K^t$. For such an extension, denote by $K\subseteq K_a^u\subseteq K_a$ the maximal unramified extension of $K$ contained in $K_a$. We will denote by $k\subseteq k_a$ (resp. $S_a\rightarrow S$) the finite Galois extension (resp. finite \'etale covering) of $k$ (resp. $S$) corresponding to $K\subseteq K_a^u$ (see e.g. \cite{se62}). This provides a morphism of filtered sets
\begin{equation}
    \mathscr{A}\rightarrow \mathscr{I}.
\end{equation}

The canonical morphism
\begin{equation}
  Gal(K_a/K)\rightarrow Gal(K_a/K)/Gal(K_a^u/K)\simeq Gal(K_a^u/K)\simeq Gal(k_a/k)
\end{equation}
endows $X_{\sigma_a}$ with an action of $Gal(K_a/K)$.

Every tower of finite Galois extensions $K\subseteq K_a \subseteq K_b$ inside $K^t$ induces a tower of finite Galois extensions $K\subseteq K_a^u \subseteq K_b^u$ inside $K^u$ and therefore a commutative square
\begin{equation}\label{commutativity Gal(Kab)->Gal(kab)}
  \begindc{\commdiag}[18]
    \obj(-30,10)[1]{$Gal(K_b/K)$}
    \obj(30,10)[2]{$Gal(k_b/k)$}
    \obj(-30,-10)[3]{$Gal(K_a/K)$}
    \obj(30,-10)[4]{$Gal(k_a/k)$}
    \mor{1}{2}{$$}
    \mor{1}{3}{$$}
    \mor{2}{4}{$$}
    \mor{3}{4}{$$}
  \enddc
\end{equation}
which endows $X_{\sigma_a}$ with an action of $Gal(K_b/K)$.

In particular, we have a diagram of Deligne-Mumford stacks
\begin{equation}\label{diagram pro DM stack Xsigmas/Gal(eta)}
    \mathscr{A}^{op} \rightarrow \DM{S}.
\end{equation}
\begin{equation*}
    a\leq b \mapsto [X_{\sigma_b}/Gal(K_b/K)]\xrightarrow{\beta_{ab}}[X_{\sigma_a}/Gal(K_a/K)].
\end{equation*}

\begin{notation}
For every finite Galois extension $K\subseteq K_a \subseteq K^t$, we will use the following notation:
\begin{equation}
    \Det{X_{\sigma_a};\Lambda}^{Gal(K_a/K)}:=\Det{[X_{\sigma_a}/Gal(K_a/K)];\Lambda}.
\end{equation}
\end{notation}

For every tower of finite Galois extensions $K\subseteq K_a \subseteq K_b$, the adjunction
\begin{equation}
    \beta_{ab}^*:\Det{[X_{\sigma_a}/Gal(K_a/K)];\Lambda}\leftrightarrows \Det{[X_{\sigma_b}/Gal(K_b/K)];\Lambda}: (\beta_{ab})_*
\end{equation}
identifies with
\begin{equation*}
    u_{ab}^*:\Det{X_{\sigma_a};\Lambda}^{Gal(K_a/K)}\leftrightarrows \Det{X_{\sigma_b};\Lambda}^{Gal(K_b/K)}:(u_{ab})_*\circ (-)^{Gal(K_b/K_a)},
\end{equation*}
where $u_{ab}:X_{\sigma_b}\rightarrow X_{\sigma_a}$ is the morphism induced by $S_b\rightarrow S_a$.

We define the pro Deligne-Mumford stack
\begin{equation}
  [X_{\sigma^s}/Gal(K^t/K)]:="\varprojlim_a"[X_{\sigma_a}/Gal(K_a/K)].
\end{equation}
\begin{defn}
We define the $\oo$-category of $Gal(K^t/K)$-equivariant sheaves on $X_{\sigma^s}$ as the limit of the diagram
\begin{equation}
    (\ref{functoriality pushforward DM stacks})\circ(\ref{diagram pro DM stack Xsigmas/Gal(eta)}) : \mathscr{A}^{op}\rightarrow \prsr,
\end{equation}
i.e. as
\begin{equation}
  \Det{X_{\sigma^s};\Lambda}^{Gal(K^t/K)}=\Det{[X_{\sigma^s}/Gal(K^t/K)];\Lambda}:=
\end{equation}
\begin{equation*}  
 \varprojlim_a \Det{[X_{\sigma_a}/Gal(K_a/K)];\Lambda}=\varprojlim_a \Det{X_{\sigma_a};\Lambda}^{Gal(K_a/K)} \in \prsr.
\end{equation*}

\end{defn}

\subsection*{Tame vanishing cycles (reprise)}
For every finite Galois extension $K\subseteq K_a \subseteq K^t$, the morphism of Deligne-Mumford stacks
\begin{equation}
  \chi_a:[X_{\sigma_a}/Gal(K_a/K)]\rightarrow [X_{\sigma_a}/Gal(k_a/k)]
\end{equation}
induces an adjunction
\begin{equation}
  \chi_a^*:\Det{[X_{\sigma_a}/Gal(k_a/k)];\Lambda}\leftrightarrows \Det{[X_{\sigma_a}/Gal(K_a/K)];\Lambda}:(\chi_a)_*
\end{equation}
which identifies with
\begin{equation}
  triv:\Det{X_{\sigma_a};\Lambda}^{Gal(k_a/k)}\leftrightarrows \Det{X_{\sigma_a};\Lambda}^{Gal(K_a/K)}:(-)^{Gal(K_a/K_a^u)}.
\end{equation}

Notice that, by the commutativity of diagram \ref{commutativity Gal(Kab)->Gal(kab)}, the morphisms $\chi_a$ assemble in a morphism of filtered diagrams of DM stacks
\begin{equation}\label{chi A Fun(Delta1,DM)}
    \chi:\mathscr{A}^{op}\rightarrow \Fun(\Delta^1,\DM{S}).
\end{equation}
If we take the limit of the diagram
\begin{equation}
    (\ref{functoriality pushforward DM stacks})\circ (\ref{chi A Fun(Delta1,DM)}): \mathscr{A}^{op}\rightarrow \Fun(\Delta^1,\prsr),
\end{equation} 
we get the adjunction
\begin{equation}\label{adjunction etale sheaves vanishing topos}
  triv:\Det{X_{\sigma^s};\Lambda}^{Gal(k^s/k)}\leftrightarrows \Det{X_{\sigma^s};\Lambda}^{Gal(K^t/K)}:(-)^{I^t}.
\end{equation}

\begin{defn}
With the same notation as above, we define $\Det{\Vs^t_{p};\Lambda}$ to be the recollement determined by the adjunction \ref{adjunction etale sheaves vanishing topos}.
\end{defn}

We will denote by
\begin{equation}
  (i^s)^*:\Det{X;\Lambda}\rightarrow \Det{X_{\sigma^s};\Lambda}^{Gal(k^s/k)}
\end{equation}
the $\oo$-functor
\begin{equation}
  \Det{X;\Lambda}\xrightarrow{i^*}\Det{X_{\sigma};\Lambda}\xrightarrow{v^* \hspace{0.1cm}\simeq}\Det{X_{\sigma^s};\Lambda}^{Gal(k^s/k)}.
\end{equation}
Next we redefine the $\oo$-functor of tame nearby cycles. 
\begin{construction}
As a first step, we define an $\oo$-functor 
\begin{equation}\label{pushforward Xetat -> XSs}
    \bar{j}_*:\Det{X_{\eta^t};\Lambda}^{Gal(K^t/K)}\rightarrow \Det{X_{S^s},\Lambda}^{Gal(K^t/K)}.
\end{equation}

Let $\eta_a:=Spec(K_a)$ and let $X_{\eta_a}=X_{\eta}\times_{\eta}\eta_a$. Then $Gal(K_a/K)$ acts on $X_{\eta_a}$.

We have that 
\begin{equation}
  Aut_S(S_a)\simeq Gal(k_a/k).
\end{equation}
In particular, $Gal(K_a/K)$ acts on $X_{S_a}=X\times_SS_a$ by means of the morphism $Gal(K_a/K)\rightarrow Aut_S(S_a)\simeq Gal(k_a/k)$. This gives us a $Gal(K_a/K)$-equivariant map
\begin{equation}
     X_{\eta_a} \xrightarrow{j_a} X_{S_a}
\end{equation}
which defines a morphism of DM stacks
\begin{equation}
     [X_{\eta_a}/Gal(K_a/K)] \xrightarrow{\bar{j}_a} [X_{S_a}/Gal(K_a/K)].
\end{equation}
These morphisms assemble in a diagram
\begin{equation}
    \bar{j}:\mathscr{A}^{op}\rightarrow \Fun(\Delta^1,\DM{S}).
\end{equation}
The $\oo$-functor \ref{functoriality pushforward DM stacks} induces a diagram 
\begin{equation}
    \mathscr{A}^{op}\rightarrow \Fun(\Delta^1,\prsr)
\end{equation}
and we define \ref{pushforward Xetat -> XSs} as its limit.
\end{construction}
\begin{rmk}
Notice that, as $[X_{\eta_a}/Gal(K_a/K)]\simeq X_{\eta}$, the source of this $\oo$-functor is equivalent to $\Det{X_{\eta};\Lambda}$.
Also notice that if $a$ is associated to a finite, unramified Galois extension of $K$, then we also have that $[X_{S_a}/Gal(K_a/K)]\simeq X$. In particular, if we restrict the above diagram to the category $\mathscr{A}^u$ corresponding to unramified extensions, we get an $\oo$-functor
\begin{equation}
    j^u_*:\Det{X_{\eta^u};\Lambda}^{Gal(K^u/K)}\rightarrow \Det{X_{S^s};\Lambda}^{Gal(k^s/k)}
\end{equation}
that is canonically identified with 
\begin{equation}
    j_*:\Det{X_{\eta};\Lambda}\rightarrow \Det{X;\Lambda}.
\end{equation}
\end{rmk}
The next step will be to define the pullback
\begin{equation}\label{pullback bar(i)}
    \bar{i}^*:\Det{X_{S^s},\Lambda}^{Gal(K^t/K)}\rightarrow \Det{X_{\sigma^s},\Lambda}^{Gal(K^t/K)}.
\end{equation}
Here $X_{S^s}=\varprojlim_{a\in \mathscr{A}}X_{S_a}$. Notice that, for every $a\in \mathscr{A}$, there is a $Gal(K_a/K)$-equivariant map
\begin{equation}
    X_{\sigma_a} \xrightarrow{i_a} X_{S_a}
\end{equation}
which induces a diagram
\begin{equation}\label{diagram bar(i)}
    \bar{i}:\mathscr{A}^{op}\rightarrow \Fun(\Delta^1,\DM{S}).
\end{equation}
\begin{rmk}\label{remark about definition pullback compatible with continuous actions}
  At this point, one could consider the limit $\Det{X_{\sigma^s},\Lambda}^{Gal(K^t/K)}\rightarrow \Det{X_{S^s},\Lambda}^{Gal(K^t/K)}$ of the diagram
  \begin{equation*}
      (\ref{functoriality pushforward DM stacks})\circ(\ref{diagram bar(i)}): \mathscr{A}^{op}\rightarrow \Fun(\Delta^1,\prsr)
  \end{equation*}
  and define \ref{pullback bar(i)} as its left adjoint. However, with this definition it would not be clear that we have identities
  \begin{equation*}
      (-)^{Gal(K^t/K_a)}\circ \bar{i}^*\simeq i_a^*\circ (-)^{Gal(K^t/K_a)}.
  \end{equation*}
  Therefore, we will give another (equivalent) definition, for which the above equivalences will be tautological.
\end{rmk}

\begin{construction}\label{pullback via correspondences DM}
Consider the diagram $\bar{X}:\mathscr{A}^{op}\rightarrow \DM{S}$ given by $a\mapsto [X_{S_a}/Gal(K_a/K)]$. Also consider the closed embedding $X_{\sigma}\hookrightarrow X$. 
Then we can use the construction of Appendix \ref{appendix A} and get a diagram
\begin{equation}
    N(\mathscr{A}^{op})\rightarrow \Fun(\Delta^1,Corr(N(\DM{S}))_{F,all}).
\end{equation}
Indeed, since we put $X_{\sigma_i}=X_{S_i}\x_X X_{\sigma}$, we have that for each $a\leq b$
\begin{equation}
    [X_{\sigma_b}/Gal(K_b/K)]\simeq [X_{\sigma_a}/Gal(K_a/K)]\x_{[X_{S_a}/Gal(K_a/K)]}[X_{S_b}/Gal(K_b/K)].
\end{equation}
We can compose it with the $\oo$-functor \ref{six functor formalism with left adjoints} and get a diagram
\begin{equation}
    N(\mathscr{A}^{op})\rightarrow \Fun(\Delta^1,\prs).
\end{equation}
Finally, using the equivalences $\prs \simeq (\prsr)^{op}$ and $\Delta^1\simeq (\Delta^1)^{op}$ we get another diagram
\begin{equation}
    N(\mathscr{A}^{op})\rightarrow \Fun(\Delta^1,\prsr)
\end{equation}
and define \ref{pullback bar(i)} as its limit. 
\end{construction}
\begin{rmk}
Notice that it now follows from our definition that we have (natural) equivalences
\begin{equation*}
    (-)^{Gal(K^t/K_a)}\circ \bar{i}^*\simeq i_a^*\circ (-)^{Gal(K^t/K_a)},
\end{equation*}
as $(-)^{Gal(K^t/K_a)}: \Det{X_{S^s};\Lambda}^{Gal(K^t/K)}=\varprojlim \Det{X_{S_b}}^{Gal(K_b/K)}\rightarrow \Det{X_{S_a};\Lambda}^{Gal(K_a/K)}$ (resp. $(-)^{Gal(K^t/K_a)}: \Det{X_{\sigma^s};\Lambda}^{Gal(K^t/K)}=\varprojlim \Det{X_{\sigma_b}}^{Gal(K_b/K)}\rightarrow \Det{X_{\sigma_a};\Lambda}^{Gal(K_a/K)}$) is the projection onto the $a^{th}$ factor.
\end{rmk}
\begin{rmk} 
The construction can be performed in the language of \cite{gr17} as well.
Consider the $\oo$-functor $\ref{functoriality pushforward DM stacks}$. We will now use some notation and terminology borrowed from \cite{gr17}. As every morphism of DM stacks is representable by DM stacks, it follows from the results recalled in Section \ref{etale sheaves on Artin stacks} that for every proper morphism $f:\mathfrak{Y}\rightarrow \mathfrak{X}$, the $\oo$-functor $f_*:\Det{\mathfrak{Y};\Lambda}\rightarrow \Det{\mathfrak{X};\Lambda}$ admits a left adjoint. Moreover, $(\ref{functoriality pushforward DM stacks})$ satisfies the left Beck-Chevalley condition with respect to \emph{prop}, the class of proper morphisms. By \cite[Theorem 3.2.2, Chapter 7]{gr17}, this implies that we have a functor
\begin{equation}
    Corr(\DM{S})_{all,prop}^{prop}\rightarrow (\oo,2)\prsr.
\end{equation}
We shall restrict it to
\begin{equation}\label{functorial property *push !pull DM}
    Corr(\DM{S})_{all,prop}^{pceq}\rightarrow \prsr,
\end{equation}
where \emph{pceq}$\subseteq$\emph{prop} denotes the class of proper morphism of DM stacks such that the induced pushforward is an equivalence.
At the level of objects, this $\oo$-functor sends a DM stack $\mathfrak{X}$ to the $\oo$-category $\Det{\mathfrak{X};\Lambda}$. At the level of $1$-morphisms, it sends a correspondence 
\begin{equation*}
  \begindc{\commdiag}[12]
    \obj(-15,15)[1]{$\mathfrak{X}$}
    \obj(15,15)[2]{$\mathfrak{Y}$}
    \obj(-15,-15)[3]{$\mathfrak{Z}$}
    \mor{1}{2}{$f$}
    \mor{1}{3}{$g$}
  \enddc
\end{equation*}
to the $\oo$-functor $g_*\circ f^!:\Det{\mathfrak{Y};\Lambda}\rightarrow\Det{\mathfrak{Z};\Lambda}$.

We will now need to define a diagram
\begin{equation}
    \mathscr{A}^{op}\rightarrow \Fun(\Delta^1,Corr(\DM{S})_{all,prop}^{pceq}).
\end{equation}
In order to do so, compose the $\oo$-functors $(\DM{S})_{prop}\rightarrow (\DM{S})_{all}\rightarrow Corr(\DM{S})_{all,prop}^{prop}$ with the diagram
\begin{equation}
    \mathscr{A}^{op}\rightarrow \Fun(\Delta^1,\DM{S})
\end{equation}
induced by the morphisms $X_{\sigma}\x_{\sigma}\sigma_a\rightarrow X_{S_a}$.
This way, we get a diagram
\begin{equation}
    \mathscr{A}^{op}\rightarrow \Fun(\Delta^1,Corr(\DM{S})_{all,prop}^{prop}).
\end{equation}
For $a\leq b$ in $\mathscr{A}$, the associated morphism of correspondeces of DM stacks is
\begin{equation}\label{morphism of correspondences}
    \begindc{\commdiag}[15]
      \obj(-80,30)[0]{$\bar{X}_{\sigma_b}$}
      \obj(-30,15)[1]{$\bar{X}_{\sigma_a}\x_{\bar{X}_{S_a}}\bar{X}_{S_b}$}
      \obj(-30,-15)[3]{$\bar{X}_{S_b}$}
      \obj(30,15)[2]{$\bar{X}_{\sigma_a}$}
      \obj(30,-15)[4]{$\bar{X}_{S_a}$}
      \mor{0}{1}{$$}
      \mor{0}{3}{$i_b$}[\atright,\solidarrow]
      \mor{0}{2}{$\beta_{ab}$}
      \mor{1}{2}{$$}
      \mor{1}{3}{$$}
      \mor{3}{4}{$\beta_{ab}$}
      \mor{2}{4}{$i_a$}
    \enddc
\end{equation}

where $\bar{X}_{S_a}=[X_{S_a}/Gal(K_a/K)]$ (resp. \dots). In particular, as the morphisms $\bar{X}_{\sigma_b} \rightarrow \bar{X}_{\sigma_a}\x_{\bar{X}_{S_a}}\bar{X}_{S_b}$ are nilthickenings, we have an induced diagram
\begin{equation}
    \mathscr{A}^{op}\rightarrow \Fun(\Delta^1,Corr(\DM{S})_{all,prop}^{pceq}).
\end{equation}
By composing it with \ref{functorial property *push !pull DM}, we get a diagram
\begin{equation}
    \mathscr{A}^{op}\rightarrow \Fun(\Delta^1,\prsr).
\end{equation}
Then diagrams \ref{morphism of correspondences} take care of the functoriality (in $a,b$) of the natural equivalences
\begin{equation*}
    (i_b)_*\circ (\beta_{ab})^!\simeq (\beta_{ab})^!\circ (i_a)_* 
\end{equation*}
Finally, using the equivalences $\prsr \simeq (\prs)^{op}$ and $\Delta^1\simeq (\Delta^1)^{op}$, we get a diagram
\begin{equation}
    \mathscr{A}^{op}\rightarrow \Fun(\Delta^1,\prs).
\end{equation}
This expresses the functoriality (in $a,b$) of the equivalences
\begin{equation*}
    (-)^{Gal(K_b/K_a)}\circ i_a^*\simeq i_b^*\circ (-)^{Gal(K_b/K_a)}.
\end{equation*}
We can define \ref{pullback bar(i)} as the limit of this diagram.

Notice that, as both the inclusions $\prs \subseteq \widehat{Cat}_{\oo}$ and $\prsr \subseteq \widehat{Cat}_{\oo}$ preserve (small) limits, the source (resp. target) of this $\oo$-functor is indeed $\Det{X_{S^s};\Lambda}^{Gal(K^t/K)}$ (resp. $\Det{X_{\sigma^s};\Lambda}^{Gal(K^t/K)}$).
\end{rmk}

We are finally able to define the \emph{tame nearby cycles functor} $\Psi_{p,\eta}^{t}:\Det{X_{\eta};\Lambda}\rightarrow \Det{X_{\sigma^s};\Lambda}^{Gal(K^t/K)}$ as the composition
\begin{equation}\label{tame nearby cycles via DM stacks}
    \Det{X_{\eta};\Lambda}\simeq \Det{X_{\eta^t};\Lambda}^{Gal(K^t/K)}\xrightarrow{\bar{j}_*} \Det{X_{S^s},\Lambda}^{Gal(K^t/K)}\xrightarrow{\bar{i}^*}\Det{X_{\sigma^s};\Lambda}^{Gal(K^t/K)}.
\end{equation}
It follows immediately from the construction of $\bar{j}_*$ and $\bar{i}^*$ that 
\begin{equation}
    (-)^{I^t}\circ \Psi_{p,\eta}^t\simeq (i^{s})^*\circ(j^u)_*\circ(j^u)^*.
\end{equation}

In particular, there is a natural transformation
\begin{equation}
  (i^s)^*\rightarrow (-)^{I^t}\circ \Psi_{p,\eta}
\end{equation}
induced by the unit of the adjunction $((j^u)^*,(j^u)_*)$.
By Corollary \ref{functors to a recollement}, these data determine an $\oo$-functor
\begin{equation}
  \Psi_p^t:\Det{X;\Lambda}\rightarrow \Det{\Vs^t_p;\Lambda},
\end{equation}
the \emph{total tame vanishing cycles functor}.
We recover the \emph{tame vanishing cycles functor} 
\begin{equation}
  \Phi_p^t:\Det{X;\Lambda}\rightarrow \Det{X_{\sigma^s};\Lambda}^{Gal(K^s/K)}
\end{equation}
by applying Construction \ref{specialization functor}, i.e. as the composition
\begin{equation}
  \Det{X;\Lambda}\xrightarrow{\Psi_p^t} \Det{\Vs^t_p;\Lambda}\xrightarrow{sp}\Fun(\Delta^1,\Det{X_{\sigma^s};\Lambda}^{Gal(K^t/K)})\xrightarrow{cofiber}\Det{X_{\sigma^s};\Lambda}^{Gal(K^t/K)}.
\end{equation}
We conclude by verifying that the construction we described above agrees with the usual one.
\begin{prop}\label{comparison classical and stacky definition tame vanishing cycles}
  The $\oo$-functors $\Psi_{p,\eta}^{cl,t}$ (resp. $\Phi_{p}^{cl,t}$) and $\Psi_{p,\eta}$ (resp. $\Phi_p^t$) are equivalent.
\end{prop}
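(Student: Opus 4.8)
The plan is to compare the two pairs of functors levelwise along the filtered poset $\mathscr{A}$. By construction $\Det{X_{\sigma^s};\Lambda}^{Gal(K^t/K)}=\varprojlim_{a\in\mathscr{A}}\Det{X_{\sigma_a};\Lambda}^{Gal(K_a/K)}$, the projection onto the $a$-th factor being the functor written $(-)^{Gal(K^t/K_a)}$, so it is enough to produce, for each $a\in\mathscr{A}$, an equivalence $(-)^{Gal(K^t/K_a)}\circ\Psi^{t}_{p,\eta}\simeq(-)^{Gal(K^t/K_a)}\circ\Psi^{cl,t}_{\eta,p}$, and to check that these are natural in $a$. Granting this, the statement for $\Phi^{t}_{p}$ versus $\Phi^{cl,t}_{p}$ will follow by identifying specialization morphisms, which I take up at the end.

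On the stacky side, I would first unwind the definitions of $\bar{j}_*$ and $\bar{i}^*$. As $\bar{j}_*$ is the limit over $\mathscr{A}$ of the quotient-stack pushforwards $(\bar{j}_a)_*$, with source $\varprojlim_a\Det{[X_{\eta_a}/Gal(K_a/K)];\Lambda}$ all of whose structure maps are equivalences (because $[X_{\eta_a}/Gal(K_a/K)]\simeq X_{\eta}$), and since the Remark following Construction \ref{pullback via correspondences DM} records $(-)^{Gal(K^t/K_a)}\circ\bar{i}^*\simeq i_a^*\circ(-)^{Gal(K^t/K_a)}$, one obtains a canonical identification $(-)^{Gal(K^t/K_a)}\circ\Psi^{t}_{p,\eta}\simeq i_a^*\circ(\bar{j}_a)_*$, where a sheaf $\Gc$ on $X_{\eta}$ is regarded, through $X_{\eta}\simeq[X_{\eta_a}/Gal(K_a/K)]$, as a $Gal(K_a/K)$-equivariant sheaf on $X_{\eta_a}$, and $\bar{j}_a\colon[X_{\eta_a}/Gal(K_a/K)]\to[X_{S_a}/Gal(K_a/K)]$, $i_a\colon[X_{\sigma_a}/Gal(K_a/K)]\to[X_{S_a}/Gal(K_a/K)]$ are the morphisms appearing in \ref{morphism of correspondences}.

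The substance is to match the classical side with this. Factoring $j^t$ as $X_{\eta^t}\xrightarrow{\pi_a}X_{\eta_a}\xrightarrow{j_a}X_{S_a}\hookrightarrow X$ and $i^s$ as $X_{\sigma^s}\xrightarrow{q_a}X_{\sigma_a}\hookrightarrow X$, I would proceed in two steps. First, $\pi_a$ is a cofiltered limit of finite Galois étale covers with group $Gal(K^t/K_a)$ of pro-order invertible in $\Lambda$, and $\Gc_{|X_{\eta^t}}\simeq\pi_a^*(\Gc_{|X_{\eta_a}})$; finite étale descent (available since $\Lambda$ is torsion with the residue characteristics inverted) gives $\bigl((\pi_a)_*\pi_a^*(-)\bigr)^{Gal(K^t/K_a)}\simeq\mathrm{id}$, and because $*$-pushforward is a right adjoint it commutes with the passage to the $Gal(K^t/K_a)$-level, so the $Gal(K^t/K_a)$-level of $(j^t)_*(\Gc_{|X_{\eta^t}})$ is $(\bar{j}_a)_*\Gc$. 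Second, on these objects $(i^s)^*$ commutes with $(-)^{Gal(K^t/K_a)}$: this is the point at which one has to be careful, since $(i^s)^*$ is a left adjoint, but it follows from proper base change for the finite étale tower $X_{\sigma^s}\to X_{\sigma_a}$ together with the geometric description of the fixed-point functor as a $*$-pushforward along the homeomorphism of pro-DM stacks $[X_{\sigma^s}/Gal(k^s/k_a)]\simeq X_{\sigma_a}$ (the Remark in the subsection on $[X_{\sigma^s}/Gal(k^s/k)]$ and its finite-level analogue). Combining the two steps gives $(-)^{Gal(K^t/K_a)}\circ\Psi^{cl,t}_{\eta,p}\simeq i_a^*\circ(\bar{j}_a)_*$, matching the stacky side; and since every equivalence used is an instance of the functoriality of the equivariant six operations in the correspondence formalism of Section \ref{etale sheaves on Artin stacks} applied to the morphisms of correspondences \ref{morphism of correspondences}, they are automatically compatible with the transition maps $a\le b$. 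This yields $\Psi^{cl,t}_{\eta,p}\simeq\Psi^{t}_{p,\eta}$.

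Finally, for the vanishing cycles: both $\Phi^{cl,t}_{p}(\Fc)$ and $\Phi^{t}_{p}(\Fc)$ are, by definition, the cofiber in $\Det{X_{\sigma^s};\Lambda}^{Gal(K^t/K)}$ of a specialization morphism from $(i^s)^*\Fc$ (carrying the trivial inertia action) to the respective tame nearby cycles of $j^*\Fc$; the classical one is induced by the unit of $((j^t)^*,(j^t)_*)$, whereas the one used in Construction \ref{specialization functor} for the recollement $\Det{\Vs^t_p;\Lambda}$ corresponds, under the adjunction $(triv,(-)^{I^t})$, to the unit of $((j^u)^*,(j^u)_*)$ (using the identification $(-)^{I^t}\circ\Psi^{t}_{p,\eta}\simeq(i^s)^*\circ(j^u)_*\circ(j^u)^*$ recorded after the construction of $\Psi^{t}_{p,\eta}$). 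Under the equivalence $\Psi^{cl,t}_{\eta,p}\simeq\Psi^{t}_{p,\eta}$ just produced these two morphisms correspond, because taking $(-)^{I^t}$ of the unit $\mathrm{id}\to(j^t)_*(j^t)^*$ reproduces the unit $\mathrm{id}\to(j^u)_*(j^u)^*$ (again by finite étale descent, as in the first step above, together with the Remark identifying the restriction of $\bar{j}_*$ to unramified levels with $j_*$); hence their cofibers agree, which gives the statement for $\Phi$. I expect the hard part to be exactly the reconciliation of the classical and stacky descriptions of the composite $(i^s)^*(j^t)_*$: the classical one is packaged as a single pushforward along the pro-object $X_{\eta^t}$ followed by a single pullback along $X_{\sigma^s}$, whereas $\Psi^{t}_{p,\eta}$ is assembled levelwise on quotient stacks and only afterwards passed to the limit, and forcing the two to coincide requires commuting the left adjoint $(i^s)^*$ past the fixed-point (limit-projection) functors — which is not formal and rests genuinely on proper base change for the finite étale covers $X_{S_a}\to X$ and on finite étale descent with torsion coefficients coprime to the residue characteristics, organised $\infty$-coherently and compatibly along $\mathscr{A}$.
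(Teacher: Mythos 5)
Your proposal is correct and follows essentially the same route as the paper: both reduce to the levelwise comparison over $\mathscr{A}$, exploit the fact that the commutation $(-)^{Gal(K^t/K_a)}\circ\bar{i}^*\simeq i_a^*\circ(-)^{Gal(K^t/K_a)}$ is built tautologically into Construction \ref{pullback via correspondences DM}, and invoke continuity of the classical $Gal(K^t/K)$-action to identify the classical side levelwise with $i_a^*\circ(j_a)_*$. The only difference is one of emphasis — the paper packages the levelwise reduction through Lemma \ref{categorical lemma} and treats the classical identification as known, whereas you spell out the Galois-descent and base-change input on the classical side.
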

\begin{proof}
We obtain a canonical natural transformations $\Psi_{p,\eta}^{cl,t}\rightarrow \Psi_{p,\eta}^{t}$ (resp. $\Phi_{p}^{cl,t}\rightarrow \Phi_{p}^{t}$) by the canonical equivalences
\begin{equation}
    \Det{X_{\sigma_a};\Lambda}^{Gal(K_a/K)}\simeq \Det{[X_{\sigma_a}/Gal(K_a/K)];\Lambda} \hspace{0.5cm} (\text{resp. \dots})
\end{equation}
Let $\Fc \in \Det{X;\Lambda}$. Write $K^t=\varinjlim_aK_a$, where $a$ runs in the filtered set $\mathscr{A}$ of finite Galois extensions $K\subseteq K_a \subseteq K^t$. Consider the diagrams
\begin{equation}
  X_{\sigma_a}\xrightarrow{i_a}X_{S_a} \xleftarrow{j_a}X_{\eta_a}.
\end{equation} 
  Then $\Psi_{p,\eta}^{cl,t}(\Fc)\simeq \varinjlim_a i_a^*\circ(j_a)_*(\Fc_a)$, where $\Fc_a=(X_{S_a}\rightarrow X)^*\Fc$. This is an expression of the fact that $(\Psi_{p,\eta}^{cl,t}(\Fc))^{Gal(K^t/K_a)}\simeq i_a^*\circ(j_a)_*(\Fc_a)$ and that the action of $Gal(K^t/K)$ is continuous. Since $i_a^*\circ (j_a)_*$ identifies with $\bar{i}_a^*\circ(\bar{j}_a)_*$, to prove the first claim we are left to show that $\Psi_{p,\eta}^t(\Fc)\simeq \varinjlim_a \bar{i}_a^*\circ (\bar{j}_a)_*\circ (\Fc_a)$. By Lemma \ref{categorical lemma}, we only need to show that
  \begin{equation}
      (\Psi_{p,\eta}^{t}(\Gc))^{Gal(K^t/K_a)}\simeq \bar{i}_a^*\circ (\bar{j}_a)_*(\Fc_a),
  \end{equation}
  which follows immediately from the definitions of $\bar{i}^*$ and $\bar{j}_*$.
The second claim follows from
  \begin{equation}
    \Phi_{p}^{cl,t}(\Fc)\simeq \varinjlim_a\Bigl (cofib\bigl (i_a^*(\Fc_a) \rightarrow i_a^*\circ (j_a)_*\circ j_a^*(\Fc_a)\bigr )\Bigr )
  \end{equation}
  \begin{equation}
    \simeq \varinjlim_a\Bigl (cofib\bigr (\bar{i}_a^*(\Fc_a) \rightarrow \bar{i}_a^*\circ (\bar{j}_a)_*\circ \bar{j}_a^*(\Fc_a)\bigr )\Bigr )\simeq \Phi_p^t(\Fc).
  \end{equation}
\end{proof}

\section{Tame nearby and vanishing cycles over \texorpdfstring{$\ag{S}$}{ag}}\label{tame nearby and vanishing cycles over ag}

Let $S$ denote a base scheme. In \cite{p20}, the second author introduced a formalism similar to that of inertia invariant vanishing cycles. In order to set up a complete theory, the language of stacks is needed. Indeed, we will define a sheaf of (tame) vanishing cycles for a scheme $X$ together with a global section $s$ of a line bundle $\Lc$. The datum of the triplet $(X,\Lc,s)$ is equivalent to that of a morphism $X\rightarrow \ag{S}$. In fact, such a morphism is the datum of a diagram
\begin{equation}
  X\leftarrow P \rightarrow \aff{1}{S},
\end{equation}
where $P\rightarrow X$ is a $\gm{S}$-torsor and $P \rightarrow \aff{1}{S}$ a $\gm{S}$-equivariant morphism.

The main idea is then to let $\ag{S}$ play the role of a disk. Then, $\bgm{S}$ will play the role of the center of the disk, while $S\simeq [\aff{1}{S}-\{ 0 \}/\gm{S}] \simeq \ag{S}-\bgm{S}$ that of the punctured disk. In the introduction of \cite{p20} there is a mental map with an analogy between the different disks in our, the topological and the geometric/arithmetic setting.

Let $X\rightarrow \ag{S}$ be an $X$-point of $\ag{S}$, corresponding to $(X,\Lc, s)$, where $\Lc$ is a line bundle on $X$ and $s \in \Hu^0(X,\Lc)$.
We will consider the diagram
\begin{equation}
  \begindc{\commdiag}[13]
    \obj(-50,15)[1]{$X_0$}
    \obj(0,15)[2]{$X$}
    \obj(50,15)[3]{$U_X$}
    \obj(-50,-15)[4]{$\bgm{S}$}
    \obj(0,-15)[5]{$\ag{S}$}
    \obj(50,-15)[6]{$S,$}
    \mor{1}{2}{$$}
    \mor{3}{2}{$$}
    \mor{1}{4}{$$}
    \mor{2}{5}{$$}
    \mor{3}{6}{$$}
    \mor{4}{5}{$$}
    \mor{6}{5}{$$}
  \enddc
\end{equation}
where both squares are Cartesian.

\begin{rmk}
Notice that $X_0\simeq \Vu(s)$ is the zero locus of $s$, while $U_X\simeq X-X_0\simeq X-\Vu(s)$ is its open complementary.

Moreover, let $\aff{1}{S}\rightarrow \ag{S}$ be the canonical atlas of $\ag{S}$ (i.e., the $\aff{1}{S}$-point of $\ag{S}$ corresponding to $(\aff{1}{S},\Oc_S[t],t)$). By pulling back the diagram above along this smooth morphism we get
\begin{equation}
\begindc{\commdiag}[13]
    \obj(-50,15)[1]{$P_{X_0}$}
    \obj(0,15)[2]{$P_X$}
    \obj(50,15)[3]{$P_{U_X}$}
    \obj(-50,-15)[4]{$S$}
    \obj(0,-15)[5]{$\aff{1}{S}$}
    \obj(50,-15)[6]{$\gm{S}.$}
    \mor{1}{2}{$$}
    \mor{3}{2}{$$}
    \mor{1}{4}{$$}
    \mor{2}{5}{$$}
    \mor{3}{6}{$$}
    \mor{4}{5}{$$}
    \mor{6}{5}{$$}
\enddc
\end{equation}

All morphisms in this diagram are $\gm{S}$-equivariant. Also, the canonical morphism $P_X\rightarrow X$ (resp. $P_{X_0}\rightarrow X_0$) is the $\gm{S}$-torsor $(\V(\Lc)-X)\rightarrow X$ (resp. $(\V(\Lc_{|X_0})-X_0)\rightarrow X_0$).
\end{rmk}

\begin{notation}
Let $\N_S$ denote the set of natural numbers that are invertible in $\Oc_S$. We will consider it as a filtered set, where $n\leq m$ if and only if $n$ divides $m$.
\end{notation}
\begin{construction}
Let $n\in \N_S$. Elevation to the $n^{\text{th}}$-power of the parameter of $\aff{1}{S}$ induces a morphism of Artin stacks
\begin{equation}
    \Theta^{(n)}:\ag{S}\rightarrow \ag{S}.
\end{equation}
More precisely, it is the morphism of quotient stacks induced by the commutative square
\begin{equation}
    \begindc{\commdiag}[15]
      \obj(-40,15)[1]{$\gm{S}\x \aff{1}{S}$}
      \obj(-10,15)[2]{$\aff{1}{S}$}
      \obj(-40,-15)[3]{$\gm{S}\x \aff{1}{S}$}
      \obj(-10,-15)[4]{$\aff{1}{S}$}
      \mor{1}{2}{$$}
      \mor{1}{3}{$$}
      \mor{2}{4}{$$}
      \mor{3}{4}{$$}
      \obj(10,15)[5]{$(u,x)$}
      \obj(60,15)[6]{$u\cdot x$}
      \obj(10,-15)[7]{$(u^n,x^n)$}
      \obj(60,-15)[8]{$u^n\cdot x^n=(u\cdot x)^n.$}
      \mor{5}{6}{$$}[\atright, \aplicationarrow]
      \mor{5}{7}{$$}[\atright, \aplicationarrow]
      \mor{6}{8}{$$}[\atright, \aplicationarrow]
      \mor{7}{8}{$$}[\atright, \aplicationarrow]
    \enddc
\end{equation}
As functor of points, for an $S$-scheme $X$, the induced functor
$\ag{S}(X)\rightarrow \ag{S}(X)$ sends an object $(\Lc,s)$ to $(\Lc^{\otimes n},s^{\otimes n})$.
\end{construction}
\begin{rmk}
These morphisms were first introduced (independently) in \cite{cad07} and \cite{agv08}.
\end{rmk}
\begin{rmk}
For every $n,m \in \N_S$, there are canonical equivalences of morphisms of Artin stacks
\begin{equation}
    \Theta^{(n)}\circ \Theta^{(m)}\simeq \Theta^{(nm)}\simeq \Theta^{(m)}\circ \Theta^{(n)}.
\end{equation}
\end{rmk}

\begin{defn}
Let $p:X\rightarrow \ag{S}$ be a morphism of Artin stacks. For every $n\in \N_S$ we define
\begin{equation}
    X^{(n)}:=X\x_{p,\ag{S},\Theta^{(n)}}\ag{S},
\end{equation}
\begin{equation}
    X_0^{(n)}:=X_0\x_X X^{(n)}.
\end{equation}
\end{defn}
For every $n,m\in N_S$ we dispose of a diagram of Artin stacks
\begin{equation}
    \begindc{\commdiag}[15]
      \obj(-60,15)[1]{$X_0^{(nm)}$}
      \obj(0,15)[2]{$X^{(nm)}$}
      \obj(60,15)[3]{$U_X$}
      \obj(-60,-15)[4]{$X_0^{(n)}$}
      \obj(0,-15)[5]{$X^{(n)}$}
      \obj(60,-15)[6]{$U_X.$}
      \mor{1}{2}{$i_X^{(nm)}$}
      \mor{3}{2}{$j_X^{(nm)}$}[\atright,\solidarrow]
      \mor{1}{4}{$u_{X_0,(n)}^{(nm)}$}[\atright,\solidarrow]
      \mor{2}{5}{$u_{X,(n)}^{(nm)}$}
      \mor{3}{6}{$id_{U_X}$}
      \mor{4}{5}{$i_X^{(n)}$}[\atright,\solidarrow]
      \mor{6}{5}{$j_X^{(n)}$}
    \enddc
\end{equation}
where all squares are Cartesian.

It follows immediately from the functoriality of fibre products that we dispose of diagrams of Artin stacks
\begin{equation}
    X_0^{(\bullet)}, X^{(\bullet)}: \N_S^{\textup{op}}\rightarrow \Ar{S}.
\end{equation}
Moreover, if we consider $U_X$ as a constant diagram $\N_S^{\textup{op}}\rightarrow \Ar{S}$, we have morphisms
\begin{equation}
    i_X^{(\bullet)}:X_0^{(\bullet)}\rightarrow X^{(\bullet)} \leftarrow U_X:j_X^{(\bullet)}.
\end{equation}
\begin{notation}
  Let $n\in \N_S$ and let $\Lambda$ be a ring of coefficients as in section $\S \ref{etale sheaves on Artin stacks}$. Let $\mu_n$ denote the group $\Oc_S[t]/(t^n-1)$. We will adopt the following notation:
  \begin{equation}
      \Det{X;\Lambda}^{\mu_n}:=\Det{X^{(n)};\Lambda},
  \end{equation}
  \begin{equation}
      \Det{X_0;\Lambda}^{\mu_n}:=\Det{X_0^{(n)};\Lambda}.
  \end{equation}
\end{notation}
The reason why we introduced the above notation is that $X^{(n)}$ (resp. $X_0^{(n)}$) plays the role of $[X/\mu_n]$ (resp. $[X_0/\mu_n]$)\footnote{Notice that, however, $X_0^{(n)}$ is a nilthickening of $[X_0/\mu_n]$}. Therefore, we are justified to think of complexes on $X^{(n)}$ (resp. $X_0^{(n)}$) as complexes on $X$ (resp. $X_0$) endowed with an action of $\mu_n$.
From this perspective, the adjunctions $\bigl ((u_{X_0,(n)}^{(nm)})^*,(u_{X_0,(n)}^{(nm)})_* \bigr )$, $\bigl ((u_{X,(n)}^{(nm)})^*,(u_{X,(n)}^{(nm)})_* \bigr )$ identify with 
\begin{equation}
    \textup{triv}:\Det{X_0;\Lambda}^{\mu_n}\leftrightarrows \Det{X_0;\Lambda}^{\mu_{nm}}: (-)^{\mu_m},
\end{equation}
\begin{equation}
    \textup{triv}:\Det{X;\Lambda}^{\mu_n}\leftrightarrows \Det{X;\Lambda}^{\mu_{nm}}: (-)^{\mu_m}.
\end{equation}

Let
\begin{equation}\label{functorial properties pushforward Art}
    \Ar{S}\rightarrow \prsr
\end{equation}
\begin{equation*}
    (f:\mathfrak{Y}\rightarrow \mathfrak{X})\mapsto \bigl ( f_*:\Det{\mathfrak{Y};\Lambda}\rightarrow \Det{\mathfrak{X};\Lambda} \bigr )
\end{equation*}
and consider the diagram of presentable stable $\oo$-categories
\begin{equation}
    (\ref{functorial properties pushforward Art})\circ X_0^{(\bullet)}: \N_S^{\textup{op}}\rightarrow \prsr.
\end{equation}
\begin{defn}
We define the $\oo$-category $\Det{X_0;\Lambda}^{\mu_{\oo}}$ as the limit of the diagram above:
\begin{equation}
    \Det{X_0;\Lambda}^{\mu_{\oo}}:= \varprojlim_{\N_S} \Det{X_0^{(\bullet)};\Lambda} \in \prsr.
\end{equation}
Similarly, we define the $\oo$-category $\Det{X;\Lambda}^{\muinf{}}$ as the limit of the diagram
\begin{equation}
    (\ref{functorial properties pushforward Art})\circ X^{(\bullet)}: \N_S^{\textup{op}}\rightarrow \prsr.
\end{equation}
Objects in these presentable stable $\oo$-category should be thought as complexes on $X_0$ (resp. $X$) endowed with a \emph{continuous} action of $\mu_{\oo}:=\varprojlim \mu_n$.
\end{defn}
\begin{rmk}
Notice that $\Det{X_0;\Lambda}^{\mu_{\oo}}$ is also a limit in $\widehat{\mathcal{C}at_{\oo}}$, the big $\oo$-category of $\oo$-categories. See \cite[Theorem 5.5.3.18]{lu09} and \cite[Theorem 1.1.4.4]{lu17}.
\end{rmk}
For every $n\in \N_S$ there is a short exact sequence of profinite groups
\begin{equation}
    1\rightarrow n\cdot \mu_{\oo}\rightarrow \mu_{\oo}\rightarrow \mu_n \rightarrow 1.
\end{equation}
The canonical projection $\Det{X_0;\Lambda}^{\mu_{\oo}}\rightarrow \Det{X_0;\Lambda}^{\mu_n}$ is the right adjoint in the adjunction
\begin{equation}
    \textup{triv}: \Det{X_0;\Lambda}^{\mu_n}\leftrightarrows \Det{X_0;\Lambda}^{\mu_{\oo}}: (-)^{n\cdot \mu_{\oo}}
\end{equation}
that corresponds to taking fixed points with respect to the action of the subgroup $n\cdot \mu_{\oo}\subseteq \mu_{\oo}$.

In particular, we dispose of the adjunction
\begin{equation}\label{triv muinf adjunction}
    \textup{triv}:\Det{X_0;\Lambda}\leftrightarrows \Det{X_0;\Lambda}^{\mu_{\oo}}:(-)^{\mu_{\oo}}.
\end{equation}
\begin{defn}
We define $\Det{\Vs_p^t;\Lambda}$ as the recollement determined by the adjunction \ref{triv muinf adjunction}.
\end{defn}

\begin{lmm}
$\Det{\Vs^t_p;\Lambda}$ is a presentable stable $\oo$-category.
\end{lmm}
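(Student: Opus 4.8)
The plan is to realise $\Det{\Vs^t_p;\Lambda}$ as a pullback of presentable stable $\oo$-categories along accessible exact functors, and then to use that $\prsr$ is closed under small limits inside $\widehat{Cat}_{\oo}$.

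First I would unwind the definition. By construction, $\Det{\Vs^t_p;\Lambda}$ is the recollement of $\Cc_0:=\Det{X_0;\Lambda}$ and $\Cc_1:=\Det{X_0;\Lambda}^{\mu_{\oo}}$ whose gluing functor $L_0\circ i_1\colon\Cc_1\to\Cc_0$ is the right adjoint $(-)^{\mu_{\oo}}$ of the adjunction \ref{triv muinf adjunction}. Applying Proposition \ref{explicit description recollement} to this recollement yields an equivalence
\begin{equation*}
  \Det{\Vs^t_p;\Lambda}\;\simeq\;\Fun(\Delta^1,\Det{X_0;\Lambda})\times_{\Det{X_0;\Lambda}}\Det{X_0;\Lambda}^{\mu_{\oo}},
\end{equation*}
where the fibre product is taken along the evaluation $ev_1\colon\Fun(\Delta^1,\Det{X_0;\Lambda})\to\Det{X_0;\Lambda}$ and along $(-)^{\mu_{\oo}}\colon\Det{X_0;\Lambda}^{\mu_{\oo}}\to\Det{X_0;\Lambda}$.

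Then I would check that each ingredient of this pullback is presentable stable and each leg is a morphism of $\prsr$. The $\oo$-category $\Det{X_0;\Lambda}$ is presentable and stable by \S\ref{etale sheaves on Artin stacks}, and hence so is $\Fun(\Delta^1,\Det{X_0;\Lambda})$, $\Delta^1$ being a small $\oo$-category. The $\oo$-category $\Det{X_0;\Lambda}^{\mu_{\oo}}$ is by definition a limit of presentable stable $\oo$-categories formed in $\prsr$, a limit which is moreover computed in $\widehat{Cat}_{\oo}$ (see \cite[Theorem 5.5.3.18]{lu09}, \cite[Theorem 1.1.4.4]{lu17}), so it too is presentable and stable. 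The evaluation $ev_1$ admits both a left and a right adjoint, given by left and right Kan extension along $\{1\}\hookrightarrow\Delta^1$; in particular it is accessible and exact. The functor $(-)^{\mu_{\oo}}$ is a right adjoint between presentable $\oo$-categories, hence accessible and exact. Therefore the displayed fibre product is a limit in $\prsr$; since the inclusion $\prsr\hookrightarrow\widehat{Cat}_{\oo}$ preserves small limits, and since a pullback of stable $\oo$-categories along exact functors is again stable, the $\oo$-category $\Det{\Vs^t_p;\Lambda}$ is presentable and stable.

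I do not expect any genuine obstacle: the whole point is to pass from the recollement to the pullback description of Proposition \ref{explicit description recollement} and then to invoke the usual closure properties of $\prsr$. The only two points worth a moment's care are the identification of the gluing functor of the recollement with $(-)^{\mu_{\oo}}$ — which is what makes the two legs of the pullback accessible and exact — and the fact that the pullback of the underlying $\oo$-categories does compute the limit taken in $\prsr$.
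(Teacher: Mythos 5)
Your argument is correct and coincides with the paper's own ``alternative'' proof, which likewise invokes Proposition \ref{explicit description recollement} to exhibit $\Det{\Vs^t_p;\Lambda}$ as a fiber product in $\prsr$ and then uses completeness of $\prsr$. The paper's primary argument instead deduces presentability directly from accessibility of the right adjoint $(-)^{\mu_{\oo}}$ via \cite[Corollary 5.4.7.17]{lu09} and stability via \cite[Proposition A.8.17]{lu17}, but your route is equally valid and is explicitly endorsed in the text.
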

\begin{proof}
$\Det{X_0;\Lambda}$ and $\Det{X_0;\Lambda}^{\mu_{\oo}}$ are presentable and by \cite[Proposition 5.4.7.7]{lu09} $(-)^{\mu_{\oo}}$ is an accessible functor. By \cite[Corollary 5.4.7.17]{lu09}, $\Det{\Vs_p^t;\Lambda}$ is accessible. As it also admits small colimits, $\Det{\Vs_p^t;\Lambda}$ is presentable.

It is stable by \cite[Proposition A.8.17]{lu17}.

Alternatively, by Proposition \ref{explicit description recollement} one sees that $\Det{\Vs_p^t;\Lambda}$ is a fiber product in $\prsr$, which is a complete (big) $\oo$-category (\cite[Theorem 5.5.3.18]{lu09}).
\end{proof}

\subsection*{Tame nearby cycles over \texorpdfstring{$\ag{S}$}{ag}}
We will define a tame nearby cycles functor in this $\gm{}$-equivariant setting following the lines of \ref{tame nearby cycles via DM stacks}.
\begin{construction}
Consider the diagram
\begin{equation}
    j_X^{(\bullet)}:\N_S^{op}\rightarrow \Fun(\Delta^1,\Ar{S}).
\end{equation}
We define the $\oo$-functor
\begin{equation}
    (\bar{j}_X)_*:\Det{U_X;\Lambda}\rightarrow \Det{X;\Lambda}^{\muinf{}}
\end{equation}
as the limit of the diagram
\begin{equation}
    (\ref{functorial properties pushforward Art})\circ j_X^{(\bullet)}:\N_S^{op}\rightarrow \Fun(\Delta^1,\prsr).
\end{equation}
\end{construction}

The next step will be constructing the $\oo$-functor
\begin{equation}\label{pullback bar(i)X}
    \bar{i}_X^*:\Det{X;\Lambda}^{\muinf{}}\rightarrow \Det{X_0;\Lambda}^{\muinf{}}.
\end{equation}
For the same reason we explained in Remark \ref{remark about definition pullback compatible with continuous actions}, we will not define it as the left adjoint to the limit of the diagram
\begin{equation}
    (\ref{functorial properties pushforward Art})\circ i_X^{(\bullet)}:\N_S^{op}\rightarrow \Fun(\Delta^1,\prsr),
\end{equation}
but rather we will follow the same path of Construction \ref{pullback via correspondences DM}.

\begin{construction}\label{pullback via correspondences Art}
Consider the diagram $X^{(\bullet)}:\N_S^{op}\rightarrow \Ar{S}$. Also consider the closed embedding $X_0 \hookrightarrow X$. 
Then we can use the construction of Appendix \ref{appendix A} and get a diagram
\begin{equation}
    \N_S^{op }\rightarrow \Fun(\Delta^1,Corr(N(\Ar{S}))_{F,all}).
\end{equation}
We can compose it with the $\oo$-functor \ref{six functor formalism with left adjoints} and get a diagram
\begin{equation}
    \N_S^{op}\rightarrow \Fun(\Delta^1,\prs).
\end{equation}
Finally, using the equivalences $\prs \simeq (\prsr)^{op}$ and $\Delta^1\simeq (\Delta^1)^{op}$ we get another diagram
\begin{equation}
    \N_S^{op}\rightarrow \Fun(\Delta^1,\prsr)
\end{equation}
that expresses the functoriality of the equivalences
\begin{equation*}
    (-)^{m\cdot \mu_{\oo}}\circ (i_X^{(nm)})^*\simeq (i_X^{(n)})^*\circ (-)^{m\cdot \mu_{\oo}}.
\end{equation*}
We define \ref{pullback bar(i)X} as its limit.
\end{construction}
\begin{rmk}
One can construct the $\oo$-functor \ref{pullback bar(i)X} also using the formalism of \cite{gr17}. Consider the $\oo$-functor \ref{functorial properties pushforward Art}. Moreover, let \emph{pDM} denote the class of proper morphisms representable by DM stacks in $\Ar{S}$. By the results reminded in Section \ref{etale sheaves on Artin stacks}, every morphism in \emph{pDM} satisfies the left Beck-Chevalley condition. Then, by \cite[Theorem 3.2.2, Chapter 7]{gr17} we have the functor
\begin{equation}
    Corr(\Ar{S})_{all,pDM}^{pDM}\rightarrow (\oo,2)\prsr.
\end{equation}
Similarly to what we did in Construction \ref{pullback via correspondences DM}, we shall restrict it to
\begin{equation}\label{functorial property *push !pull Ar}
    Corr(\Ar{S})_{all,pDM}^{pceq}\rightarrow \prsr,
\end{equation}
where \emph{pceq}$\subseteq$\emph{pDM} denotes the class of proper morphism representable by DM stacks such that the induced pushforward is an equivalence.
Just as in Construction \ref{pullback via correspondences DM}, at the level of objects this $\oo$-functor sends an Artin stack $\mathfrak{X}$ to the $\oo$-category $\Det{\mathfrak{X};\Lambda}$ and it sends a correspondence 
\begin{equation*}
  \begindc{\commdiag}[12]
    \obj(-15,15)[1]{$\mathfrak{X}$}
    \obj(15,15)[2]{$\mathfrak{Y}$}
    \obj(-15,-15)[3]{$\mathfrak{Z}$}
    \mor{1}{2}{$f$}
    \mor{1}{3}{$g$}
  \enddc
\end{equation*}
to the $\oo$-functor $g_*\circ f^!:\Det{\mathfrak{Y};\Lambda}\rightarrow\Det{\mathfrak{Z};\Lambda}$.

Consider the composition $\oo$-functors $(\Ar{S})_{pDM}\rightarrow (\Ar{S})_{all}\rightarrow Corr(\Ar{S})_{all,pDM}^{pDM}$ with diagram ${i'}_X^{(\bullet)}:{X'}_0^{(\bullet)}\rightarrow X^{(\bullet)}$. where ${X'}_0^{(n)}=X_0\x_{p_0,\bgm{S},\Theta_0^{(n)}}\bgm{S}$. This way, we get a diagram
\begin{equation}
    \N_S^{op}\rightarrow \Fun(\Delta^1,Corr(\Ar{S})_{all,pDM}^{pDM}).
\end{equation}
For $n, m$ in $\N_S$, the associated morphism of correspondeces of Artin stacks is
\begin{equation}\label{morphism of correspondences Artin}
    \begindc{\commdiag}[15]
      \obj(-80,30)[0]{${X'}_0^{(nm)}$}
      \obj(-30,15)[1]{${X'}_0^{(n)}\x_{X^{(n)}}X^{(nm)}$}
      \obj(-30,-15)[3]{$X^{(nm)}$}
      \obj(30,15)[2]{${X'}_0^{(n)}$}
      \obj(30,-15)[4]{$X^{(n)}$}
      \mor{0}{1}{$$}
      \mor{0}{3}{${i'}_X^{(nm)}$}[\atright,\solidarrow]
      \mor{0}{2}{$u_{X_0,(n)}^{(nm)}$}
      \mor{1}{2}{$$}
      \mor{1}{3}{$$}
      \mor{3}{4}{$u_{X,(n)}^{(nm)}$}
      \mor{2}{4}{${i'}_X^{(n)}$}
    \enddc
\end{equation}

The morphisms ${X'}_0^{(nm)} \rightarrow {X'}_0^{(n)}\x_{X^{(n)}}X^{(nm)}$ are nilthickenings and therefore they belong to \emph{pDM}. Thus, we have an induced diagram
\begin{equation}
    \N_S^{op}\rightarrow \Fun(\Delta^1,Corr(\Ar{S})_{all,pDM}^{pceq}).
\end{equation}
By composing it with \ref{functorial property *push !pull Ar}, we get a diagram
\begin{equation}
    \N_S^{op}\rightarrow \Fun(\Delta^1,\prsr),
\end{equation}
which provides us with the functoriality of the equivalences
\begin{equation*}
    ({i'}_X^{(nm)})_*\circ (u_{X,(n)}^{(nm)})^!\simeq (u_{X_0,(n)}^{(nm)})^!\circ ({i'}_X^{(n)})_*. 
\end{equation*}
Passing to left adjoints, i.e. using the equivalences $\prsr \simeq (\prs)^{op}$ and $\Delta^1\simeq (\Delta^1)^{op}$ as in Construction \ref{pullback via correspondences DM}, we get a diagram
\begin{equation}
    \N_S^{op}\rightarrow \Fun(\Delta^1,\prs)
\end{equation}
that expresses the functoriality of the equivalences
\begin{equation*}
    (-)^{m\cdot \mu_{\oo}}\circ ({i'}_X^{(nm)})^*\simeq ({i'}_X^{(n)})^*\circ (-)^{m\cdot \mu_{\oo}}.
\end{equation*}
We define \ref{pullback bar(i)X} as the limit of this diagram.

The preservation of (small) limits of both inclusions $\prs \subseteq \widehat{Cat}_{\oo}$ and $\prsr \subseteq \widehat{Cat}_{\oo}$ guarantees that the source (resp. target) of this $\oo$-functor is indeed $\Det{X;\Lambda}^{\mu_{\oo}}$ (resp. $\Det{X_0;\Lambda}^{\mu_{\oo}}$).
\end{rmk}
\begin{defn}
We define \emph{tame nearby cycles over} $\ag{S}$ as the $\oo$-functor
\begin{equation}
    \Psi_{\eta,p}^t:=\bar{i}_X^*\circ (\bar{j}_X)_*:\Det{U_X;\Lambda}\rightarrow \Det{X_0;\Lambda}^{\mu_{\oo}}.
\end{equation}
For $\Fc \in \Det{U_X;\Lambda}$, we will refer to $\Psi_{\eta,p}^t(\Fc)$ as the \emph{sheaf of tame nearby cycles with coefficients in} $\Fc$.
\end{defn}

\subsection*{Tame vanishing cycles over \texorpdfstring{$\ag{S}$}{ag}}
\begin{rmk}\label{muinf composed with tame nearby cycles}
There is an equivalence of $\oo$-functors
\begin{equation}
  (i_X^{(1)})^*\circ (j_X^{(1)})_*\simeq (-)^{\mu_{\oo}}\circ \Psi_{\eta,p}^t:\Det{U_X;\Lambda}\rightarrow \Det{X_0;\Lambda}.
\end{equation}
Indeed, by construction we have that 
\begin{equation}
    (-)^{\mu_{\oo}}\circ \bar{i}_X^{*}\simeq (i_X^{(1)})^*\circ (-)^{\mu_{\oo}}, \hspace{1cm}(-)^{\mu_{\oo}}\circ (\bar{j}_X)_{*}\simeq (j_X^{(1)})_*.
\end{equation}
\end{rmk}
The previous remark provides us with a natural transformation
\begin{equation}
  i_X^*=(i_X^{(1)})^*\rightarrow (-)^{\mu_{\oo}}\circ \Psi_{\eta,p}^t\circ (j_X^{(1)})^*\simeq (i_X^{(1)})^*\circ (j_X^{(1)})_*\circ (j_X^{(1)})^*:\Det{X;\Lambda}\rightarrow \Det{X_0;\Lambda},
\end{equation}
induced by the unit of the adjunction $((j_X^{(1)})^*,(j_X^{(1)})_*)$.
\begin{defn}
Define the $\oo$-functor of \emph{total tame vanishing cycles over $\ag{S}$}
\begin{equation}
\Psi_p^t:\Det{X;\Lambda}\rightarrow \Det{\Vs_p^t;\Lambda}
\end{equation}
to be the functor obtained by applying Corollary $\ref{functors to a recollement}$ to $(i_X^*,\Psi_{\eta,p}^t\circ (j_X^{(1)})^*,i_X^*\rightarrow (-)^{\mu_{\oo}}\circ \Psi_{\eta,p}^t\circ (j_X^{(1)})^*)$.

For $\Fc \in \Det{X;\Lambda}$, we refer to $\Psi_{p}^t(\Fc)$ as the \emph{sheaf of total tame vanishing cycles with coefficients in $\Fc$}.
\end{defn}
\begin{defn}
We define \emph{tame vanishing cycles over $\ag{S}$} as the $\oo$-functor
\begin{equation}
\Phi_p^t:\Det{X;\Lambda}\rightarrow \Det{X_0;\Lambda}^{\mu_{\oo}},
\end{equation}
obtained as the following composition
\begin{equation}
\Det{X;\Lambda}\xrightarrow{\Psi_p^t}\Det{\Vs_{p}^t;\Lambda}\xrightarrow{sp}\Fun(\Delta^1,\Det{X_0;\Lambda}^{\mu_{\oo}})\xrightarrow{cofib}\Det{X_0;\Lambda}^{\mu_{\oo}}.
\end{equation}
For $\Fc \in \Det{X;\Lambda}$, we refer to $\Phi_{p}^t(\Fc)$ as the \emph{sheaf of tame vanishing cycles with coefficients in $\Fc$}.
\end{defn}

\section{Functorial behaviour of tame nearby and vanishing cycles over \texorpdfstring{$\ag{S}$}{ag}}\label{functorial behaviour}
Let $f:Y\rightarrow X$ be a morphism over $\ag{S}$, i.e. assume that
\begin{equation}
  \begindc{\commdiag}[13]
    \obj(-30,15)[1]{$Y$}
    \obj(30,15)[2]{$X$}
    \obj(0,-15)[3]{$\ag{S}$}
    \mor{1}{2}{$f$}
    \mor{1}{3}{$q$}[\atright, \solidarrow]
    \mor{2}{3}{$p$}
  \enddc
\end{equation}
commutes. 
\begin{rmk}Assume that $X$ and $Y$ are $S$-schemes. Explicitly, this means that if $p$ is determined by $(X\rightarrow S, \Lc,s)$ and $q$ by $(Y\rightarrow S, \Mcc,t)$, then $f$ is an $S$-morphism and we are given an isomorphism $\alpha: \Mcc\simeq f^*\Lc$ such that $t$ corresponds to $f^*(s)$ under $\alpha$.
\end{rmk}
Then we have the following commutative diagrams
\begin{equation}
  \begindc{\commdiag}[18]
    \obj(-50,15)[1]{$Y_0^{(n)}$}
    \obj(0,15)[2]{$Y^{(n)}$}
    \obj(50,15)[3]{$U_Y$}
    \obj(-50,-15)[4]{$X_0^{(n)}$}
    \obj(0,-15)[5]{$X^{(n)}$}
    \obj(50,-15)[6]{$U_X.$}
    \mor{1}{2}{$i_Y^{(n)}$}
    \mor{3}{2}{$j_Y^{(n)}$}[\atright,\solidarrow]
    \mor{4}{5}{$i_X^{(n)}$}
    \mor{6}{5}{$j_X^{(n)}$}[\atright,\solidarrow]
    \mor{1}{4}{$f_0^{(n)}$}
    \mor{2}{5}{$f^{(n)}$}
    \mor{3}{6}{$f_U$}
    
  \enddc
\end{equation}
We will now define an adjunction 
\begin{equation}\label{star adjunction "vanishing topoi"}
  f_{\Vs^t}^*:\Det{\Vs^t_p;\Lambda}\leftrightarrows \Det{\Vs^t_q;\Lambda}: f_{\Vs^t,*}
\end{equation}
using the results we discussed in Section \ref{section recollements}.

Consider the adjunction 
\begin{equation}\label{star adjunction f_0}
  f_0^*=(f_0^{(1)})^*:\Det{X_0;\Lambda}\leftrightarrows \Det{Y_0;\Lambda}:(f_0^{(1)})_*=(f_0)_*
\end{equation}
induced by pullback/pushforward along $f_0=f_0^{(1)}:Y_0\rightarrow X_0$.

Clearly, the maps $f_0^{(n)}:Y_0^{(n)}\rightarrow X_0^{(n)}$ arrange in a morphism of diagrams of Artin stacks
\begin{equation}\label{diagram f_0^(bullet)}
  f_0^{(\bullet)}:Y_0^{(\bullet)}\rightarrow X_0^{(\bullet)} : \N_S^{op}\rightarrow \Ar{S}.
\end{equation}
In particular, we have adjunctions
\begin{equation}
  (f_0^{(n)})^*:\Det{X_0;\Lambda}^{\mu_n}\leftrightarrows \Det{Y_0;\Lambda}^{\mu_n}:(f_0^{(n)})_*.
\end{equation}

\begin{construction}\label{*pullback bar(f)_0} 

We will define an adjunction
\begin{equation}\label{star adjunction f_0tilde}
  (\bar{f}_0)^*:\Det{X_0;\Lambda}^{\mu_{\oo}}\leftrightarrows \Det{Y_0;\Lambda}^{\mu_{\oo}}:(\bar{f}_0)_*.
\end{equation}
In order to do so, consider the diagram
\begin{equation}
    X_0^{(\bullet)}:\N_S^{op}\rightarrow \Ar{S}.
\end{equation}
By considering the morphism $Y_0\rightarrow X_0$ and applying the construction in Appendix \ref{appendix A}, we get a diagram
\begin{equation}
    \N_S^{op}\x \Delta^1\rightarrow Corr(N(\Ar{S}))_{F,all}.
\end{equation}
If we then compose it with the $\oo$-functor \ref{six functor formalism with left adjoints}, we obtain a diagram
\begin{equation}
    \N_S^{op}\rightarrow \Fun(\Delta^1,\prs)
\end{equation}
which gives us homotopy coherent equivalences
\begin{equation}
    (-)^{\mu_m}\circ (f_0^{(nm)})^*\simeq (f_0^{n})^*\circ (-)^{\mu_m}.
\end{equation}
We define $(\bar{f}_0)^*:\Det{X_0;\Lambda}^{\mu_{\oo}}\rightarrow \Det{Y_0;\Lambda}^{\mu_{\oo}}$ as the limit of this diagram and $(\bar{f}_0)_*:\Det{Y_0;\Lambda}^{\mu_{\oo}}\rightarrow \Det{X_0;\Lambda}^{\mu_{\oo}}$ as its right adjoint.
\end{construction}

Then, the adjunction $(\ref{star adjunction "vanishing topoi"})$ is obtained by applying Proposition $\ref{adjunctions between recollements}$ to $(\ref{star adjunction f_0})$ and $(\ref{star adjunction f_0tilde})$.

Similarly, if $f$ is a separated morphism locally of finite type, one can define an adjunction
\begin{equation}
   f_{\Vs^t,!}:\Det{\Vs_q^t;\Lambda}\leftrightarrows \Det{\Vs_p^t;\Lambda}: f_{\Vs^t}^!.
\end{equation}
As above, one considers two adjunctions
\begin{equation}
  (f_0)_!=(f_0^{(1)})_!:\Det{Y_0;\Lambda}\leftrightarrows \Det{X_0;\Lambda}: (f_0^{(1)})^!=(f_0)^!,
\end{equation}
\begin{equation}
  (\bar{f}_0)_!:\Det{Y_0;\Lambda}^{\mu_{\oo}}\leftrightarrows \Det{X_0;\Lambda}^{\mu_{\oo}}: (\bar{f}_0)^!.
\end{equation}
\begin{construction}
We consider the same diagram
\begin{equation}
    \N_S\x \Delta^1\rightarrow Corr(N(\Ar{S}))_{F,all} 
\end{equation}
as above, i.e. the one obtained by applying the construction in Appendix \ref{appendix A} to $X_0^{(\bullet)}$ and $Y_0\rightarrow X_0$.
We composite it with \ref{six functor formalism with left adjoints} and then we pass to right adjoints, thus obtaining a diagram
\begin{equation}
    \N_S^{op}\rightarrow \Fun(\Delta^1,\prsr)
\end{equation}
which provides us with homotopy coherent equivalences
\begin{equation} 
    (-)^{\mu_m}\circ (f_0^{(nm)})^!\simeq (f_0^{n})^!\circ (-)^{\mu_m}.
\end{equation}
We define $(\bar{f}_0)^!:\Det{X_0;\Lambda}^{\mu_{\oo}}\leftrightarrows \Det{Y_0;\Lambda}^{\mu_{\oo}}$ as the limit of this diagram and $(\bar{f}_0)_!:\Det{Y_0;\Lambda}^{\mu_{\oo}}\leftrightarrows \Det{X_0;\Lambda}^{\mu_{\oo}}$ as its left adjoint.
\end{construction}

Then, the adjunction $(f_{\Vs^t,!}, f_{\Vs^t}^!)$ is defined by applying Proposition $\ref{adjunctions between recollements}$.

\subsection*{Tame vanishing cycles over \texorpdfstring{$\ag{S}$}{ag} and \texorpdfstring{$*$}{*}-pullbacks.}\label{section tame vanishing cycles over ag and *pullbacks}
The aim of the present subsection is to define a morphism of $\oo$-functors
\begin{equation}\label{vanishing cycles and star pullback}
  f_{\Vs^t}^*\circ \Psi^t_p\rightarrow \Psi_q^t\circ f^*:\Det{X;\Lambda}\rightarrow \Det{\Vs_q^t;\Lambda}.
\end{equation}
By composing both $f_{\Vs^t}^*\circ \Psi^t_p$ and $\Psi_q^t\circ f^*$ with $\Det{\Vs_q^t;\Lambda}\rightarrow \Det{Y_0;\Lambda}$ we find the $\oo$-functors $f_0^*\circ i_X^*$ and $i_Y^*\circ f^*$, which are clearly equivalent.

On the other hand, if we compose $f_{\Vs^t}^*\circ \Psi^t_p$ and $\Psi_q^t\circ f^*$ with $\Det{\Vs_q^t;\Lambda}\rightarrow \Det{Y_0;\Lambda}^{\mu_{\oo}}$ we obtain the $\oo$-functors $\bar{f}_0^*\circ \Psi_{\eta,p}^t\circ j_X^*$ and $\Psi_{\eta,q}^t\circ f^*\circ j_Y^*$ respectively. 

\begin{construction}
  By repeating Construction \ref{*pullback bar(f)_0} replacing diagram \ref{diagram f_0^(bullet)} with
  \begin{equation}
      f^{(\bullet)}:Y^{(\bullet)}\rightarrow X^{(\bullet)}
  \end{equation}
  we get the $\oo$-functor
  \begin{equation}
      \bar{f}^*:\Det{X;\Lambda}^{\mu_{\oo}}\rightarrow \Det{Y;\Lambda}^{\mu_{\oo}}
  \end{equation}
  and its right adjoint
  \begin{equation}
      \bar{f}_*:\Det{Y;\Lambda}^{\mu_{\oo}}\rightarrow \Det{X;\Lambda}^{\mu_{\oo}}.
  \end{equation}
\end{construction}
\begin{rmk}\label{concrete construction *pushforward bar(f)}
 
  One can verify that $\bar{f}_*:\Det{Y;\Lambda}^{\mu_{\oo}}\rightarrow \Det{X;\Lambda}^{\mu_{\oo}}$ can be obtained as the limit of the diagram
  \begin{equation}
      \N_S^{op}\xrightarrow{f^{(\bullet)}} \Fun(\Delta^1,\Ar{S})\xrightarrow{(\ref{functorial properties pushforward Art})} \Fun(\Delta^1,\prsr).
  \end{equation}
\end{rmk}
\begin{lmm}\label{main lemma subsection tame vanishing cycles and * pullback}
\begin{enumerate}
    \item There is an equivalence of $\oo$-functors $\bar{f}_0^*\circ \bar{i}_X^*\simeq \bar{i}_Y^*\circ \bar{f}^*$.
    \item There is a natural transformation $\alpha_f:\bar{f}^*\circ (\bar{j}_X)_*\rightarrow (\bar{j}_Y)_*\circ f_{U_Y}^*$.
    \item If $f$ is smooth, then $\alpha_f$ is an equivalence.
\end{enumerate}
\end{lmm}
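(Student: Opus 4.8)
The plan is to reduce each of the three statements to its evident levelwise counterpart over $\N_S^{\textup{op}}$, using that $\bar i_X^*$, $\bar i_Y^*$, $\bar f^*$, $\bar f_0^*$, $(\bar j_X)_*$, $(\bar j_Y)_*$ and $\bar f_*$ are all constructed as limits over $\N_S^{\textup{op}}$ of diagrams whose value at $n$ is the corresponding $\oo$-functor attached to $X^{(n)}$, $X_0^{(n)}$, $Y^{(n)}$, $Y_0^{(n)}$, $U_X$, $U_Y$. For (1), the geometric input is that, since $q=p\circ f$, one has $Y^{(n)}\simeq Y\x_X X^{(n)}$ and hence $Y_0^{(n)}\simeq Y_0\x_{X_0}X_0^{(n)}$ for every $n\in\N_S$; consequently the square with corners $Y_0^{(n)},Y^{(n)},X_0^{(n)},X^{(n)}$ and edges $i_Y^{(n)},f^{(n)},f_0^{(n)},i_X^{(n)}$ is Cartesian, so in particular $i_X^{(n)}\circ f_0^{(n)}\simeq f^{(n)}\circ i_Y^{(n)}$, coherently in $n$ (this square is the base change of the fixed square $Y_0\to Y\to X\leftarrow X_0\leftarrow Y_0$ along $X^{(\bullet)}\to X$). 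By Construction~\ref{pullback via correspondences Art}, its $Y$-analogue, and the variant of Construction~\ref{*pullback bar(f)_0} defining $\bar f^*$, the functors $\bar i_X^*$, $\bar i_Y^*$, $\bar f^*$, $\bar f_0^*$ are the limits over $\N_S^{\textup{op}}$ of $(\ref{six functor formalism with left adjoints})$ applied to the correspondence diagrams $X_0^{(n)}\xleftarrow{\textup{id}}X_0^{(n)}\xrightarrow{i_X^{(n)}}X^{(n)}$, $Y_0^{(n)}\xleftarrow{\textup{id}}Y_0^{(n)}\xrightarrow{i_Y^{(n)}}Y^{(n)}$, $Y^{(n)}\xleftarrow{\textup{id}}Y^{(n)}\xrightarrow{f^{(n)}}X^{(n)}$, $Y_0^{(n)}\xleftarrow{\textup{id}}Y_0^{(n)}\xrightarrow{f_0^{(n)}}X_0^{(n)}$. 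Composition of correspondences in $Corr(N(\Ar{S}))_{F,all}$ turns the composable pair computing $\bar f_0^*\circ\bar i_X^*$ into the correspondence with right leg $i_X^{(n)}\circ f_0^{(n)}$, and the one computing $\bar i_Y^*\circ\bar f^*$ into the correspondence with right leg $f^{(n)}\circ i_Y^{(n)}$ (both with left leg the identity of $Y_0^{(n)}$); these agree by the commutativity just recorded, coherently in $n$, so after passing to the relevant adjoints and taking the limit over $\N_S^{\textup{op}}$ we get $\bar f_0^*\circ\bar i_X^*\simeq\bar i_Y^*\circ\bar f^*$.

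For (2), I would first produce an honest equivalence $\beta\colon(\bar j_X)_*\circ(f_{U_Y})_*\simeq\bar f_*\circ(\bar j_Y)_*$. Since $U_Y\simeq U_X\x_X Y\simeq U_X\x_{X^{(n)}}Y^{(n)}$, the square with corners $U_Y,Y^{(n)},U_X,X^{(n)}$ is Cartesian and $j_X^{(n)}\circ f_{U_Y}\simeq f^{(n)}\circ j_Y^{(n)}$, coherently in $n$; hence $(j_X^{(n)})_*\circ(f_{U_Y})_*\simeq(f^{(n)})_*\circ(j_Y^{(n)})_*$, and passing to the limit over $\N_S^{\textup{op}}$ — using Remark~\ref{concrete construction *pushforward bar(f)} for $\bar f_*$ and the definitions of $(\bar j_X)_*$, $(\bar j_Y)_*$ via $(\ref{functorial properties pushforward Art})$ — gives $\beta$. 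I then define $\alpha_f$ to be the mate of $\beta$ under the adjunctions $(\bar f^*,\bar f_*)$ and $(f_{U_Y}^*,(f_{U_Y})_*)$, i.e. the composite
\begin{equation*}
\bar f^*\circ(\bar j_X)_*\longrightarrow\bar f^*\circ(\bar j_X)_*\circ(f_{U_Y})_*\circ f_{U_Y}^*\xrightarrow{\ \beta\ }\bar f^*\circ\bar f_*\circ(\bar j_Y)_*\circ f_{U_Y}^*\longrightarrow(\bar j_Y)_*\circ f_{U_Y}^*,
\end{equation*}
whose first and last arrows are induced by the unit of $(f_{U_Y}^*,(f_{U_Y})_*)$ and the counit of $(\bar f^*,\bar f_*)$.

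For (3), I would check that $\alpha_f$ is an equivalence when $f$ is smooth by testing it after each projection $(-)^{n\cdot\mu_{\oo}}\colon\Det{Y;\Lambda}^{\mu_{\oo}}\to\Det{Y^{(n)};\Lambda}$, which are jointly conservative. These projections intertwine $\bar f^*$ with $(f^{(n)})^*$, $\bar f_*$ with $(f^{(n)})_*$ and $(\bar j_X)_*,(\bar j_Y)_*$ with $(j_X^{(n)})_*,(j_Y^{(n)})_*$, and they carry $\beta$ to the levelwise equivalence $(j_X^{(n)})_*\circ(f_{U_Y})_*\simeq(f^{(n)})_*\circ(j_Y^{(n)})_*$; hence they carry $\alpha_f$ to its mate, which is exactly the exchange transformation $(f^{(n)})^*\circ(j_X^{(n)})_*\to(j_Y^{(n)})_*\circ f_{U_Y}^*$ of the Cartesian square with corners $U_Y,Y^{(n)},U_X,X^{(n)}$. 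Since $f^{(n)}$ is the base change of $f$ along $X^{(n)}\to X$, it is smooth, so this exchange transformation is an equivalence by smooth base change; thus $\alpha_f$ becomes an equivalence after every projection, hence is an equivalence.

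The only real point of care, rather than of mathematical difficulty, is homotopy coherence: one must check that the levelwise data above assemble into genuine diagrams indexed by $\N_S^{\textup{op}}$ (which they do, every square in sight being a base change of a fixed square along $X^{(\bullet)}\to X$), that forming limits commutes with the functor $\oo$-categories $\Fun(\Delta^1,-)$ and $\Fun(\Delta^1\x\Delta^1,-)$ occurring in the constructions, and that the projections $(-)^{n\cdot\mu_{\oo}}$ are compatible with forming mates. Once this bookkeeping is in place, the genuinely geometric inputs are only the Cartesian-ness of the relevant squares and ordinary smooth base change on Artin stacks, both recorded in Section~\ref{etale sheaves on Artin stacks}.
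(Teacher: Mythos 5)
Your proof is correct and follows essentially the same route as the paper's: part (1) via the coherent identification of the composed pullback diagrams over $\N_S^{\textup{op}}$, part (2) by first identifying $(\bar j_X)_*\circ (f_{U})_*\simeq \bar f_*\circ(\bar j_Y)_*$ from the equivalence of diagrams $j_X^{(\bullet)}\circ f_U\simeq f^{(\bullet)}\circ j_Y^{(\bullet)}$ and then transposing (your mate is the same transformation as the paper's adjoint transpose), and part (3) by reduction to levelwise smooth base change for the Cartesian squares with corners $U_Y, Y^{(n)}, U_X, X^{(n)}$. The only cosmetic difference is in (3), where you invoke joint conservativity of the projections $(-)^{n\cdot\mu_{\oo}}$ while the paper instead writes the relevant objects as colimits $\varinjlim_n$ of their level-$n$ components (in the spirit of Lemma \ref{categorical lemma}); both devices perform the same reduction.
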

\begin{proof}
\begin{enumerate}
    \item Notice that $\bar{f}_0^*\circ \bar{i}_X^*$ can be obtained by applying Construction \ref{*pullback bar(f)_0} to the diagram of Artin stacks
    \begin{equation}
        f_0^{(\bullet)}\circ i_X^{(\bullet)}:\N_S^{op} \rightarrow \Ar{S}.
    \end{equation}
    Similarly, $\bar{i}_Y^*\circ \bar{f}^*$ can be obtained by applying Construction \ref{*pullback bar(f)_0} to the diagram of Artin stacks
    \begin{equation}
        i_Y^{(\bullet)}\circ f^{(\bullet)}:\N_S^{op} \rightarrow \Ar{S}.
    \end{equation}
    As these diagrams are clearly equivalent, the claim follows.
    \item By adjunction, $\alpha_f$ corresponds to a natural transformation $(\bar{j}_X)_*\rightarrow \bar{f}_*\circ (\bar{j}_Y)_*\circ f_{U}^*$. By Remark \ref{concrete construction *pushforward bar(f)}, we see that the $\oo$-functor $\bar{f}_*\circ (\bar{j}_Y)_*$ can be obtained as the limit of the diagram
    \begin{equation}
        \N_S^{op}\xrightarrow{f^{(\bullet)}\circ j_Y^{(\bullet)}} \Fun(\Delta^1,\Ar{S})\xrightarrow{(\ref{functorial properties pushforward Art})} \Fun(\Delta^1,\prsr).
    \end{equation}
    As the diagram of Artin stacks $f^{(\bullet)}\circ j_Y^{(\bullet)}$ is equivalent to $j_X^{(\bullet)}\circ f_{U}$, we see that $\bar{f}_*\circ (\bar{j}_Y)_*$ is equivalent to the limit of the diagram
    \begin{equation}
        \N_S^{op}\xrightarrow{j_X^{(\bullet)}\circ f_{U}} \Fun(\Delta^1,\Ar{S})\xrightarrow{(\ref{functorial properties pushforward Art})} \Fun(\Delta^1,\prsr),
    \end{equation}
    i.e. to $(\bar{j}_X)_*\circ (f_{U})_*$. Then $(\bar{j}_X)_*\rightarrow \bar{f}_*\circ (\bar{j}_Y)_*\circ f_{U}^*$ is the morphism induced by the unit of the adjunction $(f_{U}^*,(f_{U})_*)$.
    \item It suffices to show that, for $\Fc \in \Det{U_Y;\Lambda}$, the morphism $\alpha_f(\Fc): \bar{f}^*\circ (\bar{j}_Y)_*(\Fc)\rightarrow (\bar{j}_X)_*\circ f_{U}^*(\Fc)$ is an equivalence. By construction,
    \begin{equation}
        \bar{f}^*\circ (\bar{j}_Y)_*(\Fc)\simeq \varinjlim_{n\in \N_S}(f^{(n)})^*\circ (j_Y^{(n)})_*(\Fc), \hspace{0.5cm} (\bar{j}_X)_*\circ f_{U_Y}^*(\Fc)\simeq \varinjlim_{n\in \N_S} (j_X^{(n)})_*\circ f_U^*(\Fc).
    \end{equation}
    Under these equivalences, $\alpha_f(\Fc)$ corresponds to the colimit of the base change morphisms $(f^{(n)})^*\circ (j_Y^{(n)})_*(\Fc)\rightarrow (j_X^{(n)})_*\circ f_U^*(\Fc)$, which are equivalences by smooth base change.
\end{enumerate}
\end{proof}
We define a natural transformation
\begin{equation}\label{tame nearby cycles and star pullback}
  \bar{f}_0^*\circ \Psi_{\eta,p}^t \circ j_X^*\rightarrow \Psi_{\eta,q}^t\circ j_Y^* \circ f^*
\end{equation}
as the morphism
\begin{equation}
    \bar{f}_0^*\circ \bar{i}_X^*\circ (\bar{j}_X)_*\circ j_X^*\simeq \bar{i}_Y^*\circ \bar{f}^*\circ (\bar{j}_X)_*\circ j_X^*\xrightarrow{\alpha_f}\bar{i}_Y^*\circ (\bar{j}_Y)_* \circ f_U^*\circ j_X^*\simeq \bar{i}_Y^*\circ (\bar{j}_Y)_* \circ j_X^*\circ f^*.
\end{equation}
\begin{lmm}
Applying $(-)^{\mu_{\oo}}$ to $(\ref{tame nearby cycles and star pullback})$ we obtain (up to homotopy) the morphism
\begin{equation}
  f_0^*\circ (i_X^{(1)})^*\circ (j_X^{(1)})_*\circ (j_X^{(1)})^* \rightarrow (i_Y^{(1)})^*\circ (j_Y^{(1)})_*\circ (j_Y^{(1)})^*\circ f^*.
\end{equation}
\end{lmm}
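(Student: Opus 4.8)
The plan is to apply the $\oo$-functor $(-)^{\mu_{\oo}}$ termwise along the zig-zag that defines $(\ref{tame nearby cycles and star pullback})$, invoking at each stage the compatibility of $(-)^{\mu_{\oo}}$ with the four building blocks that occur. These compatibilities are: $(-)^{\mu_{\oo}}\circ \bar{i}_X^*\simeq (i_X^{(1)})^*\circ(-)^{\mu_{\oo}}$ and $(-)^{\mu_{\oo}}\circ(\bar{j}_X)_*\simeq(j_X^{(1)})_*$, which are precisely the identities already noted in Remark \ref{muinf composed with tame nearby cycles}; and $(-)^{\mu_{\oo}}\circ\bar{f}_0^*\simeq f_0^*\circ(-)^{\mu_{\oo}}$ together with $(-)^{\mu_{\oo}}\circ\bar{f}^*\simeq f^*\circ(-)^{\mu_{\oo}}$, which follow from Construction \ref{*pullback bar(f)_0} (and its variant for the diagram $f^{(\bullet)}$) once one recalls that $(-)^{\mu_{\oo}}$ is, by construction, the evaluation at the initial object $1\in\N_S$ of a limit cone over $\N_S^{\textup{op}}$, and that $X^{(1)}=X$, $X_0^{(1)}=X_0$, $f^{(1)}=f$, $f_0^{(1)}=f_0$.

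With these identities in hand the endpoints are computed at once. Applying them in turn to $\bar{f}_0^*\circ\Psi_{\eta,p}^t\circ j_X^*=\bar{f}_0^*\circ\bar{i}_X^*\circ(\bar{j}_X)_*\circ j_X^*$ gives $f_0^*\circ(i_X^{(1)})^*\circ(j_X^{(1)})_*\circ j_X^*$; since $j_X=j_X^{(1)}$ this is $f_0^*\circ(i_X^{(1)})^*\circ(j_X^{(1)})_*\circ(j_X^{(1)})^*$, the source of the asserted morphism. The same computation applied to $\Psi_{\eta,q}^t\circ j_Y^*\circ f^*$ produces $(i_Y^{(1)})^*\circ(j_Y^{(1)})_*\circ(j_Y^{(1)})^*\circ f^*$, its target.

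It then remains to identify the middle of the zig-zag. Recall that $(\ref{tame nearby cycles and star pullback})$ is the composite of the identification $\bar{f}_0^*\circ\bar{i}_X^*\simeq\bar{i}_Y^*\circ\bar{f}^*$ of Lemma \ref{main lemma subsection tame vanishing cycles and * pullback}(1), of $\alpha_f$ from Lemma \ref{main lemma subsection tame vanishing cycles and * pullback}(2), and of the identification $f_U^*\circ j_X^*\simeq j_Y^*\circ f^*$. Under $(-)^{\mu_{\oo}}$ the first becomes the tautological base-change equivalence $f_0^*\circ(i_X^{(1)})^*\simeq(i_Y^{(1)})^*\circ f^*$ for the Cartesian square with corners $Y_0,Y,X_0,X$, since by Lemma \ref{main lemma subsection tame vanishing cycles and * pullback}(1) it arises from the evident equivalence of $\N_S^{\textup{op}}$-diagrams of Artin stacks whose $n=1$ term is $i_X\circ f_0\simeq f\circ i_Y$. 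The map $\alpha_f$, by its construction in Lemma \ref{main lemma subsection tame vanishing cycles and * pullback}(2), is adjoint to $(\bar{j}_X)_*\rightarrow\bar{f}_*\circ(\bar{j}_Y)_*\circ f_U^*$, the latter being the morphism induced by the unit of $(f_U^*,(f_U)_*)$ through the levelwise identification $\bar{f}_*\circ(\bar{j}_Y)_*\simeq(\bar{j}_X)_*\circ(f_U)_*$ of Remark \ref{concrete construction *pushforward bar(f)}; as all of this is built levelwise over $\N_S^{\textup{op}}$, naturality of mates identifies $(-)^{\mu_{\oo}}(\alpha_f)$ with the $n=1$ base-change transformation $f^*\circ(j_X^{(1)})_*\rightarrow(j_Y^{(1)})_*\circ f_U^*$ for the square $U_Y\to U_X$, $Y\to X$. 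Concatenating the three pieces, and using $j_X^*=(j_X^{(1)})^*$ and $j_Y^*=(j_Y^{(1)})^*$, one finds that $(-)^{\mu_{\oo}}(\ref{tame nearby cycles and star pullback})$ is exactly the morphism $f_0^*\circ(i_X^{(1)})^*\circ(j_X^{(1)})_*\circ(j_X^{(1)})^*\rightarrow(i_Y^{(1)})^*\circ(j_Y^{(1)})_*\circ(j_Y^{(1)})^*\circ f^*$ induced by base change along $f$ and by the unit of $((j_X^{(1)})^*,(j_X^{(1)})_*)$.

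The delicate point here is not conceptual but bookkeeping: one must know that $(-)^{\mu_{\oo}}$ commutes with all these constructions coherently rather than merely on objects, so that the entire homotopy-coherent zig-zag is carried to the claimed one. This is handled uniformly once one observes that every functor in play --- $\bar{i}_X^*$, $(\bar{j}_X)_*$, $\bar{f}^*$, $\bar{f}_0^*$ and their $Y$-analogues --- is defined as a limit over $\N_S^{\textup{op}}$ of a diagram valued in $\Fun(\Delta^1,\prsr)$ or $\Fun(\Delta^1,\prs)$, and that $(-)^{\mu_{\oo}}$ is the evaluation of such a limit cone at the initial object of $\N_S$; since evaluation at an object preserves limits, $(-)^{\mu_{\oo}}(\ref{tame nearby cycles and star pullback})$ is computed factorwise and reduces, factor by factor, to the classical non-equivariant data. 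The one functor deserving an explicit word is $\bar{i}_X^*$, built in Construction \ref{pullback via correspondences Art} through correspondences rather than directly as a limit of $*$-pullbacks: at the single level $n=1$ the relevant correspondence degenerates and the associated $\oo$-functor is simply the honest pullback $(i_X^{(1)})^*$, exactly as in the remark immediately following Construction \ref{pullback via correspondences DM}.
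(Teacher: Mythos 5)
Your proposal is correct and follows essentially the same route as the paper's proof: the paper likewise invokes the constructed compatibilities $(-)^{\mu_{\oo}}\circ\bar{f}_0^*\simeq f_0^*\circ(-)^{\mu_{\oo}}$ together with Remark \ref{muinf composed with tame nearby cycles}, and identifies the resulting morphism as the one induced by the base change map $f^*\circ(j_X^{(1)})_*\rightarrow(j_Y^{(1)})_*\circ f_U^*$. You merely spell out in more detail the bookkeeping (evaluation of the limit cone at $n=1$, and the level-$1$ degeneration of the correspondence defining $\bar{i}_X^*$) that the paper compresses into ``by construction''.
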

\begin{proof}
By construction, we have that
\begin{equation}
(-)^{\mu_{\oo}}\circ \bar{f}_0^*\simeq f_0^*\circ (-)^{\mu_{\oo}}.
\end{equation}
Then, using also Remark $\ref{muinf composed with tame nearby cycles}$, we get that 
\begin{equation}
  \bigl ( \bar{f}_0^*\circ \Psi_{\eta,p}^t \circ j_X^*\rightarrow \Psi_{\eta,q}^t\circ j_Y^* \circ f^*\bigr )^{\mu_{\oo}}\simeq f_0^*\circ (i_X^{(1)})^*\circ (j_X^{(1)})_*\circ (j_X^{(1)})^* \rightarrow (i_Y^{(1)})^*\circ (j_Y^{(1)})_*\circ (j_Y^{(1)})^*\circ f^*,
\end{equation}
where the morphism on the right is induced by the base change morphism $f^*\circ (j_X^{(1)})_*\rightarrow (j_Y^{(1)})_*\circ f_U^*$.
\end{proof}
It is then clear that we have a square $\sigma:\Delta^1\x \Delta^1 \rightarrow \Fun(\Det{X;\Lambda},\Det{Y_0;\Lambda})$
\begin{equation}
  \begindc{\commdiag}[13]
    \obj(-80,15)[1]{$f_0^*\circ i_X^*$}
    \obj(-80,-15)[2]{$i_Y^*\circ f^*$}
    \obj(80,15)[3]{$ f_0^*\circ (i_X^{(1)})^*\circ (j_X^{(1)})_*\circ (j_X^{(1)})^*$}
    \obj(80,-15)[4]{$(i_Y^{(1)})^*\circ (j_Y^{(1)})_*\circ (j_Y^{(1)})^*\circ f^*.$}
    \mor{1}{2}{$\sim$}
    \mor{1}{3}{$$}
    \mor{2}{4}{$$}
    \mor{3}{4}{$$}
  \enddc
\end{equation}

and therefore the data $(f_0^*\circ i_X^*\simeq i_Y^*\circ f^*,  \tilde{f}_0^*\circ \Psi_{\eta,p}^t \circ j_X^*\rightarrow \Psi_{\eta,q}^t\circ j_Y^* \circ f^*,\sigma)$ define the morphism $(\ref{vanishing cycles and star pullback})$.
\begin{prop}
  If $f:Y\rightarrow X$ is a smooth morphism, then $(\ref{vanishing cycles and star pullback})$ is an equivalence.
\end{prop}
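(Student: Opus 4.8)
The plan is to exploit that $\Det{\Vs_q^t;\Lambda}$ is a recollement of $\Det{Y_0;\Lambda}$ and $\Det{Y_0;\Lambda}^{\mu_{\oo}}$, so that equivalences of $\oo$-functors landing in it can be detected componentwise. Indeed, a morphism in $\Det{\Vs_q^t;\Lambda}$ is an equivalence if and only if its images under the two left adjoints $L_0\colon\Det{\Vs_q^t;\Lambda}\rightarrow \Det{Y_0;\Lambda}$ and $L_1\colon\Det{\Vs_q^t;\Lambda}\rightarrow \Det{Y_0;\Lambda}^{\mu_{\oo}}$ are equivalences (property (5) of a recollement); equivalently, by Proposition \ref{recollement category of functors} the $\oo$-category $\Fun(\Det{X;\Lambda},\Det{\Vs_q^t;\Lambda})$ is the recollement of $\Fun(\Det{X;\Lambda},\Det{Y_0;\Lambda})$ and $\Fun(\Det{X;\Lambda},\Det{Y_0;\Lambda}^{\mu_{\oo}})$, and $(\ref{vanishing cycles and star pullback})$ is an equivalence as soon as its images under postcomposition with $L_0$ and $L_1$ are. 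So it suffices to check these two components.

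First I would record what the two components are. By the last assertion of Corollary \ref{functors to a recollement} and the constructions of $\Psi_p^t$ and of $f_{\Vs^t}^*$, the $L_0$-component of $(\ref{vanishing cycles and star pullback})$ is the identification $f_0^*\circ i_X^*\simeq i_Y^*\circ f^*$, and its $L_1$-component is precisely the natural transformation $(\ref{tame nearby cycles and star pullback})$, in accordance with the way $(\ref{vanishing cycles and star pullback})$ was assembled from the datum $(f_0^*\circ i_X^*\simeq i_Y^*\circ f^*,\ (\ref{tame nearby cycles and star pullback}),\ \sigma)$. The $L_0$-component is an equivalence by (plain) base change, with no smoothness needed, so the whole problem reduces to showing that $(\ref{tame nearby cycles and star pullback})$ is an equivalence under our hypothesis on $f$.

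For that, I would unwind the definition of $(\ref{tame nearby cycles and star pullback})$ as the composite
\[
\bar{f}_0^*\circ \bar{i}_X^*\circ (\bar{j}_X)_*\circ j_X^*\;\simeq\;\bar{i}_Y^*\circ \bar{f}^*\circ (\bar{j}_X)_*\circ j_X^*\;\xrightarrow{\bar{i}_Y^*(\alpha_f)}\;\bar{i}_Y^*\circ (\bar{j}_Y)_*\circ f_U^*\circ j_X^*\;\simeq\;\bar{i}_Y^*\circ (\bar{j}_Y)_*\circ j_X^*\circ f^*,
\]
where the first equivalence is Lemma \ref{main lemma subsection tame vanishing cycles and * pullback}(1) and the outer two equivalences are formal. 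Since $Y\rightarrow \ag{S}$ factors through $p$, one has $U_Y\simeq Y\times_X U_X$, so $f_U\colon U_Y\rightarrow U_X$ is a base change of $f$ and is therefore smooth; by Lemma \ref{main lemma subsection tame vanishing cycles and * pullback}(3) the natural transformation $\alpha_f$ is then an equivalence, hence so is $\bar{i}_Y^*(\alpha_f)$, and $(\ref{tame nearby cycles and star pullback})$ is an equivalence. Combining with the previous paragraph, both recollement components of $(\ref{vanishing cycles and star pullback})$ are equivalences, and the claim follows.

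The main point is already packaged in Lemma \ref{main lemma subsection tame vanishing cycles and * pullback}, part (3) of which is where smooth base change enters; the present statement is then a purely formal consequence of the recollement formalism of Section \ref{section recollements}. The only mild care needed is the bookkeeping identifying the two recollement components of $(\ref{vanishing cycles and star pullback})$ with $f_0^*\circ i_X^*\simeq i_Y^*\circ f^*$ and $(\ref{tame nearby cycles and star pullback})$ respectively, and the observation that $f_U$ inherits smoothness from $f$; neither presents a genuine obstacle.
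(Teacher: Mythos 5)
Your argument is correct and is essentially the paper's own proof: reduce via the recollement (the two localization functors detect equivalences) to the two components, observe the $\Det{Y_0;\Lambda}$-component is the tautological equivalence $f_0^*\circ i_X^*\simeq i_Y^*\circ f^*$, and deduce that the $\Det{Y_0;\Lambda}^{\mu_{\oo}}$-component $(\ref{tame nearby cycles and star pullback})$ is an equivalence from Lemma \ref{main lemma subsection tame vanishing cycles and * pullback}(3). Your additional bookkeeping (identifying the components explicitly and noting that $f_U$ inherits smoothness as a base change of $f$) only makes explicit what the paper leaves implicit.
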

\begin{proof}
As the $\oo$-functors $\Det{\Vs_q^t;\Lambda}\rightarrow \Det{Y_0;\Lambda}$ and $\Det{\Vs_q^t;\Lambda}\rightarrow \Det{Y_0}^{\mu_{\oo}}$ detect equivalences, it suffices to show that $(\ref{tame nearby cycles and star pullback})$ is an equivalence. But this is a consequence of Lemma \ref{main lemma subsection tame vanishing cycles and * pullback}.
\end{proof}
\subsection*{Tame vanishing cycles over \texorpdfstring{$\ag{S}$}{ag} and \texorpdfstring{$*$}{*}-pushforward.}
The aim of the present subsection is to define a morphism of $\oo$-functors
\begin{equation}\label{vanishing cycles and star pushforward}
  \Psi^t_p\circ f_* \rightarrow (f_{\Vs^t})_*\circ \Psi_{q}^t :\Det{Y;\Lambda}\rightarrow \Det{\Vs_p^t;\Lambda}.
\end{equation}
Composing $\Psi^t_p\circ f_*$ and $(f_{\Vs^t})_*\circ \Psi_{q}^t$ with $\Det{\Vs_q^t;\Lambda}\rightarrow \Det{Y_0;\Lambda}$ we obtain the $\oo$-functors $i_X^*\circ f_*$ and $(f_0)_*\circ i_Y^*$ respectively. Then, we clearly have a morphism
\begin{equation}
i_X^*\circ f_*\rightarrow (f_0)_*\circ i_Y^*: \Det{Y;\Lambda} \rightarrow \Det{X_0;\Lambda}.
\end{equation}
If we compose them with $\Det{\Vs_q^t;\Lambda}\rightarrow \Det{Y_0;\Lambda}^{\mu_{\oo}}$ instead, we get the $\oo$-functors $\Psi_{\eta,p}^t\circ j_X^* \circ f_*$ and $(\tilde{f}_0)_*\circ \Psi_{\eta,q}^t \circ j_Y^*$ respectively. 
\begin{lmm}\label{main lemma section tame vanishing cycles and * pushforward}
\begin{enumerate}
    \item There is an equivalence of $\oo$-functors $(\bar{i}_X)_*\circ (\bar{f}_0)_*\simeq \bar{f}_*\circ (\bar{i}_Y)_*$.
    \item There is a natural transformation $\beta_f: \bar{i}_X^*\circ \bar{f}_*\rightarrow (\bar{f}_0)_*\circ \bar{i}_Y^*$.
    \item If $f$ is a proper morphism representable by DM stacks, then $\beta_f$ is an equivalence.
\end{enumerate}
\begin{proof}
\begin{enumerate}
    \item This follows from the observation that the diagrams $i_X^{(\bullet)}\circ f^{(\bullet)}:\N_S^{op}\rightarrow \Fun(\Delta^1,\Ar{S})$ and $f_0^{(\bullet)}\circ i_Y^{(\bullet)}:\N_S^{op}\rightarrow \Fun(\Delta^1,\Ar{S})$ are equivalent and that $(\bar{i}_X)_*\circ (\bar{f}_0)_*$ (resp. $\bar{f}_*\circ (\bar{i}_Y)_*$) can be obtained as the limit of $\N_S^{op}\xrightarrow{i_X^{(\bullet)}\circ f^{(\bullet)}}\Fun(\Delta^1,\Ar{S})\xrightarrow{\ref{functorial properties pushforward Art}}\Fun(\Delta^1,\prsr)$ (resp. $\N_S^{op}\xrightarrow{f_0^{(\bullet)}\circ i_Y^{(\bullet)}}\Fun(\Delta^1,\Ar{S})\xrightarrow{\ref{functorial properties pushforward Art}}\Fun(\Delta^1,\prsr)$).
    \item The natural transformation $\beta_f$ corresponds by adjunction to a morphism $ \bar{f}_*\rightarrow (\bar{i}_X)_*\circ (\bar{f}_0)_*\circ \bar{i}_Y^*\simeq \bar{f}_*\circ (\bar{i}_Y)_*\circ \bar{i}_Y^*$, which is induced by the unit of the adjunction $(\bar{i}_Y^*,(\bar{i}_Y)_*)$.
    \item It suffices to show that, for $\Fc \in \Det{Y;\Lambda}^{\mu_{\oo}}$, the morphism $\beta_f(\Fc): \bar{i}_X^*\circ \bar{f}_*(\Fc)\rightarrow (\bar{f}_0)_*\circ \bar{i}_Y^*(\Fc)$ is an equivalence. By construction,
    \begin{equation}
        \bar{i}_X^*\circ \bar{f}_*(\Fc)(\Fc)\simeq \varinjlim_{n\in \N_S}(i_X^{(n)})^*\circ (f^{(n)})_*(\Fc), \hspace{0.5cm} (\bar{f}_0)_*\circ \bar{i}_Y^*(\Fc)\simeq \varinjlim_{n\in \N_S} (f_0^{(n)})_*\circ (i_Y^{(n)})^(\Fc).
    \end{equation}
    Under these equivalences, $\beta_f(\Fc)$ corresponds to the colimit of the base change morphisms $(i_X^{(n)})^*\circ (f^{(n)})_*(\Fc)\rightarrow (f_0^{(n)})_*\circ (i_Y^{(n)})^(\Fc)$, which are equivalences by proper base change.
\end{enumerate}
\end{proof}
\end{lmm}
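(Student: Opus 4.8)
The plan is to prove the three assertions in turn, in each case reducing to the finite levels $n\in\N_S$ and then passing to the limit defining the $\mu_{\oo}$-equivariant $\oo$-categories; the whole argument runs in close parallel with the proof of Lemma \ref{main lemma subsection tame vanishing cycles and * pullback}, with the roles of $*$-pullback and $*$-pushforward interchanged. For part (1), one notes that the square of Artin stacks relating $i_X^{(n)}$, $f^{(n)}$, $i_Y^{(n)}$ and $f_0^{(n)}$ commutes (and is in fact Cartesian), so that $(i_X^{(n)})_*\circ(f_0^{(n)})_*\simeq(f^{(n)})_*\circ(i_Y^{(n)})_*$ at each level $n$, compatibly with the transition maps as $n$ varies; since $(\bar{i}_X)_*$, $(\bar{f}_0)_*$, $\bar{f}_*$ and $(\bar{i}_Y)_*$ are, by their constructions (Remark \ref{concrete construction *pushforward bar(f)} and the analogous descriptions of $(\bar{i}_X)_*$ and $(\bar{i}_Y)_*$), the limits over $\N_S$ of the corresponding levelwise $*$-pushforwards, taking the limit of this compatible family of equivalences yields $(\bar{i}_X)_*\circ(\bar{f}_0)_*\simeq\bar{f}_*\circ(\bar{i}_Y)_*$. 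For part (2), $\beta_f$ is obtained formally: since $\bar{i}_X^*$ is left adjoint to $(\bar{i}_X)_*$, the datum of a natural transformation $\bar{i}_X^*\circ\bar{f}_*\to(\bar{f}_0)_*\circ\bar{i}_Y^*$ is the same as that of a natural transformation $\bar{f}_*\to(\bar{i}_X)_*\circ(\bar{f}_0)_*\circ\bar{i}_Y^*$; by part (1) the target identifies with $\bar{f}_*\circ(\bar{i}_Y)_*\circ\bar{i}_Y^*$, and one takes the transformation obtained by whiskering $\bar{f}_*$ with the unit $\textup{id}\to(\bar{i}_Y)_*\circ\bar{i}_Y^*$ of the adjunction $(\bar{i}_Y^*,(\bar{i}_Y)_*)$.

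For part (3), I would check that $\beta_f$ is an equivalence objectwise. Since $\bar{i}_X^*$, $\bar{f}_*$, $(\bar{f}_0)_*$ and $\bar{i}_Y^*$ are built from the $\N_S$-indexed limits of Constructions \ref{pullback via correspondences Art} and \ref{*pullback bar(f)_0} and Remark \ref{concrete construction *pushforward bar(f)}, one can use the description of the objects of these limit $\oo$-categories in terms of their levels --- exactly as in the proof of Proposition \ref{comparison classical and stacky definition tame vanishing cycles} --- to identify, for $\Fc\in\Det{Y;\Lambda}^{\mu_{\oo}}$, the morphism $\beta_f(\Fc)$ with the colimit $\varinjlim_{n\in\N_S}$ of the base-change morphisms $(i_X^{(n)})^*\circ(f^{(n)})_*\to(f_0^{(n)})_*\circ(i_Y^{(n)})^*$ attached to the Cartesian squares above, evaluated at the levels of $\Fc$. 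Since properness and representability by Deligne-Mumford stacks are stable under base change, each $f^{(n)}$ --- hence each $f_0^{(n)}$ --- is again proper and representable by DM stacks, so the proper base change formula recalled in Section \ref{etale sheaves on Artin stacks} shows that each of these morphisms is an equivalence; as a colimit of equivalences is an equivalence, $\beta_f$ is an equivalence.

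The step I expect to be the main obstacle is the bookkeeping in part (3): one must verify that $\beta_f$, which is manufactured from the $\oo$-categorical limits of the correspondence-formalism diagrams of Constructions \ref{pullback via correspondences Art} and \ref{*pullback bar(f)_0} rather than by hand, really is identified, \emph{compatibly with all the structure in play}, with the colimit over $\N_S$ of the levelwise base-change transformations, so that one is entitled to test whether it is an equivalence one level $n$ at a time. Once this is settled, the only genuinely geometric input is the Cartesianness of the relevant squares together with proper base change, which is standard; one should merely take care that the ancillary finiteness hypotheses entering the version of proper base change recalled in Section \ref{etale sheaves on Artin stacks} survive the base changes along the maps $\Theta^{(n)}$.
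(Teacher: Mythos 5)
Your proposal is correct and follows essentially the same route as the paper: part (1) by identifying both composites as the limit over $\N_S$ of the levelwise pushforwards along the equal composites $i_X^{(\bullet)}\circ f_0^{(\bullet)}=f^{(\bullet)}\circ i_Y^{(\bullet)}$, part (2) by adjunction from the unit of $(\bar{i}_Y^*,(\bar{i}_Y)_*)$ via (1), and part (3) by identifying $\beta_f(\Fc)$ with the colimit of the levelwise base-change morphisms and invoking proper base change. Your added remarks that the relevant squares are Cartesian and that properness and DM-representability persist under the base changes are correct and make explicit what the paper leaves implicit.
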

We define 
\begin{equation}
    \Psi_{\eta,p}^t\circ j_X^* \circ f_* \rightarrow (\bar{f}_0)_*\circ \Psi_{\eta,q}^t \circ j_Y^*
\end{equation}
as the natural transformation
\begin{equation}
    \bar{i}_X^*\circ (\bar{j}_X)_*\circ j_X^* \circ f_*\rightarrow \bar{i}_X^*\circ (\bar{j}_X)_* \circ (f_U)_*\circ j_Y^*\simeq \bar{i}_X^*\circ \bar{f}_*\circ (\bar{j}_Y)_*\circ j_Y^*\xrightarrow{\beta_f} (\bar{f}_0)_*\circ \bar{i}_Y^*\circ (\bar{j}_Y)_*\circ j_Y^*.
\end{equation}

\begin{lmm}
The square $\tau:\partial(\Delta^1\x \Delta^1)\rightarrow \Fun(\Det{Y;\Lambda},\Det{X_0;\Lambda})$
\begin{equation}
  \begindc{\commdiag}[13]
    \obj(-50,20)[1]{$(i_X^{(1)})^*\circ f_*$}
    \obj(50,20)[2]{$(f_0)_*\circ (i_Y^{(1)})^*$}
    \obj(-50,-20)[3]{$(-)^{\mu_{\oo}}\circ \Psi_{\eta,p}^t\circ j_X^*\circ f_*$}
    \obj(50,-20)[4]{$(-)^{\mu_{\oo}}\circ (\bar{f}_0)_*\circ \Psi^t_{\eta,q}\circ j_Y^*$}
    \mor{1}{2}{$$}
    \mor{1}{3}{$$}
    \mor{2}{4}{$$}
    \mor{3}{4}{$$}
  \enddc
\end{equation}
commutes.
\end{lmm}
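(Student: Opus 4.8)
The plan is to reduce the commutativity of $\tau$ to the standard compatibility between base-change transformations and unit transformations in the six-functor formalism on $\Ar{S}$, after identifying every corner and every edge of $\tau$ with familiar data. This is the $*$-pushforward counterpart of the square $\sigma$ produced in the previous subsection, and the argument runs in parallel.

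First I would identify the corners. By Remark \ref{muinf composed with tame nearby cycles} there is an equivalence $(-)^{\mu_{\oo}}\circ \Psi^t_{\eta,q}\simeq (i_Y^{(1)})^*\circ (j_Y^{(1)})_*\circ (j_Y^{(1)})^*$, and likewise on the $X$-side. Moreover, applying the description of Remark \ref{concrete construction *pushforward bar(f)} with $(Y_0,X_0,f_0)$ in place of $(Y,X,f)$ shows that $(\bar{f}_0)_*$ is the limit over $\N_S^{\textup{op}}$ of the diagram of the functors $(f_0^{(n)})_*$; since $(-)^{\mu_{\oo}}$ is by definition the projection of this limit onto its $n=1$ vertex and projections commute with the structure maps of a limit, we obtain a canonical equivalence $(-)^{\mu_{\oo}}\circ (\bar{f}_0)_*\simeq (f_0)_*\circ (-)^{\mu_{\oo}}$. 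Under these equivalences the bottom-left corner of $\tau$ becomes $(i_X^{(1)})^*\circ (j_X^{(1)})_*\circ (j_X^{(1)})^*\circ f_*$ and the bottom-right corner becomes $(f_0)_*\circ (i_Y^{(1)})^*\circ (j_Y^{(1)})_*\circ (j_Y^{(1)})^*$, while the top corners are $(i_X^{(1)})^*\circ f_*$ and $(f_0)_*\circ (i_Y^{(1)})^*$.

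Next I would identify the edges. By the construction of $\Psi^t_p$ through Corollary \ref{functors to a recollement}, the left edge of $\tau$ is $f_*$ precomposed with the transformation $i_X^*\to (-)^{\mu_{\oo}}\circ \Psi^t_{\eta,p}\circ (j_X^{(1)})^*$, which under the identifications above is precisely the unit of the adjunction $((j_X^{(1)})^*,(j_X^{(1)})_*)$; similarly the right edge is $(f_0)_*$ applied to the unit of $((j_Y^{(1)})^*,(j_Y^{(1)})_*)$. The top edge is the exchange transformation $(i_X^{(1)})^*\circ f_*\to (f_0)_*\circ (i_Y^{(1)})^*$ attached to the Cartesian square relating $f,i_X^{(1)},f_0,i_Y^{(1)}$. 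Finally, the bottom edge is $(-)^{\mu_{\oo}}$ applied to the transformation constructed immediately before the statement; unwinding that construction and using that, by Lemma \ref{main lemma section tame vanishing cycles and * pushforward} together with the constructions of $\bar{i}_X^*$ and $(\bar{j}_X)_*$ as limits over $\N_S$, all the ``barred'' functors and transformations restrict along $(-)^{\mu_{\oo}}$ (i.e.\ along the $n=1$ projection) to the corresponding unbarred functors and to the $n=1$ exchange transformations, the bottom edge becomes the pasting of the exchange $(j_X^{(1)})^*\circ f_*\to (f_U)_*\circ (j_Y^{(1)})^*$ for the Cartesian square $(j_X^{(1)},f,f_U,j_Y^{(1)})$, the identification $(j_X^{(1)})_*\circ (f_U)_*\simeq f_*\circ (j_Y^{(1)})_*$, and the exchange $(i_X^{(1)})^*\circ f_*\to (f_0)_*\circ (i_Y^{(1)})^*$.

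With this in place, both composites around $\tau$ are seen to equal the pasting of the exchange square for $(i_X^{(1)},f,f_0,i_Y^{(1)})$ with the image, under $(f_0)_*\circ (i_Y^{(1)})^*$, of the unit of $((j_Y^{(1)})^*,(j_Y^{(1)})_*)$ — equivalently, with the image under $(i_X^{(1)})^*\circ (j_X^{(1)})_*$ of the $U$-exchange composed with the relevant units — and the two pastings agree by the standard compatibility of exchange transformations with unit transformations in a six-functor formalism (the triangle identities together with the naturality of the exchange transformation), which here is encoded homotopy-coherently by the $\oo$-functor \ref{six functor formalism with left adjoints}. Alternatively, the equality can be checked termwise on the defining limit diagrams over $\N_S$, where it reduces to the classical statement for schemes. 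I expect the main obstacle to be purely bookkeeping: pinning down precisely how the transformation defined just before the statement restricts under $(-)^{\mu_{\oo}}$, so as to recognize it as the expected composite of $n=1$ exchange and unit maps. Once that identification is made the commutativity is formal, and no idea beyond the one already used for the analogous $*$-pullback square $\sigma$ is required.
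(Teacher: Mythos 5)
Your proposal is correct and follows essentially the same route as the paper: identify the bottom edge, after applying $(-)^{\mu_{\oo}}$ and the equivalence $(-)^{\mu_{\oo}}\circ(\bar f_0)_*\simeq (f_0)_*\circ(-)^{\mu_{\oo}}$, as the pasting of the $j$-level exchange, the identification $(j_X)_*\circ (f_U)_*\simeq f_*\circ (j_Y)_*$, and the $i$-level base-change morphism, and then conclude by the compatibility of base change with the units of $(j^*,j_*)$. This is exactly the reduction the paper performs before invoking naturality, so no further comment is needed.
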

\begin{proof}
The bottom row in the square is homotopic to the composition
\begin{equation}
    i_X^*\circ (j_X)_*\circ j_X^*\circ f_*\rightarrow i_X^*\circ (j_X)_*\circ (f_U)_*\circ j_Y^*\simeq i_X^*\circ f_* \circ (j_Y)_*\circ j_Y^* \rightarrow (f_0)_*\circ i_Y^*\circ (j_Y)_*\circ j_Y^*.
\end{equation}
Thus, one obtains a diagram $\partial(\Delta^1\x \Delta^1)\rightarrow \Fun(\Det{Y;\Lambda},\Det{X_0;\Lambda})$
\begin{equation}
  \begindc{\commdiag}[13]
    \obj(-50,20)[1]{$(i_X^{(1)})^*\circ f_*$}
    \obj(50,20)[2]{$(f_0)_*\circ (i_Y^{(1)})^*$}
    \obj(-50,-20)[3]{$i_X^*\circ f_* \circ (j_Y)_*\circ j_Y^*$}
    \obj(50,-20)[4]{$(f_0)_*\circ i_Y^*\circ (j_Y)_*\circ j_Y^*,$}
    \mor{1}{2}{$$}
    \mor{1}{3}{$$}
    \mor{2}{4}{$$}
    \mor{3}{4}{$$}
  \enddc
\end{equation}
where the vertical arrows are induced by the unit of the adjunction $(j_Y^*,(j_Y)_*)$ and the horizontal arrows by the base change morphism $(i_X^{(1)})^*\circ f_*\rightarrow (f_0)_*\circ (i_Y^{(1)})^*$. It is then clear that the two compositions are homotpic, whence the claim.
\end{proof}
Therefore, the data $((i_X^{(1})^*\circ f_*\rightarrow (f_0)_*\circ (i_Y^{(1)})^*, \Psi_{\eta,p}^t\circ j_X^*\circ f_*\rightarrow (\tilde{f}_0)_*\circ \Psi^t_{\eta,q}\circ j_Y^*,\tau)$ define the morphism $(\ref{vanishing cycles and star pushforward})$.
\begin{prop}
If $f:Y\rightarrow X$ is a proper morphism representable by DM stacks, then $(\ref{vanishing cycles and star pushforward})$ is an equivalence.
\end{prop}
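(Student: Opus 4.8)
The plan is to reproduce the two-step strategy that settled the $*$-pullback case. Since $\Det{\Vs_p^t;\Lambda}$ is the recollement of $\Det{X_0;\Lambda}$ and $\Det{X_0;\Lambda}^{\mu_{\oo}}$, the pair of projections onto these two subcategories jointly detects equivalences, so I would first reduce to checking that $(\ref{vanishing cycles and star pushforward})$ becomes an equivalence after composing, on the one hand, with $\Det{\Vs_p^t;\Lambda}\to\Det{X_0;\Lambda}$ and, on the other, with $\Det{\Vs_p^t;\Lambda}\to\Det{X_0;\Lambda}^{\mu_{\oo}}$. To make this reduction effective I would use that $(f_{\Vs^t})_*$ was produced through Proposition $\ref{adjunctions between recollements}$ out of the adjunctions $(f_0^*,(f_0)_*)$ and $((\bar{f}_0)^*,(\bar{f}_0)_*)$: the last clause of Corollary $\ref{functors to a recollement}$ identifies the composites of $(f_{\Vs^t})_*$ with the two projections as $(f_0)_*$ and $(\bar{f}_0)_*$ precomposed with the projections out of $\Det{\Vs_q^t;\Lambda}$, and the latter, composed with $\Psi_q^t$, return $i_Y^*$ and $\Psi_{\eta,q}^t\circ j_Y^*$; using the analogous identities for $\Psi_p^t$ together with the commuting square $\tau$ exhibited just before the statement, the two components of $(\ref{vanishing cycles and star pushforward})$ get identified with the base change morphism $i_X^*\circ f_*\to(f_0)_*\circ i_Y^*$ and with the morphism $\Psi_{\eta,p}^t\circ j_X^*\circ f_*\to(\bar{f}_0)_*\circ\Psi_{\eta,q}^t\circ j_Y^*$ constructed in the subsection.

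For the first component, $i_X^*\circ f_*\to(f_0)_*\circ i_Y^*$ is the base change transformation attached to the Cartesian square with vertical arrows $i_X,i_Y$ and horizontal arrows $f,f_0$; as $f$ is proper and representable by Deligne--Mumford stacks, the proper base change formula recalled in Section $\ref{etale sheaves on Artin stacks}$ makes it an equivalence. For the second component I would appeal to Lemma $\ref{main lemma section tame vanishing cycles and * pushforward}$: unwinding its construction, the morphism is the composite
\[
\bar{i}_X^*(\bar{j}_X)_*j_X^*f_*\longrightarrow\bar{i}_X^*(\bar{j}_X)_*(f_U)_*j_Y^*\xrightarrow{\ \sim\ }\bar{i}_X^*\bar{f}_*(\bar{j}_Y)_*j_Y^*\xrightarrow{\ \beta_f\ }(\bar{f}_0)_*\bar{i}_Y^*(\bar{j}_Y)_*j_Y^*,
\]
whose first arrow is $\bar{i}_X^*(\bar{j}_X)_*$ applied to the base change map $j_X^*f_*\to(f_U)_*j_Y^*$ — an equivalence because $j_X$ is an open immersion (smooth base change), hence so is the whole arrow; whose middle arrow is an equivalence since the diagrams $j_X^{(\bullet)}\circ f_U$ and $f^{(\bullet)}\circ j_Y^{(\bullet)}$ of Artin stacks coincide; and whose last arrow $\beta_f$ is an equivalence by part $(3)$ of the lemma, precisely because $f$ is proper and representable by DM stacks. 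Both components being equivalences, so is $(\ref{vanishing cycles and star pushforward})$.

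I expect the only real effort to be organisational rather than conceptual. One has to match the two projections of $(\ref{vanishing cycles and star pushforward})$ — an arrow assembled via Corollary $\ref{functors to a recollement}$ out of the triple whose third entry is the square $\tau$ — with the base change morphism and the nearby-cycles morphism above, i.e. check that $\tau$ is literally the square verified in the lemma immediately preceding the statement; this is the step where it is easiest to get lost in indices. Once it is in place, the proof is just proper base change together with Lemma $\ref{main lemma section tame vanishing cycles and * pushforward}$, exactly parallel to the argument for $(\ref{vanishing cycles and star pullback})$.
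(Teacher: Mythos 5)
Your proof is correct and follows essentially the same route as the paper: reduce via the two projections that jointly detect equivalences, handle the $\Det{X_0;\Lambda}$-component by proper base change, and handle the $\mu_{\oo}$-equivariant component by decomposing it into the base change map $j_X^*f_*\to(f_U)_*j_Y^*$, the exchange equivalence of Lemma \ref{main lemma section tame vanishing cycles and * pushforward}(1), and $\beta_f$, which is an equivalence by part (3) of that lemma. The only (immaterial) divergence is that you justify $j_X^*f_*\simeq(f_U)_*j_Y^*$ by smooth base change along the open immersion $j_X$, whereas the paper cites proper base change for the proper map $f$; both apply here.
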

\begin{proof}
As $\Det{\Vs^t_p;\Lambda}\rightarrow \Det{X_0;\Lambda}$ and $\Det{\Vs^t_p;\Lambda}\rightarrow \Det{X_0}^{\mu_{\oo}}$ detect equivalences, it suffices to show that $(i_X^{(1})^*\circ f_*\rightarrow (f_0)_*\circ (i_Y^{(1)})^*$ and $\Psi_{\eta,p}^t\circ j_X^*\circ f_*\rightarrow (\tilde{f}_0)_*\circ \Psi^t_{\eta,q}\circ j_Y^*$ are equivalences, which follows from proper base change. For the former this is immediate, while for the latter it suffices invoke point (3) in Lemma \ref{main lemma section tame vanishing cycles and * pushforward} and the proper base change equivalence $(f_U)_*\circ j_Y^*\simeq j_X^*\circ f_*$.
\end{proof}

\subsection*{Vanishing cycles over \texorpdfstring{$\ag{S}$}{ag} and \texorpdfstring{$!$}{!}-pushforward}
In this subsection we will define a natural transformation
\begin{equation}\label{vanishing cycles and shriek pushforward}
  (f_{\Vs^t})_!\circ \Psi_{q}^t\rightarrow \Psi_{p}^t\circ f_!
\end{equation}
for every separated morphism $f:Y\rightarrow X$ locally of finite type. Composing $(f_{\Vs^t})_!\circ \Psi_{q}^t$ and $\Psi_{p}^t\circ f_!$ with the localization $\oo$-functor $\Det{\Vs^t_p;\Lambda}\rightarrow \Det{X_0;\Lambda}$ we get $(f_0^{(1)})_!\circ (i_Y^{(1)})^*$ and $(i_X^{(1)})^*\circ f_!$, that are equivalent (\cite{lz17}). On the other hand, if we compose them with the localization $\oo$-functor $\Det{\Vs^t_p;\Lambda}\rightarrow \Det{X_0;\Lambda}^{\mu_{\oo}}$, we find $(\tilde{f}_0)_!\circ \Psi_{q,\eta}^t\circ j_Y^*$ and $\Psi_{p,\eta}^t\circ j_X^*\circ f_!$ respectively. 

\begin{lmm}
\begin{enumerate}
    \item There is an equivalence of $\oo$-functors $(\bar{f}_0)_!\circ \bar{i}_Y^*\simeq \bar{i}_X^*\circ \bar{f}_!$.
    \item There is a natural transformation $\gamma_f:\bar{f}_!\circ (\bar{j}_Y)_*\rightarrow (\bar{j}_X)_*\circ (f_U)_!$.
\end{enumerate}
\end{lmm}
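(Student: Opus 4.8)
The plan is to establish both assertions exactly as their $*$-analogues in Lemma~\ref{main lemma subsection tame vanishing cycles and * pullback} and Lemma~\ref{main lemma section tame vanishing cycles and * pushforward}, working throughout with diagrams indexed by $\N_S^{\textup{op}}$ rather than objectwise, the only geometric input being base change at each finite level.

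\emph{Part (1).} For every $n\in\N_S$ the square with vertices $Y_0^{(n)},Y^{(n)},X_0^{(n)},X^{(n)}$ and edges $i_Y^{(n)},f^{(n)},f_0^{(n)},i_X^{(n)}$ is Cartesian, so the base change formula of Section~\ref{etale sheaves on Artin stacks} gives $(i_X^{(n)})^*\circ f^{(n)}_!\simeq (f_0^{(n)})_!\circ (i_Y^{(n)})^*$. I would upgrade this to an equivalence of $\oo$-functors by noting that $\bar{i}_X^*\circ\bar{f}_!$ and $(\bar{f}_0)_!\circ\bar{i}_Y^*$ are both obtained from the six functor formalism~\ref{six functor formalism with left adjoints} applied to one and the same diagram $\N_S^{\textup{op}}\rightarrow\Fun(\Delta^1,Corr(N(\Ar{S}))_{F,all})$, namely the one produced, via the construction of Appendix~\ref{appendix A}, by composing the correspondences that define $\bar{i}_X^*$ (Construction~\ref{pullback via correspondences Art}) with the ones that define $\bar{f}_!$; since the squares above are Cartesian, the composite correspondence at level $n$ is $X_0^{(n)}\xleftarrow{f_0^{(n)}}Y_0^{(n)}\xrightarrow{i_Y^{(n)}}Y^{(n)}$ computed either way, so after the usual passage $\prs\simeq(\prsr)^{\textup{op}}$, $\Delta^1\simeq(\Delta^1)^{\textup{op}}$ and taking limits one obtains the claimed equivalence. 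This is the verbatim analogue of the proofs of Lemma~\ref{main lemma subsection tame vanishing cycles and * pullback}(1) and Lemma~\ref{main lemma section tame vanishing cycles and * pushforward}(1).

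\emph{Part (2).} For every $n$ the square with vertices $U_Y,Y^{(n)},U_X,X^{(n)}$ and edges $j_Y^{(n)},f^{(n)},f_U,j_X^{(n)}$ is Cartesian, since the open substack $U_X\subseteq X^{(n)}$ pulls back along $f^{(n)}$ to the complement of $Y_0^{(n)}$, which is $U_Y$. Base change then gives $(j_X^{(n)})^*\circ f^{(n)}_!\simeq (f_U)_!\circ (j_Y^{(n)})^*$, and assembling over $\N_S^{\textup{op}}$ as in part (1) produces an equivalence $\bar{j}_X^*\circ\bar{f}_!\simeq (f_U)_!\circ\bar{j}_Y^*$, where $\bar{j}_X^*$ and $\bar{j}_Y^*$ denote the left adjoints of $(\bar{j}_X)_*$ and $(\bar{j}_Y)_*$. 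Moreover each $j_Y^{(n)}$ is an open immersion, so $(j_Y^{(n)})^*\circ (j_Y^{(n)})_*\simeq\mathrm{id}$, and hence $\bar{j}_Y^*\circ (\bar{j}_Y)_*\simeq\mathrm{id}$. I would then define $\gamma_f$ as the composite
\begin{equation*}
  \bar{f}_!\circ (\bar{j}_Y)_*\longrightarrow (\bar{j}_X)_*\circ\bar{j}_X^*\circ\bar{f}_!\circ (\bar{j}_Y)_*\simeq (\bar{j}_X)_*\circ (f_U)_!\circ\bar{j}_Y^*\circ (\bar{j}_Y)_*\simeq (\bar{j}_X)_*\circ (f_U)_!,
\end{equation*}
the first arrow being the unit of the adjunction $(\bar{j}_X^*,(\bar{j}_X)_*)$. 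One does not expect $\gamma_f$ to be an equivalence in general, since this unit is not; it should become one precisely when $f$ is proper representable by Deligne--Mumford stacks, by proper base change, which would be the content of a subsequent point.

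\emph{Main obstacle.} The essential difficulty is not geometric but coherential: one must realize all of the equivalences above, and the map $\gamma_f$ itself, at the level of diagrams $\N_S^{\textup{op}}\rightarrow\Fun(\Delta^1,\prs)$ (resp. $\Fun(\Delta^1,\prsr)$), so that everything is homotopy coherent, mimicking Constructions~\ref{pullback via correspondences DM} and~\ref{pullback via correspondences Art} and the functorial passage~\ref{functorial properties pushforward Art}. In particular one must keep careful track of the variance in the dualities $\prs\simeq(\prsr)^{\textup{op}}$ and $\Delta^1\simeq(\Delta^1)^{\textup{op}}$, so that $\bar{f}_!$, $\bar{j}_X^*$ and $\bar{i}_X^*$ — all of which are built as left adjoints of limit diagrams — compose in the intended way; this is where I would expect to spend essentially all of the effort, the final two sentences of each part being then immediate.
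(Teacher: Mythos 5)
Your proposal is correct. Part (1) is the paper's own argument: both composites are exhibited as arising from one and the same $\N_S^{\textup{op}}$-diagram of correspondences, obtained by composing the correspondences defining $\bar{i}_X^*$ and $\bar{f}_!$ and using that the squares are Cartesian; the paper merely phrases this after passing to right adjoints, i.e. proves $(\bar{i}_Y)_*\circ\bar{f}_0^!\simeq\bar{f}^!\circ(\bar{i}_X)_*$, which is the mate of your statement. For part (2) you take a slightly different, equally valid route: you build $\gamma_f$ from the unit of $(\bar{j}_X^*,(\bar{j}_X)_*)$, the base-change equivalence $\bar{j}_X^*\circ\bar{f}_!\simeq(f_U)_!\circ\bar{j}_Y^*$ for the Cartesian "open" square, and the localization identity $\bar{j}_Y^*\circ(\bar{j}_Y)_*\simeq\mathrm{id}$; the paper instead transposes across the adjunction $(\bar{f}_!,\bar{f}^!)$, establishes $\bar{f}^!\circ(\bar{j}_X)_*\simeq(\bar{j}_Y)_*\circ f_U^!$ (the right-adjoint mate of your equivalence, proved by the same correspondence decomposition as in (1)), and then applies the unit of $((f_U)_!,f_U^!)$. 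The two constructions are mates of one another and yield canonically homotopic transformations, so either suffices for the lemma; the paper's version has the mild advantage of not needing the auxiliary left adjoint $\bar{j}_X^*$ nor the coherent identification $\bar{j}_Y^*\circ(\bar{j}_Y)_*\simeq\mathrm{id}$, both of which you would still have to realize at the level of $\N_S^{\textup{op}}$-diagrams — precisely the coherence work you correctly identify as the main burden.
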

\begin{proof}
\begin{enumerate}
    \item Passing to right adjoints, we will show that there is an equivalence of $\oo$-functors $(\bar{i}_Y)_*\circ \bar{f}_0^!\simeq \bar{f}^!\circ (\bar{i}_X)_*$. The first $\oo$-functor can be obtained as the limit of the diagram
    \begin{equation}
        \N_S^{op}\rightarrow \Fun(\Delta^1,Corr(N(\Ar{S}))_{F,all})\rightarrow \Fun(\Delta^1,\prsr),
    \end{equation}
    where the first arrow is the diagram
    \begin{equation}
        \begindc{\commdiag}[15]
          \obj(-20,0)[1]{$n$}
          \obj(20,15)[2]{$Y_0^{(n)}$}
          \obj(50,15)[3]{$X_0^{(n)}$}
          \obj(20,-15)[4]{$Y^{(n)}.$}
          \obj(0,0)[0]{$\mapsto$}
          \mor{2}{3}{$f_0^{(n)}$}
          \mor{2}{4}{$i_Y^{(n)}$}
        \enddc
    \end{equation}
    However, this diagram of correspondences is equivalent to the composition of the diagrams
    \begin{equation}
        \begindc{\commdiag}[15]
          \obj(-80,0)[1a]{$n$}
          \obj(-40,15)[2a]{$X_0^{(n)}$}
          \obj(-10,15)[3a]{$X_0^{(n)}$}
          \obj(-40,-15)[4a]{$X^{(n)},$}
          \obj(-60,0)[0a]{$\mapsto$}
          \mor{2a}{3a}{$id$}
          \mor{2a}{4a}{$i_X^{(n)}$}
          \obj(40,0)[1b]{$n$}
          \obj(80,15)[2b]{$Y^{(n)}$}
          \obj(110,15)[3b]{$X^{(n)}$}
          \obj(80,-15)[4b]{$Y^{(n)},$}
          \obj(60,0)[0b]{$\mapsto$}
          \mor{2b}{3b}{$f^{(n)}$}
          \mor{2b}{4b}{$id$}
        \enddc
    \end{equation}
    whence the equivalence.
    \item By adjunction, we must provide a natural transformation $(\bar{j}_Y)_*\rightarrow \bar{f}^!\circ (\bar{j}_X)_*\circ (f_U)_*$. Using an analogue argument of the one in (1), we see that $\bar{f}^!\circ (\bar{j}_X)_*\simeq (\bar{j}_Y)_*\circ f_U^!$. We define the above-mentioned natural transformation as the morphism
    \begin{equation}
        (\bar{j}_Y)_*\rightarrow \bar{f}^!\circ (\bar{j}_X)_*\circ (f_U)_*\simeq (\bar{j}_Y)_*\circ f_U^!\circ (f_U)_!
    \end{equation}
    induced by the unit of the adjunction $((f_U)_!,f_U^!)$.
\end{enumerate}
\end{proof}
We define
\begin{equation}
    (\bar{f}_0)_!\circ \Psi_{\eta,q}^t\circ j_Y^* \rightarrow \Psi_{\eta,p}^t\circ j_X^*\circ f_!
\end{equation}
as the following composition:
\begin{equation}
    (\bar{f}_0)_!\circ \bar{i}_Y^*\circ (\bar{j}_Y)_*\circ j_Y^* \simeq \bar{i}_X^*\circ \bar{f}_! \circ (\bar{j}_Y)_* \circ j_Y^* \xrightarrow{\gamma_f} \bar{i}_X^*\circ (\bar{j}_Y)_* \circ (f_U)_! \circ j_Y^*\simeq  \bar{i}_X^*\circ (\bar{j}_Y)_* j_X^* \circ f_!.
\end{equation}

\begin{lmm}
The square $\tau:\partial(\Delta^1\x \Delta^1)\rightarrow \Fun(\Det{Y;\Lambda},\Det{X_0;\Lambda})$
\begin{equation}
  \begindc{\commdiag}[13]
    \obj(-50,20)[1]{$(f_0^{(1)})_!\circ (i_Y^{(1)})^*$}
    \obj(50,20)[2]{$(i_X^{(1)})^*\circ f_!$}
    \obj(-50,-20)[3]{$(-)^{\muinf{S}}\circ (\tilde{f}_0)_!\circ \Psi_{q,\eta}^t\circ j_Y^*$}
    \obj(50,-20)[4]{$(-)^{\muinf{S}}\circ \Psi_{p,\eta}^t\circ j_X^*\circ f_!$}
    \mor{1}{2}{$$}
    \mor{1}{3}{$$}
    \mor{2}{4}{$$}
    \mor{3}{4}{$$}
  \enddc
\end{equation}
commutes.
\end{lmm}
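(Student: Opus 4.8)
The plan is to argue exactly as in the proofs of the two preceding lemmas on $*$-pushforward and $!$-pushforward. The first step is to apply the projection $(-)^{\mu_{\oo}}$ to the two objects of the bottom row of $\tau$ and rewrite them using the compatibilities recorded in the constructions above, namely $(-)^{\mu_{\oo}}\circ \bar{i}_X^*\simeq (i_X^{(1)})^*\circ (-)^{\mu_{\oo}}$ (and its analogue for $\bar{i}_Y^*$), $(-)^{\mu_{\oo}}\circ (\bar{j}_X)_*\simeq (j_X^{(1)})_*$, $(-)^{\mu_{\oo}}\circ (\bar{j}_Y)_*\simeq (j_Y^{(1)})_*$, and $(-)^{\mu_{\oo}}\circ (\bar{f}_0)_!\simeq (f_0^{(1)})_!\circ (-)^{\mu_{\oo}}$. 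Under these identifications the bottom-left object $(-)^{\mu_{\oo}}\circ (\bar{f}_0)_!\circ \Psi_{\eta,q}^t\circ j_Y^*$ becomes $(f_0^{(1)})_!\circ (i_Y^{(1)})^*\circ (j_Y^{(1)})_*\circ j_Y^*$, and the bottom-right object $(-)^{\mu_{\oo}}\circ \Psi_{\eta,p}^t\circ j_X^*\circ f_!$ becomes $(i_X^{(1)})^*\circ (j_X^{(1)})_*\circ j_X^*\circ f_!$.

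The second step is to trace the bottom arrow. By the very definition of the morphism $(\bar{f}_0)_!\circ \Psi_{\eta,q}^t\circ j_Y^*\rightarrow \Psi_{\eta,p}^t\circ j_X^*\circ f_!$ and of $\gamma_f$, after applying $(-)^{\mu_{\oo}}$ and using the identifications above this arrow becomes the composite of: (i) the localization base change equivalence $(f_0^{(1)})_!\circ (i_Y^{(1)})^*\simeq (i_X^{(1)})^*\circ f_!$ of \cite{lz17}, postcomposed with $(j_Y^{(1)})_*\circ j_Y^*$; (ii) the natural transformation $f_!\circ (j_Y^{(1)})_*\rightarrow (j_X^{(1)})_*\circ (f_U)_!$, which by construction of $\gamma_f$ is the morphism corresponding, under the equivalence $f^!\circ (j_X^{(1)})_*\simeq (j_Y^{(1)})_*\circ f_U^!$, to $(j_Y^{(1)})_*$ applied to the unit of $((f_U)_!,f_U^!)$; and (iii) the exceptional base change $(f_U)_!\circ j_Y^*\simeq j_X^*\circ f_!$. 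On the other hand, the two vertical arrows of $\tau$ become, after $(-)^{\mu_{\oo}}$, the maps induced by the unit of $(j_Y^*,(j_Y)_*)$ on the left and by the unit of $(j_X^*,(j_X)_*)$ on the right, up to the localization base change along the top edge. With these descriptions in hand, $\tau$ decomposes as a pasting of two squares: one expressing the naturality of the localization base change $(i^{(1)})^*\circ f_!\simeq (f_0^{(1)})_!\circ (i^{(1)})^*$ with respect to the unit of $(j^*,j_*)$, and one expressing the compatibility of the exceptional base change $(f_U)_!\circ j_Y^*\simeq j_X^*\circ f_!$ with the unit of $((f_U)_!,f_U^!)$. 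Both commute by the homotopy coherence built into the six-functor formalism of \cite{lz17}, so the two composites around $\tau$ agree, and one concludes as in the previous two lemmas.

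The mathematical content is no harder than in the $*$- and $!$-pushforward cases already treated; the only delicate point, and hence the main thing to be careful about, is the bookkeeping at the level of homotopy-coherent data. Concretely, one must make sure that the several base-change equivalences invoked — localization base change along the closed immersions $i_X^{(1)},i_Y^{(1)}$, exceptional base change along $f$ and $f_U$, and the compatibility of $\gamma_f$ with $(-)^{\mu_{\oo}}$ — are precisely the ones produced coherently by the constructions of the preceding sections, so that the $2$-cells filling the small squares really are the tautological ones and paste together as claimed. In particular, the identity $(-)^{\mu_{\oo}}\circ (\bar{f}_0)_!\simeq (f_0^{(1)})_!\circ (-)^{\mu_{\oo}}$ used above is best justified by realizing $(\bar{f}_0)_!$ as the limit $\varprojlim_{\N_S}(f_0^{(\bullet)})_!$, which is legitimate because the transition morphisms between the stacks $X_0^{(n)}$ are proper, so that $!$- and $*$-pushforward agree along them and the $!$-pushforwards assemble functorially into a diagram over $\N_S^{\textup{op}}$.
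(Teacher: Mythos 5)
Your proposal is correct and follows essentially the same route as the paper: apply $(-)^{\mu_{\oo}}$, use the compatibilities of $\bar{i}^*$, $(\bar{j})_*$ and $(\bar{f}_0)_!$ with the projection to identify the bottom row with the composite of the localization base change, the map induced by the unit of $((f_U)_!,f_U^!)$, and the exceptional base change, and then conclude by naturality/coherence of these exchange morphisms with the units of $(j^*,j_*)$. You simply spell out the pasting of the two small squares that the paper dismisses as "clear".
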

\begin{proof}
The bottom row in the square is homotopic to the composition
\begin{equation}
    (f_0)_!\circ i_Y^* \circ (j_Y)_* \circ j_Y^*\simeq i_X^*\circ f_! \circ (j_Y)_* \circ j_Y^* \rightarrow i_X^*\circ (j_X)_* \circ (f_U)_! \circ j_Y^* \simeq i_X^*\circ (j_X)_* \circ j_X^* \circ f_!.
\end{equation}
Thus, one obtains a diagram $\partial(\Delta^1\x \Delta^1)\rightarrow \Fun(\Det{Y;\Lambda},\Det{X_0;\Lambda})$
\begin{equation}
  \begindc{\commdiag}[13]
    \obj(-50,20)[1]{$(f_0)_!\circ i_Y^*$}
    \obj(50,20)[2]{$i_X^*\circ f_!$}
    \obj(-50,-20)[3]{$(f_0)_!\circ i_Y^* \circ (j_Y)_* \circ j_Y^*$}
    \obj(50,-20)[4]{$i_X^*\circ (j_X)_* \circ j_X^* \circ f_!,$}
    \mor{1}{2}{$$}
    \mor{1}{3}{$$}
    \mor{2}{4}{$$}
    \mor{3}{4}{$$}
  \enddc
\end{equation}
where the vertical arrows are induced by the unit of the adjunctions $(j_Y^*,(j_Y)_*)$ and $(j_X^*,(j_X)_*)$, while the horizontal arrows by the base change morphism. It is then clear that the two compositions are homotpic, whence the claim.
\end{proof}
The data $((f_0^{(1)})_!\circ (i_Y^{(1)})^*\simeq(i_X^{(1)})^*\circ f_!, (\bar{f}_0)_!\circ \Psi_{q,\eta}^t\circ j_Y^*\rightarrow \Psi_{p,\eta}^t\circ j_X^*\circ f_!, \tau)$ determine the morphism $(\ref{vanishing cycles and shriek pushforward})$.
\subsection*{Vanishing cycles over \texorpdfstring{$\ag{S}$}{ag} and \texorpdfstring{$!$}{!}-pullback}\label{section tame vanishing cycles over ag and !pullbacks}

In this subsection we will define a morphism
\begin{equation}\label{tame vanishing cycles and ! pullback}
  \Psi_{q}^t\circ f^!\rightarrow f_{\Vs^t}^!\circ \Psi_{p}^t.
\end{equation}

By composing both $\Psi_{q}^t\circ f^!$ and $f_{\Vs^t}^!\circ \Psi_{p}^t$ with the localization $\Det{\Vs^t_q;\Lambda}\rightarrow \Det{Y_0;\Lambda}$ we find the $\oo$-functors $(i_{Y}^{(1)})^*\circ f^!$ and $(f_{0})^!\circ (i_{X}^{(1)})^*$ respectively. We consider the morphism $(i_{Y}^{(1)})^*\circ f^!\rightarrow (f_{0})^!\circ (i_{X}^{(1)})^*$ adjoint to
\begin{equation}
f^!\rightarrow (i_{Y}^{(1)})_*\circ (f_{0})^!\circ (i_{X}^{(1)})^*\simeq f^!\circ (i_{X}^{(1)})_*\circ (i_{X}^{(1)})^*
\end{equation}
induced by the unit of the adjunction $((i_{X}^{(1)})^*,(i_{X}^{(1)})_*)$.

On the other hand, if we compose $\Psi_{q}^t\circ f^!$ and $f_{\Vs^t}^!\circ \Psi_{p}^t$ with the localization $\Det{\Vs^t_q;\Lambda}\rightarrow \Det{Y_0;\Lambda}^{\mu_{\oo}}$ we find the $\oo$-functors $\Psi_{q,\eta}^t\circ j_Y^* \circ f^!$ and $(\tilde{f}_0)^!\circ \Psi_{p,\eta}^t\circ j_X^*$ respectively. 
Notice that there is a morphism of $\oo$-functors $\bar{i}_Y^*\circ f^!\rightarrow \bar{f}_0^!\circ \bar{i}_X^*$, adjunct to the natural transfomation
\begin{equation}
    f^!\rightarrow (\bar{i}_Y)_*\circ \bar{f}_0^!\circ \bar{i}_X^*\simeq f^!\circ (\bar{i}_X)_*\circ \bar{i}_X^*
\end{equation}
induced by the unit of the adjunction $(\bar{i}_X^*,(\bar{i}_X)_*)$.

We define $\Psi_{q,\eta}^t\circ j_Y^* \circ f^!\rightarrow (\tilde{f}_0)^!\circ \Psi_{p,\eta}^t\circ j_X^*$ as the composition
\begin{equation}
     \bar{i}_Y^*\circ (\bar{j}_Y)_*\circ j_Y^* \circ f^! \rightarrow \bar{i}_Y^*\circ (\bar{j}_Y)_*\circ f_U^!\circ j_X^*\simeq \bar{i}_Y^*\circ \bar{f}^! \circ (\bar{j}_X)_*\circ j_X^*\rightarrow  \bar{f}_0^!\circ \bar{i}_X^* \circ (\bar{j}_X)_*\circ j_X^*.
\end{equation}

\begin{lmm}
The square $\tau:\partial(\Delta^1\x \Delta^1)\rightarrow \Fun(\Det{Y;\Lambda},\Det{X_0;\Lambda})$
\begin{equation}
  \begindc{\commdiag}[13]
    \obj(-50,20)[1]{$(i_{Y}^{(1)})^*\circ f^!$}
    \obj(50,20)[2]{$(f_{0})^!\circ (i_{X}^{(1)})^*$}
    \obj(-50,-20)[3]{$(-)^{\muinf{S}}\circ \Psi_{q,\eta}^t\circ j_Y^*\circ f^!$}
    \obj(50,-20)[4]{$(-)^{\muinf{S}}\circ(\bar{f}_0)^!\circ \Psi_{p,\eta}^t\circ j_X^*$}
    \mor{1}{2}{$$}
    \mor{1}{3}{$$}
    \mor{2}{4}{$$}
    \mor{3}{4}{$$}
  \enddc
\end{equation}
commutes.
\end{lmm}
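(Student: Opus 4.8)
The plan is to run, \emph{mutatis mutandis}, the same argument used a few lines above to verify commutativity of the analogous square for $!$-pushforward; the present statement is its mirror image, and as there the point is that after applying $(-)^{\muinf{S}}$ to the lower vertices the whole square collapses to the naturality square of a single transformation along a unit map.

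First I would apply $(-)^{\muinf{S}}$ to the two lower vertices. By Remark~\ref{muinf composed with tame nearby cycles} one has $(-)^{\muinf{S}}\circ\Psi_{\eta,q}^t\simeq (i_Y^{(1)})^*\circ (j_Y^{(1)})_*$, and the construction of $(\bar{f}_0)^!$ was arranged precisely so that $(-)^{\muinf{S}}\circ (\bar{f}_0)^!\simeq (f_0^{(1)})^!\circ (-)^{\muinf{S}}$ (the projection onto the $n=1$ factor is $(-)^{\muinf{S}}$ and $f_0^{(1)}=f_0$). Hence the bottom-left vertex becomes $(i_Y^{(1)})^*\circ (j_Y^{(1)})_*\circ j_Y^*\circ f^!$, the bottom-right vertex becomes $(f_0^{(1)})^!\circ (i_X^{(1)})^*\circ (j_X^{(1)})_*\circ j_X^*$, the left vertical map becomes the one induced by the unit of $((j_Y^{(1)})^*,(j_Y^{(1)})_*)$, and the right vertical map becomes $(f_0^{(1)})^!$ applied to the unit of $((j_X^{(1)})^*,(j_X^{(1)})_*)$ (transported through the identification above).

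The second step is to recognize the resulting lower horizontal arrow. Since $j_X$ and $j_Y$ are open immersions, absolute purity with $d=0$ gives $j_Y^*\simeq j_Y^!$, so functoriality of $!$-pullback yields $j_Y^*\circ f^!\simeq f_U^!\circ j_X^*$; combining this with the exchange equivalences $(i_Y^{(1)})_*\circ (f_0^{(1)})^!\simeq f^!\circ (i_X^{(1)})_*$ and $(j_Y^{(1)})_*\circ f_U^!\simeq f^!\circ (j_X^{(1)})_*$ of the six functor formalism, the lower arrow should be identified with
\[
(i_Y^{(1)})^*\circ f^!\circ (j_X^{(1)})_*\circ j_X^* \;\longrightarrow\; (f_0^{(1)})^!\circ (i_X^{(1)})^*\circ (j_X^{(1)})_*\circ j_X^* ,
\]
that is, with the value at $(j_X^{(1)})_*\circ j_X^*$ of the same natural transformation $(i_Y^{(1)})^*\circ f^!\rightarrow (f_0^{(1)})^!\circ (i_X^{(1)})^*$ forming the top row (the one adjoint to $f^!\rightarrow f^!\circ (i_X^{(1)})_*\circ (i_X^{(1)})^*$). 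At that point the square degenerates to the naturality square of that transformation along the unit $\mathrm{id}\rightarrow (j_X^{(1)})_*\circ j_X^*$, hence commutes.

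The main obstacle, exactly as in the $!$-pushforward case, is purely bookkeeping: one has to make sure the identifications invoked above are literally the homotopies used in the definitions of $\Psi_q^t$, of $f_{\Vs^t}^!$ (through Proposition~\ref{adjunctions between recollements}), and of the two natural transformations composing into \eqref{tame vanishing cycles and ! pullback}, so that the two pasting diagrams being compared agree coherently and not merely objectwise. I do not expect any genuinely new difficulty beyond what was already handled in the $!$-pushforward verification.
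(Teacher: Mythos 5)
Your argument is correct and is essentially an expanded version of the paper's own (one-line) proof, which simply says the square commutes by functoriality of the exchange morphisms $Ex^{*,!}$: after applying $(-)^{\muinf{S}}$ the square does collapse, exactly as you describe, to the interchange/naturality square of the exchange transformation $(i_Y^{(1)})^*\circ f^!\rightarrow (f_0^{(1)})^!\circ (i_X^{(1)})^*$ against the unit $\mathrm{id}\rightarrow (j_X^{(1)})_*\circ (j_X^{(1)})^*$. One cosmetic remark: the identification $j^*\simeq j^!$ for an open immersion is an instance of \emph{relative} (smooth) purity with $d=0$, not of absolute purity.
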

\begin{proof}
This follows from the functoriality of the exchange morphisms $Ex^{*,!}$.
\end{proof}
We obtain \ref{tame vanishing cycles and ! pullback} by applying Corollary \ref{functors to a recollement} to the datum $\bigl ( (i_{Y}^{(1)})^*\circ f^!\rightarrow (f_{0})^!\circ (i_{X}^{(1)})^*, \Psi_{q,\eta}^t\circ j_Y^* \circ f^!\rightarrow (\tilde{f}_0)^!\circ \Psi_{p,\eta}^t\circ j_X^*, \tau \bigr )$.

\section{Comparison with vanishing cycles over a strictly henselian trait}\label{comparison with vanishing cycles over a strictly henselian trait}

Assume that $S$ is a strictly henselian trait and keep the notation of section \S \ref{review of the theory of tame vanishing cycles}. Let $p:X\rightarrow S$ a morphism of finite type. The uniformizer $\pi$ induces a morphism $\pi:S\rightarrow \aff{1}{S}$. Let $f:X\rightarrow \ag{S}$ denote the following composition:
\begin{equation}
    X\xrightarrow{p} S \xrightarrow{\pi} \aff{1}{S} \rightarrow \ag{S}.
\end{equation}
It is a natural to compare the theory of tame vanishing cycles of $f$ with that of $p$ (defined in \cite[Exposé XIII]{sga7ii}). 
First observe that these two $\oo$-functors are indeed comparable. Notice that $X_0\simeq X_{\sigma}$.
\begin{lmm}
There exists a commutative square in $\prsr$
\begin{equation}
    \begindc{\commdiag}[13]
      \obj(-40,20)[1]{$\Det{X_0;\Lambda}$}
      \obj(40,20)[2]{$\Det{X_0;\Lambda}^{\mu_{\oo}}$}
      \obj(-40,-20)[3]{$\Det{X_0;\Lambda}$}
      \obj(40,-20)[4]{$\Det{X_0;\Lambda}^{I^t}.$}
      \mor{2}{1}{$(-)^{\mu_{\oo}}$}[\atright,\solidarrow]
      \mor{1}{3}{$id$}
      \mor{2}{4}{$\simeq$}
      \mor{4}{3}{$(-)^{I^t}$}
    \enddc
\end{equation}
In particular, there is an equivalence $\Det{\Vs^t_p;\Lambda}\simeq \Det{\Vs^t_f;\Lambda}$.
\end{lmm}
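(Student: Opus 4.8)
The plan is to unwind both sides into limits over pro-systems of Deligne--Mumford stacks and to observe that, when $S$ is a strictly henselian trait and $f$ is pulled back from $\pi\colon S\to\aff1{S}$, the two pro-systems coincide up to nilpotent thickenings and a cofinal reindexing.

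First I would pin down the index categories. Since $S$ is strictly henselian one has $k=k^{s}$, hence $K^{u}=K$ and $K^{t}/K$ is totally tamely ramified; by the classical structure theory of tame extensions of a strictly henselian trait (\cite{se62}) one has $K^{t}=\bigcup_{n\in\N_{S}}K(\pi^{1/n})$, each $K(\pi^{1/n})$ is Galois over $K$ with $\mathrm{Gal}(K(\pi^{1/n})/K)\simeq\mu_{n}$, and $K(\pi^{1/n})\subseteq K(\pi^{1/m})$ exactly when $n\mid m$. Thus $n\mapsto K(\pi^{1/n})$ is a cofinal embedding $\N_{S}\hookrightarrow\mathscr{A}$ landing in the totally ramified extensions; for such $a\leftrightarrow n$ one has $K_{a}^{u}=K$, $S_{a}=S$, $X_{\sigma_{a}}=X_{\sigma}=X_{0}$, and the $\mathrm{Gal}(K_{a}/K)\simeq\mu_{n}$-action on $X_{\sigma_{a}}$ is trivial since it factors through $\mathrm{Gal}(K_{a}^{u}/K)=1$. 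Under this reindexing the pro-DM-stack of \S\ref{review of the theory of tame vanishing cycles} computing $\Det{X_{0};\Lambda}^{I^{t}}$ becomes the pro-object $\big([X_{0}/\mu_{n}]\big)_{n\in\N_{S}}$ with trivial $\mu_{n}$-action, whose transition maps induce the fixed-point functors $(-)^{\mu_{m/n}}$.

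Next I would match this with the $\mu_{\oo}$-side. Tracing through the definition of $\Theta^{(n)}$ one finds $\aff1{S}\times_{\ag{S},\Theta^{(n)}}\ag{S}\simeq[\aff1{S}/\mu_{n}]$, with $\mu_{n}\subset\gm{S}$ acting by scaling and the projection to $\aff1{S}$ being $s\mapsto s^{n}$; pulling back along $X\to S\xrightarrow{\pi}\aff1{S}$ gives $X^{(n)}\simeq[X_{S_{n}}/\mu_{n}]$ with $S_{n}=\mathrm{Spec}(A[s]/(s^{n}-\pi))$ the normalisation of $A$ in $K(\pi^{1/n})$, so that $X_{0}^{(n)}=X_{0}\times_{X}X^{(n)}\simeq[(X_{\sigma}\times_{\sigma}\mathrm{Spec}(k[s]/(s^{n})))/\mu_{n}]$ is a nilpotent thickening of $[X_{0}/\mu_{n}]$ with trivial $\mu_{n}$-action. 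As $\Det{-;\Lambda}$ is insensitive to nilpotent thickenings, these identifications are compatible with the structure maps of the two pro-systems (the transition $X_{0}^{(m)}\to X_{0}^{(n)}$ inducing exactly $(-)^{\mu_{m/n}}$), so the two diagrams $\N_{S}^{\mathrm{op}}\to\prsr$ are equivalent; passing to limits gives the equivalence $\Det{X_{0};\Lambda}^{\mu_{\oo}}\simeq\Det{X_{0};\Lambda}^{I^{t}}$. Under it the $n=1$ term matches the $K_{a}=K$ term, and since $(-)^{\mu_{\oo}}$ (resp. $(-)^{I^{t}}$) is precisely the canonical projection of the limit onto that term, the square of the statement commutes.

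Finally, the ``in particular'' is formal. By definition $\Det{\Vs^t_f;\Lambda}$ (resp. $\Det{\Vs^t_p;\Lambda}$ of \S\ref{review of the theory of tame vanishing cycles}) is the recollement determined by the adjunction $(\textup{triv},(-)^{\mu_{\oo}})$ (resp. $(\textup{triv},(-)^{I^{t}})$), and by Proposition \ref{explicit description recollement} it is the fibre product $\Fun(\Delta^{1},\Det{X_{0};\Lambda})\times_{ev_{1},\Det{X_{0};\Lambda}}\Det{X_{0};\Lambda}^{\mu_{\oo}}$ taken in $\prsr$ along the gluing functor $(-)^{\mu_{\oo}}$, and similarly for $I^{t}$; the commutative square of the statement is an equivalence of the two cospans, hence induces an equivalence of the fibre products. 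The main obstacle is the bookkeeping of the second paragraph: making the identification $X_{0}^{(n)}\simeq[(X_{\sigma}\times_{\sigma}\mathrm{Spec}(k[s]/(s^{n})))/\mu_{n}]$ and the cofinality of $\{K(\pi^{1/n})\}_{n}$ in $\mathscr{A}$ functorial, and compatible with the $\Delta^{1}$-indexed functoriality data used to construct $\bar{i}^{*}$, $\bar{j}_{*}$, $(-)^{I^{t}}$ and $(-)^{\mu_{\oo}}$; everything else reduces to nilpotent-invariance of \'etale cohomology and the formalism of recollements from \S\ref{section recollements}.
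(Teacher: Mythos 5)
Your proposal is correct and follows essentially the same route as the paper: both arguments reduce to identifying, compatibly in $n$, the DM stack $X_0^{(n)}$ with $[X_0/Gal(K(\pi^{1/n})/K)]$ (up to a nilpotent thickening), using that strict henselianity makes $\mu_n$ the constant group scheme on $Gal(K(\pi^{1/n})/K)$, kills $Gal(k^s/k)$, and makes the Kummer extensions cofinal in $\mathscr{A}$. Your version just spells out more explicitly the root-stack computation of $X^{(n)}$ and the passage from the equivalence of cospans to the equivalence of recollements, both of which the paper leaves implicit.
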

\begin{proof}
  First notice that, since we assume that $S$ is strictly henselian, $Gal(k^s/k)\simeq 0$. In particular, $\Det{\Vs^t_p;\Lambda}$ is the recollement of $\Det{X_0;\Lambda}$ and $\Det{X_0;\Lambda}^{\mu_{\oo}}$ and the second statement follows immediately from the first.
  
  We need to show that for every $n\in \N_S$ there is a commutative square
\begin{equation}
    \begindc{\commdiag}[13]
      \obj(-60,20)[1]{$\Det{X_0;\Lambda}$}
      \obj(60,20)[2]{$\Det{X_0;\Lambda}^{\mu_{n}}$}
      \obj(-60,-20)[3]{$\Det{X_0;\Lambda}$}
      \obj(60,-20)[4]{$\Det{X_0;\Lambda}^{Gal(K[\pi^{\frac{1}{n}}]/K)}.$}
      \mor{2}{1}{$(-)^{\mu_{n}}$}[\atright,\solidarrow]
      \mor{1}{3}{$id$}
      \mor{2}{4}{$\simeq$}
      \mor{4}{3}{$(-)^{Gal(K[\pi^{\frac{1}{n}}]/K)}$}
    \enddc
\end{equation}
Moreover, these squares have to fit in a diagram $\N_S\rightarrow \Fun(\Delta^1\x \Delta^1,\prsr)$.

Since $S$ is strictly henselian, the group scheme $\mu_n$ is isomorphic to $Gal(K[\pi^{\frac{1}{n}}]/K)\x S$, where the scheme on the right denotes the group scheme $\amalg_{Gal(K[\pi^{\frac{1}{n}}]/K)}S$ with the group structure induced by $Gal(K[\pi^{\frac{1}{n}}]/K)$. In particular, there are equivalences of DM stacks
\begin{equation}
    X_0^{(n)}\simeq [X/Gal(K[\pi^{\frac{1}{n}}]/K)]\x_X X_0.
\end{equation}
Moreover, they are compatible in the obvious sense, i.e. there is an equivalence of diagrams of DM stacks
\begin{equation}
    X_0^{(\bullet)}\simeq [X/Gal(K[\pi^{\frac{1}{\bullet}}]/K)]\x_X X_0:\N_S\rightarrow \DM{S}.
\end{equation}
The claim follows immediately.
\end{proof}

\begin{thm}
There is an equivalence of $\oo$-functors
\begin{equation}
  \Psi_p^{cl,t} \simeq \Psi_f^t:\Det{X;\Lambda}\rightarrow \Det{\Vs^t_p;\Lambda}\simeq \Det{\Vs^t_f;\Lambda}.
\end{equation}
\end{thm}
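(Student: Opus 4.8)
The plan is to check that both $\oo$-functors agree componentwise in the recollement $\Det{\Vs^t_p;\Lambda}\simeq \Det{\Vs^t_f;\Lambda}$, i.e. after composing with the two localization functors to $\Det{X_0;\Lambda}$ and to $\Det{X_0;\Lambda}^{\mu_{\oo}}\simeq \Det{X_{\sigma^s};\Lambda}^{I^t}$, and then to check that the specialization morphisms coincide; since a functor into a recollement is determined by such data (Corollary~\ref{functors to a recollement}), this will suffice. First I would record that both $\Psi_p^{cl,t}$ and $\Psi_f^t$ have the same "closed part": composing with the localization to $\Det{X_0;\Lambda}$ yields $(i^s)^*$ on the classical side and $(i_X^{(1)})^*=i_X^*$ on the stacky side, and under $X_0\simeq X_\sigma$ these are literally the same functor (this is the content of the definition of $\Psi_p^{cl,t}$ via the specialization morphism, whose source is $(i^s)^*$, and of $\Psi_f^t$ via Corollary~\ref{functors to a recollement}, whose first datum is $i_X^*$).

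The main work is the "open part", where I would exhibit an equivalence $\Psi_{p,\eta}^{cl,t}\simeq \Psi_{\eta,f}^t\circ j_X^*$ compatible with the $I^t\simeq \mu_{\oo}$-actions. The natural approach is to reduce to the finite levels and pass to the filtered colimit, exactly as in Proposition~\ref{comparison classical and stacky definition tame vanishing cycles}. Concretely, for each $n\in\N_S$ one has $K[\pi^{1/n}]$, and the preceding lemma produces a compatible equivalence of diagrams of DM stacks $X_0^{(\bullet)}\simeq [X/Gal(K[\pi^{1/\bullet}]/K)]\times_X X_0$. One must likewise match, at each level $n$, the relevant morphisms: the closed immersion $i_X^{(n)}:X_0^{(n)}\to X^{(n)}$ and open immersion $j_X^{(n)}:U_X\to X^{(n)}$ with their classical counterparts $i_a:X_{\sigma_a}\to X_{S_a}$ and $j_a:X_{\eta_a}\to X_{S_a}$ for the extension $K\subseteq K_a=K[\pi^{1/n}]$. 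Here the key geometric point is that $X^{(n)}=X\times_{\ag{S},\Theta^{(n)}}\ag{S}$ is computed by pulling back $\pi^{1/n}:S[\pi^{1/n}]\to \aff{1}{S}$, so that $X^{(n)}\simeq [X_{S_a}/Gal(K_a/K)]$ as a DM stack over $\ag{S}$, and $U_X\simeq X_{\eta_a}/Gal(K_a/K)$, $X_0^{(n)}\simeq X_{\sigma_a}/Gal(K_a/K)$ (modulo the nilthickening issue already accounted for in \S\ref{review of the theory of tame vanishing cycles}); this uses that $S$ is strictly henselian so that $\mu_n\simeq Gal(K_a/K)\times S$. Given these identifications, $\bar{i}_X^*\circ(\bar{j}_X)_*$ and $(i^s)^*\circ(j^t)_*$ are computed by the same colimit of the level-$n$ functors $(i_X^{(n)})^*\circ(j_X^{(n)})_*\simeq i_a^*\circ(j_a)_*$, invoking Lemma~\ref{categorical lemma} to identify the colimit with the continuous-action object as in the proof of Proposition~\ref{comparison classical and stacky definition tame vanishing cycles}.

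Finally I would check that the specialization morphism matches: on the classical side $sp$ is the unit of $((j^t)^*,(j^t)_*)$ restricted along $(i^s)^*$, whereas the datum feeding Corollary~\ref{functors to a recollement} for $\Psi_f^t$ is the unit of $((j_X^{(1)})^*,(j_X^{(1)})_*)$; unravelling Remark~\ref{muinf composed with tame nearby cycles} and the equivalence $(-)^{\mu_{\oo}}\circ\Psi_{\eta,f}^t\simeq (i_X^{(1)})^*\circ(j_X^{(1)})_*\circ(j_X^{(1)})^*$, these two units are identified under $I^t\simeq\mu_{\oo}$ and $X_0\simeq X_\sigma$. Since by uniqueness a functor into the recollement $\Det{\Vs^t_p;\Lambda}$ is determined up to equivalence by the triple (closed part, open part, specialization), this yields $\Psi_p^{cl,t}\simeq\Psi_f^t$. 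The main obstacle is purely bookkeeping: organizing the level-$n$ equivalences into a coherent diagram $\N_S^{op}\to\Fun(\Delta^1,\prsr)$ (or the appropriate correspondence category, as in Construction~\ref{pullback via correspondences DM} and Construction~\ref{pullback via correspondences Art}) matching the two constructions of $\bar{i}_X^*$, so that the comparison is a genuine equivalence of $\oo$-functors rather than merely a levelwise or pointwise one; all the geometric input is already supplied by the lemma preceding the theorem.
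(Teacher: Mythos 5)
Your proposal is correct and follows essentially the same route as the paper: it reduces to comparing the open ($\mu_{\oo}$-equivariant) components together with the specialization morphisms, uses the strictly henselian hypothesis to identify $\mu_n$ with $Gal(K[\pi^{1/n}]/K)$ and hence the diagram $X_0^{(\bullet)}\to X^{(\bullet)}\leftarrow X_\eta$ with the corresponding diagram of Galois quotient stacks, and then concludes by invoking Proposition \ref{comparison classical and stacky definition tame vanishing cycles}.
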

\begin{proof}
As the compositions of $\Psi_p^{cl,t}$ and $\Psi_f^t$ with $\Det{\Vs^t_p;\Lambda}\simeq \Det{\Vs^t_f;\Lambda}\rightarrow \Det{X_0;\Lambda}$ clearly agree, it suffices to show that there is an equivalence $\Psi_{\eta,p}^{cl,t}\circ j_X^*\simeq \Psi_{\eta,f}^t\circ j_X^*$, compatible with the morphisms $i_X^*\rightarrow \Psi_{\eta,p},^{cl,t}\circ j_X^*$ and $i_X^*\rightarrow \Psi_{\eta,f}^t\circ j_X^*$.
  This relies on the fact that, as we said above, $S$ is strictly henselian and for each $n\in \N_S$ there is an isomorphism of group $S$-schemes
  \begin{equation}
    \mu_{n}\simeq Gal(K[\pi^{\frac{1}{n}}]/K)\x S.
  \end{equation}
  Therefore we get equivalences of diagrams of DM stacks $\N_S \x (\cdot \rightarrow \cdot \leftarrow \cdot)\rightarrow \DM{S}$
  \begin{equation}
      \begindc{\commdiag}[13]
        \obj(-100,12)[1]{$X_0^{(\bullet)}$}
        \obj(0,12)[2]{$X^{(\bullet)}$}
        \obj(100,12)[3]{$X_{\eta}$}
        \obj(-100,-12)[4]{$[X/Gal(K[\pi^{\frac{1}{\bullet}}]/K)]\x_X X_0$}
        \obj(0,-12)[5]{$[X/Gal(K[\pi^{\frac{1}{\bullet}}]/K)]$}
        \obj(100,-12)[6]{$X_{\eta}$}
        \mor{1}{2}{$i_X^{(\bullet)}$}
        \mor{3}{2}{$j_X^{(\bullet)}$}[\atright, \solidarrow]
        \mor{1}{4}{$\simeq$}
        \mor{2}{5}{$\simeq$}
        \mor{3}{6}{$\simeq$}
        \mor{4}{5}{$i_X^{(\bullet)}$}
        \mor{6}{5}{$j_X^{(\bullet)}$}[\atright, \solidarrow]
      \enddc
  \end{equation}
Then the theorem follows from the comparison between the classical definition of tame nearby cycles and that using DM stacks carried out in Section \S \ref{review of the theory of tame vanishing cycles} (Proposition \ref{comparison classical and stacky definition tame vanishing cycles}).
\end{proof}

\section{Tame nearby cycles over \texorpdfstring{$\aff{1}{S}$}{A1} and comparison with the étale version of Ayoub's tame nearby cycles}\label{tame nearby cycles over A1 and comparison with the etale version of Ayoub's tame nearby cycles}
In \cite{ay07a,ay07b} J.~Ayoub introduced a formalism of tame nearby cycles over $\aff{1}{S}$ in the motivic context, which he proved to be compatible with tame nearby cycles under \'etale/$\ell$-adic realization in \cite{ay14}. 
Obviously, its construction also makes sense in the \'etale setting. 
Another formalism of tame nearby cycles over $\aff{1}{S}$ is provided by the formalism introduced here: for every $f:X\rightarrow \aff{1}{S}$, we can apply our formalism to the composition $p:X\xrightarrow{f} \aff{1}{S}\rightarrow \ag{S}$. 
In this section we will prove that these two notions of tame nearby cycles over $\aff{1}{S}$ agree in a suitable sense.
In order to do so, we shall recall Ayoub's construction, following \cite{ay14}. Let $(\Ec_S,\N_S)$ denote the following diagram of $S$-schemes:
\begin{equation}
    \Ec_S:\N_S\rightarrow \sch
\end{equation}
\begin{equation*}
    (n\rightarrow nm) \mapsto \gm{S}\xrightarrow{(-)^{m}} \gm{S}.
\end{equation*}
\begin{notation}
For an $S$-scheme $Z$, let $(Z,\N_S)$ denote the constant diagram indexed by $\N_S$ with constant value $Z$.
\end{notation}
\begin{rmk}
Notice that there is a natural transformation $(\Ec_S,\N_S)\rightarrow (\gm{S},\N_S)$ which corresponds to ($n\in \N_S$)
\begin{equation}
    \Ec_S(n)=\gm{S}\xrightarrow{(-)^n}\gm{S}.
\end{equation}
\end{rmk}
For $f:X\rightarrow S$, in \emph{loc. cit.} Ayoub considers the following diagram (of $\N_S$-diagrams):
\begin{equation}
    \begindc{\commdiag}[15]
      \obj(-60,15)[1]{$(X_0,\N_S)$}
      \obj(-20,15)[2]{$(X,\N_S)$}
      \obj(20,15)[3]{$(U_X,\N_S)$}
      \obj(60,15)[4]{$(\Ec_f,\N_S)$}
      \obj(-60,-15)[5]{$(S,\N_S)$}
      \obj(-20,-15)[6]{$(\aff{1}{S},\N_S)$}
      \obj(20,-15)[7]{$(\gm{S},\N_S)$}
      \obj(60,-15)[8]{$(\Ec_S,\N_S).$}
      \mor{1}{2}{$i_X$}
      \mor{3}{2}{$j_X$}[\atright,\solidarrow]
      \mor{4}{3}{$v_X$}[\atright,\solidarrow]
      \mor{5}{6}{$i$}
      \mor{7}{6}{$j$}[\atright,\solidarrow]
      \mor{8}{7}{$v$}[\atright,\solidarrow]
      \mor{1}{5}{$f_0$}
      \mor{2}{6}{$f$}
      \mor{3}{7}{$f_U$}
      \mor{4}{8}{$f_{\Ec}$}
    \enddc
\end{equation}
All squares are Cartesian. Then Ayoub defines the following $\oo$-functor (see \cite[Formula (100), pag. 69]{ay14}):
\begin{equation}
    \Psi^{Ay}_{f,\eta}:=(p_0)_{\#}\circ i_X^*\circ j_{X,*}\circ v_{X,*}\circ v_{X}^*\circ p_U^*:\Det{U_X;\Lambda}\rightarrow \Det{X_0;\Lambda}.
\end{equation}
Here, $p_0:(X_0,\N_S)\rightarrow (X_0,\Delta^0)$ (resp. $p_U:(U_X,\N_S)\rightarrow (U_X,\Delta^0)$ ) denotes the obvious morphism of diagrams of schemes. Notice that the diagram $(\Ec_f,\N_S)$ is $n\mapsto U_X\x_{\gm{S}}\gm{S}$, where we consider the pullback along the morphism $\gm{S}\xrightarrow{(-)^n}\gm{S}$.

Let $\text{Forget}:\Det{X_0;\Lambda}^{\mu_{\oo}}\rightarrow \Det{X_0,\Lambda}$ be the left adjoint to the $\oo$-functor $\Det{X_0;\Lambda}\rightarrow \Det{X_0;\Lambda}^{\mu_{\oo}}$ defined by the pushforwards along the canonical morphisms $X_0\rightarrow X_0^{(n)}$. Then we have the following
\begin{thm}\label{compatibility with Ayoub}
There is an equivalence of $\oo$-functors
\begin{equation}
    \Psi^{Ay}_{f,\eta}\simeq \text{Forget}\circ \Psi_{p,\eta}^t:\Det{U_X;\Lambda}\rightarrow \Det{X_0;\Lambda}.
\end{equation}
\end{thm}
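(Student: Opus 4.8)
The plan is to compare the two functors term by term along the tower indexed by $\N_S$ and then pass to the colimit, bridging the stacky description and Ayoub's description by means of cyclic covers. The starting point is an identification of the root stacks $X^{(n)}$. Since $p\colon X\to\ag{S}$ factors through the canonical atlas $\aff{1}{S}\to\ag{S}$, the line bundle classified by $p$ is trivial, $\Lc\simeq\Oc_X$, with $s$ equal to $f\in\Gamma(X,\Oc_X)$; one then checks directly from the functor of points (and as is well known from \cite{cad07,agv08}) that there is an equivalence of Artin stacks $X^{(n)}\simeq[X^{[n]}/\mu_n]$ over $X$, compatible in $n$ via the maps $z\mapsto z^m$, where $X^{[n]}:=\mathrm{Spec}_X(\Oc_X[z]/(z^n-f))$ is the $n$-fold cyclic cover and $\mu_n$ acts by $z\mapsto\zeta z$. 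Under this equivalence $j_X^{(n)}$ becomes the open immersion $U_X\simeq[U_X^{[n]}/\mu_n]\hookrightarrow[X^{[n]}/\mu_n]$ with $U_X^{[n]}:=\mathrm{Spec}_{U_X}(\Oc_{U_X}[z]/(z^n-f))\simeq U_X\times_{\gm{S},(-)^n}\gm{S}=\Ec_f(n)$; the closed immersion $i_X^{(n)}$ becomes $[X_0^{[n]}/\mu_n]\hookrightarrow[X^{[n]}/\mu_n]$, where $X_0^{[n]}:=\mathrm{Spec}_{X_0}(\Oc_{X_0}[z]/(z^n))$ is a nilthickening $\iota_n\colon X_0\hookrightarrow X_0^{[n]}$ of $X_0$; and the canonical morphism $\rho_n\colon X_0\to X_0^{(n)}$ becomes the composite of $\iota_n$ with the torsor quotient $q_n^0\colon X_0^{[n]}\to[X_0^{[n]}/\mu_n]$.

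Next I would write both sides of the desired equivalence as filtered colimits over $\N_S$. By definition of $(p_0)_{\#}$ and of the diagrams $\Ec_f$, Ayoub's functor is $\Psi^{Ay}_{f,\eta}(\Fc)\simeq\varinjlim_{n\in\N_S}i_X^*\,j_{X,*}\,v_{n,*}\,v_n^*\Fc$, where $v_n\colon\Ec_f(n)=U_X^{[n]}\to U_X$. On the other side, $\text{Forget}$ is by construction left adjoint to the $\oo$-functor $\Det{X_0;\Lambda}\to\varprojlim_n\Det{X_0^{(n)};\Lambda}$ whose $n$-th component is $\rho_{n,*}$, so a routine computation of the left adjoint of a limit of right adjoints gives $\text{Forget}((\Gc_n)_n)\simeq\varinjlim_{n\in\N_S}\rho_n^*\Gc_n$; combined with the fact that the $n$-th component of $\Psi^t_{p,\eta}(\Fc)$ is $(i_X^{(n)})^*(j_X^{(n)})_*\Fc$ — which is built into the definitions of $\bar i_X^*$ and $(\bar j_X)_*$, exactly as in the proof of Proposition~\ref{comparison classical and stacky definition tame vanishing cycles} — this yields $\text{Forget}\circ\Psi^t_{p,\eta}(\Fc)\simeq\varinjlim_{n\in\N_S}\rho_n^*(i_X^{(n)})^*(j_X^{(n)})_*\Fc$.

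The core of the argument is then the termwise comparison $\rho_n^*(i_X^{(n)})^*(j_X^{(n)})_*\Fc\simeq i_X^*\,j_{X,*}\,v_{n,*}\,v_n^*\Fc$. For the left-hand side, writing $\rho_n=q_n^0\circ\iota_n$ and using the Cartesian squares relating $X^{[n]},X_0^{[n]},U_X^{[n]}$ to $X^{(n)},X_0^{(n)},U_X$ through the atlas $q_n\colon X^{[n]}\to X^{(n)}$, one gets $\rho_n^*(i_X^{(n)})^*(j_X^{(n)})_*\Fc\simeq\iota_n^*(i^{[n]})^*q_n^*(j_X^{(n)})_*\Fc\simeq\iota_n^*(i^{[n]})^*(j^{[n]})_*v_n^*\Fc$, the last step being smooth base change along the $\mu_n$-torsor $q_n$. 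For the right-hand side, functoriality of $*$-pushforward gives $j_{X,*}v_{n,*}\simeq\pi_{n,*}(j^{[n]})_*$ for the finite map $\pi_n\colon X^{[n]}\to X$, proper base change along $\pi_n$ gives $i_X^*\pi_{n,*}\simeq\pi^0_{n,*}(i^{[n]})^*$, and $\pi^0_{n,*}\simeq\iota_n^*$ because $\pi^0_n\colon X_0^{[n]}\to X_0$ admits $\iota_n$ as a section with $\iota_n$ a nilthickening, so that $\iota_n^*$ is an equivalence by topological invariance of the étale site. Both sides thus equal $\iota_n^*(i^{[n]})^*(j^{[n]})_*v_n^*\Fc$. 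To promote these objectwise equivalences to an equivalence of $\N_S^{\mathrm{op}}$-indexed diagrams, hence of colimits, I would assemble the chain of base-change equivalences into a single morphism of diagrams $\N_S^{\mathrm{op}}\to\Fun(\Delta^1,\prsr)$ using the correspondence machinery of Appendix~\ref{appendix A} and the functoriality of \eqref{six functor formalism with left adjoints}, in the style of the constructions of \S\ref{tame nearby and vanishing cycles over ag}.

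The hard part is not cohomological — smooth and proper base change and topological invariance of the étale site are entirely standard — but rather twofold: making the equivalence $X^{(n)}\simeq[X^{[n]}/\mu_n]$ precise together with all its compatibilities, and in particular its functoriality in $n$ through the $\mu_{nm}\twoheadrightarrow\mu_n$ tower versus the $(-)^m$ tower; and then carrying out the bookkeeping that turns the family of termwise equivalences into an honest map of $\N_S^{\mathrm{op}}$-diagrams in the $\oo$-categorical framework already set up in the paper.
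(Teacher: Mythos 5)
Your argument is correct, and its global architecture --- writing both $\Psi^{Ay}_{f,\eta}$ and $\text{Forget}\circ\Psi^t_{p,\eta}$ as filtered colimits over $\N_S$ and comparing the two towers term by term, using that $\text{Forget}$ is a left adjoint --- coincides with the paper's. Where you genuinely differ is in the mechanism of the termwise comparison. The paper relates the constant row $X_0\to X\leftarrow U_X\leftarrow \Ec_f(n)$ to the row $X_0^{(n)}\to X^{(n)}\leftarrow U_X^{(n)}\leftarrow U_X$ by a single application of smooth base change along vertical morphisms covering the canonical $q_0:X_0\to X_0^{(n)}$, together with the observation that $X_0\to X_0^{(n)}\times_{X^{(n)}}X$ is a homeomorphism. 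You instead interpose the scheme-level cyclic cover $X^{[n]}=\mathrm{Spec}_X(\Oc_X[z]/(z^n-f))$ via the presentation $X^{(n)}\simeq[X^{[n]}/\mu_n]$: étale base change along the atlas $X^{[n]}\to X^{(n)}$ computes the stacky term, while proper base change along the finite map $\pi_n:X^{[n]}\to X$ plus invariance of the étale site under the nilthickening $\iota_n:X_0\hookrightarrow X_0^{[n]}$ (so that $\pi^0_{n,*}\simeq\iota_n^*$) computes Ayoub's term, both landing on $\iota_n^*(i^{[n]})^*(j^{[n]})_*v_n^*\Fc$. Your detour is slightly longer but buys something real: every morphism you base-change along manifestly exists, since $X^{[n]}$ maps canonically both to $X$ and to $X^{(n)}$, whereas a morphism $X\to X^{(n)}$ extending $q_0$ amounts to a global $n$-th root of $(\Oc_X,f)$ and is most honestly mediated, as you do, through the cyclic cover; it also makes explicit the root-stack presentation and the identification $\Ec_f(n)\simeq U_X^{[n]}$ that the paper uses silently. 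Both your write-up and the paper's leave the promotion of the termwise equivalences to an equivalence of $\N_S^{\mathrm{op}}$-indexed diagrams at the level of a sketch, and you correctly isolate that bookkeeping, together with the $n$-functoriality of $X^{(n)}\simeq[X^{[n]}/\mu_n]$ along the $\mu_{nm}\twoheadrightarrow\mu_n$ versus $z\mapsto z^m$ towers, as the part requiring the machinery of Appendix~\ref{appendix A}.
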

\begin{proof}

As $(X_0,\N_S)$, $(X,\N_S)$ and $(U_X,\N_S)$ are constant diagrams of $S$-schemes, there are equivalences of $\oo$-categories
\begin{equation}
    \Det{(X_0,\N_S);\Lambda}\simeq \Det{X_0;\Lambda}, \hspace{0.5cm} \Det{(X,\N_S);\Lambda}\simeq \Det{X;\Lambda}, \hspace{0.5cm} \Det{(U_X,\N_S);\Lambda}\simeq \Det{U_X;\Lambda}.
\end{equation}
This means that, for any $\Fc \in \Det{U_X;\Lambda}$, by definition we have that 
\begin{equation}
    \Psi^{Ay}_{f,\eta}(\Fc) \simeq \varinjlim_{n\in \N_S} i_X^*\circ j_{X,*}^{(n)}\circ (v_X^{(n)})^*(\Fc),
\end{equation}
where $i_X:X_0\rightarrow X$, $j_X^{(n)}:U_X\x_{\gm{S}}\gm{S}=\Ec_f(n)\rightarrow X$ and $v_X^{(n)}:\gm{S}=\Ec_f(n)\rightarrow \gm{S}$. Notice that these morphisms naturally fit in the following diagram
\begin{equation}
    \begindc{\commdiag}[15]
      \obj(-100,15)[1]{$X_0$}
      \obj(-50,15)[2]{$X$}
      \obj(30,15)[3]{$U_X$}
      \obj(100,15)[4]{$\Ec_f(n)$}
      \obj(-100,-15)[5]{$X_0^{(n)}$}
      \obj(-50,-15)[6]{$X^{(n)}$}
      \obj(30,-15)[7]{$U_X^{(n)}=X^{(n)}\x_{\ag{S}}S$}
      \obj(100,-15)[8]{$U_X,$}
      \mor{1}{2}{$i_X$}
      \mor{3}{2}{$j_X$}
      \mor{4}{3}{$v_X^{(n)}$}
      \cmor((100,18)(100,22)(97,22)(25,22)(-47,22)(-50,22)(-50,18)) \pdown(25,27){$j_X^{(n)}$}
      \mor{5}{6}{$i_X^{(n)}$}
      \mor{7}{6}{$j_X$}
      \mor{8}{7}{$v_X^{(n)}$}
      \cmor((100,-18)(100,-22)(97,-22)(25,-22)(-47,-22)(-50,-22)(-50,-18)) \pup(25,-27){$j_X^{(n)}$}
      \mor{1}{5}{$q_0$}
      \mor{2}{6}{$q$}
      \mor{3}{7}{$q_U$}
      \mor{4}{8}{$q_U^{(n)}$}
    \enddc
\end{equation}
where all squares are Cartesian and all vertical morphisms are smooth. Since $\text{Forget}$ is a left adjoint,
\begin{equation}
    \text{Forget}\circ \Psi_{p,\eta}^t(\Fc)\simeq \varinjlim_{n\in \N_S}\text{Forget}\bigl ( (i_X^{(n)})^*\circ (j_X^{(n)})_*\circ (v_X^{(n)})^*(\Fc) \bigr ).
\end{equation}
By definition, $\text{Forget}:\Det{X_0;\Lambda}^{\mu_n}\rightarrow \Det{X_0;\Lambda}$ corresponds to $q_0^*$.
Moreover, $X_0\rightarrow X_0^{(n)}\x_{X^{(n)}}X$ is an homeomorphism.
Then, by the smooth base change theorem, we get that
\begin{equation}
    q_0^*\circ (i_X^{(n)})^*\circ (j_X^{(n)})_*\circ (v_X^{(n)})^*(\Fc)\simeq i_X^*\circ j_{X,*}^{(n)}\circ (v_X^{(n)})^*(\Fc).
\end{equation}
In particular, this implies that
\begin{equation}
    \text{Forget}\circ \Psi_{p,\eta}^t(\Fc)\simeq \Psi_{f,\eta}^{Ay}(\Fc).
\end{equation}
Naturality in $\Fc$ follows immediately from the fact that all the passages we have used are canonical. 
\end{proof}

\section{Compatibility with tensor product and duality}\label{compatibility with tensor produc and duality}
Notice that since the forgetful functor $\CAlg(\prsr)\rightarrow \prsr$ is a right adjoint, and for any derived stack $Z$ the $\oo$-category $\Det{Z;\Lambda}$ is symmetric monoidal, for every $Z\rightarrow \ag{S}$, the $\oo$-categories $\Det{Z;\Lambda}^{\mu_{\oo}}$ and $\Det{Z_0;\Lambda}^{\mu_{\oo}}$ are symmetric monoidal (see \cite[Corollary 3.2.2.5]{lu17} for the existence of limits in $\CAlg(\prsr)$ and for the fact that they are preserved under the forgetful functor). 
More precisely, these $\oo$-categories are \emph{closed} symmetric monoidal. 
In fact, we can equivalently define them in the $\oo$-category $\CAlg(\prs)$.
Let $p:X\rightarrow \ag{S}$ be an Artin stack over $\ag{S}$.
In this section we will prove some compatibilities of $\Psi_{p,\eta}^t$ with external tensor products and with duality, using the compatibility of tame nearby cycles over $\ag{S}$ with the \'etale version of Ayoub's tame nearby cycles, for which such compatibilities are already established (see \cite{ay07a,ay07b,ay14,jy21}).

\subsection*{K\"unneth.}
In this subsection we shall define the K\"unneth morphism in our context and prove its compatibility with tame nearby cycles in characteristic zero.

\begin{notation}
In this section we will consider two morphisms $p:X\rightarrow \ag{S}$, $q:Y\rightarrow \ag{S}$ of Artin stacks. Let $r:Z:=X\times_{\ag{S}}Y\rightarrow \ag{S}$.
\end{notation}
\begin{defn}
We define the \emph{$\mu_{\oo}$-equvariant external tensor product} $-\boxtimes^{\mu_{\oo}} - $ as the composition
\begin{equation}
    \Det{X_0;\Lambda}^{\mu_{\oo}}\x \Det{Y_0;\Lambda}^{\mu_{\oo}}\xrightarrow{(\bar{pr}_{X_0})^*\times (\bar{pr}_{Y_0})^*} \Det{Z_0;\Lambda}^{\mu_{\oo}}\x \Det{Z_0;\Lambda}^{\mu_{\oo}} \xrightarrow{-\otimes -} \Det{Z_0;\Lambda}^{\mu_{\oo}},
\end{equation}
where $(\bar{pr}_{X_0})^*$ (resp. $(\bar{pr}_{Y_0})^*$) is the $\oo$-functor of Construction \ref{*pullback bar(f)_0} and $-\otimes -$ is the tensor product on $\Det{Z_0;\Lambda}^{\mu_{\oo}}$.
\end{defn}

We are now ready to define the \emph{K\"unneth morphism}. Let $F\in \Det{U_X;\Lambda}$ and $G\in \Det{U_Y;\Lambda}$. We have canonical morphisms
\begin{equation}
    (\bar{pr}_{X_0})^*\Psi_{\eta,p}^t(F)\rightarrow \Psi_{\eta,r}^t((pr_{U_X})^*F), \hspace{0.5cm} (\bar{pr}_{Y_0})^*\Psi_{\eta,q}^t(G)\rightarrow \Psi_{\eta,r}^t((pr_{U_Y})^*G)
\end{equation}
as constructed in Section \ref{section tame vanishing cycles over ag and *pullbacks}. Therefore, we can consider the morphism in $\Det{Z_0;\Lambda}^{\mu_{\oo}}$
\begin{equation}
    \Psi_{\eta,p}^t(F)\boxtimes^{\mu_{\oo}} \Psi_{\eta,q}^t(G) \rightarrow \Psi_{\eta,r}^t((pr_{U_X})^*F)\otimes \Psi_{\eta,r}^t((pr_{U_Y})^*G).
\end{equation}
Being a composition of lax monoidal $\oo$-functors, $\Psi_{\eta,r}^t$ is lax monoidal. Therefore, there is a canonical morphism
\begin{equation}
    \Psi_{\eta,r}^t((pr_{U_X})^*F)\otimes \Psi_{\eta,r}^t((pr_{U_Y})^*G)\rightarrow \Psi_{\eta,r}^t(F\boxtimes G),
\end{equation}
where $F\boxtimes G$ denotes the external tensor product
\begin{equation}
    \Det{U_X;\Lambda}\x \Det{U_Y;\Lambda}\rightarrow \Det{U_Z;\Lambda}.
\end{equation}
\begin{defn}
Let $F\in \Det{U_X;\Lambda}$ and $G\in \Det{U_Y;\Lambda}$. We define the \emph{K\"unneth morphism} (for $F$ and $G$) as the composition of the two morphisms above:
\begin{equation}
    \textup{K\"u}: \Psi_{\eta,p}^t(F)\boxtimes^{\mu_{\oo}} \Psi_{\eta,q}^t(G)\rightarrow \Psi_{\eta,r}^t(F\boxtimes G).
\end{equation}
\end{defn}

\begin{thm}\label{compatibility with tensor product}
Assume that $S$ is the spectrum of an algebraically closed field of characteristic $0$. Let $F\in \Det{U_X;\Lambda}$ and let $G\in \Det{U_Y;\Lambda}$ be constructible objects. The K\"unneth morphism 
\begin{equation}
    \textup{K\"u}:\Psi_{p,\eta}^t(F)\boxtimes \Psi_{q,\eta}^t(G)\rightarrow \Psi_{p,\eta}^t(F\boxtimes G )
\end{equation}
is an equivalence.
\end{thm}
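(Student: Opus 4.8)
The strategy is to reduce the assertion, through a chain of descent steps, to the corresponding compatibility for Ayoub's tame nearby cycles over $\aff1S$, which is available under the hypothesis of characteristic zero (see \cite{ay07a,ay07b,ay14,jy21}). The starting point is that $\Det{Z_0;\Lambda}^{\mu_{\oo}} = \varprojlim_{n\in\N_S}\Det{Z_0^{(n)};\Lambda}$ is a limit of $\oo$-categories, so the projections $(-)^{n\cdot\mu_{\oo}}\colon\Det{Z_0;\Lambda}^{\mu_{\oo}}\to\Det{Z_0^{(n)};\Lambda}$ are jointly conservative; hence it suffices to prove that $(\textup{K\"u})^{n\cdot\mu_{\oo}}$ is an equivalence for every $n\in\N_S$. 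Using that each $(-)^{n\cdot\mu_{\oo}}$ is symmetric monoidal (the limit defining $\Det{Z_0;\Lambda}^{\mu_{\oo}}$ is taken in $\CAlg(\prsr)$), that $(-)^{n\cdot\mu_{\oo}}\circ(\bar{pr}_{X_0})^*\simeq(\bar{pr}_{X_0}^{(n)})^*\circ(-)^{n\cdot\mu_{\oo}}$ by the construction of the $\mu_{\oo}$-equivariant pullback (Construction \ref{pullback via correspondences Art}), and that $(-)^{n\cdot\mu_{\oo}}\circ\Psi_{\eta,p}^t\simeq(i_X^{(n)})^*\circ(j_X^{(n)})_*$ (from the identities $(-)^{n\cdot\mu_{\oo}}\circ\bar{i}_X^*\simeq(i_X^{(n)})^*\circ(-)^{n\cdot\mu_{\oo}}$ and $(-)^{n\cdot\mu_{\oo}}\circ(\bar{j}_X)_*\simeq(j_X^{(n)})_*$, and likewise for $q$ and $r$), one identifies $(\textup{K\"u})^{n\cdot\mu_{\oo}}$ with the external product comparison morphism
\[
 (\bar{pr}_{X_0}^{(n)})^*(i_X^{(n)})^*(j_X^{(n)})_*F \otimes_{Z_0^{(n)}} (\bar{pr}_{Y_0}^{(n)})^*(i_Y^{(n)})^*(j_Y^{(n)})_*G \longrightarrow (i_Z^{(n)})^*(j_Z^{(n)})_*(F\boxtimes G)
\]
for the ``level $n$'' tame nearby cycles functors $(i_{(-)}^{(n)})^*(j_{(-)}^{(n)})_*$ over $\ag{S}$.

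Next I would descend to schemes and then to $\aff1S$. By the compatibility of $\Psi_{\eta,\bullet}^t$ and of $\textup{K\"u}$ with $*$-pullback along smooth morphisms established in \S\ref{section tame vanishing cycles over ag and *pullbacks} (the comparison being an equivalence for smooth maps), together with the joint conservativity of $*$-pullback along the members of a smooth atlas, one may replace $X$ and $Y$ by smooth schematic atlases; then $Z = X\times_{\ag S}Y$ is again a scheme since $\ag{S}$ has affine diagonal. Applying the same principle to the atlas $\aff1S\to\ag{S}$ and invoking smooth base change — exactly as in the proof of Theorem \ref{compatibility with Ayoub} — one commutes the atlas pullback past the formation of $(i^{(n)})^*(j^{(n)})_*$ and presents, up to the twist by the $\gm{S}$-torsors $P_X\to X$, $P_Y\to Y$, $P_Z\to Z$ and the base change along the $n$-th power map of $\aff1S$, each level $n$ comparison morphism in terms of the level $n$ term $\Psi^{Ay,(n)}_{(-),\eta}$ occurring in Ayoub's colimit description $\Psi^{Ay}_{\bullet,\eta}\simeq\varinjlim_n\Psi^{Ay,(n)}_{\bullet,\eta}$.

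It then remains to observe that the resulting morphism is precisely the (finite level) K\"unneth comparison morphism for Ayoub's tame nearby cycles over $\aff1S$, applied to the $\gm{S}$-equivariant morphisms $P_X\to\aff1S$, $P_Y\to\aff1S$ and to the constructible sheaves pulled back from $F$ and $G$; passing to the colimit over $n$ one recovers, via the identification of Theorem \ref{compatibility with Ayoub}, the K\"unneth comparison for $\Psi^{Ay}$ itself. Under the assumptions that $S$ is the spectrum of an algebraically closed field of characteristic zero and that $F$, $G$ are constructible — so that tame nearby cycles preserve constructibility and the geometric inputs (resolution of singularities) needed for the K\"unneth formula for nearby cycles are available — this morphism is an equivalence by \cite{ay07a,ay07b,ay14,jy21}. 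This shows that each $(\textup{K\"u})^{n\cdot\mu_{\oo}}$ is an equivalence, hence $\textup{K\"u}$ is.

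The main obstacle is the second step: one must verify carefully that the K\"unneth morphism we have constructed — assembled from the smooth $*$-pullback comparisons of \S\ref{section tame vanishing cycles over ag and *pullbacks} and the lax monoidal structure of $\Psi_{\eta,r}^t$ — is, after the atlas pullbacks, identified with Ayoub's K\"unneth morphism, keeping track of the $\gm{S}$-torsor twists relating $X,Y,Z$ to $P_X,P_Y,P_Z$ and of the ramification $t\mapsto t^n$ of the base implicit in the passage to level $n$. Everything else is formal manipulation with limits of $\oo$-categories and base change.
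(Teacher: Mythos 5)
Your overall strategy --- reduce via atlases and smooth base change to the case of a scheme mapping to $\aff{1}{S}$, then invoke Ayoub's K\"unneth theorem through Theorem \ref{compatibility with Ayoub} --- is the right one and is close to the paper's. But your very first reduction opens a gap that your last step does not close. You replace the conservativity of the single functor $\text{Forget}\colon \Det{Z_0;\Lambda}^{\mu_{\oo}}\to\Det{Z_0;\Lambda}$ by the joint conservativity of the projections $(-)^{n\cdot\mu_{\oo}}$, thereby committing yourself to proving that \emph{each} finite-level morphism $(\textup{K\"u})^{n\cdot\mu_{\oo}}$ is an equivalence. Ayoub's theorem, however, only controls $\Psi^{Ay}$, which under Theorem \ref{compatibility with Ayoub} corresponds to the \emph{filtered colimit} over $n\in\N_S$ of the atlas-pullbacks of these finite-level terms; there is no ``finite level K\"unneth theorem'' in \cite{ay07b,ay14,jy21} to fall back on. Your closing inference --- the colimit is an equivalence by Ayoub, ``this shows that each $(\textup{K\"u})^{n\cdot\mu_{\oo}}$ is an equivalence'' --- is invalid: a filtered colimit of morphisms can be an equivalence while no individual term is.

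The repair is exactly the paper's route: do not project to finite levels at all. The functor $\text{Forget}$ is conservative and compatible with external products; by Theorem \ref{compatibility with Ayoub} one has $\text{Forget}\circ\Psi^t_{\bullet,\eta}\simeq\Psi^{Ay}_{\bullet,\eta}$, under which $\text{Forget}(\textup{K\"u})$ becomes Ayoub's K\"unneth morphism, an equivalence by \cite[Th\'eor\`eme 3.5.17]{ay07b} and \cite{ay14}; conservativity of $\text{Forget}$ then gives the claim (and, a posteriori, all your finite-level statements). A secondary remark on your reduction to $\aff{1}{S}$: the paper Zariski-localizes on $X_0$ so that the line bundle trivializes and $p$ factors through $\aff{1}{S}$ on the nose, instead of pulling back to the $\gm{S}$-torsors $P_X$, $P_Y$; your route forces you to compare $P_X\times_{\ag{S}}P_Y$ with $P_X\times_{\aff{1}{S}}P_Y$ and to carry the resulting torsor twist through the identification with Ayoub's morphism. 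That is a complication rather than an error, but it is avoidable, and it compounds the unresolved ``main obstacle'' you flag at the end.
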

\begin{proof}

Choose any smooth surjective morphism $X'\rightarrow X$, with $X'$ a scheme. The compatibility of external tensor product with pullbacks and that of tame nearby vanishing cycles with pullbacks along smooth morphisms tell us that it suffices to consider the K\"unneth morphism for $X'$. Therefore, we might assume that $X$ is a scheme.
\newline
Since the property of being an equivalence is local on $X_0$ with respect to the Zariski topology and the K\"unneth morphism is compatible with pullbacks along smooth morphisms (in particular, along open embddings), we might assume that $p:X\rightarrow \ag{S}$ factors through $f:X\rightarrow \aff{1}{S}$. Also, forgetting the continuous action of $\mu_{\oo}$ is conservative and compatible with external products and therefore we might ignore it. In this case, we are considering Ayoub's tame nearby cycles over $\aff{1}{S}$ (\'etale version) by Theorem \ref{compatibility with Ayoub}. The theorem follows by \cite[Th\'eor\`eme 3.5.17]{ay07b} and by the results of \cite{ay14}.
\end{proof}

\subsection*{Duality.} In this subsection we will investigate how tame nearby cycles over $\ag{S}$ behave with respect to duality. Let $p:X\rightarrow \ag{S}$ be a Artin stack over $\ag{S}$. Let $p_U:X\times_{\ag{S}}S=U_X\rightarrow S$.

Assume that $p$ is separated of finite type. Recall that the \emph{duality functor on} $\Det{U_X;\Lambda}$ is
\begin{equation}
    \Db_{U_X}:= \Hom_{U_X}\bigl (-,p_U^!(\sLambda_S)\bigr ):\Det{U_X;\Lambda}^{\textup{op}}\rightarrow \Det{U_X;\Lambda},
\end{equation}
where $\Hom_{U_X}$ denotes the internal hom in $\Det{U_X;\Lambda}$.

Similarly, we define the \emph{duality functor on} $\Det{X_0;\Lambda}^{\mu_{\oo}}$ as the $\oo$-functor
\begin{equation}
    \Db_{X_0}:= \Hom(-,\bar{p}_0^!(\sLambda_{\bgm{S}})):\Det{X_0;\Lambda}^{\mu_{\oo},\textup{op}}\rightarrow \Det{X_0;\Lambda}^{\mu_{\oo}}.
\end{equation}
Let $A\in \Det{U_X;\Lambda}$. There is a canonical morphism
\begin{equation}
    \Psi_{p,\eta}^t(\Db_{U_X}(A))\rightarrow \Db_{X_0}(\Psi_{p,\eta}^t(A)),
\end{equation}
defined by adjunction by the composition
\begin{equation}
    \Psi_{p,\eta}^t(\Db_{U_X}(A))\otimes \Psi_{p,\eta}^t(A)\rightarrow \Psi_{p,\eta}^t(\Db_{U_X}(A)\otimes A)\rightarrow \Psi_{p,\eta}^t(p_U^!(\sLambda_S))\rightarrow \bar{p}_0^!(\Psi_{id,\eta}^t(\sLambda_S))\simeq \bar{p}_0^!(\sLambda_{\bgm{S}}),
\end{equation}
where the first arrow is induced by the lax monoidal structure on $ \Psi_{p,\eta}^t$, the second arrow by the canonical map $\Db_{U_X}(A)\otimes A \rightarrow p_U^!(\sLambda_S)$ and the third arrow by the map constructed in Section \ref{section tame vanishing cycles over ag and !pullbacks}. The equivalence 
\begin{equation}
    \Psi_{id,\eta}^t(\sLambda_S)\simeq \sLambda_{\bgm{S}}
\end{equation}
will be proved in Section \ref{comparison with monodromy invariant vanishing cycles}.
\begin{thm}
Let $S$ be the spectrum of a field. Assume that $A$ is a constructible object in $\Det{U_X;\Lambda}$. Then
\begin{equation}
    \Psi_{p,\eta}^t\bigl ( \Db_{U_X}(A) \bigr )\simeq \Db_{X_0}\bigl ( \Psi_{p,\eta}^t(A)\bigr ).
\end{equation}
\end{thm}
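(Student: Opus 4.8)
The plan is to follow exactly the dévissage used in the proof of Theorem \ref{compatibility with tensor product}, reducing the statement to Ayoub's tame nearby cycles over $\aff{1}{S}$, for which compatibility with duality is already available. First I would reduce to the case where $X$ is a scheme: choose a smooth surjective morphism $a\colon X'\to X$ with $X'$ a scheme. By the constructions of Section \ref{section tame vanishing cycles over ag and *pullbacks} and Section \ref{section tame vanishing cycles over ag and !pullbacks}, the functor $\Psi_{p,\eta}^t$ and the duality functors $\Db_{U_X}$, $\Db_{X_0}$ are compatible with $*$- and $!$-pullback along smooth morphisms (the Tate twists and shifts coming from relative purity cancel on the two sides of the comparison morphism, since a smooth pullback of the dualizing complex is again dualizing up to such a twist); as $a$ is surjective, the pullback functors along the induced smooth atlas $X'_0\to X_0$ are jointly conservative, so it suffices to prove the equivalence after pulling back to $X'$. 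Since being an equivalence is Zariski-local on $X_0$, and the duality comparison morphism is compatible with restriction to opens (a special case of smooth pullback), we may further replace $X$ by a Zariski open and thus assume that $p$ factors through the canonical atlas $\aff{1}{S}\to \ag{S}$, that is, through a morphism $f\colon X\to \aff{1}{S}$.

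Next I would apply the forgetful functor $\mathrm{Forget}\colon \Det{X_0;\Lambda}^{\mu_{\oo}}\to \Det{X_0;\Lambda}$. It is conservative, and — being, in the notation of Theorem \ref{compatibility with Ayoub}, a left adjoint realized by smooth ($q_0^*$-type) pullbacks — it is compatible with the internal hom and with the dualizing object $\bar{p}_0^!(\sLambda_{\bgm{S}})$, hence with $\Db_{X_0}$; so it is enough to verify the equivalence after forgetting the $\mu_{\oo}$-action. By Theorem \ref{compatibility with Ayoub}, $\mathrm{Forget}\circ \Psi_{p,\eta}^t$ is then identified with the étale incarnation of Ayoub's tame nearby cycles $\Psi^{Ay}_{f,\eta}$ over $\aff{1}{S}$. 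The compatibility of the latter with duality for constructible coefficients over the spectrum of a field is established in \cite{ay07b} (using \cite{ay14} for the comparison with the $\ell$-adic realization, and \cite{jy21}), which yields the claim. The equivalence $\Psi_{id,\eta}^t(\sLambda_S)\simeq \sLambda_{\bgm{S}}$ of Section \ref{comparison with monodromy invariant vanishing cycles} is what makes the target $\bar{p}_0^!(\sLambda_{\bgm{S}})$ of the comparison morphism match the dualizing object used by Ayoub.

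The main obstacle I expect is not any one of the three reductions individually, but checking that the specific duality comparison morphism of the statement — assembled from the lax monoidal structure on $\Psi_{p,\eta}^t$, the evaluation $\Db_{U_X}(A)\otimes A\to p_U^!(\sLambda_S)$, and the $!$-pullback comparison of Section \ref{section tame vanishing cycles over ag and !pullbacks} — is carried to the corresponding comparison morphism under each dévissage, and in particular that it agrees with Ayoub's duality morphism after $\mathrm{Forget}$. Concretely, one must track the lax monoidal structure and the exchange transformations $Ex^{*,!}$ through smooth pullback, through Zariski localization, and through $\mathrm{Forget}$, and compare the result with the construction of the duality morphism in \cite{ay07b}; once these coherence identifications are in place the theorem is immediate. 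A clean way to organize this is to observe that all functors and natural transformations involved are obtained by taking limits over $\N_S$ of the corresponding data at finite level $n$, where the comparison morphism is the classical one for $f^{(n)}\colon X^{(n)}\to \aff{1}{S}$, so that the compatibility reduces, uniformly in $n$, to the known statement after $q_0^*$ and smooth base change.
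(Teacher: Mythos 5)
Your proposal is correct and follows essentially the same route as the paper's proof: reduce by smooth atlas and Zariski localization to the case where $p$ factors through $\aff{1}{S}$, then apply the conservative functor $\mathrm{Forget}$, observe that it commutes with $\Db_{X_0}$ because it is given by pullback along smooth atlases of relative dimension $0$ (so $*$- and $!$-pullbacks agree and duality exchanges them), and conclude via Theorem \ref{compatibility with Ayoub} together with the cited results of Ayoub and Jin--Yang. The coherence issues you flag about tracking the comparison morphism through the dévissage are real but are also left implicit in the paper.
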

\begin{proof}
The question is local on $X$, so that we consider the pullback along an open embedding $j:V\rightarrow X$. Indeed, the left hand side commutes with pullbacks along smooth morphisms. For what concerns the right hand side, it is easy to see (using the adjunctions $(\bar{j}_{0!}, \bar{j}_0^!), (\bar{j}_0^*,\bar{j}_{0*})$ and the equivalence $\bar{j}_0^*\simeq \bar{j}_0^!$) that
\begin{equation}
    \bar{j}_0^*\circ \Db_{X_0}\bigl ( \Psi_{p,\eta}^t(A)\bigr )\simeq \Db_{V_0}\bigl ( \Psi_{p\circ j,\eta}^t(A_{|V})\bigr ).
\end{equation}
Therefore, we can assume without loss of generality that $p:X\rightarrow \ag{S}$ factors through $\aff{1}{S}$.

Moreover, as $Forget: \Det{X_0;\Lambda}^{\mu_{\oo}}\rightarrow \Det{X_0;\Lambda}$ is a conservative functor, it suffices to show that the induced map
\begin{equation}
     Forget\circ \Psi_{p,\eta}^t(\Db_{U_X}(A))\rightarrow Forget \circ \Db_{X_0}(\Psi_{p,\eta}^t(A))
\end{equation}
is an equivalence. By Theorem \ref{compatibility with Ayoub}, the left hand side is the (\'etale version) of Ayoub's nearby cycles. As for the right hand side, we claim that 
\begin{equation}
    Forget \circ \Db_{X_0} \simeq \Db_{X_0} \circ Forget,
\end{equation}
where $\Db_{X_0}$ stands for $\Hom_{X_0}\bigl (-,p_0^!(\sLambda_{\bgm{S}})\bigr ):\Det{X_0;\Lambda}^{\textup{op}}\rightarrow \Det{X_0;\Lambda}$ on the right hand side of the formula. However, this follows immediately from the fact that $Forget$ is induced by the pullback along the smooth atlases $X_0\rightarrow X_0^{(n)}$ (which are smooth morphisms of relative dimension $0$) and from the fact that duality exchanges $*$-pullbacks with $!$-pullbacks and viceversa. Therefore, we are reduced to show that 
\begin{equation*}
     Forget\circ \Psi_{p,\eta}^t(\Db_{U_X}(A))\rightarrow \Db_{X_0}(Forget \circ \Psi_{p,\eta}^t(A))
\end{equation*}
is an equivalence, which follows immediately from Theorem \ref{compatibility with Ayoub} and \cite[Th\'eor\`eme 3.5.20]{ay07a,ay07b}, \cite{ay14} and \cite[Theorem 3.2.5]{jy21}
\end{proof}

\section{Tame vanishing cycles over \texorpdfstring{$\ag{S}$}{ag} and monodromy invariant vanishing cycles}\label{comparison with monodromy invariant vanishing cycles}
In \cite{p20}, we introduced the so called \emph{monodromy invariant vanishing cycles} (in the $\ell$-adic setting). Let us briefly recall the definition. Let $X$ be an $S$ scheme. Fix a line bundle $\Lc$ on $X$ and a global section $s\in \Hu^0(X,\Lc)$. Consider the diagram
\begin{equation}
  \begindc{\commdiag}[18]
    \obj(-50,15)[1]{$X_0$}
    \obj(0,15)[2]{$X$}
    \obj(50,15)[3]{$U_X$}
    \obj(-50,-15)[4]{$X$}
    \obj(0,-15)[5]{$\V(\Lc)$}
    \obj(50,-15)[6]{$\Uc=\V(\Lc)-X$}
    \mor{1}{2}{$i$}
    \mor{3}{2}{$j$}
    \mor{1}{4}{$s_0$}
    \mor{2}{5}{$s$}
    \mor{3}{6}{$s_{\Uc}$}
    \mor{4}{5}{$i_0$}
    \mor{6}{5}{$j_0$}
  \enddc
\end{equation}
where $\V(\Lc)=Spec_X(Sym_{\Oc_X}(\Lc^{\vee}))$ is the total space of $\Lc$, $s:X\rightarrow \V(\Lc)$ is the global section determined by $s$ and both squares are Cartesian.
\begin{defn}
For an \'etale sheaf $\Fc \in \Det{\Uc;\Lambda}$, we define \emph{monodromy invariant vanishing cycles with coefficients in $\Fc$} as
\begin{equation}
  \Phi^{\textup{mi}}_{(X,s)}(\Fc):=cofiber \bigl ( i^*s^*j_{0*}\Fc \rightarrow i^*j_*s_{\Uc}^*\Fc \bigr ) \in \Det{X_0;\Lambda}.
\end{equation}
\end{defn}
In \cite{p20}, it is proved that \footnote{actually, we proved what follows in the $\ell$-adic setting, but the proofs work \emph{mutatis mutandis} with finite coefficients too.} 
\begin{equation}
  \Phi^{\textup{mi}}_{(X,s)}(\sLambda_X)\simeq cofiber \Bigl ( cofiber \bigl (c_1(\Lc_{|X_0}):\sLambda_{X_0}(-1)[-2]\rightarrow \sLambda_{X_0} \bigr) \rightarrow i^*j_*\sLambda_{U_X} \Bigr ),
\end{equation}
where the map $ cofiber \bigl (c_1(\Lc_{|X_0}):\sLambda_{X_0}(-1)[-2]\rightarrow \sLambda_{X_0} \bigr) \rightarrow i^*j_*\sLambda_{U_X} $ is induced by the commutative diagram
\begin{equation}
  \begindc{\commdiag}[18]
    \obj(-60,15)[1]{$\sLambda_{X_0}(-1)[-2]$}
    \obj(60,15)[2]{$\sLambda_{X_0}$}
    \obj(-60,-15)[3]{$i^*j_*\sLambda_{U_X}(-1)[-2]$}
    \obj(60,-15)[4]{$i^*j_*\sLambda_{U_X}$}
    \mor{1}{2}{$c_1(\Lc_{|X_0})$}
    \mor{2}{4}{$$}
    \mor{1}{3}{$$}
    \mor{3}{4}{$i^*j_*(c_1(\Lc_{|U_X}))\sim 0$}
  \enddc
\end{equation}
  
Notice that the squares in the diagram
\begin{equation}
  \begindc{\commdiag}[18]
    \obj(-50,15)[1]{$X_0$}
    \obj(0,15)[2]{$X$}
    \obj(50,15)[3]{$U_X$}
    \obj(-50,-15)[4]{$\bgm{S}$}
    \obj(0,-15)[5]{$\ag{S}$}
    \obj(50,-15)[6]{$S$}
    \mor{1}{2}{$i$}
    \mor{3}{2}{$j$}
    \mor{1}{4}{$p_0$}
    \mor{2}{5}{$p$}
    \mor{3}{6}{$p_{\Uc}$}
    \mor{4}{5}{$i_0$}
    \mor{6}{5}{$j_0$}
  \enddc
\end{equation}
are Cartesian as well. Here $p$ is the morphism associated to $(\Lc,s)$ and $p_0$ the morphism associated to $\Lc_{|X_0}$. 

We have already observed (Remark $\ref{muinf composed with tame nearby cycles}$) that 
\begin{equation}
  (\Psi^t_{\eta,p}(\sLambda_{X}))^{\mu_{\oo}}\simeq i^*j_*\sLambda_{U_X}.
\end{equation}
We will now show that 
\begin{equation}
  (triv(\sLambda_{X_0}))^{\mu_{\oo}}\simeq cofiber \bigl (c_1(\Lc_{|X_0}):\sLambda_{X_0}(-1)[-2]\rightarrow \sLambda_{X_0} \bigr ).
\end{equation}
\subsection*{The base}
In this subsection we will investigate the construction of tame vanishing cycles applied to the identity morphism on $\ag{S}$, i.e. the $\oo$-functor
\begin{equation}
  \Psi^t_{id}:\Det{\ag{S};\Lambda}\rightarrow \Det{\Vs_{id}^t;\Lambda}.
\end{equation}

For any $n,m \in \N_S$ there are adjunctions
\begin{equation}
  triv^{\oo}_n:\Det{\bgm{S};\Lambda}^{\mu_n}\leftrightarrows \Det{\bgm{S};\Lambda}:(-)^{n\cdot \mu_{\oo}},
\end{equation}
\begin{equation}
  triv^{nm}_{n}:\Det{\bgm{S};\Lambda}^{\mu_n}\leftrightarrows \Det{\bgm{S};\Lambda}^{\mu_{nm}}:(-)^{m\cdot \mu_{nm}}.
\end{equation}
These $\oo$-functors are compatible in the following sense:
\begin{equation}
  triv^{\oo}_{nm}\circ triv^{nm}_n\simeq  triv^{\oo}_n,
\end{equation}
\begin{equation}
  (-)^{m\cdot \mu_{nm}}\circ (-)^{nm\cdot \mu_{\oo}}\simeq (-)^{n\cdot \mu_{\oo}}.
\end{equation}

By definition, $\Psi_{id,\eta}^t(\sLambda_{\ag{S}})$ is the object of $\Det{\bgm{S};\Lambda}^{\mu_{\oo}}=\varprojlim_n\Det{\bgm{S};\Lambda}^{\mu_{n}}$ corresponding to $\{ (\Psi_{id,\eta}^t)^{(n)}(\sLambda_{\ag{S}}) \}$.
Using the above mentioned equivalences of $\oo$-functors, for any $n,m\in \N_S$ there is a morphism
\begin{equation}\label{eq:tame_identity1}
triv^{\oo}_n\bigl ( (\Psi_{id,\eta}^t(\sLambda_{\ag{S}}))^{n\cdot \mu_{\oo}} \bigr )\rightarrow triv^{\oo}_{nm}\bigl ( (\Psi_{id,\eta}^t(\sLambda_{\ag{S}}))^{nm\cdot \mu_{\oo}} \bigr )
\end{equation}
induced by the isomorphism
\begin{equation}
triv^{\oo}_n\bigl ( (\Psi_{id,\eta}^t(\sLambda_{\ag{S}}))^{n\cdot \mu_{\oo}} \bigr )\simeq triv^{\oo}_{nm}\Bigl ( triv^{nm}_n \bigl ( (\Psi_{id,\eta}^t(\sLambda_{\ag{S}}))^{nm\cdot \mu_{\oo}} \bigr )^{\mu_m} \Bigr )
\end{equation}
and the morphism
\begin{equation}
 triv^{\oo}_{nm}\Bigl ( triv^{nm}_n \bigl ( (\Psi_{id,\eta}^t(\sLambda_{\ag{S}}))^{nm\cdot \mu_{\oo}} \bigr )^{\mu_m} \Bigr )\rightarrow triv^{\oo}_{nm}\bigl ( (\Psi_{id,\eta}^t(\sLambda_{\ag{S}}))^{nm\cdot \mu_{\oo}} \bigr )
\end{equation}
induced by the counit of the adjunction $(triv^{nm}_n,(-)^{\mu_m})$. As these counits and the equivalences
\begin{equation}
  triv^{nmp}_{nm}\circ triv^{nm}_m\simeq  triv^{nmp}_m, \hspace{0.5cm}  (-)^{\mu_m}\circ (-)^{\mu_n}\simeq (-)^{\mu_{nm}}
\end{equation}
are natural, these morhisms \eqref{eq:tame_identity1} assemble in a diagram
\begin{equation}
  \N_S\rightarrow \Det{\bgm;\Lambda}^{\mu_{\oo}}.
\end{equation}
\begin{lmm}
The following equivalence holds in $\Det{\bgm{S};\Lambda}^{\mu_{\oo}}$:
\begin{equation}
  triv(\sLambda_{\bgm{S}})\simeq \varinjlim \Bigl (triv^{\oo}_n\bigl ( (\Psi_{id,\eta}^t(\sLambda_{S}))^{n\cdot \mu_{\oo}} \bigr )\rightarrow triv^{\oo}_{nm}\bigl ( (\Psi_{id,\eta}^t(\sLambda_{S}))^{nm\cdot \mu_{\oo}} \bigr )  \Bigr ).
\end{equation}
\end{lmm}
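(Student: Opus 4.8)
The plan is to unwind the construction of $\Psi_{\mathrm{id},\eta}^t$ for the identity morphism of $\ag{S}$, to identify each layer explicitly via absolute purity, and then to compute the colimit after composing with a conservative functor. First I would describe the layers. For $p=\mathrm{id}\colon\ag{S}\to\ag{S}$ one has $X=\ag{S}$, $X_0=\bgm{S}$ and $U_X=S$; the second projection identifies $\ag{S}^{(n)}=\ag{S}\times_{\Theta^{(n)},\ag{S}}\ag{S}$ with $\ag{S}$, under which the open immersion $j_X^{(n)}$ becomes the canonical one $j\colon S\hookrightarrow\ag{S}$ for every $n$, while $X_0^{(n)}=(\Theta^{(n)})^{-1}(\bgm{S})$ is the nilthickening $[\mathrm{Spec}(\Oc_S[t]/(t^n))/\gm{S}]$ of $\bgm{S}$ with its $\mu_n$-decoration. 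As \'etale sheaves are insensitive to nilpotents, every layer is canonically
\begin{equation*}
\bigl(\Psi_{\mathrm{id},\eta}^t(\sLambda_S)\bigr)^{n\cdot\mu_{\oo}}\simeq (i_X^{(n)})^*(j_X^{(n)})_*\sLambda_S\simeq i^*j_*\sLambda_S,
\end{equation*}
where $i\colon\bgm{S}\hookrightarrow\ag{S}$ is the canonical regular closed immersion of codimension one. The localization triangle $i_*i^!\sLambda_{\ag{S}}\to\sLambda_{\ag{S}}\to j_*\sLambda_S$ together with absolute purity ($i^!\sLambda_{\ag{S}}\simeq\sLambda_{\bgm{S}}(-1)[-2]$, the normal bundle of $\bgm{S}$ in $\ag{S}$ being $\Oc_{\bgm{S}}(1)$) then yields
\begin{equation*}
i^*j_*\sLambda_S\simeq \mathrm{cofib}\bigl(c_1(\Oc_{\bgm{S}}(1))\colon\sLambda_{\bgm{S}}(-1)[-2]\to\sLambda_{\bgm{S}}\bigr),
\end{equation*}
and, tracing through Construction~\ref{pullback via correspondences Art}, the transition functors of the tower are induced by pullback along the $m$-th power maps $\Theta_0^{(m)}$ of $\bgm{S}$, under which $c_1(\Oc_{\bgm{S}}(1))$ is multiplied by $m$.

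Next I would write down the comparison map: applying $i^*$ to the unit $\sLambda_{\ag{S}}\to j_*\sLambda_S$ of $(j^*,j_*)$ and using $i^*\sLambda_{\ag{S}}\simeq\sLambda_{\bgm{S}}$ produces maps $\sLambda_{\bgm{S}}\to i^*j_*\sLambda_S$ respecting the trivial $\mu_n$-structure; applying $\mathrm{triv}^{\oo}_n$ and using $\mathrm{triv}^{\oo}_n\circ\mathrm{triv}^{n}_1\simeq\mathrm{triv}^{\oo}_1=\mathrm{triv}$ one obtains a cocone on the colimit diagram recalled before the statement, hence a morphism
\begin{equation*}
\mathrm{triv}(\sLambda_{\bgm{S}})\longrightarrow \varinjlim_n\,\mathrm{triv}^{\oo}_n\bigl((\Psi_{\mathrm{id},\eta}^t(\sLambda_S))^{n\cdot\mu_{\oo}}\bigr),
\end{equation*}
which by construction is compatible with the counit-based transition maps of the system. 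To see it is an equivalence I would use that the projections $(-)^{n\cdot\mu_{\oo}}$ onto the stages $\Det{\bgm{S};\Lambda}^{\mu_n}$ jointly detect equivalences, and that the $n$-th stage of the source is $\mathrm{triv}^{n}_1(\sLambda_{\bgm{S}})$. After interchanging $(-)^{n\cdot\mu_{\oo}}$ with the filtered colimit --- which one can justify using the description of $\Det{\bgm{S};\Lambda}^{\mu_{\oo}}$ as a limit in $\widehat{\mathcal{C}at_{\oo}}$ and the fact that, along the cofinal system of multiples of $n$, the $n$-th stage of $\mathrm{triv}^{\oo}_{k}\bigl((\Psi_{\mathrm{id},\eta}^t(\sLambda_S))^{k\cdot\mu_{\oo}}\bigr)$ is computed as homotopy fixed points of $i^*j_*\sLambda_S$ for the finite group $\mu_{k/n}$ --- the statement becomes the explicit claim that the colimit over $k$ of these homotopy fixed points of $\mathrm{cofib}(c_1(\Oc_{\bgm{S}}(1)))$, with transition maps multiplying the first Chern class by the appropriate integers, collapses onto $\mathrm{triv}^{n}_1(\sLambda_{\bgm{S}})$.

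The main obstacle is this last computation: one has to keep track simultaneously of the Tate twists, of the cohomology of the finite groups $\mu_{k/n}$ contributed by the homotopy fixed points, and of the multiplication-by-$m$ maps on $c_1(\Oc_{\bgm{S}}(1))$, and to check that all positive-weight contributions cancel \emph{on the nose} in the colimit, leaving only $\sLambda_{\bgm{S}}$ with trivial action. This is precisely where the r\^ole of the first Chern class in the canonical $\mu_{\oo}$-action --- announced in the introduction --- becomes visible, and where the hypothesis that $\Lambda$ is torsion, so that $c_1(\Oc_{\bgm{S}}(1))$ becomes infinitely divisible along the tower, is used; by contrast, the compatibility of the comparison map with the transition maps of the colimit system is a routine matter.
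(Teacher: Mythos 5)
Your identification of the individual terms agrees with the paper's: both arguments observe that for $p=\mathrm{id}$ each stage $(\Psi_{id,\eta}^t(\sLambda_S))^{n\cdot\mu_{\oo}}$ is, up to a nilthickening that \'etale sheaves do not see, the sheaf $i^*j_*\sLambda_S$ on $\bgm{S}$, identify it via the localization triangle and absolute purity with $cofiber\bigl(c_1:\sLambda_{\bgm{S}}(-1)[-2]\to\sLambda_{\bgm{S}}\bigr)$, and identify the transition maps of the tower as multiplication by $m$ on the Tate-twisted term (via $(\Theta_0^{(m)})^*$ multiplying the universal first Chern class by $m$) and the identity on $\sLambda_{\bgm{S}}$. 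Up to that point you are following the paper's route.

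The gap is in the last step, where you propose to verify the equivalence by applying each projection $(-)^{n\cdot\mu_{\oo}}$ and commuting it past the filtered colimit. These projections are the structure maps of a limit in $\prsr$ whose transition functors $(-)^{\mu_m}$ are right adjoints (homotopy fixed points for finite groups); they do not preserve colimits, so the interchange you invoke is precisely the nontrivial content, and the justification you sketch (the description as a limit in $\widehat{Cat}_{\oo}$, plus the identification of the $n$-th stage of $triv^{\oo}_k$ as $\mu_{k/n}$-homotopy fixed points) is not given and itself requires an argument. Worse, even granting the interchange, you are left with computing colimits over $k$ of $\mu_{k/n}$-homotopy fixed points of $cofiber(c_1)$ --- a computation involving the full group cohomology of the cyclic groups $\mu_{k/n}$ with torsion coefficients --- which you explicitly defer as ``the main obstacle'' and never carry out. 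The paper avoids all of this: it computes the colimit directly in $\Det{\bgm{S};\Lambda}^{\mu_{\oo}}$, using that the diagram is a diagram of cofiber sequences whose colimit is the cofiber of the colimits. The twisted part has transition maps equal to $m$ times the canonical ones, and since $\Lambda$ is torsion there is an $N\in\N_S$ with $N\Lambda=0$, so every stage maps to a later stage by zero and that colimit vanishes; the untwisted part is the tower $triv^{\oo}_n(\sLambda_{\bgm{S}})$ with the counit transition maps, whose colimit is $triv(\sLambda_{\bgm{S}})$ (this is the content of the categorical lemma proved immediately afterwards in the paper). No homotopy fixed points of the finite quotients ever need to be evaluated. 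As written, your argument is incomplete at its decisive step, and the step it reduces to is harder than the statement itself.
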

\begin{proof}
By definition, we have that $(\Psi_{id,\eta}^t(\sLambda_{S}))^{n\cdot \mu_{\oo}}=(\Psi_{id,\eta}^t)^{(n)}(\sLambda_{S})$. Consider the following diagram of Artin stacks
\begin{equation}
  \begindc{\commdiag}[20]
    \obj(-140,15)[1]{$\bgm{S}$}
    \obj(-80,15)[2]{$\bgm{S}\x_{i,\ag{S},\Theta^{(n)}}\ag{S}$}
    \obj(0,15)[3]{$\ag{S}$}
    \obj(60,15)[4]{$S$}
    \obj(-80,-15)[5]{$\bgm{S}$}
    \obj(0,-15)[6]{$\ag{S}$}
    \obj(60,-15)[7]{$S.$}
    \mor{1}{2}{$\beta$}
    \mor{2}{3}{$$}
    \mor{4}{3}{$j$}
    \mor{5}{6}{$i$}
    \mor{7}{6}{$j$}
    \mor{1}{5}{$\Theta_0^{(n)}$}
    \mor{2}{5}{$$}
    \mor{3}{6}{$\Theta^{(n)}$}
    \mor{4}{7}{$id_S$}
    \cmor((-140,19)(-140,21)(-138,23)(-40,23)(-2,23)(0,21)(0,19)) \pdown(-40,27){$i$}
  \enddc
\end{equation}
Both squares are Cartesian and $\beta$ is a closed embedding with empty open complementary.
\begin{equation}
  (\Psi_{id,\eta}^t)^{(n)}(\sLambda_S):=i^*j_*(\sLambda_S)
\end{equation}
Then the distinguished triangle
\begin{equation}
  i^!(\sLambda_{\ag{S}})\rightarrow \sLambda_{\bgm{S}}\rightarrow i^*j_*(\sLambda_S)
\end{equation}
and absolute purity for Artin stacks (\cite[Theorem 3.28]{k19})
\begin{equation}
  \sLambda_{\bgm{S}}(-1)[-2]\simeq i^!(\sLambda_{\ag{S}})
\end{equation}
imply that 
\begin{equation}
  i^*j_*(\sLambda_S)\simeq cofiber \bigl ( \sLambda_{\bgm{S}}(-1)[-2]\xrightarrow{c_1} \sLambda_{\bgm{S}} \bigr ).
\end{equation}
Here $c_1\in \Hu_{et}^2(\bgm{S},\sLambda_{\bgm{S}}(1))$ is the \emph{universal first Chern class}. Since $\Theta_0^{(n)}:\bgm{S}\rightarrow \bgm{S}$ corresponds to $n\cdot U$, where $U$ is the \emph{universal line bundle}, we see that
\begin{equation}
    (\Theta_0^{(n)})^*\circ (\Theta_0^{(n)})_*(c_1)\simeq n\cdot c_1:\sLambda_{\bgm{S}}(-1)[-2]\rightarrow \sLambda_{\bgm{S}}.
\end{equation}
Under these equivalences, the morphisms $triv^{nm}_n\bigl ( (\Psi_{id,\eta}^t(\sLambda_{\ag{S}}))^{n\cdot \mu_{\oo}} \bigr )\rightarrow \bigl ( (\Psi_{id,\eta}^t(\sLambda_{\ag{S}}))^{nm\cdot \mu_{\oo}} \bigr )$ corresponds to (\cite[Proposition 10.3]{ay14})
\begin{equation}
  \begindc{\commdiag}[20]
    \obj(-50,15)[1]{$cofiber \Bigl ( triv^{nm}_m(\sLambda_{\bgm{S}}(-1)[-2]) $}
    \obj(50,15)[2]{$triv^{nm}_m(\sLambda_{\bgm{S}}) \Bigr )$}
    \obj(-50,-15)[3]{$cofiber \Bigl ( \sLambda_{\bgm{S}}(-1)[-2]$}
    \obj(50,-15)[4]{$\sLambda_{\bgm{S}} \Bigr ).$}
    \mor{1}{2}{$m\cdot c_1$}
    \mor{1}{3}{$m$}
    \mor{2}{4}{$1$}
    \mor{3}{4}{$c_1$}
  \enddc
\end{equation}
It is then clear that, as $\Lambda$ is torsion,
\begin{equation}
  triv(\sLambda_{S})\simeq \varinjlim \Bigl (triv^{\oo}_n\bigl ( (\Psi_{id,\eta}^t(\sLambda_{\ag{S}}))^{n\cdot \mu_{\oo}} \bigr )\rightarrow triv^{\oo}_{nm}\bigl ( (\Psi_{id,\eta}^t(\sLambda_{\ag{S}}))^{nm\cdot \mu_{\oo}} \bigr )  \Bigr ).
\end{equation}
\end{proof}
We will now need the following categorical lemma.
\begin{lmm}\label{categorical lemma}
Let $\Cc_{\bullet}:\N_S^{\textup{op}}\rightarrow \textup{\textbf{Pr}}^{\textup{R}}_{\textup{st}}$ be a diagram in the (big) $\oo$-category of stable, presentable $\oo$-categories with right adjoints. For every $n,m\in \N_S$, there is an adjunction
\begin{equation}
i_{n,nm}:\Cc_n\leftrightarrows \Cc_{nm}:t_{n,nm}.
\end{equation}
They are compatible in the obvious sense. Let $\Cc:=\varprojlim_{n}\Cc_n$ be the limit of such diagram. For every $n\in \N_S$ we have an adjunction
\begin{equation}
i_{n}:\Cc_n\leftrightarrows \Cc:t_{n}.
\end{equation}
These are also compatible in an obvious way.

For any $F\in \Cc$, let $F_n:=t_n(F)$. As $t_{n,nm}(F_{nm})\simeq F_n$, the counits of the adjunctions $(i_{n,nm},t_{n,nm})$ induce morphisms
\begin{equation}
  i_n(F_n)\rightarrow i_{nm}(F_{nm})
\end{equation}
that define a diagram $\N_S \rightarrow \Cc$. The canonical morphism
\begin{equation}
  \varinjlim_ni_n(F_n)\rightarrow F
\end{equation}
is an equivalence.
\end{lmm}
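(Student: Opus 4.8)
The plan is to reduce the statement, via the Yoneda lemma, to a comparison of mapping spaces assembled from the units and counits of the various adjunctions. First I would record the formal input. Since $\Cc=\varprojlim_{n\in\N_S^{\textup{op}}}\Cc_n$ is a limit in $\prsr$, it is also the limit in $\widehat{Cat}_{\oo}$ by \cite[Theorem 5.5.3.18]{lu09} and \cite[Theorem 1.1.4.4]{lu17}; dually, under $\prsr\simeq(\prs)^{\textup{op}}$ it is the colimit in $\prs$ of the diagram $\N_S\to\prs$ of left adjoints $n\mapsto\Cc_n$, $i_{n,nm}$, the colimit cocone $\Cc_n\to\Cc$ being precisely the left adjoints $i_n$. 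In particular $t_n\simeq t_{n,nm}\circ t_{nm}$ and, passing to left adjoints, $i_n\simeq i_{nm}\circ i_{n,nm}$; this is what makes the transition maps $i_n(F_n)\to i_{nm}(F_{nm})$ of the statement (induced by the counits of $(i_{n,nm},t_{n,nm})$) well defined, and the counits of $(i_n,t_n)$ assemble the cocone $i_n(F_n)=i_nt_n(F)\to F$, hence the canonical morphism $\varinjlim_n i_n(F_n)\to F$.

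Next I would fix $G\in\Cc$, write $G_n:=t_n(G)$, and compute both sides. Mapping out of the colimit and using the adjunctions $(i_n,t_n)$,
\[
\Map_{\Cc}\bigl(\varinjlim_n i_n(F_n),G\bigr)\simeq\varprojlim_{n\in\N_S^{\textup{op}}}\Map_{\Cc}(i_n(F_n),G)\simeq\varprojlim_{n\in\N_S^{\textup{op}}}\Map_{\Cc_n}(F_n,G_n),
\]
the transition maps in the limit being induced by the functors $t_{n,nm}$ on morphism spaces. On the other hand, since $\Cc=\varprojlim_{\N_S^{\textup{op}}}\Cc_n$ in $\widehat{Cat}_{\oo}$ and mapping spaces in a limit of $\oo$-categories are the limit of the mapping spaces (compute $\Fun(\Delta^1,\Cc)\simeq\varprojlim_n\Fun(\Delta^1,\Cc_n)$ and take the fibre over $(F,G)$, limits commuting with fibres),
\[
\Map_{\Cc}(F,G)\simeq\varprojlim_{n\in\N_S^{\textup{op}}}\Map_{\Cc_n}(F_n,G_n),
\]
again with transition maps induced by the $t_{n,nm}$ and with limit cone given by the functors $t_n$ on morphism spaces.

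Finally I would check that the comparison morphism $\Map_{\Cc}(F,G)\to\Map_{\Cc}\bigl(\varinjlim_n i_n(F_n),G\bigr)$ induced by $\varinjlim_n i_n(F_n)\to F$ is compatible with these two descriptions: composing it with the $n$-th limit projection is, by construction, the composite
\[
\Map_{\Cc}(F,G)\xrightarrow{\ (\varepsilon^{(n)})^{*}\ }\Map_{\Cc}\bigl(i_nt_n(F),G\bigr)\xrightarrow{\ \sim\ }\Map_{\Cc_n}\bigl(t_n(F),t_n(G)\bigr),
\]
where $\varepsilon^{(n)}\colon i_nt_n(F)\to F$ is the counit and the second arrow is the adjunction equivalence of $(i_n,t_n)$; by the triangle identities this composite is exactly the functor $t_n$ on morphism spaces. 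Hence the comparison morphism is a map between two limit cones over the same $\N_S^{\textup{op}}$-diagram respecting the cone legs, so it is an equivalence, and naturally in $G$; by the Yoneda lemma $\varinjlim_n i_n(F_n)\to F$ is an equivalence. I expect the main obstacle to be precisely this last bookkeeping step — tracking units, counits and adjunction equivalences coherently enough to see that the induced map on $\Map_{\Cc}(F,G)$ is the canonical projection to $\Map_{\Cc_n}(F_n,G_n)$, and verifying that all the identifications above are natural in $G$ so that Yoneda applies; everything else is a direct appeal to \cite{lu09,lu17}.
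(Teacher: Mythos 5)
Your proposal is correct and follows essentially the same route as the paper: both compute $\Map_{\Cc}(\varinjlim_n i_n(F_n),G)$ and $\Map_{\Cc}(F,G)$ as the limit $\varprojlim_n\Map_{\Cc_n}(F_n,G_n)$, the latter via $\Fun(\Delta^1,\Cc)\simeq\varprojlim_n\Fun(\Delta^1,\Cc_n)$ and commuting limits, then conclude by Yoneda. Your final paragraph checking that the comparison map respects the two cone legs (via the triangle identities) is a step the paper compresses into ``The claim follows,'' so if anything your write-up is slightly more complete.
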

\begin{proof}
Let $G\in \Cc$. Then
\begin{equation}
\Map_{\Cc}(\varinjlim_ni_n(F_n),G)\simeq \varprojlim_n \Map_{\Cc}(i_n(F_n),G)\simeq \varprojlim \Map_{\Cc_n}(F_n,G_n),
\end{equation}
where $G_n:=t_n(G)$. $\Map_{\Cc}(F,G)$ is the pullback of the following diagram
\begin{equation}
 \begindc{\commdiag}[20]
   \obj(30,15)[1]{$\Fun(\Delta^1,\Cc)$}
   \obj(-30,-15)[2]{$\Delta^0$}
   \obj(30,-15)[3]{$\Cc \x \Cc.$}
   \mor{1}{3}{$(ev_0,ev_1)$}
   \mor{2}{3}{$(F,G)$}
 \enddc
\end{equation}
As $\Fun(\Delta^1,\Cc)\simeq \varprojlim_n \Fun(\Delta^1,\Cc_n)$ and $\Cc \x \Cc \simeq \varprojlim_n \Cc_n\x\Cc_n$, this diagram is the limit of the diagrams
\begin{equation}
 \begindc{\commdiag}[20]
   \obj(30,15)[1]{$\Fun(\Delta^1,\Cc_n)$}
   \obj(-30,-15)[2]{$\Delta^0$}
   \obj(30,-15)[3]{$\Cc_n \x \Cc_n,$}
   \mor{1}{3}{$(ev_0,ev_1)$}
   \mor{2}{3}{$(F_n,G_n)$}
 \enddc
\end{equation}
whose pullbacks are $\Map_{\Cc_n}(F_n,G_n)$. Since limits commute with limits, we find that 
\begin{equation}
\Map_{\Cc}(F,G)\simeq \varprojlim_n \Map_{\Cc_n}(F_n,G_n).
\end{equation}
The claim follows.
\end{proof}
In particular, we get that
\begin{cor}
The following equivalences hold in $\Det{\bgm{S};\Lambda}^{\mu_{\oo}}$:
\begin{equation}
  triv(\sLambda_{\bgm{S}})\simeq \varinjlim_n (\Psi_{id,\eta}^t)^{(n)}(\sLambda_{S})\simeq \Psi_{id,\eta}^t(\sLambda_{S}).
\end{equation}
\end{cor}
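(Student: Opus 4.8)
The plan is to obtain both equivalences by stringing together the two lemmas that immediately precede the statement, so that no genuinely new argument is required.

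First I would observe that the first asserted equivalence, $triv(\sLambda_{\bgm{S}})\simeq \varinjlim_n (\Psi_{id,\eta}^t)^{(n)}(\sLambda_{S})$, is nothing but a reformulation of the Lemma just proved. There the colimit is written as $\varinjlim_n triv^{\oo}_n\bigl((\Psi_{id,\eta}^t(\sLambda_{S}))^{n\cdot\mu_{\oo}}\bigr)$, and one only has to recall the definitional identity $(\Psi_{id,\eta}^t(\sLambda_{S}))^{n\cdot\mu_{\oo}}=(\Psi_{id,\eta}^t)^{(n)}(\sLambda_{S})$ together with the fact that, in the presentation $\Det{\bgm{S};\Lambda}^{\mu_{\oo}}=\varprojlim_n\Det{\bgm{S};\Lambda}^{\mu_n}$, the functor $triv^{\oo}_n$ is precisely the canonical functor $\Det{\bgm{S};\Lambda}^{\mu_n}\to\Det{\bgm{S};\Lambda}^{\mu_{\oo}}$ left adjoint to the $n$-th projection $(-)^{n\cdot\mu_{\oo}}$ (here we use that for $p=id$ one has $X_0=\bgm{S}$ and $U_X=S$).

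Second, for the equivalence $\varinjlim_n (\Psi_{id,\eta}^t)^{(n)}(\sLambda_{S})\simeq \Psi_{id,\eta}^t(\sLambda_{S})$ I would apply Lemma \ref{categorical lemma} directly to the diagram $\Cc_\bullet:\N_S^{\textup{op}}\to\prsr$, $n\mapsto\Det{\bgm{S};\Lambda}^{\mu_n}$, whose limit is $\Det{\bgm{S};\Lambda}^{\mu_{\oo}}$ by definition; the adjunctions $(i_{n,nm},t_{n,nm})$ appearing in that lemma are here $(triv^{nm}_n,(-)^{m\cdot\mu_{nm}})$, and $(i_n,t_n)$ are $(triv^{\oo}_n,(-)^{n\cdot\mu_{\oo}})$. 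Taking $F:=\Psi_{id,\eta}^t(\sLambda_{S})\in\Det{\bgm{S};\Lambda}^{\mu_{\oo}}$, one has $F_n=t_n(F)=(\Psi_{id,\eta}^t(\sLambda_{S}))^{n\cdot\mu_{\oo}}=(\Psi_{id,\eta}^t)^{(n)}(\sLambda_{S})$, and Lemma \ref{categorical lemma} provides a canonical equivalence $\varinjlim_n i_n(F_n)\xrightarrow{\ \sim\ }F$, which is exactly the second equivalence in the statement.

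The only point deserving any care — and hence the (quite modest) main obstacle — is matching up the transition maps: one must check that the diagram $\N_S\to\Det{\bgm{S};\Lambda}^{\mu_{\oo}}$ whose colimit appears in the preceding Lemma, built from the counits of the adjunctions $(triv^{nm}_n,(-)^{m\cdot\mu_{nm}})$, coincides with the diagram produced by the recipe of Lemma \ref{categorical lemma} applied to $F=\Psi_{id,\eta}^t(\sLambda_{S})$. This holds essentially by construction, since in both cases the structure maps are induced by the same counits through the coherences $triv^{\oo}_{nm}\circ triv^{nm}_n\simeq triv^{\oo}_n$ and $(-)^{m\cdot\mu_{nm}}\circ(-)^{nm\cdot\mu_{\oo}}\simeq(-)^{n\cdot\mu_{\oo}}$ recorded above. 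Once this identification is made explicit, the two lemmas combine to yield both equivalences of the Corollary at once.
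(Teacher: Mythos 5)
Your proposal is correct and matches the paper's intended argument exactly: the paper gives no separate proof, presenting the corollary as an immediate consequence of the preceding lemma (which supplies the first equivalence, via the definitional identity $(\Psi_{id,\eta}^t(\sLambda_{S}))^{n\cdot\mu_{\oo}}=(\Psi_{id,\eta}^t)^{(n)}(\sLambda_{S})$) together with Lemma \ref{categorical lemma} applied to $F=\Psi_{id,\eta}^t(\sLambda_{S})$ (which supplies the second). Your remark that the two colimit diagrams agree because both are built from the same counits and coherences is the right, and only, point requiring verification.
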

\begin{prop}
With the same notation of the previous sections,
\begin{equation}
  (triv(\sLambda_{X_0}))^{\mu_{\oo}}\simeq cofiber \bigl (:\sLambda_{X_0}(-1)[-2]\xrightarrow{c_1(\Lc_{|X_0})} \sLambda_{X_0} \bigr ) \in \Det{X_0;\Lambda}.
\end{equation}
\end{prop}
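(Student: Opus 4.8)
The plan is to reduce to the case $X_{0}=\bgm{S}$, $p_{0}=\mathrm{id}$, which is essentially the Corollary above, and then to pull back along $p_{0}\colon X_{0}\to\bgm{S}$. Denote by $\bar{p}_{0}^{\,*}\colon\Det{\bgm{S};\Lambda}^{\mu_{\oo}}\to\Det{X_{0};\Lambda}^{\mu_{\oo}}$ the $\mu_{\oo}$-equivariant $*$-pullback along $p_{0}$, namely the instance of Construction~\ref{*pullback bar(f)_0} for $f=p\colon X\to\ag{S}$ viewed as a morphism over $\ag{S}$ (where $\ag{S}$ carries the identity as its structure morphism), so that on the zero loci $f_{0}=p_{0}$. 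The first step is to show that
\begin{equation*}
  triv(\sLambda_{X_{0}})\simeq\bar{p}_{0}^{\,*}\bigl(triv(\sLambda_{\bgm{S}})\bigr)\quad\text{in }\Det{X_{0};\Lambda}^{\mu_{\oo}},
\end{equation*}
and that $(-)^{\mu_{\oo}}\circ\bar{p}_{0}^{\,*}\simeq p_{0}^{\,*}\circ(-)^{\mu_{\oo}}$. Both $\bar{p}_{0}^{\,*}\circ triv$ and $triv\circ p_{0}^{\,*}$ are composites of left adjoints, so it is enough to identify their right adjoints $(-)^{\mu_{\oo}}\circ\bar{p}_{0,*}$ and $p_{0,*}\circ(-)^{\mu_{\oo}}$. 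By the variant of Remark~\ref{concrete construction *pushforward bar(f)} applicable to $\bar{p}_{0}$, the functor $\bar{p}_{0,*}$ is the limit over $\N_{S}^{\mathrm{op}}$ of the pushforwards $(p_{0}^{(n)})_{*}$, while $(-)^{\mu_{\oo}}$ is the projection onto the factor $n=1$; hence both composites are canonically identified with $p_{0,*}=(p_{0}^{(1)})_{*}$ precomposed with this projection. Since $p_{0}^{\,*}\sLambda_{\bgm{S}}\simeq\sLambda_{X_{0}}$, the displayed equivalence follows, and the compatibility $(-)^{\mu_{\oo}}\circ\bar{p}_{0}^{\,*}\simeq p_{0}^{\,*}\circ(-)^{\mu_{\oo}}$ is seen the same way, $\bar{p}_{0}^{\,*}$ being assembled from the $*$-pullbacks $(p_{0}^{(n)})^{*}$ and $(-)^{\mu_{\oo}}$ being evaluation at $n=1$.

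For the base case, the Corollary above gives $triv(\sLambda_{\bgm{S}})\simeq\Psi_{id,\eta}^{t}(\sLambda_{S})$ in $\Det{\bgm{S};\Lambda}^{\mu_{\oo}}$. Applying $(-)^{\mu_{\oo}}$ and Remark~\ref{muinf composed with tame nearby cycles} for $p$ the identity morphism of $\ag{S}$ (so that $X_{0}=\bgm{S}$, $U_{X}=S$), we get $\bigl(triv(\sLambda_{\bgm{S}})\bigr)^{\mu_{\oo}}\simeq i^{*}j_{*}\sLambda_{S}$, where $i\colon\bgm{S}\hookrightarrow\ag{S}$ and $j\colon S\hookrightarrow\ag{S}$. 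The localization fiber-cofiber sequence $i_{*}i^{!}\to\mathrm{id}\to j_{*}j^{*}$ evaluated at $\sLambda_{\ag{S}}$ and pulled back along $i^{*}$, together with absolute purity for Artin stacks (\cite[Theorem 3.28]{k19})---which gives $i^{!}\sLambda_{\ag{S}}\simeq\sLambda_{\bgm{S}}(-1)[-2]$ since $i$ is the zero section of $\ag{S}$, a regular closed immersion of codimension $1$---then yield
\begin{equation*}
  \bigl(triv(\sLambda_{\bgm{S}})\bigr)^{\mu_{\oo}}\simeq i^{*}j_{*}\sLambda_{S}\simeq cofiber\bigl(\sLambda_{\bgm{S}}(-1)[-2]\xrightarrow{c_{1}}\sLambda_{\bgm{S}}\bigr),
\end{equation*}
where $c_{1}\in\Hu^{2}(\bgm{S},\sLambda_{\bgm{S}}(1))$ is the first Chern class of the universal line bundle $U$ on $\bgm{S}$ (the conormal bundle of $i$), i.e. the universal first Chern class.

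Combining the two steps, and using that $p_{0}^{\,*}$ is exact, we obtain
\begin{equation*}
  \bigl(triv(\sLambda_{X_{0}})\bigr)^{\mu_{\oo}}\simeq p_{0}^{\,*}\bigl((triv(\sLambda_{\bgm{S}}))^{\mu_{\oo}}\bigr)\simeq p_{0}^{\,*}\,cofiber\bigl(\sLambda_{\bgm{S}}(-1)[-2]\xrightarrow{c_{1}}\sLambda_{\bgm{S}}\bigr)\simeq cofiber\bigl(\sLambda_{X_{0}}(-1)[-2]\xrightarrow{p_{0}^{\,*}c_{1}}\sLambda_{X_{0}}\bigr).
\end{equation*}
Since $p_{0}\colon X_{0}\to\bgm{S}$ classifies the line bundle $\Lc_{|X_{0}}$, we have $p_{0}^{\,*}U\simeq\Lc_{|X_{0}}$, and the naturality of Chern classes gives $p_{0}^{\,*}c_{1}=c_{1}(p_{0}^{\,*}U)=c_{1}(\Lc_{|X_{0}})$, which is the claimed formula. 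The main obstacle is the first step: checking that forming the $\mu_{\oo}$-trivial sheaf commutes with the $\mu_{\oo}$-equivariant $*$-pullback $\bar{p}_{0}^{\,*}$, which forces one to unwind the homotopy-coherent definitions of $triv$, of $\bar{p}_{0}^{\,*}$ (Construction~\ref{*pullback bar(f)_0}) and of $(-)^{\mu_{\oo}}$ through the limit presentations of the $\oo$-categories $\Det{-;\Lambda}^{\mu_{\oo}}$; passing to right adjoints, as above, reduces it to the transparent statement that a limit of pushforwards, followed by projection to the factor $n=1$, is pushforward along $p_{0}$. Everything else is formal, given the Corollary above, Remark~\ref{muinf composed with tame nearby cycles}, absolute purity, and the naturality of $c_{1}$.
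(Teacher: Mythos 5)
Your proposal is correct and follows essentially the same route as the paper: reduce to the universal case over $\bgm{S}$ via $\bar{p}_0^*$ and the commutation $(-)^{\mu_{\oo}}\circ\bar{p}_0^*\simeq p_0^*\circ(-)^{\mu_{\oo}}$, invoke the corollary identifying $triv(\sLambda_{\bgm{S}})$ with $\Psi_{id,\eta}^t(\sLambda_S)$ together with the localization sequence and absolute purity, and conclude by naturality of the first Chern class ($p_0^*c_1=c_1(\Lc_{|X_0})$). The only difference is that you spell out the right-adjoint argument for the commutation and re-derive the purity computation, both of which the paper delegates to the preceding lemma and to earlier constructions; also note the normal (not conormal) bundle of the zero section is the universal line bundle, though this does not affect the argument.
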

\begin{proof}
As $\sLambda_{X_0}\simeq p_0^*(\sLambda_{\bgm{S}})$, we have that
\begin{equation}
  (triv(\sLambda_{X_0}))^{\mu_{\oo}}\simeq (\tilde{p}_0^*(\sLambda_{\bgm{S}}))^{\mu_{\oo}}\simeq p_0^*(\sLambda_{\bgm{S}})^{\mu_{\oo}}.
\end{equation}
The previous corollary and the fact that $(-)^{\mu_{\oo}}\circ \Psi_{id,\eta}^t\simeq (\Psi_{id,\eta}^t)^{(1)}\simeq i_0^*\circ (j_0)_*$ thus imply that
\begin{equation}
  (triv(\sLambda_{X_0}))^{\mu_{\oo}}\simeq p_0^*\bigl ( \sLambda_{\bgm{S}}(-1)[-2]\xrightarrow{c_1} \sLambda_{\bgm{S}} \bigr ),
\end{equation}
where $c_1 \in \Hu_{\text{\'et}}^2(\bgm{S},\sLambda_{\bgm{S}}(1))$ is the \emph{universal first Chern class}. The claim follows as 
\begin{equation}
  p_0^*(c_1)=c_1(\Lc_{|X_0})\in \Hu^2_{\text{\'et}}(X_0,\sLambda_{X_0}(1)).
\end{equation}

Indeed, we have the following commutative diagram:
\begin{equation}
  \begindc{\commdiag}[20]
    \obj(-40,15)[1]{$\Hu^{1}_{\text{\'et}}(X_0,\mathbb{G}_{\textup{m}})$}
    \obj(40,15)[2]{$\Hu^{2}_{\text{\'et}}(X_0,\sLambda_{}(1))$}
    \obj(-40,-15)[3]{$\Hu^{1}_{\text{\'et}}(\bgm{S},\mathbb{G}_{\textup{m}})$}
    \obj(40,-15)[4]{$\Hu^2_{\text{\'et}}(\bgm{S},\sLambda(1)).$}
    \mor{1}{2}{$\partial_{X_0}$}
    \mor{3}{1}{$p_0^*$}
    \mor{4}{2}{$p_0^*$}
    \mor{3}{4}{$\partial_{\bgm{S}}$}
  \enddc
\end{equation}
Moreover,
\begin{equation}
\Z \subseteq \Hu^{1}_{\text{\'et}}(\bgm{S},\mathbb{G}_{\textup{m}})
\end{equation}
The element $U \in \Hu^{1}_{\text{\'et}}(\bgm{S},\mathbb{G}_{\textup{m}})$ corresponding to $1$ is the \emph{universal $\gm{S}$-torsor} and is such that
\begin{equation}
  p_0^*(U)=[\Lc_{|X_0}], \hspace{0.5cm} \partial_{\bgm{S}}(U)=c_1.
\end{equation}

\end{proof}

\begin{thm}
There is an equivalence
\begin{equation}
  (\Phi_p^t(\sLambda_{X}))^{\mu_{\oo}}\simeq \Phi^{\textup{mi}}_{(X,s)}(\sLambda_{X}) \in \Det{X_0;\Lambda}.
\end{equation}
\end{thm}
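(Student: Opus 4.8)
The plan is to unwind the definition of $\Phi_p^t$, reduce the assertion to the comparison of a single morphism, and then match that morphism with the one appearing in the formula for $\Phi^{\textup{mi}}_{(X,s)}(\sLambda_X)$ recalled above. Recall that $\Phi_p^t=\mathrm{cofib}\circ sp\circ\Psi_p^t$, and that $sp$ is obtained by applying Construction \ref{specialization functor} to the recollement $\Det{\Vs_p^t;\Lambda}$ of $\Det{X_0;\Lambda}$ and $\Det{X_0;\Lambda}^{\mu_{\oo}}$; here $L_0\circ i_1=(-)^{\mu_{\oo}}$ and the relevant left adjoint is $\textup{triv}$, so $sp$ sends $\Psi_p^t(\sLambda_X)$ to the arrow $\widetilde\sigma\colon \textup{triv}(i_X^*\sLambda_X)\to\Psi_{\eta,p}^t((j_X^{(1)})^*\sLambda_X)$ which corresponds, under the adjunction $(\textup{triv},(-)^{\mu_{\oo}})$ and the identification $(-)^{\mu_{\oo}}\circ\Psi_{\eta,p}^t\simeq (i_X^{(1)})^*(j_X^{(1)})_*$ of Remark \ref{muinf composed with tame nearby cycles}, to the specialization morphism $i_X^*\sLambda_X\to (i_X^{(1)})^*(j_X^{(1)})_*(j_X^{(1)})^*\sLambda_X$ used to define $\Psi_p^t$. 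Since $(-)^{\mu_{\oo}}$ is a right adjoint between stable $\oo$-categories it is exact, hence commutes with cofibres, and therefore
\begin{equation}
(\Phi_p^t(\sLambda_X))^{\mu_{\oo}}\simeq \mathrm{cofib}\bigl((\widetilde\sigma)^{\mu_{\oo}}\bigr),\qquad (\widetilde\sigma)^{\mu_{\oo}}\colon (\textup{triv}(\sLambda_{X_0}))^{\mu_{\oo}}\longrightarrow (\Psi_{\eta,p}^t(\sLambda_{U_X}))^{\mu_{\oo}},
\end{equation}
where $(\widetilde\sigma)^{\mu_{\oo}}$ is the morphism whose adjoint is $i^*$ of the unit $\sLambda_X\to j_*\sLambda_{U_X}$ of $(j^*,j_*)$, with $i\colon X_0\hookrightarrow X$ and $j\colon U_X\hookrightarrow X$.

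Now I identify source and target. By Remark \ref{muinf composed with tame nearby cycles} one has $(\Psi_{\eta,p}^t(\sLambda_{U_X}))^{\mu_{\oo}}\simeq i^*j_*\sLambda_{U_X}$, and by the Proposition established just above $(\textup{triv}(\sLambda_{X_0}))^{\mu_{\oo}}\simeq\mathrm{cofib}\bigl(c_1(\Lc_{|X_0})\colon\sLambda_{X_0}(-1)[-2]\to\sLambda_{X_0}\bigr)$. On the other hand, the formula recalled from \cite{p20} identifies $\Phi^{\textup{mi}}_{(X,s)}(\sLambda_X)$ with the cofibre of a morphism $c\colon \mathrm{cofib}(c_1(\Lc_{|X_0}))\to i^*j_*\sLambda_{U_X}$, namely the one induced on horizontal cofibres by the commutative square whose vertical maps are the units $\sLambda_{X_0}\to i^*j_*\sLambda_{U_X}$, whose top row is $c_1(\Lc_{|X_0})$ and whose bottom row $i^*j_*(c_1(\Lc_{|U_X}))$ carries its canonical nullhomotopy coming from the trivialization of $\Lc_{|U_X}$ by the nowhere‑vanishing section $s_{|U_X}$; equivalently, unravelling the diagram of \emph{loc.\ cit.}, $c=i^*\bigl(s^*j_{0*}\sLambda_{\Uc}\to j_*s_{\Uc}^*\sLambda_{\Uc}\bigr)$ is the exchange transformation attached to the Cartesian square relating $U_X\hookrightarrow X\xrightarrow{s}\V(\Lc)\hookleftarrow\Uc$, under the identifications $i^*s^*j_{0*}\sLambda_{\Uc}\simeq s_0^*i_0^*j_{0*}\sLambda_{\Uc}\simeq\mathrm{cofib}(c_1(\Lc_{|X_0}))$ (using $s_0=i$ and the Gysin triangle for the zero section of $\V(\Lc)$) and $s_{\Uc}^*\sLambda_{\Uc}\simeq\sLambda_{U_X}$. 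Thus everything reduces to proving $(\widetilde\sigma)^{\mu_{\oo}}\simeq c$.

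Both morphisms have the same underlying map after precomposition with the canonical map $\sLambda_{X_0}\to\mathrm{cofib}(c_1(\Lc_{|X_0}))$, namely the unit $u\colon\sLambda_{X_0}\to i^*j_*\sLambda_{U_X}$: for $(\widetilde\sigma)^{\mu_{\oo}}$ this is the triangle identity for $(\textup{triv},(-)^{\mu_{\oo}})$, and for $c$ it is read off the square. The remaining datum — a nullhomotopy of $u\circ c_1(\Lc_{|X_0})$ — is pinned down by tracing through the construction. Using $\sLambda_{X_0}\simeq p_0^*\sLambda_{\bgm{S}}$, $\sLambda_{U_X}\simeq p_U^*\sLambda_S$, the geometric identity $\V(\Lc)\simeq X\times_{\bgm{S}}\ag{S}$ with $s=(\mathrm{id}_X,p)$, and the facts that the Proposition's identification of $(\textup{triv}\,\sLambda_{X_0})^{\mu_{\oo}}$ is pulled back from $\bgm{S}$ (via the equivalence $\textup{triv}(\sLambda_{\bgm{S}})\simeq\Psi_{\mathrm{id},\eta}^t(\sLambda_S)$ of the Corollary in \S\ref{comparison with monodromy invariant vanishing cycles}) and that the $*$-pullback comparison for $\Psi_{\eta,\bullet}^t$ (Section \ref{section tame vanishing cycles over ag and *pullbacks}) is compatible with $(-)^{\mu_{\oo}}$, one reduces the comparison to the universal situation $X=\ag{S}$, $\Lc=U$, $s$ the universal section. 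There $\V(U)=\ag{S}$, and both exchange transformations in play reduce to the boundary map of the localization triangle $i_0^!\sLambda_{\ag{S}}\to\sLambda_{\bgm{S}}\to i_0^*j_{0*}\sLambda_S$ together with absolute purity $i_0^!\sLambda_{\ag{S}}\simeq\sLambda_{\bgm{S}}(-1)[-2]$, with the nullhomotopy in both cases the one expressing the vanishing of the universal first Chern class on $S=[\gm{S}/\gm{S}]$; hence $(\widetilde\sigma)^{\mu_{\oo}}\simeq c$, and consequently
\begin{equation}
(\Phi_p^t(\sLambda_X))^{\mu_{\oo}}\simeq\mathrm{cofib}\bigl((\widetilde\sigma)^{\mu_{\oo}}\bigr)\simeq\mathrm{cofib}(c)\simeq\Phi^{\textup{mi}}_{(X,s)}(\sLambda_X).
\end{equation}
The main obstacle is precisely this last identification of homotopy data: both the Proposition's computation of $(\textup{triv}\,\sLambda_{X_0})^{\mu_{\oo}}$ and the formula of \cite{p20} only determine the relevant morphism up to the choice of such a nullhomotopy, and one must verify that the canonical choice implicit in $sp$ agrees with the canonical choice implicit in the commutative square of \emph{loc.\ cit.}; this is where the geometry of $\V(\Lc)=X\times_{\bgm{S}}\ag{S}$ and the triviality of $\Lc_{|U_X}$ enter.
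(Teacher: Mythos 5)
Your proposal follows essentially the same route as the paper: both sides are identified, via the preceding lemmas, as cofibres of maps $\mathrm{cofib}\bigl(c_1(\Lc_{|X_0})\bigr)\to i^*j_*\sLambda_{U_X}$, and the two maps are compared by checking that their restrictions along $\sLambda_{X_0}\to\mathrm{cofib}\bigl(c_1(\Lc_{|X_0})\bigr)$ both agree with the unit $\tilde f$. You are in fact more scrupulous than the paper, which stops at this restriction check, whereas you correctly note that a map out of a cofibre is only determined by such a restriction together with a nullhomotopy, and you sketch a reduction to the universal case over $\bgm{S}$ to match those nullhomotopies.
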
 
\begin{proof}
We have that
\begin{equation}
\Phi^{\textup{mi}}_{(X,s)}(\sLambda_X)\simeq cofiber \Bigl ( cofiber \bigl (c_1(\Lc_{|X_0}):\sLambda_{X_0}(-1)[-2]\rightarrow \sLambda_{X_0} \bigr) \xrightarrow{f} i^*j_*\sLambda_{U_X} \Bigr ),
\end{equation}
where $f$ is the morphism induced by the counit $\tilde{f}:\sLambda_{X_{0}}\rightarrow i^*j_*\sLambda_{U_X}$ ($\tilde{f}\circ c_1(\Lc_{|X_0})\sim 0$). 

On the other hand, the previous lemmata and propositions imply that
\begin{equation}
(\Phi_p^t(\sLambda_{X}))^{\mu_{\oo}}\simeq cofiber \Bigl ( cofiber \bigl (c_1(\Lc_{|X_0}):\sLambda_{X_0}(-1)[-2]\rightarrow \sLambda_{X_0} \bigr) \xrightarrow{sp^{\mu_{\oo}}} i^*j_*\sLambda_{U_X} \Bigr ).
\end{equation}
Therefore, we only need to show that the morphisms $f$ and $sp^{\mu_{\oo}}$ are homotopic. With the same notation as in Construction \ref{bar specialization functor}, we see that the composition
\begin{equation}
  \sLambda_{X_0}\rightarrow cofiber \bigl (c_1(\Lc_{|X_0}):\sLambda_{X_0}(-1)[-2]\rightarrow \sLambda_{X_0} \bigr) \xrightarrow{sp^{\mu_{\oo}}} i^*j_*\sLambda_{U_X}
\end{equation}
is homotopic to $\bar{sp}$, i.e. to $\tilde{f}$. The claim follows.
\end{proof}
\begin{rmk}
Using the usual limiting procedure, we find monodromy-invariant ($\Ql{}$-adic) vanishing cycles (see \cite[Definition 4.2.6]{p20}) as 
\begin{equation}
  \Phi^{\textup{mi}}_{(X,s)}(\Ql{})\simeq \Phi_{p}^t(\Ql{,X})^{\mu_{\oo}}.
\end{equation} 
\end{rmk}
\appendix
\section{}\label{appendix A}
Let $\Ac$ be a filtered set with a minimal element $e$. 
We consider $\Ac$ as an ordinary category by letting $Hom_{\Ac}(a,a')$ be the punctual set if $a\leq a'$ and the empty set otherwise.
Let $X:\Ac^{op}\rightarrow \Cc$ be a diagram in a $(2,1)$-category.
We assume that $\Cc$ has $2$-fibre products.
Suppose that there is a distinguished class of $1$-morphisms $F$ in $\Cc$, which is stable under composition, pullbacks and contains all identities, and that $X(a\leq a')\in F$ for every $a\leq a' \in \Ac$.
Moreover, suppose that we are given a $1$-morphism in $\Cc$
\begin{equation}
    i(e):X_0(e)\rightarrow X(e).
\end{equation}
We shall put $X_0(a):=X_0(e)\x_{X(e)}X(a)$ for all $a\in \Ac$.
It follows from the properties of $2$-fibre products that we get a diagram $X_0:\Ac^{op}\rightarrow \Cc$, such that $X_0(a\leq a')\in F$ for every $a\leq a' \in \Ac$.

Starting from these data, we will construct a diagram
\begin{equation}
    D:N(\Ac^{op})\x \Delta^1 \rightarrow \delta^*_{2,\{2\}}\bigl ( N(\Cc) \bigr )^{cart}_{F,all}.
\end{equation}
Here $N$ denotes the simplicial nerve of \cite{lu09} and $\Cc$ is regarded as a simplicial category by applying the nerve construction on all hom categories.
The simplicial set $\delta^*_{2,\{2\}}\bigl ( N(\Cc) \bigr )^{cart}_{F,all}$ is the one defined in \cite{lz17}.

We will define $D$ explicitly.

\begin{rmk}[$n$-simplexes in $\delta^*_{2,\{2\}}\bigl ( N(\Cc) \bigr )^{cart}_{F,all}$]
By definition, an $n$-simplex $\Delta^n\rightarrow \delta^*_{2,\{2\}}\bigl ( N(\Cc) \bigr )^{cart}_{F,all}$ is a morphism of simplicial sets
\begin{equation}
    \Delta^n\x (\Delta^n)^{op}\rightarrow N(\Cc)
\end{equation}
such that vertical morphisms lie in $F$ and all squares are Cartesian.
By definition of the simplicial nerve, this corresponds to simplicial functor
\begin{equation}
    S:\Cb[\Delta^n\x (\Delta^n)^{op}]\rightarrow \Cc
\end{equation}
such that $S((i,j)\leq (i',j))$ is in $F$ for all $i\leq i'$ and all $j$ and such that each diagram
\begin{equation}\label{pullback condition}
    \begindc{\commdiag}[15]
      \obj(-25,15)[1]{$S(i,j)$}
      \obj(25,15)[2]{$S(i,j')$}
      \obj(-25,-15)[3]{$S(i',j)$}
      \obj(25,-15)[4]{$S(i',j')$}
      \mor{1}{2}{$$}
      \mor{1}{3}{$$}
      \mor{2}{4}{$$}
      \mor{3}{4}{$$}
    \enddc
\end{equation}
is a $2$-fibre product, where $i\leq i', j\geq j'$.
\end{rmk}
\begin{construction}
Let $n\geq 0$. We will now construct a function
\begin{equation}
    D_n: Hom_{Set_{\Delta}}(\Delta^n,N(\Ac^{op})\x \Delta^1)\rightarrow Hom_{Set_{\Delta}}(\Delta^n,\delta^*_{2,\{2\}}\bigl ( N(\Cc) \bigr )^{cart}_{F,all}).
\end{equation}
Let $(a,s)=(a_0\leq a_1 \leq \dots \leq a_n, s_0\leq \dots \leq s_n)$ be an $n$-simplex of $N(\Ac)\x \Delta^1$. 
We need to define a simplicial functor
\begin{equation}
    D_n(a,s):\Cb[\Delta^n\x (\Delta^n)^{op}]\rightarrow \Cc.
\end{equation}
Let $(i,j)$ be an object of $\Cb[\Delta^n\x (\Delta^n)^{op}]$. 
Then we put
\begin{equation}
    D_n(a,s)(i,j)=\begin{cases}
                   X(a_i) \hspace{0.5cm} \text{if } s_j=1;\\
                   X_0(a_i) \hspace{0.5cm} \text{if } s_j=0.
                \end{cases}
\end{equation}
For each $(i,j),(i',j')\in \Cb[\Delta^n\x (\Delta^n)^{op}]$, we need to define a morphism of simplicial sets
\begin{equation}
    \Map_{\Cb[\Delta^n\x (\Delta^n)^{op}]}\bigl ((i,j),(i',j')\bigr )=N(P_{(i,j),(i',j')})\rightarrow N\bigl (Hom_{\Cc}(D_n(a,s)(i,j),D_n(a,s)(i',j'))\bigr ),
\end{equation}
that is, we have to define a functor
\begin{equation}
    P_{(i,j),(i',j')}\rightarrow Hom_{\Cc}(D_n(a,s)(i,j),D_n(a,s)(i',j')).
\end{equation}
Here $P_{(i,j),(i',j')}$ stands for the partially ordered set defined in \cite[Example 1.1.5.9]{lu09}.
As a first approximation, notice that the morphism of diagrams $i:X_0\rightarrow X$ provides us with a diagram
\begin{equation}
    \begindc{\commdiag}[15]
       \obj(-80,30)[11]{$X_0(a_n)$}
       \obj(-40,30)[21]{$X_0(a_n)$}
       \obj(0,30)[31]{$\dots$}
       \obj(40,30)[41]{$X_0(a_n)$}
       \obj(80,30)[51]{$X(a_n)$}
       \obj(120,30)[61]{$X(a_n)$}
       \obj(160,30)[71]{$\dots$}
       \obj(200,30)[81]{$X(a_n)$}
       \mor{11}{21}{$id$}
       \mor{21}{31}{$id$}
       \mor{31}{41}{$id$}
       \mor{41}{51}{$i(a_n)$}
       \mor{51}{61}{$id$}
       \mor{61}{71}{$id$}
       \mor{71}{81}{$id$}
       
       \obj(-80,10)[12]{$X_0(a_{n-1})$}
       \obj(-40,10)[22]{$X_0(a_{n-1})$}
       \obj(0,10)[32]{$\dots$}
       \obj(40,10)[42]{$X_0(a_{n-1})$}
       \obj(80,10)[52]{$X(a_{n-1})$}
       \obj(120,10)[62]{$X(a_{n-})$}
       \obj(160,10)[72]{$\dots$}
       \obj(200,10)[82]{$X(a_{n-1})$}
       \mor{12}{22}{$id$}
       \mor{22}{32}{$id$}
       \mor{32}{42}{$id$}
       \mor{42}{52}{$i(a_{n-1})$}[\atright, \solidarrow]
       \mor{52}{62}{$id$}
       \mor{62}{72}{$id$}
       \mor{72}{82}{$id$}
       
       \obj(-80,-10)[13]{$\vdots$}
       \obj(-40,-10)[23]{$\vdots$}
       \obj(0,-10)[33]{$\vdots$}
       \obj(40,-10)[43]{$\vdots$}
       \obj(80,-10)[53]{$\vdots$}
       \obj(120,-10)[63]{$\vdots$}
       \obj(160,-10)[73]{$\vdots$}
       \obj(200,-10)[83]{$\vdots$}
       
       \obj(-80,-30)[14]{$X_0(a_{0})$}
       \obj(-40,-30)[24]{$X_0(a_{0})$}
       \obj(0,-30)[34]{$\dots$}
       \obj(40,-30)[44]{$X_0(a_{0})$}
       \obj(80,-30)[54]{$X(a_{0})$}
       \obj(120,-30)[64]{$X(a_{0})$}
       \obj(160,-30)[74]{$\dots$}
       \obj(200,-30)[84]{$X(a_{0}).$}
       \mor{14}{24}{$id$}
       \mor{24}{34}{$id$}
       \mor{34}{44}{$id$}
       \mor{44}{54}{$i(a_{0})$}
       \mor{54}{64}{$id$}
       \mor{64}{74}{$id$}
       \mor{74}{84}{$id$}
       
       \mor{11}{12}{$$}
       \mor{12}{13}{$$}
       \mor{13}{14}{$$}
       \mor{21}{22}{$$}
       \mor{22}{23}{$$}
       \mor{23}{24}{$$}
       \mor{41}{42}{$$}
       \mor{42}{43}{$$}
       \mor{43}{44}{$$}
       \mor{51}{52}{$$}
       \mor{52}{53}{$$}
       \mor{53}{54}{$$}
       \mor{61}{62}{$$}
       \mor{62}{63}{$$}
       \mor{63}{64}{$$}
       \mor{81}{82}{$$}
       \mor{82}{83}{$$}
       \mor{83}{84}{$$}
    \enddc
\end{equation}
The vertical maps are those defined by $X_0$ and $X$. The fact that condition (\ref{pullback condition}) holds is obvious from the definitions. The $2$-cells are not depicted. However, the $2$-cells providing isomorphisms
\begin{equation}
    X_0(a\leq a'')\simeq X_0(a'\leq a'')\circ X_0(a\leq a') \hspace{0.5cm} (\text{resp. } X(a\leq a'')\simeq X(a'\leq a'')\circ X(a\leq a') )
\end{equation}
are provided by the functor $X_0$ (resp. $X$). 
Moreover, we attach the identity $2$-cells in the squares that do not belong to the column where the $i(a_j)$ appear and $2$-cells provided by the definition of $2$-fibre products (see eg \cite[Tag 003O]{stproj}) to those that belong to that column. These are compatible in an obvious sense that we will not make explicit here (by the universal property of $2$-fibre products).

Each $A\in P_{(i,j),(i',j')}$ admits an unique "minimal path", that is a "path" $(i,j)\rightarrow (i',j')$ such that there are no elements of $A$ under this "path".
At the level of objects, we define $P_{(i,j),(i',j')}\rightarrow Hom_{\Cc}(D_n(a,s)(i,j),D_n(a,s)(i',j'))$ by sending $A$ to the composition of the images of the "$1$-steps" of this minimal path. If $A\subseteq A'$, then the minimal path of $A'$ lies below that of $A$ and we define the morphism
\begin{equation}
    D_n(a,s)(A\subseteq A'):D_n(a,s)(A)\rightarrow D_n(a)(A')
\end{equation}
by using the $2$-cells mentioned above.
The fact that 
\begin{equation}
    D_n(a,s)(A\subseteq A'')=D_n(a,s)(A'\subseteq A'')\circ D_n(a,s)(A\subseteq A')
\end{equation}
follows from the compatibility of the $2$-cells. The equality
\begin{equation}
    D_n(a,s)(A\subseteq A)=id
\end{equation}
is obvious.

This defines the functors $P_{(i,j),(i',j')}\rightarrow Hom_{\Cc}(D_n(a,s)(i,j),D_n(a,s)(i',j'))$, as desired.

Finally, notice that the square
\begin{equation}
    \begindc{\commdiag}[20]
      \obj(-70,10)[1]{$P_{(i,j),(i',j')}\x P_{(i',j'),(i'',j'')}$}
      \obj(70,10)[2]{$Hom_{\Cc}(D_n(a,s)(i,j),D_n(a,s)(i',j'))\x Hom_{\Cc}(D_n(a,s)(i',j'),D_n(a,s)(i'',j''))$}
      \obj(-70,-10)[3]{$P_{(i,j),(i'',j'')}$}
      \obj(70,-10)[4]{$Hom_{\Cc}(D_n(a,s)(i,j),D_n(a,s)(i'',j''))$}
      \mor{1}{2}{$$}
      \mor{1}{3}{$\cup$}
      \mor{2}{4}{$\circ_{\Cc}$}
      \mor{3}{4}{$$}
    \enddc
\end{equation}
is commutative on the nose: at the level of objects, this follows immediately from the observation that if $A\in P_{(i,j),(i',j')}$ and $A'\in P_{(i',j'),(i'',j'')}$, then the "minimal path" of $A\cup A'$ is the join of the minimal paths of $A$ and $A'$. At the level of morphisms, it follows once again from the compatibility of the $2$-cells in the diagram above. This provides us with the desired simplicial functor $D_n(a,s):\Cb[\Delta^n\x (\Delta^n)^{op}]\rightarrow \Cc$.
\end{construction}
\begin{lmm}
The functions $D_n: Hom_{Set_{\Delta}}(\Delta^n,N(\Ac^{op})\x \Delta^1)\rightarrow Hom_{Set_{\Delta}}(\Delta^n,\delta^*_{2,\{2\}}\bigl ( N(\Cc) \bigr )^{cart}_{F,all})$ assemble in a morphism of simplicial sets
\begin{equation}
    D:N(\Ac^{op})\x \Delta^1 \rightarrow \delta^*_{2,\{2\}}\bigl ( N(\Cc) \bigr )^{cart}_{F,all}.
\end{equation}
\end{lmm}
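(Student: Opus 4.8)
The plan is to check that the family $\{D_n\}_{n\ge 0}$ commutes with all the simplicial operators, i.e.\ with precomposition by an arbitrary monotone map $\theta:[m]\rightarrow [n]$. On the source $N(\Ac^{op})\x\Delta^1$ this operator sends an $n$-simplex $(a,s)=(a_0\le\cdots\le a_n,\,s_0\le\cdots\le s_n)$ to the $m$-simplex $(a\circ\theta,\,s\circ\theta)$; on the target $\delta^*_{2,\{2\}}\bigl(N(\Cc)\bigr)^{cart}_{F,all}$, whose $n$-simplices are the simplicial functors $\Cb[\Delta^n\x(\Delta^n)^{op}]\rightarrow\Cc$ satisfying the Cartesianness and $F$-conditions recalled above, it is precomposition with the functor $\Cb[\theta]:\Cb[\Delta^m\x(\Delta^m)^{op}]\rightarrow\Cb[\Delta^n\x(\Delta^n)^{op}]$ induced by $\theta$ (applied to both factors, contravariantly on the second one). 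Thus the whole content of the lemma is the equality of simplicial functors
\begin{equation*}
  D_m\bigl((a,s)\circ\theta\bigr)=D_n(a,s)\circ\Cb[\theta],
\end{equation*}
and since the two sides are produced by one and the same explicit recipe, it suffices to compare them object-by-object and morphism-by-morphism.

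On objects, $\Cb[\theta]$ sends $(i,j)$ to $(\theta(i),\theta(j))$, so the right-hand side evaluates at $(i,j)$ to $X(a_{\theta(i)})$ or $X_0(a_{\theta(i)})$ according as $s_{\theta(j)}=1$ or $0$, while the left-hand side evaluates at $(i,j)$ to $X\bigl((a\circ\theta)_i\bigr)$ or $X_0\bigl((a\circ\theta)_i\bigr)$ according as $(s\circ\theta)_j=1$ or $0$. These coincide because $(a\circ\theta)_i=a_{\theta(i)}$ and $(s\circ\theta)_j=s_{\theta(j)}$.

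On mapping simplicial sets, for objects $(i,j),(i',j')$ the functor $\Cb[\theta]$ induces, through the description of the mapping spaces of $\Cb[\Delta^\bullet\x(\Delta^\bullet)^{op}]$ in terms of the posets $P_{(i,j),(i',j')}$ recalled above, the nerve of the monotone map $P_{(i,j),(i',j')}\rightarrow P_{(\theta(i),\theta(j)),(\theta(i'),\theta(j'))}$ that carries a subset to its image in the coarser grid. The point to verify is that the minimal path of the image of $A$ is obtained from the image of the minimal path of $A$ by deleting the "steps" that $\theta$ collapses, and that such collapsed steps are sent by $D_n(a,s)$ to identity morphisms — they are morphisms between equal objects, and $D_n(a,s)$ is a functor; hence the composite in $\Cc$ of the images of the steps of the minimal path of $A$ equals the composite of the steps of the minimal path of its image, i.e.\ the two functors $P_{(i,j),(i',j')}\rightarrow Hom_{\Cc}(\cdot,\cdot)$ agree on objects. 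On morphisms of $P_{(i,j),(i',j')}$ they agree because the $2$-cells used in the definition of $D_n(a,s)$ — the associativity isomorphisms furnished by the functors $X$ and $X_0$ and the canonical $2$-cells of the $2$-fibre products — are functorial, so they are carried to one another by $\Cb[\theta]$.

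The step I expect to be the main obstacle is the last one, precisely in the case where $\theta$ is a degeneracy: there indices get repeated and minimal paths genuinely collapse, so one must make sure that the "minimal path" bookkeeping, together with the chosen associativity and $2$-fibre-product $2$-cells, is compatible with $\Cb[\theta]$ on the nose and not merely up to coherent homotopy. For $\theta$ a face map this is immediate, since passing to a sub-grid collapses no path.
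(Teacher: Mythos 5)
Your proof takes essentially the same route as the paper's: a direct check that the explicit formulas for the $D_n$ commute with the simplicial operators, the only organizational difference being that you treat an arbitrary monotone $\theta:[m]\rightarrow[n]$ at once, where the paper spells out the face and degeneracy operators on the target separately (degeneracies insert identity rows and columns, faces compose adjacent rows and columns) and then declares the compatibility clear. One correction to your closing remark, though: the face-map case is not more immediate than the degeneracy case. Under a face map $d^i$ the grid gets \emph{refined}, not collapsed: a single $1$-step of the $[n-1]$-grid, which $D_{n-1}(a\circ d^i,s\circ d^i)$ sends to the single morphism $X(a_{i-1}\le a_{i+1})$, corresponds to a two-step segment of the minimal path in the $[n]$-grid, which $D_n(a,s)\circ\Cb[d^i]$ sends to $X(a_i\le a_{i+1})\circ X(a_{i-1}\le a_i)$. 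On-the-nose agreement there uses the strict compatibility of the composition constraints of $X$ and $X_0$ (the $2$-cells the construction attaches), exactly as the degeneracy case uses that identities are preserved strictly; the paper's own proof is equally terse on both points, so this is a shared, not a new, gap.
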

\begin{proof}
We need to show that the $D_n$'s are compatible with the face and degeneracy maps of $N(\Ac^{op})\x \Delta^1$ and $\delta^*_{2,\{2\}}\bigl ( N(\Cc) \bigr )^{cart}_{F,all}$. The description of face and degeneracy maps in $N(\Ac^{op})\x \Delta^1$ being well known, we will limit to describe those of the $\delta^*_{2,\{2\}}\bigl ( N(\Cc) \bigr )^{cart}_{F,all}$. For $S:\Cb[\Delta^n\x (\Delta^n)^op]\rightarrow N(\Cc)$, its image along the degeneracy map
\begin{equation}
    s_n^i: Hom_{Set_{\Delta}}(\Delta^n,\delta^*_{2,\{2\}}\bigl ( N(\Cc) \bigr )^{cart}_{F,all})\rightarrow Hom_{Set_{\Delta}}(\Delta^{n+1},\delta^*_{2,\{2\}}\bigl ( N(\Cc) \bigr )^{cart}_{F,all})
\end{equation}
corresponds to inserting a row
\begin{equation}
    \begindc{\commdiag}[20]
       \obj(-80,10)[1]{$S(i,n)$}
       \obj(-40,10)[2]{$S(i,n-1)$}
       \obj(0,10)[3]{$\dots$}
       \obj(40,10)[4]{$S(i,1)$}
       \obj(80,10)[5]{$S(i,0)$}
       \mor{1}{2}{$$}
       \mor{2}{3}{$$}
       \mor{3}{4}{$$}
       \mor{4}{5}{$$}
       
       \obj(-80,-10)[6]{$S(i,n)$}
       \obj(-40,-10)[7]{$S(i,n-1)$}
       \obj(0,-10)[8]{$\dots$}
       \obj(40,-10)[9]{$S(i,1)$}
       \obj(80,-10)[10]{$S(i,0),$}
       \mor{6}{7}{$$}
       \mor{7}{8}{$$}
       \mor{8}{9}{$$}
       \mor{9}{10}{$$}
       
       \mor{1}{6}{$id$}
       \mor{2}{7}{$id$}
       \mor{4}{9}{$id$}
       \mor{5}{10}{$id$}
    \enddc
\end{equation}
a column 
\begin{equation}
    \begindc{\commdiag}[20]
       \obj(-20,40)[1]{$S(0,i)$}
       \obj(-20,20)[2]{$S(1,i)$}
       \obj(-20,0)[3]{$\dots$}
       \obj(-20,-20)[4]{$S(n-1,i)$}
       \obj(-20,-40)[5]{$S(n-1,0)$}
       \mor{1}{2}{$$}
       \mor{2}{3}{$$}
       \mor{3}{4}{$$}
       \mor{4}{5}{$$}
       
       \obj(20,40)[6]{$S(0,i)$}
       \obj(20,20)[7]{$S(1,i)$}
       \obj(20,0)[8]{$\dots$}
       \obj(20,-20)[9]{$S(n-1,i)$}
       \obj(20,-40)[10]{$S(n,i),$}
       \mor{6}{7}{$$}
       \mor{7}{8}{$$}
       \mor{8}{9}{$$}
       \mor{9}{10}{$$}
       
       \mor{1}{6}{$id$}
       \mor{2}{7}{$id$}
       \mor{4}{9}{$id$}
       \mor{5}{10}{$id$}
    \enddc
\end{equation}
and the obvious $2$-cells. 
It is then clear from this description and that of the $D_n(a_0\leq \dots \leq a_i\leq a_i \leq \dots \leq a_n,s_0\leq \dots \leq s_i\leq s_i \leq \dots \leq s_n)$ that the $D_n$'s are compatible with the degeneracy maps.

On the other hand, the face maps
\begin{equation}
    d_n^i: Hom_{Set_{\Delta}}(\Delta^n,\delta^*_{2,\{2\}}\bigl ( N(\Cc) \bigr )^{cart}_{F,all})\rightarrow Hom_{Set_{\Delta}}(\Delta^{n-1},\delta^*_{2,\{2\}}\bigl ( N(\Cc) \bigr )^{cart}_{F,all})
\end{equation}
correspond to taking compositions $S(k+1,i)\circ S(k,i)$ and $S(i,j-1)\circ S(i,j)$ to get rid if the $i^{th}$ row and column in the diagram $S:\Delta^n\x(\Delta^n)^{op}\rightarrow \Cc$. Then it is also obvious that the $D_n$'s are compatible with the face maps as well.
\end{proof}
\begin{cor}
The above construction provides us with an $\oo$-functor
\begin{equation}
    N(\Ac^{op})\x \Delta^1 \rightarrow Corr( N(\Cc) )_{F,all}.
\end{equation}
\end{cor}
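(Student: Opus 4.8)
The plan is to deduce the corollary formally from the preceding lemma, using that the target simplicial set is already an $\oo$-category so that no further coherence data are needed. First I would recall from \cite{lz17} that, under the hypotheses imposed on $\Cc$ and $F$ (existence of $2$-fibre products; $F$ stable under composition and pullback and containing all identities), the simplicial set $\delta^*_{2,\{2\}}\bigl ( N(\Cc) \bigr )^{cart}_{F,all}$ is an $\oo$-category, and that $Corr(N(\Cc))_{F,all}$ is, up to the notational identification of \emph{loc.\ cit.}, precisely this $\oo$-category --- this is the model of the $(\oo,1)$-category of correspondences already in use in the body of the paper (cf.\ the discussion following \eqref{six functor formalism with left adjoints}).

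Next I would observe that the source $N(\Ac^{op})\x \Delta^1$ is itself an $\oo$-category: since $\Delta^1=N([1])$ and the nerve preserves finite products, it is the nerve $N(\Ac^{op}\x [1])$ of an ordinary category, hence a quasi-category. At this point the corollary is immediate: a morphism of simplicial sets between two quasi-categories is the same thing as a functor of $\oo$-categories, so the map $D$ produced in the lemma is automatically the desired $\oo$-functor $N(\Ac^{op})\x \Delta^1 \rightarrow Corr(N(\Cc))_{F,all}$.

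The only step requiring attention is the bookkeeping: in the applications one takes $\Cc$ to be $\DM{S}$ (Construction \ref{pullback via correspondences DM}) or $\Ar{S}$ (Constructions \ref{pullback via correspondences Art} and \ref{*pullback bar(f)_0}) with $F$ the class of separated morphisms locally of finite type, and one must check that these data satisfy the axioms of \cite{lz17} and that $\delta^*_{2,\{2\}}\bigl ( N(\Cc) \bigr )^{cart}_{F,all}$ coincides with what is denoted $Corr(N(\Cc))_{F,all}$ there; unwinding the definitions this is routine. I do not expect any genuine obstacle: the real combinatorial content --- the Cartesianness of the squares \eqref{pullback condition} and the compatibility of the $D_n$ with faces and degeneracies --- has already been handled in the construction and in the lemma, and the corollary is a purely formal consequence of them.
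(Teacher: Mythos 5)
Your proposal is correct and follows essentially the same route as the paper, whose entire proof is the citation of \cite[Section 6.1]{lz17} for the identification of $\delta^*_{2,\{2\}}\bigl(N(\Cc)\bigr)^{cart}_{F,all}$ with the $\oo$-category of correspondences $Corr(N(\Cc))_{F,all}$. Your additional remarks --- that $N(\Ac^{op})\times\Delta^1$ is the nerve of an ordinary category and that a simplicial map between quasi-categories is automatically an $\oo$-functor --- are exactly the implicit content of that one-line proof.
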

\begin{proof}
This follows immediately from \cite[Section 6.1]{lz17}.
\end{proof}

%\bibliographystyle{plain}
%\bibliography{sections/references}

\begin{thebibliography}{10}

\bibitem{agv08}
D.~Abramovich, T.~Graber, and A.~Vistoli.
\newblock Gromov-{W}itten theory of {D}eligne-{M}umford stacks.
\newblock {\em American Journal of Mathematics}, 130(5):1337--1398, 2008.

\bibitem{sga4iii}
M.~Artin and P.~Deligne.
\newblock {\em Théorie des Topos et Cohomologie Étale des Schémas}, volume
  305 of {\em Lecture Notes in Mathematics}.
\newblock Springer Berlin, Heidelberg, 1973.
\newblock dirig\'e par M. Artin, A. Grothendieck and J.-L. Verdier.

\bibitem{ay07a}
J.~Ayoub.
\newblock Les six opérations de {G}rothendieck et le formalisme des cycles
  évanescents dans le monde motivique ({I}).
\newblock {\em Astérisque}, 314:x+464, 2007.

\bibitem{ay07b}
J.~Ayoub.
\newblock Les six opérations de {G}rothendieck et le formalisme des cycles
  évanescents dans le monde motivique ({II}).
\newblock {\em Astérisque}, 315:vi+362, 2007.

\bibitem{ay14}
J.~Ayoub.
\newblock La réalisation étale et les opérations de {G}rothendieck.
\newblock {\em Annales scientifiques de l'ENS}, 47(1):1--145, 2014.

\bibitem{be03}
Kai~A. Behrend.
\newblock Derived $\ell$-adic categories for algebraic stack.
\newblock {\em Memoirs of the American Mathematical Society}, 163(774):viii+93,
  2003.

\bibitem{cad07}
C.~Cadman.
\newblock Using stacks to impose tangency conditions on curves.
\newblock {\em American Journal of Mathematics}, 129(2):405--427, 2007.

\bibitem{cd16}
D.-C. Cisinski and F.~Déglise.
\newblock Étale motives.
\newblock {\em Compositio Mathematica}, 152(3):556--666, 2016.

\bibitem{cd19}
D.-C. Cisinski and F.~Déglise.
\newblock {\em Triangulated Categories of Mixed Motives}.
\newblock Springer Monographs in Mathematics. Springer Cham, 2019.

\bibitem{sga7i}
P.~Deligne, A.~Grothendieck, Michèle Raynaud, M.~Raynaud, and D.~S. Rim.
\newblock {\em Groupes de Monodromie en Géométrie Algébrique}, volume 288 of
  {\em Lecture Notes in Mathematics}.
\newblock Springer Berlin, Heidelberg, 1972.
\newblock dirig\'e par A. Grothendieck avec la collaboration de M. Raynaud et
  D. S. Rim.

\bibitem{sga7ii}
P.~Deligne and N.~Katz.
\newblock {\em Groupes de Monodromie en Géométrie Algébrique}, volume 340 of
  {\em Lecture Notes in Mathematics}.
\newblock Springer Berlin, Heidelberg, 1973.

\bibitem{gr17}
D.~Gaitsgory and N.~Rozenblyum.
\newblock {\em A Study in Derived Algebraic Geometry: Volume I: Correspondences
  and Duality}, volume 221 of {\em Mathematical Surveys and Monographs}.
\newblock American Mathematical Society, Providence, RI, 2017.

\bibitem{sga4ii}
A.~Grothendieck, B.~Saint-Donat, and J.~L. Verdier.
\newblock {\em Théorie des Topos et Cohomologie Étale des Schémas}, volume
  270 of {\em Lecture Notes in Mathematics}.
\newblock Springer Berlin, Heidelberg, 1972.
\newblock dirig\'e par M. Artin, A. Grothendieck and J.-L. Verdier.

\bibitem{sga4i}
A.~Grothendieck and J.~L. Verdier.
\newblock {\em Théorie des Topos et Cohomologie Étale des Schémas}, volume
  269 of {\em Lecture Notes in Mathematics}.
\newblock Springer Berlin, Heidelberg, 1972.
\newblock dirig\'e par M. Artin, A. Grothendieck and J.-L. Verdier.

\bibitem{jy21}
F.~Jin and E.~Yang.
\newblock Some results on the motivic nearby cycle.
\newblock arXiv:2107.08603v4, 2021.
\newblock preprint.

\bibitem{k19}
A.~Khan.
\newblock Virtual fundamental classes of derived stacks {I}.
\newblock arXiv:1909.01332v1, 2019.
\newblock preprint.

\bibitem{lo08}
Y.~Laszlo and M.~Olsson.
\newblock The six operations for sheaves on {A}rtin stacks {I}: Finite
  coefficients.
\newblock {\em Publ. Math. IHES}, 107:109–168, 2008.

\bibitem{lmb00}
G.~Laumon and L.~Moret-Bailly.
\newblock {\em Champs algébriques}.
\newblock Ergebnisse der Mathematik und ihrer Grenzgebiete. 3. Folge. Springer
  Berlin, Heidelberg, 2000.

\bibitem{lz17}
Y.~Liu and W.~Zheng.
\newblock Enhanced six operations and base change theorem for higher {A}rtin
  stacks.
\newblock arXiv:1211.5948v3.
\newblock preprint.

\bibitem{lu09}
J.~Lurie.
\newblock {\em Higher topos theory}, volume 170 of {\em Annals of Mathematics
  Studies}.
\newblock Princeton University Press, Princeton, NJ, 2009.

\bibitem{lu17}
J.~Lurie.
\newblock Higher algebra, 2017.
\newblock available on the author's webpage.

\bibitem{m68}
J.~Milnor.
\newblock {\em Singular points of complex hypersurfaces}, volume~61 of {\em
  Annals of Mathematics Studies}.
\newblock Princeton University Press and University of Tokyo Press, Princeton,
  NJ and Tokyo, 1968.

\bibitem{o07}
M.~Olsson.
\newblock Sheaves on artin stacks.
\newblock {\em J. Reine Angew. Math.}, 2007(603):55--112, 2007.

\bibitem{p20t}
M.~Pippi.
\newblock {\em Cat\'egories de singularit\'es, factorisations matricielles et
  cycles \'evanescents}.
\newblock PhD thesis, Universit\'e Toulouse III - Paul Sabatier, 2020.

\bibitem{p20}
M.~Pippi.
\newblock Motivic and $\ell$-adic realizations of the category of singularities
  of the zero locus of a global section of a vector bundle.
\newblock {\em Sel. Math. New Ser.}, 28(33), 2022.

\bibitem{se62}
J.~P. Serre.
\newblock {\em Corps Locaux}.
\newblock Actualités Scientifiques et Industrielles 1296. Publications de
  l’Institut de Mathématique de l’Université de Nancago. 8, Paris:
  Hermann and Cie, 1962.

\bibitem{stproj}
The {Stacks Project Authors}.
\newblock \textit{Stacks Project}.
\newblock \url{https://stacks.math.columbia.edu}, 2022.

\bibitem{Verdier}
J.-L. Verdier.
\newblock Sp\'ecialisation de faisceaux et monodromie mod\'er\'ee.
\newblock In {\em Analyse et topologie sur les espaces singuliers (II-III) -
  6--10 juillet 1981}, number 101--102 in Ast\'erisque. Soci\'et\'e
  math\'ematique de France, 1983.

\end{thebibliography}

\end{document}